\renewcommand\section{\@startsection{section}{1}{\z@}%
                       {-3\p@ \@plus -4\p@ \@minus -4\p@}%
                       {3\p@ \@plus 4\p@ \@minus 4\p@}%
                      {\normalfont\normalsize\centering\scshape}}
\author{Lashi Bandara}
\title{Rough metrics on manifolds and quadratic estimates}
\date{\today}
\address{Lashi Bandara, Centre for Mathematics and its Applications, 
Australian National University, Canberra, ACT, 0200, Australia}
\urladdr{\href{http://maths.anu.edu.au/~bandara}{http://maths.anu.edu.au/~bandara}}
\email{\href{mailto:lashi.bandara@anu.edu.au}{lashi.bandara@anu.edu.au}}
\subjclass[2010]{58J05, 58J60, 47B44, 46E35}
\keywords{Rough metrics, Quadratic estimates, Kato square root problem} 
\def\colour{\colour}
\def\colour{\color}
\newtheorem{theorem}{Theorem}[section]
\newtheorem{corollary}[theorem]{Corollary}
\newtheorem{lemma}[theorem]{Lemma}
\newtheorem{proposition}[theorem]{Proposition}
\newtheorem{definition}[theorem]{Definition}
\newtheorem{remark}[theorem]{Remark}
\newtheorem{example}[theorem]{Example}
\newcommand{\mdot}{\cdotp}
\newcommand{\cbrac}[1]{\left(#1\right)}
\newcommand{\dbrac}[1]{\left\{#1\right\}}
\newcommand{\modulus}[1]{|#1|}
\newcommand{\set}[1]{\dbrac{#1}}
\newcommand{\dom}{ {\mathcal{D}}}
\newcommand{\ran}{ {\mathcal{R}}}
\newcommand{\nul}{ {\mathcal{N}}}
\newcommand{\comp}{\circ}
\newcommand{\R}{\mathbb{R}}
\newcommand{\C}{\mathbb{C}}
\newcommand{\script}[1]{\mathscr{#1}}
\DeclareMathOperator{\re}{Re}			
\renewcommand{\emptyset}{\varnothing}
\newcommand{\union}{\cup}
\newcommand{\intersect}{\cap}
\newcommand{\rest}[1]{{{\lvert_{}}_{}}_{#1}}
\newcommand{\close}[1]{\overline{#1}}		
\DeclareMathOperator{\img}{Img}
\newcommand{\ind}[1]{\raisebox{\depth}{\(\chi\)}_{#1}}	
\newcommand{\Char}[1]{\chi_{#1}} 	
\renewcommand{\epsilon}{\varepsilon}
\renewcommand{\phi}{\varphi}
\DeclareMathOperator{\Graph}{graph}
\newcommand{\tp}[1]{{#1}^{\mathrm{tr}}}
\newcommand{\tensor}{\otimes}
\newcommand{\norm}[1]{\| #1 \|}			
\newcommand{\spt}[1]{{\rm spt} {\text{ }}#1}	
\DeclareMathOperator{\essinf}{essinf}
\newcommand{\interior}[1]{\mathring{#1}}	
\DeclareMathOperator{\tr}{tr}			
\DeclareMathOperator{\len}{\ell}			
\DeclareMathOperator{\divv}{div}		
\newcommand{\cut}{\ \llcorner\ }			
\newcommand{\partt}[1][{}]{{\partial_{{#1}}}}		
\newcommand{\Rm}{\rm{Rm}}			
\newcommand{\Ric}{{\rm Ric}}			
\DeclareMathOperator{\inj}{inj} 		
\newcommand{\bnd}{\partial}			
\newcommand{\Forms}[1][{}]{\mathbf{\Omega}^{#1}}		
\newcommand{\Tensors}[1][{}]{{\mathcal{T}}^{(#1)}}	
\newcommand{\Sect}{\mathbf{\Gamma}}		
\newcommand{\tanb}{{\rm T}}		
\newcommand{\cotanb}{{\rm T}^\ast}	
\newcommand{\pushf}[1]{{#1}_\ast}			
\newcommand{\pullb}[1]{{#1}^\ast}			
\DeclareFontFamily{OT1}{restrictfont}{}
\DeclareFontShape{OT1}{restrictfont}{m}{n}{<-> fmvr8x}{}
\newcommand{\adj}[1]{{#1}^\ast}			
\newcommand{\extd}{{\rm d}}			
\newcommand{\intd}{{\updelta}}
\newcommand{\Dir}{{\rm D}}			
\newcommand{\inprod}[1]{\langle #1 \rangle}	
\newcommand{\grad}{\nabla}			
\newcommand{\conn}[1][{}]{{\grad_{{#1}}}}		
\DeclareMathOperator{\Lip}{\bf Lip}			
\newcommand{\Leb}[1][{}]{\script{L}^{#1}}			
\newcommand{\bddlf}{\mathcal{L}} 	
\newcommand{\spec}{\sigma}		
\newcommand{\conj}[1]{\overline{#1}}				
\DeclareMathOperator{\nr}{nr}				
\newcommand{\Lp}[2][{}]{{\rm L}^{#2}_{\rm #1}}		
\newcommand{\Ck}[2][{}]{{\rm C}^{#2}_{\rm #1}}		
\newcommand{\Lips}[1][{}]{{\rm Lip}_{\rm #1}}		
\newcommand{\Sob}[2][{}]{{\rm W}^{#2}_{\rm #1}}		
\newcommand{\SobH}[2][{}]{{\Sob[#1]{#2,2}}}	
\newcommand{\convolve}{\, \ast\, }
\newcommand{\iden}{{\mathrm{I}}}
\newcommand{\Hil}{\script{H}}			
\newcommand{\Rend}{{\rm R}}
\newcommand{\sC}{\script{C}}
\newcommand{\sR}{\script{R}} 
\newcommand{\cC}{\mathcal{C}}
\newcommand{\cV}{\mathcal{V}}
\newcommand{\cM}{\mathcal{M}} 
\newcommand{\cN}{\mathcal{N}}
\newcommand{\cP}{\mathcal{P}}
\newcommand{\cT}{\mathcal{T}}
\newcommand{\Spa}{\mathcal{X}}
\newcommand{\mg}{\mathrm{g}}
\newcommand{\mgt}{{\tilde{\mg}}}
\newcommand{\mh}{\mathrm{h}}
\newcommand{\met}{\uprho} 
\newcommand{\Sph}{\mathrm{S}}
\newcommand{\Rosen}{Ros\'en} 
\newcommand{\B}{\mathrm{B}}
\newcommand{\E}{\mathrm{E}}
\DeclareMathOperator{\Sing}{Sing}
\DeclareMathOperator{\Reg}{Reg}
\newcommand{\RNum}[1]{\uppercase\expandafter{\romannumeral #1\relax}}
\begin{document}

\maketitle

\begin{abstract}
We study the persistence of quadratic
estimates related to the Kato square
root problem across a change of metric
on smooth manifolds by 
defining a class of Riemannian-like
metrics that are permitted to be of low regularity
and degenerate on sets of measure
zero. We also demonstrate how to
transmit quadratic estimates between 
manifolds which are homeomorphic and
locally bi-Lipschitz.
As a consequence,
we demonstrate the invariance
of the Kato square root problem under Lipschitz 
transformations of the space
and obtain solutions to this
problem on functions
and forms on compact
manifolds with a continuous metric.
Furthermore, we show that a
lower bound on the injectivity radius is
not a necessary condition to solve the
Kato square root problem. 
\end{abstract}
\vspace*{-0.5em}
\tableofcontents
\vspace*{-2em}

\parindent0cm
\setlength{\parskip}{\baselineskip}

\section{Introduction}
\label{Sect:Intro}

Quadratic estimates are instrumental
in the study of functions of bi-sectorial operators
as these estimates are equivalent 
to the existence of a  bounded holomorphic functional calculus.
These operators capture an 
important class of partial differential 
operators 
and quadratic estimates
provide a quantitative mechanism by which to
analyse them.
The survey papers \cite{ADMc} by Albrecht, Duong, and McIntosh,
\cite{AAMc} by Auscher, Axelsson (\Rosen), and McIntosh,
and \cite{HMc} by Hofmann and McIntosh
are excellent sources which illustrate
the virtues of quadratic estimates
and their application to partial differential 
equations.

In this paper, we will be concerned with quadratic estimates
associated to the Kato square root problem, which is the
problem of determining the domains of
square roots of uniformly elliptic second-order divergence-form 
differential operators. 
This problem on $\R^n$ was
solved in 2002 by Auscher, Hofmann, Lacey, McIntosh and Tchamitchian in
\cite{AHLMcT}. This result was recaptured in a 
first-order framework  in 2005 by Axelsson (\Rosen), Keith, and McIntosh in
 \cite{AKMc}. 
In the same paper, the authors also 
solve the Kato square root problem
for differential forms on $\R^n$, as well as
similar problems on compact manifolds.

The AKM approach has been successfully adapted to solve a range of 
problems since its conception. In \cite{AKM2}, the same authors
tackle the Kato square root problem for domains with mixed boundary values,
in \cite{Morris3} Morris adapts this framework to solve the Kato square root problem
for Euclidean submanifolds with second fundamental form bounds, 
and in \cite{BMc}, McIntosh and the author
solve Kato square root problems on manifolds 
under mild assumptions on the geometry.
This AKM technology  has also been applied to the
study of square roots of sub-elliptic operators
on Lie groups by ter Elst, McIntosh and the author 
in \cite{BEMc}.  More recently, this framework has been 
adapted by Leopardi and Stern to study numerical problems 
in \cite{LS}. 

The central theme of this paper is to reveal an
important connection between the study of quadratic
estimates and the study of Kato square root problems
on manifolds with \emph{non-smooth} metrics.
Our philosophy is to encode the lack of regularity
of the ``rough'' metric into a rough coefficient operator 
on a nearby smooth metric.
We emphasise that the first-order perspective of 
the AKM framework is of paramount importance 
in our work.
 
We remark that this philosophy 
is not necessarily our own insight and that we have borrowed
it in large parts from the authors of \cite{AKMc}. They dedicate an entire
section in their paper to deal with the
\emph{holomorphic dependency} of the functional calculus
and show Lipschitz estimates for the calculus
when the metric on a compact manifold is perturbed
slightly. The main novelty here
is that we allow our metrics
to be of very low regularity and even degenerate
on null measure sets.
 
We also derive motivation from \cite{AKM2},
where the authors
exploit the invariance of quadratic estimates under
Lipschitz transforms to solve Kato square root problems on Lipschitz
domains. However,
since a metric on a manifold is the key global 
object which determines the underlying geometry, 
we concern ourselves with the study of
non-smooth metrics, rather the special case of such transforms.
 
To increase the readability of this paper,
we give an overview of our achievements 
section by section. 
In \S\ref{Sect:Prelim}, we introduce our notation
and provide an overview of 
the aspects of manifolds which are
of importance to us, including their Lebesgue space theory.
In particular, we outline 
the measure theoretic notions that will be
of use to us in order to talk about degeneracy 
and low regularity in subsequent sections. 
For the benefit of the reader, we also 
present an overview of the AKM framework.

Following onto \S\ref{Sect:Rough},
we formulate a notion of a 
\emph{rough metric} (Definition
\ref{Def:RoughMet}), which is a Riemannian-like
metric that may not only be low in its regularity, but which
may degenerate on large, non-trivial, but null measure sets.
We also consider Lebesgue and Sobolev space
theory for such metrics.

Then, in \S\ref{Sect:Red}, 
we demonstrate how to reduce Kato square root problems
on rough metrics to similar problems on a \emph{nearby} smooth 
metric. A perspective that emerges from our
developments in this section is that the Kato square root problems
which concern us are dependent more
on the measure than the induced distance metric. We also use the
technology we build to show that the Kato square root problem 
on compact manifolds can always be solved in the affirmative 
when the metric is
only assumed to be continuous (Theorem \ref{Thm:CpctContMet}).

We dedicate \S\ref{Sect:QuadIsom}
to the study of transmitting quadratic
estimates between two manifolds.
To make this study sufficiently general and non-trivial,
we assume that the map between these two manifolds is
a homeomorphism that is locally bi-Lipschitz. 
We show that if one manifold has a smooth metric and 
admits quadratic estimates, then so does the other
in the induced, low regularity pullback metric (Theorem \ref{Thm:QuadEstLipeo}).
As a consequence, we obtain that the 
Kato square root problem for functions 
is invariant under Lipschitz transformations
of the geometry (Theorem \ref{Thm:KatoLipInv}).

Lastly, in \S\ref{Sect:Inj}, we consider
the question posed by McIntosh 
as to whether a lower bound on the injectivity 
radius is necessary to solve
Kato square root problems. 
This question is motivated by the fact that, 
in every setting where this problem is solved, 
lower bounds on injectivity radius
are present. The most geometrically general
such theorem (in \cite{BMc}) 
reveals how this technicality assists in the proof.
In this section, we answer this question in the negative
by demonstrating the existence of smooth metrics on $\R^2$
with zero injectivity radius 
arbitrarily close to the Euclidean 
metric. We then apply the tools we have 
developed in \S\ref{Sect:Red} 
to perturb the Euclidean solution to a solution 
on the nearby metric (Theorem \ref{Thm:KatoInj2dim}).
More seriously, we also show 
that such solutions are abundant
in all dimensions greater than two
by constructing metrics with
zero injectivity radius
that are smooth everywhere
but a point and which are arbitrarily close to any metric
for which we can solve the Kato square root problem
(Theorem \ref{Thm:KatoInj}). 
\section*{Acknowledgements}

This work was made possible
through the support of
the Australian-American Fulbright Commission
through a Fulbright Scholarship,
the Australian National University
by the way of a supplementary PhD scholarship, 
and by the Australian Government through an 
Australian Postgraduate Award and
an Australian Research Council grant of
Alan McIntosh and Pierre Portal.
The author wishes to acknowledge the support of these
organisations as well as both Alan and Pierre for
their continuing support.

Steve Zelditch deserves a special mention as this 
paper was motivated by a conversation with him. 
The author also wishes to 
acknowledge Rick Schoen, his Stanford Fulbright mentor,
for his useful insights and feedback. Also,
Kyler Siegel, Nick Haber, Boris Hanin,
Mike Munn, Alex Amenta, Ziping Rao and Travis Willse deserve a 
mention for indulging the author in invigorating conversations.
 
The first and less ambitious version of this 
paper was scrapped and re-written after 
a conversation with Sebastien Lucie.
Annegret Burtscher deserves a special 
mention for her helpful insights
and for offering corrections.
Also, Anton Petrunin,  Sergei V. Ivanov and 
Otis Chodosh need to be acknowledged
for their insights, comments
and contributions to the final section 
of this paper.
\section{Preliminaries} 
\label{Sect:Prelim}

\subsection{Notation}
Throughout this paper, we assume the Einstein 
summation convention. That is, whenever a
raised index appears against a lowered index,
we sum over that index, unless specified otherwise.

For a function or section $f$ mapping into a
topological vector space, we denote its \emph{support},
by $\spt f = \close{\set{x : f(x) \neq 0}}$, 
the closure of the set of points that are non-zero under $f$. 

By $\Lp{p}$ we denote the $\Lp{p}$ spaces 
in the given context. Typically, we will write
$\Lp{p}(\cV,\mh)$ to denote the
$\Lp{p}$ space over the vector bundle 
$\cV$ with metric $\mh$.
In writing $\Ck{k,\alpha}$
we mean $k$ times differentiable objects
that are $\alpha$-H\"older continuous
in the $k$-th derivative. When $\alpha = 0$, we
simply write $\Ck{k}$.
The subspace of objects with compact support are denoted
by $\Ck[c]{k,\alpha}$ and $\Ck[c]{k}$ respectively.

We reserve the symbol $\Hil$ to denote a 
\emph{Hilbert space}. 
By an operator $\Gamma: \Hil \to \Hil$, we mean 
a linear map defined on a subset $\dom(\Gamma) \subset \Hil$
which is the \emph{domain} of the operator.
The \emph{null space} and \emph{range} of this operator
are denoted by $\nul(\Gamma)$ and $\ran(\Gamma)$
respectively. The algebra of bounded
linear operators are then given by $\bddlf(\Hil)$.

In our analysis, we often write $a \lesssim b$
to mean that $a \leq C b$, where $C$ is some constant. 
The dependencies of $C$ will either be explicitly
specified or otherwise, clear from context.
By $a \simeq b$ we mean that $a \lesssim b$ and
$b \lesssim a$. 

Unless specified otherwise, we will reserve 
the symbol $\delta$ to denote the standard
inner product on $\R^n$.
The Lebesgue measure on $\R^n$ will be
denoted by $\Leb$.

\subsection{Manifolds and their Lebesgue spaces}
\label{Sect:Prelim:Mfld}
The fundamental objects that lie at the heart 
of our study in this paper  
will consist of a pair $(\cM, \mg)$ where $\cM$
is \emph{manifold} and $\mg$ is a \emph{metric} on $\cM$.
By an $n$-manifold $\cM$, we will always mean a topological 
space that is \emph{second-countable}, 
\emph{Hausdorff}, and \emph{locally
homeomorphic} to open subsets of $\R^n$.
These homeomorphisms are \emph{coordinate charts}
on $\cM$ and their collection is called
an \emph{atlas} or \emph{differentiable structure}. If the 
homeomorphisms are $\Ck{k,\alpha}$-diffeomorphisms,
by which we mean that compositions of one chart
with the inverse of another (as long as their domains have 
nonempty intersection) is a $\Ck{k,\alpha}$-diffeomorphism,
then we call $\cM$ a $\Ck{k,\alpha}$ manifold.
A $\Ck{0}$ manifold is called a \emph{topological} manifold, 
a manifold that is $\Ck{0,1}$ is called Lipschitz,
and $\Ck{\infty}$ manifold is said to be \emph{smooth}. 
The \emph{geometry} of a manifold is dictated
by the metric and it is to metrics that we
shall devote significant attention throughout
this paper.

Our wish is to study geometries that contain \emph{singularities}.
The word `singularity' is used to mean many different
things, but to us, it can mean one of two things:
either singularities arising from within 
the differentiable structure of $\cM$,
or singularities arising from
a lack of regularity in $\mg$.

If we were to pursue the former notion,
the most singular
structure we can consider is $\Ck{0}$. 
It is difficult to envisage setting up 
analysis in this situation.
However, due to 
a theorem of Sullivan (see \cite{Sullivan}), 
it turns out that
in all dimensions but $4$, 
every topological manifold can be
made into a Lipschitz manifold!
This does not improve our situation much - simply 
because changes of coordinates for vectorfields
locally involves derivatives of charts, 
and if the charts are at best bi-Lipschitz,
then the vectorfields can be at most
measurable in regularity.
Consequently, analysis on Lipschitz manifolds
is both difficult and limited,
but remarkably, Gol'dshtein, Mitrea
and Mitrea makes considerable progress
setting up analysis on compact Lipschitz manifolds
in \cite{GMM}.

Given what we have just said,
and since we would like to measure
varying degrees of regularity beyond
simply Lipschitz or measurable in order 
to reduce
low regularity problems to smooth ones, 
we are forced to confine our attention 
to higher regularity
differentiable structures.
It is reasonable to expect that 
we will have to consider the regularity
classes $\Ck{k,\alpha}$ ($k \geq 1,\ \alpha \in [0,1]$)
separately. However, 
a classical result of Whitney (see \cite{Whitney})
states  that every  $\Ck{1}$  structure
(and hence, every $\Ck{k,\alpha}$ ($k\geq 1$) manifold)
can be smoothed. 
This exactly means that the manifold can be given 
a smooth atlas compatible with the initial $\Ck{k,\alpha}$ ($k\geq 1)$ 
atlas.
Let us remark that this this is false for topological manifolds 
by a counterexample due to Kervaire in \cite{Kervaire}.
Thus, from this point onwards, without the loss of generality,  
we assume that 
$\cM$ is smooth, and our efforts are concentrated
on studying situations where the metric $\mg$ is
typically of lower regularity.

Recall that for a $\Ck{k}$ ($k\geq 0$) metric $\mg$,
the \emph{length functional} of an
absolutely continuous curve $\gamma:I \to \cM$
is given by 
$$\len_\mg(\gamma) = \int_{I} \modulus{\dot{\gamma}(t)}_{\mg(\gamma(t))}\ dt,$$
where $\modulus{u}_{\mg(x)} = \sqrt{\mg_x(u,u)}.$
The induced distance is given by 
$$\met_\mg(x,y) = \inf\set{\len_\mg(\gamma): 
\gamma(0) = x,\ \gamma(1) = y,\ \gamma\  \text{abs. cts.}}.$$
We remark that for $\Ck{k}$ metrics with $k \geq 0$, 
Burtscher demonstrates 
through Theorem 3.11 and Corollary 3.13 in her paper \cite{Burtscher}
that one can equivalently consider
curves that are piecewise smooth.

A curve $\gamma:I \to \cM$ is said to be a \emph{geodesic}
for $\met_\mg$ if for each $t \in I$, 
there is a sub-interval $J_t \subset I$ with $t \in J_t$
such that for every $t_1, t_2 \in J_t$,
$\met_\mg(\gamma(t_1, t_2)) = \modulus{t_1 - t_2}$.
This is a metric-space formulation of geodesy.
A more geometric characterisation can be given 
to metrics $\mg$ of class $\Ck{2}$ or higher.
Recall, that in this situation there exists
the Levi-Cevita connection $\conn$ with respect to $\mg$.
Then, $\gamma$ is a geodesic 
if and only
$\conn[\dot{\gamma}]{\dot{\gamma}} = 0$. 
The latter notion 
captures that the curve is lacking tangential acceleration 
and hence, is in free-fall. 
A curve $\gamma$ is said to be a \emph{minimising geodesic}
if $J_t = I$. It is easy to see that every geodesic is \emph{locally minimising}, 
and that a minimising geodesic realises the 
\emph{shortest distance} between the end points of the curve.

For a metric that is $\Ck{2}$ at a point $x \in \cM$,
the exponential map $\exp_x:V_x \subset T_x\cM \to \cM$
takes a velocity vector $v \in V_x$ as input and
outputs the end point of the unique unit-length 
parametrised geodesic $\gamma_v$ of velocity $v$.
For each such point $x$, there is an $r_x > 0$
such that $\exp_x: B_{r_x}(0) \subset T_x\cM \to \cM$
is a homeomorphism with its image. 
The largest such radius $r_x$ is then the injectivity 
radius $\inj(\cM,\mg, x)$ at the point $x \in \cM$.
The injectivity radius over the entire manifold
is then given by $\inj(\cM,\mg) = \inf_{x \in \cM} \inj(\cM,\mg,x)$.

A metric $\mg$ also induces a canonical volume
measure on $\cM$. It is constructed by pasting together
the local expression 
$$d\mu_\mg(x) = \sqrt{\det \mg(x)}\ d\Leb(x),$$
via a smooth partition of unity
subordinate to a covering of $\cM$ by charts.
This construction is readily checked to be 
well-defined.

In the analysis on manifolds with non-smooth metrics, 
it will be convenient for us to 
have certain measure-theoretic notions that can be 
formulated independently of a metric. In this 
spirit, we define the following. 
\begin{definition}[Notions of measure]
We say that: 
\begin{enumerate}[(i)] 
\item a set $A \subset \cM$ is measurable
if whenever $(U,\psi)$ is a chart satisfying $U \intersect A \neq \emptyset$,
then $\phi(U\intersect A) \subset \R^n$ is $\Leb$-measurable, 

\item a function $f: \cM \to \C$ is measurable if
$f \comp \psi^{-1}: \psi(U) \to \C$ is $\Leb$-measurable
	for each chart $(U,\psi)$, 

\item a tensor field $T: \cM \to \Tensors[r,s]\cM$ is measurable
	if the coefficients $T_{i_1, \dots, i_r}^{j_1, \dots, j_s}$ 
	in each $(U,\psi)$ is measurable

\item a set $Z$ is a \emph{null set} or \emph{set of null measure} 
	if requiring $\Leb(\phi(U \intersect Z)) = 0$ for each chart $(U, \psi)$, 
\item a property $P$ is valid almost-everywhere 
if it is valid $\Leb$-a.e. in each coordinate chart $(U,\psi)$.
\end{enumerate}
\end{definition}

In what is to follow,
we always identify the $(r,s)$-tensors, the tensors
of covariant rank $r$ and contravariant rank $s$, as
a complex vector bundle after complexification 
of its real counterpart. The set of 
\emph{measurable sections} for such tensors are then denoted by
$\Sect(\Tensors[a,b]\cM)$.

While we have formulated these notions independently 
of a metric, we can indeed recapture these definitions
in terms of a background metric. This yields a global, geometric
and coordinate independent perspective.
But first, we require the following lemma concerning
the preservation of measurability. 
We phrase this result in a far more general context than
we need here because this generality will be of use to us later.
Note that in the following proposition, we take 
the measure $\sigma$-algebra to be the maximal one. 
That is, a set $M \subset \Spa$ is $\mu$-measurable
if and only for \emph{every} set $A \subset \Spa$, 
$\mu(A) = \mu(A\setminus M) + \mu(A \intersect M)$.

\begin{lemma}
\label{Lem:WeightedMeas}
Let $(\Spa, \nu)$ be a measure-space
and let $f: \Spa \to [0,\infty]$ be a
$\nu$-measurable function such that $0 < f(x) < \infty$
for $x$ $\nu$-a.e. Suppose that $d\mu(x) = f(x) d\nu(x)$
for $x$ $\nu$-a.e. Then, a set $M \subset \Spa$
is $\nu$-measurable if and only if it is $\mu$-measurable
and $\mu$ and $\nu$ share the same sets of measure zero.
\end{lemma}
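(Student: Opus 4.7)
The plan is two-fold: first show that the classes of $\mu$-null and $\nu$-null sets coincide, and then use this together with completeness of the maximal $\sigma$-algebras to deduce equality of the $\mu$-measurable and $\nu$-measurable sets. Set $Z = \{x : f(x) = 0\} \cup \{x : f(x) = \infty\}$, so $\nu(Z) = 0$ by hypothesis. If $\nu(N) = 0$, then $\mathbf{1}_N f$ vanishes $\nu$-a.e., hence $\mu(N) = \int \mathbf{1}_N f \, d\nu = 0$. Conversely, if $\mu(N) = 0$, enclose $N$ in a $\mu$-measurable $\tilde N$ with $\mu(\tilde N) = 0$; then $0 = \mu(\tilde N \setminus Z) = \int_{\tilde N \setminus Z} f \, d\nu$, and since $f > 0$ pointwise on $\tilde N \setminus Z$, this forces $\nu(\tilde N \setminus Z) = 0$, whence $\nu(N) \leq \nu(\tilde N) \leq \nu(Z) + \nu(\tilde N \setminus Z) = 0$.

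Having shown that the null sets agree, I turn to measurability. For the forward direction, suppose $M$ is $\nu$-measurable and let $A \subset \Spa$ be arbitrary. I approximate $A$ from outside by a $\nu$-measurable $\tilde A \supset A$ with $\mu(\tilde A)$ arbitrarily close to $\mu(A)$ (using the standard outer-measure description of $\mu$ through $\nu$-measurable covers), apply the $\nu$-splitting identity on $\tilde A$, and integrate $f$ against each piece. Linearity of the integral combined with $A \cap M \subset \tilde A \cap M$ and $A \setminus M \subset \tilde A \setminus M$ then produces the Carath\'eodory identity $\mu(A) = \mu(A \cap M) + \mu(A \setminus M)$. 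For the reverse direction, a $\mu$-measurable $M$ differs from a $\nu$-measurable set by a $\mu$-null, and hence (by the first stage) a $\nu$-null, set; completeness of $\nu$ on the maximal $\sigma$-algebra then implies that $M$ is itself $\nu$-measurable.

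The main obstacle is the outer-measure approximation step underlying the forward direction of the measurability argument. This depends on the precise sense in which the relation $d\mu = f \, d\nu$ is to be understood, and on having a definition of $\mu$ that reduces to an integral against $\nu$ on $\nu$-measurable sets; once this is in place, the remaining estimates are routine consequences of monotone convergence and splitting. Strict positivity of $f$ outside $Z$ is essential to the reverse direction of the null-set comparison, while $\nu$-completeness of the maximal $\sigma$-algebra is what ultimately allows the measurability conclusion to transfer back from $\mu$ to $\nu$.
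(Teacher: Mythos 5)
Your proof takes a genuinely different route from the paper's. You begin by establishing that $\mu$ and $\nu$ share null sets, then handle Carath\'eodory measurability: the forward implication ($\nu$-measurable $\Rightarrow$ $\mu$-measurable) by outer-measure approximation through $\nu$-measurable covers, and the reverse by a ``differs-from-a-$\nu$-measurable-hull-by-a-null-set'' plus completeness argument. The paper instead proves the forward implication directly---for simple $f$ by linearity, then for general $f$ by monotone convergence---records that $\nu$-null implies $\mu$-null along the way, and then disposes of the reverse implication entirely by symmetry: $1/f$ is $\nu$-measurable, hence $\mu$-measurable by the forward step, $d\nu = (1/f)\,d\mu$ a.e., and the same argument with the roles of $\mu$ and $\nu$ swapped does the rest. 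Your version buys a conceptually clean separation into ``null sets'' and ``measurable sets,'' while the paper's buys a proof in which the hard direction literally runs the easy direction a second time.

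That said, your reverse direction has a gap that the paper's symmetry argument sidesteps. The claim that a $\mu$-measurable $M$ differs from a $\nu$-measurable set by a $\mu$-null set is justified by taking a $\nu$-measurable hull $E \supset M$ with $\mu(E) = \mu(M)$ and concluding $\mu(E \setminus M) = \mu(E) - \mu(M) = 0$; but this subtraction is only legitimate when $\mu(M) < \infty$. When $\mu(M) = \infty$, the argument as written gives nothing, and without a $\sigma$-finiteness hypothesis (which the lemma does not impose) there is no obvious way to exhaust $M$ by finite-measure pieces. You correctly flag that the whole argument turns on how $d\mu = f\,d\nu$ is interpreted and on having a regular outer-measure extension, but you do not flag this finiteness issue, and it is the one place where your structure, as opposed to the paper's, is load-bearing in a way that could fail. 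To close the gap cleanly you would either need to assume $\sigma$-finiteness (harmless for the paper's application, where one works chart by chart), or replace your reverse step with the paper's $1/f$ symmetry, which reduces it to the forward direction you have already established.
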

\begin{proof}
Suppose that $M \subset \Spa$ is $\nu$-measurable
and $A \subset \Spa$ is any other set.
We show that
$\mu(A) = \mu(A \intersect M) + \mu(A \setminus M)$.

First, assume that $f$ is a simple function
and let $f = \sum_{k=1}^m \Char{F_k}$.
Then, 
$$\mu(A) = \int_{A} \sum_{k=1}^m \Char{F_k}\ d\nu(x)
	= \sum_{k=1}^m \nu(A \intersect F_k).$$
Similarly, $\mu(A \intersect M)  = \sum_{k=1}^m \nu(A \intersect F_k \intersect M)$
and since $(A \setminus M) \intersect F_k = (A \intersect F_k) \setminus M$, 
$\mu(A \setminus M) = \sum_{k=1}^m \nu((A \intersect F_k))\setminus M)$.
By the $\nu$-measurability of $M$,
$\nu(A \intersect F_k) = \nu(A \intersect F_k \intersect M) + \nu((A \intersect F_k )\setminus M)$
which shows that $\mu(A) = \mu(A\intersect M) + \mu(A \setminus M)$.

Now, since $f$ is assumed to be non-negative and $\nu$-measurable,
there exists a sequence of simple functions $f_n$ monotonically 
increasing to $f$. Thus, by the monotone convergence theorem, 
$$\mu(A) = \int_{A} f\ d\nu 
	= \lim_{n\to\infty} \int_{A} f_n\ d\nu$$
and by what we have established previously, 
$\int_{A} f_n\ d\nu = \int_{A\intersect M} f_n\ d\nu + \int_{A \setminus M} f_n\ d\nu$.
Therefore,
$\mu(A) = \mu(A \intersect M) + \mu(A \setminus M)$.
Furthermore, it is easy to see that $\nu(Z) = 0$
implies $\mu(Z) = 0$.

Next, note that
$$\cbrac{\frac{1}{f}}^{-1}(\alpha, \infty] 
	= \set{x \in \Spa: \frac{1}{f(x)} > \alpha}
	= \set{x \in \Spa: f(x) < \alpha} 
	= f^{-1}[-\infty,\alpha).$$
So $1/f$ is $\nu$-measurable which, by what
we have just established, $\mu$-measurable.
Also since $0 < f < \infty$ $\nu$-a.e.,
which implies $\mu$-a.e., we can write
$d\nu(x) = 1/f(x)\ d\mu(x)$
for $x$ $\mu$-a.e. Thus, by repeating the 
previous argument with $\nu$ and $\mu$ 
interchanged, we obtain that a $\mu$-measurable
set $M$ is $\nu$-measurable and
that whenever $\mu(Z) = 0$ implies
$\nu(Z) = 0$.
\end{proof}

This lemma then allows us to prove the following
geometric rephrasing of our measure notions.
\begin{proposition}
Let $\cM$ be a smooth manifold, $\mg$ a continuous metric, 
and $\mu_\mg$ the induced volume measure.
Then, 
\begin{enumerate}[(i)]
\item a set $M \subset \cM$ is measurable if and only 
if $M$ is $\mu_\mg$-measurable,
\item a function $f: \cM \to \C$ is measurable if and only if
	 it is $\mu_\mg$-measurable,
\item a set $Z \subset \cM$ is a null set if and only if
$\mu_\mg(Z) = 0$, 
\item a property $P$ holds a.e. in $\cM$
if and only if it holds $\mu_\mg$-a.e.
\end{enumerate} 
\end{proposition}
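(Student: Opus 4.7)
The plan is to work chart-by-chart via Lemma \ref{Lem:WeightedMeas} and then globalise through the second countability of $\cM$.

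First I would fix a chart $(U,\psi)$ and note that, by the very construction of the Riemannian volume measure, the push-forward of $\mu_\mg\rest{U}$ to $\psi(U) \subset \R^n$ is given by
\[
d(\pushf{\psi}\mu_\mg)(x) = \sqrt{\det \mg_{ij}(\psi^{-1}(x))}\, d\Leb(x),
\]
where $\mg_{ij}$ are the components of $\mg$ in this chart. Because $\mg$ is a continuous Riemannian metric, the weight $f := \sqrt{\det \mg_{ij}}\comp \psi^{-1}$ is continuous and satisfies $0 < f(x) < \infty$ for every $x \in \psi(U)$. Lemma \ref{Lem:WeightedMeas}, applied with $\Spa = \psi(U)$, $\nu = \Leb$, and $\mu = \pushf{\psi}\mu_\mg$, then yields that a set $A \subset \psi(U)$ is $\Leb$-measurable if and only if it is $\pushf{\psi}\mu_\mg$-measurable, and that the two measures share the same null sets.

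Next, I would use second countability of $\cM$ to select a countable atlas $\set{(U_i,\psi_i)}_{i \in \Na}$ with $\cM = \Union_i U_i$. For (i), a set $M \subset \cM$ is measurable in the chart-wise sense precisely when $\psi_i(U_i \intersect M)$ is $\Leb$-measurable for every $i$; by the local step this is equivalent to $U_i \intersect M$ being $\mu_\mg$-measurable for every $i$. Since $M = \Union_i (U_i \intersect M)$ is a countable union and the $\mu_\mg$-measurable sets form a $\sigma$-algebra, the two notions coincide globally. Assertion (ii) follows the same pattern applied to preimages of Borel subsets of $\C$ under $f \comp \psi_i^{-1}$. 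Assertion (iii) is immediate from the coincidence of null sets in each chart and countable subadditivity, and (iv) is a direct consequence of (iii) applied to the exceptional set on which $P$ fails.

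There is no serious obstacle here; the argument amounts to bookkeeping atop Lemma \ref{Lem:WeightedMeas}. The only mild point to be careful about is that the local equivalences assemble into the global ones, but this is guaranteed by the countability of the atlas and the standard stability of the measurable $\sigma$-algebra under countable operations.
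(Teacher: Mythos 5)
Your proof is correct and takes essentially the same route as the paper: apply Lemma \ref{Lem:WeightedMeas} in each chart with $f = \sqrt{\det\mg}$, then globalise via a countable atlas. The paper's version additionally restricts to charts with compact closure, but as you observe this is unnecessary since continuity and positive-definiteness of $\mg$ already give $0 < f < \infty$ pointwise on any chart, which is all the lemma requires.
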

\begin{proof}
Take a covering of $\cM$ by coordinate charts, each of
which have compact closure. 
 Inside each chart, we can apply Lemma \ref{Lem:WeightedMeas}
with $f = \sqrt{\det \mg}$. The conclusion then follows
immediately.
\end{proof} 

Let us now fix a continuous metric $\mg$ on our
smooth manifold $\cM$.
The $\Lp{p}(\Tensors[r,s]\cM,\mg)$ for $1 \leq p < \infty$ is defined
as the space of sections $\xi \in \Sect(\Tensors[r,s]\cM)$ such that
$$ \int_{\cM} \modulus{\xi(x)}_{\mg(x)}^p\ d\mu(x) < \infty.$$
The $\Lp{p}$ norm is then simply the quantity
$$\norm{\xi}_p = \cbrac{\int_{\cM} \modulus{\xi(x)}_{\mg(x)}^p\ d\mu(x)}^{\frac{1}{p}}.$$
Similarly, $\Lp{\infty}(\Tensors[r,s]\cM,\mg)$ consist of 
sections $\xi \in \Sect(\Tensors[r,s]\cM)$ such that there
exists $C > 0$ satisfying $\modulus{\xi(x)}_\mg(x) \leq C$ for
$x$-a.e. Then,
$$\norm{\xi}_\infty = \inf\set{C: \modulus{\xi(x)}_{\mg(x)} \leq C\ x\text{-a.e.}}.$$

By $\Sob{1,p}(\cM,\mg)$, we denote the $\Lp{p}$ Sobolev space over
functions. That is, we let $S_p = \set{u \in \Ck{\infty}\intersect \Lp{p}: \conn u \in \Lp{p}}$
and define $\Sob{1,p}(\cM,\mg)$ as the closure of $S_p$ with respect to the
Sobolev norm 
$$\norm{u}_{\Sob{1,p}} = \norm{u}_p + \norm{\conn u}_p.$$
The space $\Sob[0]{1,p}(\cM, \mg)$ is defined by
closing $\Ck[c]{\infty}(\cM)$ under the same norm. Note that
in general, $\Sob[0]{1,p}(\cM,\mg) \neq \Sob{1,p}(\cM,\mg)$.
However, the following is true. 

\begin{proposition}
\label{Prop:OpSob}
The operator $\conn[p]:\Ck{\infty} \intersect \Lp{p} \to \Ck{\infty} \intersect \Lp{p}$
is closable in $\Lp{p}$, as is the operator $\conn[c]: \Ck[c]{\infty}(\cM) \to \Ck[c]{\infty}(\cM)$.
Moreover, $\Sob{1,p}(\cM,\mg) = \dom(\close{\conn[p]})$ and $\Sob[0]{1,p}(\cM,\mg)=\dom(\close{\conn[c]})$.
\end{proposition}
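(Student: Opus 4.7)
My plan is to establish closability by reducing to the classical Euclidean statement via coordinate charts, and then read off the identification with the Sobolev spaces from the definition.

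First I would address closability of $\conn[p]$. Suppose $(u_n) \subset \Ck{\infty} \intersect \Lp{p}$ with $u_n \to 0$ in $\Lp{p}(\cM,\mg)$ and $\conn u_n \to v$ in $\Lp{p}(\Cotan\cM,\mg)$; the goal is to show $v = 0$ almost everywhere. Fix a coordinate chart $(U,\psi)$ with $\close{U}$ compact. Since $\mg$ is continuous on $\close{U}$, there exist constants $0 < c \leq C < \infty$ with $c\,\delta \leq \psi_\ast\mg \leq C\,\delta$ as quadratic forms and $c' \leq \sqrt{\det(\psi_\ast\mg)} \leq C'$ on $\psi(U)$. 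For a smooth function $u$, we have $\modulus{\conn u}_\mg^2 = \mg^{ij} \partt[i] u\, \partt[j] u$ in coordinates, which requires no differentiability of $\mg$. Consequently, the hypothesis forces $u_n \circ \psi^{-1} \to 0$ and each coordinate partial $\partt[i](u_n\circ\psi^{-1}) \to w_i$ in $\Lp{p}(\psi(U),\Leb)$.

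Now I would invoke the classical closability of the weak gradient on $\R^n$: if $u_n \to 0$ in $\Lp{p}_{\mathrm{loc}}$ and $\partt[i] u_n \to w_i$ in $\Lp{p}_{\mathrm{loc}}$, then $u_n \to 0$ distributionally, hence $\partt[i] u_n \to 0$ distributionally, and therefore $w_i = 0$. This forces $v \equiv 0$ on $U$. Covering $\cM$ by countably many such charts shows $v = 0$ globally, proving closability of $\conn[p]$. The argument for $\conn[c]$ acting on $\Ck[c]{\infty}(\cM)$ is identical since the approximating sequences still lie in $\Ck{\infty} \intersect \Lp{p}$ inside each chart.

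Finally, the identifications $\Sob{1,p}(\cM,\mg) = \dom(\close{\conn[p]})$ and $\Sob[0]{1,p}(\cM,\mg) = \dom(\close{\conn[c]})$ are now essentially a tautology. By definition, $\Sob{1,p}(\cM,\mg)$ is the completion of $S_p$ in the graph norm $\norm{u}_p + \norm{\conn u}_p$, which is precisely the graph norm of $\conn[p]$. A Cauchy sequence $(u_n) \subset S_p$ produces a limit $u \in \Lp{p}$ together with a limit $v \in \Lp{p}(\Cotan\cM,\mg)$ of $\conn u_n$; closability guarantees that $v$ depends only on $u$, and by definition of the graph closure one has $v = \close{\conn[p]} u$. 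This yields an isometric isomorphism between $\Sob{1,p}(\cM,\mg)$ and $\dom(\close{\conn[p]})$ equipped with its graph norm, and the same reasoning applied to $\Ck[c]{\infty}(\cM)$ gives the statement for $\Sob[0]{1,p}(\cM,\mg)$.

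The main obstacle is the low regularity of $\mg$, which obstructs a direct integration-by-parts argument on $\cM$ because such an argument would require differentiating $\sqrt{\det\mg}$. The localisation step above circumvents this entirely: continuity of $\mg$ is enough to make the chart-level Sobolev norm equivalent to the flat Euclidean one on relatively compact charts, and all the analytic work is then off-loaded to the well-known $\R^n$ result.
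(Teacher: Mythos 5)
Your argument is correct and follows essentially the same route as the paper: localise to relatively compact coordinate charts, use continuity and compactness to pass between $\mg$-weighted and flat Euclidean $\Lp{p}$ norms, and reduce closability to the classical Euclidean weak-gradient statement, with the Sobolev-space identifications then falling out of the definitions. The only cosmetic differences are that the paper reduces $\conn[c]$ to $\conn[p]$ via $\conn[c] \subset \conn[p]$ rather than repeating the argument, and phrases the Euclidean step as ``$\extd$ commutes with pullbacks and is closable on $\R^n$'' rather than via the coordinate expression of the gradient and distributional convergence; you are slightly more explicit about needing a two-sided comparison between $\modulus{\cdot}_\mg$ and $\modulus{\cdot}_\delta$ (not just a lower bound on $\sqrt{\det\mg}$), which the paper elides.
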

\begin{proof}
Since $\conn[c] \subset \conn[p]$, it is enough to 
show that $\conn[p]$ is closable. 
For this, all we need to show is that whenever
$u_n \in S_p$ with $u_n \to 0$ and $\conn[p] u_n \to v$, 
then $v = 0$.

Fix a chart $(U, \psi)$ such that $\close{U}$ is compact 
and note that
$\conn[p] u_n \to v$ implies that
$\conn[p] u_n \to v$ in $\Lp{p}(U,\mg)$.
Then,
\begin{multline*}
\int_{U} \modulus{\conn u_n - v}^p_\mg\ d\mu_\mg 
	= \int_{\psi(U)} \modulus{\pullb{\psi^{-1}}\conn u_n - \pullb{\psi^{-1}} v}\ \sqrt{\det \mg}\ d\Leb \\
	\geq \cbrac{\essinf_{x \in U} \sqrt{\det \mg(x)}} \int_{\psi(U)} \modulus{\conn \pullb{\psi^{-1}}u_n - \pullb{\psi^{-1}}v}\ d\Leb.
\end{multline*}
The $\essinf_{x \in U} \sqrt{\det \mg(x)} > 0$ since $\close{\psi(U)}$ is compact
and therefore, $\conn[p] u_n \to v$ in $\Lp{p}(U,\mg)$ implies
that $\conn[p] \pullb{\psi^{-1}}u_n \to \pullb{\psi^{-1}}u$ in $\Lp{p}(\psi(U), \Leb)$. 
But $\conn$ here is the 
exterior derivative $\extd$ which commutes with pullbacks, and hence,
since $\extd$ is closable in $\R^n$,
we have that $\pullb{\psi^{-1}} v = 0$ in $\phi(U)$.
Then, $v = 0$  in $U$ and since we can cover the manifold $\cM$ by countably 
many such charts $(U,\psi)$, we obtain that $v = 0$.
That the Sobolev spaces can be equated with the domains
of the closure of the relevant operator is then immediate.
\end{proof}

In the more specific case of a complete metric,
we obtain that the two Sobolev spaces are
equal.
\begin{proposition}
Suppose that $\mg$ is a continuous metric
that is complete. Then, $\Sob[0]{1,p}(\cM,\mg) = \Sob{1,p}(\cM,\mg)$.
\end{proposition}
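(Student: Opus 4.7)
The plan is to apply Proposition \ref{Prop:OpSob} and show that every $u \in S_p = \Ck{\infty}(\cM) \intersect \Lp{p}$ with $\conn u \in \Lp{p}$ lies in $\dom(\close{\conn[c]})$, via a cutoff-and-mollify scheme adapted to the continuous-metric setting. Fix a basepoint $x_0 \in \cM$ (and work on one connected component at a time). Since $\mg$ is continuous, $(\cM, \met_\mg)$ is a locally compact length space, and the assumed completeness promotes this, by the length-space Hopf--Rinow theorem, to the statement that every closed $\met_\mg$-ball is compact. For each $k \in \Na$, pick a smooth $\phi_k : \R \to [0,1]$ with $\phi_k \equiv 1$ on $[0,k]$, $\phi_k \equiv 0$ on $[2k, \infty)$ and $\modulus{\phi_k'} \leq 2/k$, and set $\eta_k(x) := \phi_k(\met_\mg(x,x_0))$. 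Then $\eta_k$ is Lipschitz on $(\cM,\met_\mg)$ with compact support contained in $\close{B_{2k}(x_0)}$.

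Next, I would show $\eta_k u \to u$ in the Sobolev norm. The convergence $\eta_k u \to u$ in $\Lp{p}$ follows from dominated convergence using $\modulus{\eta_k u} \leq \modulus{u}$. For the derivative, the Leibniz rule (valid a.e.) gives $\conn(\eta_k u) = \eta_k \conn u + u\, \conn \eta_k$; the term $\eta_k \conn u \to \conn u$ in $\Lp{p}$ again by dominated convergence, while the remainder is supported in the annulus $\close{B_{2k}(x_0)} \setminus B_k(x_0)$ and satisfies $\modulus{u\, \conn \eta_k}_\mg \leq (2/k) \modulus{u}$ almost everywhere, and so vanishes in $\Lp{p}$ by the integrability of $u$. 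Each $\eta_k u$ is therefore a compactly supported Lipschitz function; covering $\spt{\eta_k u}$ by finitely many coordinate charts with compact closure, pushing forward, mollifying in $\R^n$, and reassembling with a smooth partition of unity produces a sequence $u_{k,\varepsilon} \in \Ck[c]{\infty}(\cM)$ with $u_{k,\varepsilon} \to \eta_k u$ in $\Sob{1,p}(\cM,\mg)$ as $\varepsilon \downto 0$. A diagonal extraction then yields $u_k \in \Ck[c]{\infty}(\cM)$ with $u_k \to u$ in $\Sob{1,p}(\cM,\mg)$, which together with Proposition \ref{Prop:OpSob} gives the inclusion $\Sob{1,p}(\cM,\mg) \subset \Sob[0]{1,p}(\cM,\mg)$; the reverse inclusion is automatic.

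The principal obstacle is justifying the almost-everywhere bound $\modulus{\conn \eta_k}_\mg \leq 2/k$ and the pointwise Leibniz rule under only continuous regularity of $\mg$. Because $\mg$ is continuous, on any chart with compact closure it is uniformly comparable as a quadratic form to the Euclidean $\delta$, so $\met_\mg$ is locally bi-Lipschitz equivalent to the Euclidean distance inside such a chart. Consequently $\met_\mg(\cdot, x_0)$ is locally Lipschitz with respect to Euclidean distance and, by Rademacher's theorem, differentiable almost everywhere, while the fact that it is $1$-Lipschitz with respect to $\met_\mg$ itself yields $\modulus{\conn \met_\mg(\cdot,x_0)}_\mg \leq 1$ a.e. and hence $\modulus{\conn \eta_k}_\mg \leq 2/k$ a.e. The Leibniz identity then transports from the chart-wise Euclidean product rule for Sobolev functions, closing the argument.
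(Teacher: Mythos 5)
Your proof is correct and takes essentially the same approach as the paper: cut off by a compactly supported Lipschitz function built from the distance function, then mollify chart-wise and patch with a partition of unity. Your version is somewhat more careful than the paper's in spelling out why the cutoff exists and has the claimed properties — invoking the length-space Hopf--Rinow theorem to get properness, Rademacher's theorem plus local bi-Lipschitz comparability to justify a.e.\ differentiability, and the resulting a.e.\ bound $\modulus{\conn\met_\mg(\cdot,x_0)}_\mg \le 1$ — whereas the paper simply asserts the existence of a suitable $\met_\mg$-Lipschitz cutoff $\phi_r$ with compact support and small gradient. These are precisely the points a skeptical reader would want filled in, so your added detail is a genuine improvement rather than a detour.
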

\begin{proof}
Fix $\epsilon > 0$ and note that since $\mg$ is a continuous
and complete metric, there exists $R_\epsilon > 0$ such that
whenever $r \geq R_\epsilon$,
there exists a $\met_\mg$-Lipschitz
cutoff function $\phi_r:\cM \to [0,1]$ such that $\phi_r = 1$ on $B_r$, 
$\spt \phi_r$ is compact 
and $\modulus{\conn \phi_r}^p = (\Lip \psi_r)^p < \epsilon$
almost-everywhere. 

Now, let $u \in \Sob{1,p}(\cM,\mg)$
and note that 
$$\int_{\cM} (\modulus{u}^p + \modulus{\conn[p] u}^p)\ d\mu_\mg 
	= \lim_{r \to \infty} \int_{B_r} (\modulus{u}^p + \modulus{\conn u}^p)\ d\mu_\mg.$$
As a consequence, note that we can find another $R'_\epsilon > 0$ large
enough so that whenever $r \geq R'_\epsilon$, 
$$ \int_{\cM\setminus B_r} \modulus{u}^p\ d\mu_\mg < \epsilon
\ \text{and}\ \int_{\cM\setminus B_r} \modulus{\conn[p] u}^p\ d\mu_\mg < \epsilon.$$

Also note that $\Sob{1,p}(\cM,\mg)$ remains invariant
under multiplication by bounded Lipschitz functions. Therefore, upon 
taking $R$ to be the larger of $R_\epsilon$ and $R'_\epsilon$,
we note that for almost-every $x \in \cM$
$$
\modulus{\conn[p] u - \conn[p](\phi_r u)}^p
	\lesssim \modulus{\conn[p] u - \phi_r \conn[p] u}^p + \modulus{u\conn \phi_r}^p,$$
and that
$$ \int_{B_r} \modulus{\conn[p] u - \phi_r \conn[p] u}^p\ d\mu_\mg = 0.$$
Furthermore,
$$
\int_{\cM \setminus B_r} \modulus{(1 - \phi_r) \conn[p]u}^p\ d\mu_\mg
	\leq \int_{\cM\setminus B_r} \modulus{\conn[p] u}^p\ d\mu_\mg
	< \epsilon,$$
and
$$\int_\cM \modulus{u \conn[p]\phi_r}^p\ d\mu_\mg 
	= \int_{\cM\setminus B_r} \modulus{u \conn[p]\phi_r}^p\ d\mu_\mg
	<  \epsilon \int_{\cM\setminus B_r} \modulus{u}^p\ d\mu_\mg
	< \epsilon^2.$$
Therefore, 
$$\int_{\cM} \modulus{\conn[p] u - \conn[p](\phi_r u)}^p\ d\mu_\mg < \epsilon +\epsilon^2.$$
Thus, we can approximate $u \in \Sob{1,p}(\cM,\mg)$ by a sequence
$u_n \in \Sob{1,p}(\cM,\mg)$ such that $\spt u_n$ is compact. 

By what we have just proved, to establish our claim, 
it suffices to restrict our attention to $u \in \Sob{1,2}(\cM,\mg)$
with $\spt u$ compact.
Let us take a partition of unity $\set{\xi_i}$ with respect to the compact
charts $(U_i,\psi_i)$ covering $\spt u$. Then,
$u = \sum_{i=1}^N \xi_i u$ and on denoting the
standard symmetric mollifier in $\R^n$ by $\eta^\epsilon$, define 
$$u_\epsilon = \sum_{i=1}^N \pullb{\psi_i}(\eta^\epsilon \convolve \pullb{\psi^{-1}_i} \xi_i u),$$
for $\epsilon > 0$ small enough so that 
$\spt (\eta^\epsilon \convolve \pullb{\psi^{-1}_i} \xi_i u) \subset \xi_i(U_i)$.
Now, $u_\epsilon \in \Ck[c]{\infty}(\cM)$ and
\begin{align*}
\int_{\cM} \modulus{u - u_\epsilon}^p\ d\mu_\mg 
	&\leq \sum_{i=1}^N \int_{U_i} \modulus{\pullb{\psi_i}(\eta^\epsilon \convolve \pullb{\psi^{-1}_i} \xi_i u) 
		- \xi_i u}^p\ d\mu_\mg \\
	&= \sum_{i=1}^N \int_{\psi_i(U_i)} \modulus{\eta^\epsilon \convolve \pullb{\psi^{-1}_i} \xi_i u 
		- \pullb{\psi^{-1}_i}\xi_i u}^p\ \sqrt{\det\mg}\ d\Leb. 
\end{align*}
But $\mg$ is continuous and $\close{\psi_i(U_i)}$ is compact, and therefore, 
there exist $C_1, C_2 > 0$ such that
$C_1 \leq \sqrt{\det \mg} \leq C_2$. 
Therefore, $\eta^\epsilon \convolve \pullb{\psi^{-1}_i} \xi_i u \to \pullb{\psi^{-1}_i} \xi_i u$
in $\Lp{p}(\R^n)$ and hence, $u_\epsilon \to u$
in $\Lp{p}(\cM)$. Since $\conn[p] = \conn = \extd$ on functions,
and since the exterior derivative commutes with pullbacks,
we obtain by a similar argument that $\conn[p]u_\epsilon \to \conn[p]u$.
\end{proof}

Let $\Forms[k](\cM)$ denote the bundle of $k$-forms, 
and $\Forms(\cM)$  the bundle of forms.
Recall that the \emph{exterior derivative} is then
the differential operator $\extd:\Ck{\infty}(\Forms[k](\cM)) \to 
\Ck{\infty}(\Forms[k+1](\cM))$. 
For $k=1$, this is just $\conn$.
Let 
$$S^\extd_p = \set{u \in \Ck{\infty} \intersect 
\Lp{p}(\Forms(\cM)): \extd u \in \Lp{p}(\Forms(\cM))}$$
and define the norm 
$\norm{u}_{\extd,p} = \norm{u}_p + \norm{\extd u}_p.$
We define the \emph{Sobolev spaces of \extd} as follows.
Let the space $\Sob{\extd,p}(\cM,\mg)$ 
denote the closure of $S^\extd_p$
under $\norm{\mdot}_{\extd,p}$, and $\Sob[0]{\extd,p}(\cM,\mg)$
as the closure of $\Ck[c]{\infty}(\Forms(\cM))$ under
the same norm. We refrain from proving
a result similar to Proposition \ref{Prop:OpSob}
here since we will prove it later
with the aid of better tools.
 
While we have defined these function spaces for general 
$p$, we shall only concentrate here on the case of $p = 2$. 
Note that $\Lp{2}(\Tensors[r,s]\cM,\mg)$ is a Hilbert space, 
with the induced inner product given by
$$ \inprod{u,v} = \int_{\cM} \inprod{u(x),v(x)}_{\mg(x)}\ d\mu_\mg(x).$$ 
It is a standard fact that $\Ck[c]{\infty}(\Tensors[r,s]\cM)$
is a dense subset of $\Lp{p}(\Tensors[r,s]\cM,\mg)$ for $1 \leq p < \infty$, so 
this is in particular true for $\Lp{2}(\cM,\mg)$. Therefore, operator theory 
guarantees the existence of closed, densely-defined
adjoints to $\close{\conn[2]}$ and $\close{\conn[c]}$
which we identify as the \emph{divergence} operators
$\divv_{\mg} = -\adj{\close{\conn[2]}}$ and
$\divv_{0,\mg} = -\adj{\close{\conn[c]}}$.

\subsection{Axelsson-Keith-McIntosh framework in Hilbert spaces}
\label{Sect:AKM}

At the core of our achievements in this paper
lies the \emph{Axelsson-Keith-McIntosh} framework.
First formulated in 2005 in their paper \cite{AKMc}, it provides
a \emph{first-order} framework unifying the resolution of
many problems in modern harmonic analysis, including the 
boundedness of the Cauchy integral operator on Lipschitz curves,
the Kato square root problem for systems on $\R^n$, 
and the Kato square root problem on compact manifolds.

As aforementioned in \S\ref{Sect:Intro}, 
this framework has been used and adapted 
many times to solve a range of different problems.
The framework at the level of Hilbert spaces as
presented in \cite{AKMc} is a Dirac-type setup
 consisting of three hypotheses
(H1)-(H3) as follows. We emphasise here
that $\Hil$ is a Hilbert space and $\Gamma$ an unbounded operator
on $\Hil$. 
 
\begin{enumerate}[(H1)]
\item The operator $\Gamma: \dom(\Gamma) \subset \Hil \to \Hil$ 
	is a closed, densely-defined and nilpotent operator, 
	by which we mean $\ran(\Gamma) \subset \nul(\Gamma)$,

\item $B_1, B_2 \in \bddlf(\Hil)$ 
	and there exist $\kappa_1, \kappa_2 > 0$ 
	satisfying the accretivity conditions 
	$$\re\inprod{B_1 u, u} \geq \kappa_1 \norm{u}^2
	\ \text{and}\ 
	\re\inprod{B_2 v, v} \geq \kappa_2 \norm{v}^2,$$
	for $u \in \ran(\adj{\Gamma})$ and $v \in \ran(\Gamma)$, and

\item $B_1 B_2 \ran(\Gamma) \subset \nul(\Gamma)$ and 
	$B_2 B_1 \ran(\adj{\Gamma}) \subset \nul(\adj{\Gamma})$. 
\end{enumerate}

Let us now define $\Pi_B = \Gamma + B_1 \adj{\Gamma}B_2$
with domain $\dom(\Pi_B) = \dom(\Gamma) \intersect \dom(B_1\adj{\Gamma}B_2)$.
The conditions (H1)-(H3) guarantee
that $\Pi_B$ is a \emph{bi-sectorial} operator. In particular, this means
that $Q_t^B = t\Pi_B(\iden + t^2 \Pi_B^2)^{-1}$ is a
bounded operator. For a more complete treatment 
of sectorial and bi-sectorial
operators and their functional calculi, 
we recommend \cite{ADMc} by Albrecht, Duong, and McIntosh, 
\cite{Morris2} by Morris or the book \cite{Haase}
by Haase. 

To say that $\Pi_B$ satisfies \emph{quadratic estimates}
means that
\begin{equation*}
\label{Qest}
\tag{Q}
\int_{0}^\infty \norm{Q_t^B u}^2\ \frac{dt}{t} \simeq \norm{u}^2, 
\end{equation*}
for all $u \in \close{\ran(\Pi_B)}$.
Such estimates are the fundamental objects
of study, for they provide access to the tools
of harmonic analysis in proving Kato square root
theorems. The following is the main consequence.
Note that here, the function $\chi_{+}$
takes value $1$ on the right half of the complex
plane and $\chi_{-}$ takes $1$ on the left half.

\begin{theorem}[Kato square root type theorem]
\label{Thm:KatoType}
Suppose that $\Pi_B$ satisfies the
quadratic estimates \eqref{Qest}. Then,
\begin{enumerate}[(i)]
\item there is a spectral decomposition: 
	$$\Hil = \nul(\Pi_B) \oplus E_+ \oplus E_-$$
	where $E_{\pm} = \chi_{\pm}(\Pi_B)$ (the sum is, in general, non-orthogonal), 
\item $\dom = \dom(\Pi_B) = \dom(\Gamma) \intersect \dom(B_1\adj{\Gamma} B_2) = \dom(\sqrt{\Pi_B^2})$
	with 
	$$\norm{\Pi_B u} \simeq \norm{\Gamma u} + \norm{\adj{\Gamma} B_2 u} \simeq \norm{\sqrt{\Pi_B^2 u}}$$
	for all $u \in \dom$.
\end{enumerate}
\end{theorem}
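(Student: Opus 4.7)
The plan is to deduce both statements from the fact that the hypotheses (H1)--(H3) make $\Pi_B$ a bisectorial operator, and that the quadratic estimate \eqref{Qest} is equivalent to a bounded $H^\infty$ functional calculus on any bisector strictly larger than the spectral bisector. First I would verify, purely from (H1)--(H3), that $\Pi_B$ is closed, densely-defined, and $\omega$-bisectorial for some $\omega \in [0,\pi/2)$. The key computation is that (H3), combined with nilpotency in (H1), forces $\Pi_B^2 = \Gamma B_1 \adj{\Gamma}B_2 + B_1 \adj{\Gamma}B_2\Gamma$ and produces the topological (non-orthogonal) Hodge-type splitting
\[
\Hil = \nul(\Pi_B) \oplus \close{\ran(\Gamma)} \oplus \close{\ran(B_1 \adj{\Gamma} B_2)},
\]
with $\nul(\Pi_B) = \nul(\Gamma) \cap \nul(B_1 \adj{\Gamma}B_2)$; the accretivity in (H2) on each piece then gives uniform resolvent bounds $\|(\iden + it\Pi_B)^{-1}\| \lesssim 1$ away from the bisector, so $Q_t^B$ is uniformly bounded.

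Second, I would invoke the McIntosh convergence lemma: on the invariant subspace $\close{\ran(\Pi_B)}$, the two-sided quadratic estimate \eqref{Qest} is equivalent to the existence of a bounded holomorphic functional calculus $f \mapsto f(\Pi_B)$ on any open bisector $S_\mu^o$ containing the spectrum, with $\|f(\Pi_B)\| \lesssim \|f\|_\infty$. Since \eqref{Qest} is stated as a one-sided bound in both directions, the argument is standard but must be carried out symmetrically using also the analogous estimate for $\adj{\Pi_B} = \adj{\Gamma} + B_2^\ast \Gamma B_1^\ast$ (which sits inside the same framework by interchanging the roles of $\Gamma,\adj{\Gamma}$ and $B_1,B_2$), so that one obtains both upper and lower bounds via duality.

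For part (i), the bounded holomorphic functional calculus extends by the standard convergence procedure to bounded functions that are holomorphic on $S_\mu^o \setminus \{0\}$. The functions $\chi_\pm$, being locally constant on the two components of the bisector, fall into this class; the identities $\chi_+^2 = \chi_+$, $\chi_-^2 = \chi_-$, $\chi_+ \chi_- = 0$ and $\chi_+ + \chi_- = 1$ on $\spec(\Pi_B) \setminus \{0\}$ transfer to operator identities, defining the bounded idempotents $E_\pm$ and delivering the required direct sum decomposition with $\nul(\Pi_B)$ arising from the functional calculus's convention at $0$. For part (ii), apply the calculus to $\sgn$ and to $z \mapsto \sqrt{z^2}$: the pointwise identity $z = \sgn(z)\sqrt{z^2}$ together with the boundedness and invertibility (on $\close{\ran(\Pi_B)}$) of $\sgn(\Pi_B)$ yields $\dom(\Pi_B) = \dom(\sqrt{\Pi_B^2})$ with $\|\Pi_B u\| \simeq \|\sqrt{\Pi_B^2}u\|$. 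The equivalence with $\|\Gamma u\| + \|\adj{\Gamma}B_2 u\|$ follows from the splitting above: the upper bound is immediate from the definition of $\Pi_B$ and boundedness of $B_1$, while the lower bound uses nilpotency to force $\Gamma u$ and $B_1 \adj{\Gamma} B_2 u$ to lie in complementary subspaces of the splitting, where the accretivity (H2) prevents cancellation.

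The main obstacle is the second step: extracting the full bounded $H^\infty$ calculus from the quadratic estimate \eqref{Qest}. The subtle point is handling $\nul(\Pi_B)$ correctly (where the calculus is defined by evaluation at $0$) and ensuring the convergence lemma applies despite the fact that $\chi_\pm$ and $\sgn$ are not holomorphic at the origin; this is precisely where one must use that \eqref{Qest} is assumed on $\close{\ran(\Pi_B)}$ only and combine it with the topological splitting to patch together a calculus on all of $\Hil$.
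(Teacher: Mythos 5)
The paper does not prove this theorem: it is stated as a black-box consequence of the AKM hypotheses (H1)--(H3) and cited directly from \cite{AKMc}, with a broader exposition referenced in \S1.8 of \cite{BThesis}. Your reconstruction is the standard argument from that literature and is correct in outline: bisectoriality and the Hodge splitting $\Hil = \nul(\Pi_B) \oplus \close{\ran(\adj{\Gamma}_B)} \oplus \close{\ran(\Gamma)}$ from (H1)--(H3), McIntosh's equivalence of the two-sided quadratic estimate with a bounded $H^\infty$ functional calculus on $\close{\ran(\Pi_B)}$ (using duality to obtain the estimate for $\adj{\Pi_B}$ from the lower bound in \eqref{Qest}), the projections $E_\pm = \chi_\pm(\Pi_B)$ for (i), and the factorisation $\Pi_B = \sgn(\Pi_B)\sqrt{\Pi_B^2}$ with $\sgn(\Pi_B)$ bounded and invertible on $\close{\ran(\Pi_B)}$, together with the bounded Hodge projections and the accretivity of $B_1$ on $\ran(\adj{\Gamma})$, for (ii) — so this is essentially the same route the cited reference takes.
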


This theorem and  a description of
its consequences (in particular the stability of
the perturbation $\tilde{B} \mapsto \Pi_{B + \tilde{B}}$
as Lipschitz estimates) can be found in  \cite{AKMc}. 
A slightly more general exposition of similar results 
are contained in \S1.8 of \cite{BThesis}.

Let us conclude this section by demonstrating the connection
between this setup and the aforementioned theorem 
to the Kato square root problem (for functions). Let $(\cM, \mg)$ be a manifold and 
$\Hil = \Lp{2}(\cM) \oplus \Lp{2}(\cM) \oplus \Lp{2}(\cotanb\cM)$.
Let $S = (\iden, \conn): \Lp{2}(\cM) \to \Lp{2}(\cM) \oplus \Lp{2}(\cM)$,
and suppose 
$A \in \Lp{\infty}(\bddlf(\Lp{2}(\cM) \oplus \Lp{2}(\cotanb\cM)))$ 
and $a \in \Lp{\infty}(\bddlf(\Lp{2}(\cM)))$ satisfying
$\re \inprod{au,u} \geq \kappa_1 \norm{u}^2$
and $\re \inprod{ASv, v} \geq \kappa_2 \norm{v}_{\SobH{1}}^2$ 
for all $u \in \Lp{2}(\cM)$ and $v \in \SobH{1}(\cM)$.
Then, set 
$$
\Gamma =\begin{pmatrix} 0 & 0 \\ S & 0 \end{pmatrix},\ 
B_1 = \begin{pmatrix} a & 0 \\ 0 & 0 \end{pmatrix},\ 
B_2 = \begin{pmatrix} 0 & 0 \\ 0 & A \end{pmatrix}.$$

If the quadratic estimates \eqref{Qest} hold, 
then the conclusions of Theorem \ref{Thm:KatoType} 
are true, and in particular an easy calculation 
will reveal that the following Kato square root estimate holds:
\begin{align*}
\label{Kato}
\tag{K}
\dom(\sqrt{a\adj{S}AS}) = \SobH{1}(\cM)\ \text{and}\ 
\norm{\sqrt{a\adj{S}AS}u} \simeq \norm{u}_{\Sob{1}}
\end{align*}
for all $u \in \Sob{1}(\cM)$.
\section{Rough metrics and their properties}
\label{Sect:Rough}

In this section, we formulate a
notion of a \emph{rough metric}
as a Riemann-like metric which we
permit to be of low regularity and degenerate.
We will see in subsequent parts of this paper
that such metrics provide a sufficiently 
large class of geometries on which the 
quadratic estimates we consider
are stable. We establish some of their key properties, 
their associated Lebesgue spaces, 
and consider implications for rough
geometries when there are regular geometries
nearby.
  
\subsection{Rough metrics}
\label{Sect:RoughM}

We begin our discussion with the following definition,
noting that $\delta$ denotes
the usual Euclidean inner product.

\begin{definition}[Rough metric]
\label{Def:RoughMet} 
Suppose that $\mg \in \Sect(\Tensors[2,0]\cM)$
is symmetric and satisfies the
following \emph{local comparability condition}: for every $x \in \cM$, 
there exists
a chart $(U,\psi)$ near $x$ and constant $C \geq 1$ 
such that 
$$C^{-1} \modulus{u}_{\pullb{\psi}\delta (y)} 
	\leq \modulus{u}_{\mg(y)} \leq C \modulus{u}_{\pullb{\psi}\delta(y)}$$
for $u \in \tanb_y U$ and for almost-every $y \in U$.
Then we say that $\mg$ is a \emph{rough metric}. 
\end{definition}

By covering $\cM$ by a countable collection of
charts satisfying the local comparability 
condition, it is easy to see
that 
$0 < \modulus{u}_{\mg(x)} < \infty$ 
for $0 \neq u \in \tanb_x\cM$ for almost-every
$x \in \cM$.
As a consequence, we define the \emph{singular set} of $\mg$ as
$$\Sing(\mg) = \set{x \in \cM: \modulus{u}_{g(x)} = \infty\ 
	\text{or}\ \modulus{v}_{g(x)} = 0,\ v \neq 0}.$$
It is easy to see that
this set is of null measure.
We define the \emph{regular set} of $\mg$ by
$\Reg(\mg) = \cM \setminus \Sing(\mg)$.

The measurability of $\mg$, which if
we recall is exactly the measurability of its
coefficients inside coordinate charts
against the $\Leb$-measure, provides us
with a means to define a volume measure $\mu_\mg$.
First, we note the following.

\begin{proposition}
Let $(U,\psi)$ and $(V,\phi)$ be two coordinate
charts satisfying the local comparability condition 
such that $W = U \intersect V \neq \emptyset$.
Then, for any measurable set $A \subset W$,
$$
\int_{\psi(A)} \sqrt{\det \mg(x)}\ d\Leb(x) = 
	\int_{\phi(A)} \sqrt{\det \mg(y)}\ d\Leb(y).$$
\end{proposition}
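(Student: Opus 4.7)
The plan is to use the standard tensorial change-of-variables formula on the transition map between the two charts. Since we have reduced to the case where the differentiable structure on $\cM$ is smooth, the transition map $\tau = \phi \comp \psi^{-1}: \psi(W) \to \phi(W)$ is a smooth diffeomorphism, and this regularity will carry the argument through despite the fact that $\mg$ is only measurable.

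First, I would invoke the tensorial transformation law for $\mg$. Writing $\mg^\psi_{ij}$ for the coefficients of $\mg$ in the chart $(U,\psi)$ and $\mg^\phi_{kl}$ for the coefficients in $(V,\phi)$, the identity
$$\mg^\psi_{ij}(x) = \partd{\tau^k}{x^i}(x)\, \partd{\tau^l}{x^j}(x)\, \mg^\phi_{kl}(\tau(x))$$
is an algebraic consequence of $\mg$ being a $(2,0)$-tensor, and is valid at every point where both sides are defined. By the local comparability condition applied to $(U,\psi)$ and $(V,\phi)$, the singular set of $\mg$ is $\Leb$-null in each chart, so the relation above holds for $\Leb$-a.e.\ $x \in \psi(W)$.

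Taking determinants of both sides yields
$$\det \mg^\psi(x) = \cbrac{\det D\tau(x)}^2 \det \mg^\phi(\tau(x)),$$
so that $\sqrt{\det \mg^\psi(x)} = \modulus{\det D\tau(x)} \sqrt{\det \mg^\phi(\tau(x))}$ for $\Leb$-a.e.\ $x \in \psi(W)$. Note that the local comparability condition also guarantees that $\sqrt{\det \mg^\psi}$ and $\sqrt{\det \mg^\phi}$ are essentially bounded on compact subsets of $\psi(W)$ and $\phi(W)$ respectively, so integrability on $\psi(A)$ and $\phi(A)$ is not an issue for bounded $A$; for general measurable $A \subset W$ one reduces to bounded pieces by exhaustion.

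Applying the standard Euclidean change-of-variables formula to the smooth diffeomorphism $\tau$ then gives
$$\int_{\psi(A)} \sqrt{\det \mg^\psi(x)}\ d\Leb(x) = \int_{\psi(A)} \modulus{\det D\tau(x)} \sqrt{\det \mg^\phi(\tau(x))}\ d\Leb(x) = \int_{\phi(A)} \sqrt{\det \mg^\phi(y)}\ d\Leb(y),$$
which is exactly the claimed equality. The only subtle point, and what I would consider the main (minor) obstacle, is ensuring that the tensorial transformation law holds $\Leb$-a.e., which is handled by noting that it holds at every point of $\Reg(\mg)$ and that $\Sing(\mg)$ is a null set in each chart; once that is in hand, the rest is a direct application of the Euclidean change-of-variables theorem.
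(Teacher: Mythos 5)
Your proposal is correct and follows essentially the same route as the paper: invoke the tensorial transformation law for $\mg$ across the transition map (valid almost-everywhere), take determinants to obtain $\sqrt{\det\mg^\psi} = \modulus{\det D\tau}\sqrt{\det\mg^\phi\circ\tau}$, and conclude via the Euclidean change-of-variables formula. One small remark: your justification for the a.e.\ validity of the transformation law via $\Sing(\mg)$ being null is slightly misplaced, since the transformation law is a consequence of $\mg$ being a measurable section of a tensor bundle and holds a.e.\ irrespective of degeneracy; degeneracy only matters for the positivity of $\det\mg$, not for the law itself --- but this is cosmetic and does not affect the argument.
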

\begin{proof}
Let $\set{x^i}$ and $\set{y^j}$ 
be the the coordinates in $(U, \psi)$ and $(V,\phi)$
respectively. Then, let us denote 
$\mg$ in $U$ and $V$ respectively as
$$\mg (x) = \mg_{ij}(\psi(x))\ dx^i \tensor dx^j
\ \text{and}\ 
\mg(y) = \mgt_{kl}(\phi(y))\ dy^k \tensor dy^l,$$
for almost all $x \in U$ and almost all $y \in V$.
Then, on the intersection, the transformation formula
is similar to the case of a continuous metric but
holding for almost every $x\in\cM$,
$$
\mg_{ij}(\psi(x)) = \mgt_{kl} (\phi \comp \psi^{-1}(x)) \partt[x^i]{y^k} \partt[x^j]{y^l}, 
$$
where $(\partt[x^i]{y^j}) = \Dir(\phi \comp \psi^{-1})$.
Letting $G_x = (\mg_{ij})$ and $\tilde{G}_{y} = (\mgt_{ij})$, 
we note that
$G_x = \Dir(\phi \comp \psi^{-1}) \tilde{G}_y \tp{\Dir(\phi \comp \psi^{-1})},$
and hence
$\det G_x = (\det G_y) (\det\Dir(\phi \comp \psi^{-1}))^2.$
Recall that for an integrable function $\xi: \psi(W) \to \R$, 
the change of variable formula is given by 
$$ 
\int_{\psi(A)} \xi(x)\ d\Leb(x) 
	= \int_{\phi(A)} (\xi \comp \psi \comp \phi^{-1})(y)
		\modulus{\det\Dir(\psi \comp \phi^{-1}(y))}\ d\Leb(y).$$ 
On noting that 
$(\psi\comp \phi^{-1})^{-1} = \phi \comp \psi^{-1}$,
and combining this with  our previous calculation of $\det G_x$,
the conclusion follows.
\end{proof}

As for the case of a continuous metric, we define
the induced measure $\mu_\mg$ by 
pasting together the local expression 
$$d\mu_\mg(x) = \sqrt{\det \mg(x)}\ d\Leb(x)$$
 via a smooth partition
of unity subordinate to a countable covering
of $\cM$ by charts satisfying the local comparability 
condition. The previous proposition guarantees
that this object is well-defined under a change of 
coordinates.  We
observe the following properties of this measure.

\begin{proposition}[Compatibility of the measure]
\label{Prop:CompMeas}
Let $\mg$ be a rough metric. Then, 
\begin{enumerate}[(i)] 
\item a set $A \subset \cM$ is measurable
if and only if it is $\mu_\mg$-measurable, 
\item a function $f:\cM \to \C$ is 
measurable if and only if it is $\mu_\mg$-measurable,
\item $Z$ is a set of null measure if and only if $\mu_\mg(Z) = 0$,
\item a property $P$ holds a.e. in $\cM$ if and only
	if it holds $\mu_\mg$-a.e., 
\end{enumerate}
\end{proposition}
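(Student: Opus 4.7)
The plan is to mirror the proof of the analogous proposition for continuous metrics, replacing the global argument by a chart-wise application of Lemma \ref{Lem:WeightedMeas}. The only real work is to extract from the local comparability condition enough control on $\sqrt{\det \mg}$ in a chart to invoke the lemma there.

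First I would fix a countable atlas $\{(U_k,\psi_k)\}_{k\in\Na}$ with each $\close{U_k}$ compact and each $(U_k,\psi_k)$ satisfying the local comparability condition of Definition \ref{Def:RoughMet}; such an atlas exists since $\cM$ is second-countable and the comparability condition is required to hold at every point, so it is a local property of the atlas we may assume. Then, in each chart, I would unpack the comparability estimate at the level of matrices: writing $(\mg_{ij})$ for the coefficients of $\mg$ in $(U_k,\psi_k)$, the condition $C_k^{-1}|u|_{\pullb{\psi_k}\delta} \le |u|_{\mg} \le C_k |u|_{\pullb{\psi_k}\delta}$ a.e.\ translates to $C_k^{-2} I \le (\mg_{ij}) \le C_k^2 I$ a.e.\ on $\psi_k(U_k)$, and taking determinants yields
\[
C_k^{-2n} \le \det \mg \le C_k^{2n} \qquad \text{a.e.\ on } \psi_k(U_k).
\]
Since $\mg$ is a measurable section, its local coefficients are $\Leb$-measurable, hence so is $\sqrt{\det \mg}$, and the two-sided bound shows $0 < \sqrt{\det \mg} < \infty$ almost everywhere on $\psi_k(U_k)$.

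Next, in each chart I would apply Lemma \ref{Lem:WeightedMeas} with $\Spa = \psi_k(U_k)$, $\nu = \Leb$, $f = \sqrt{\det\mg}\comp\psi_k^{-1}$, and $\mu$ the pushforward of $\mu_\mg\rest{U_k}$ under $\psi_k$. The lemma immediately gives, within $U_k$, that $\Leb$-measurability and $\mu_\mg$-measurability coincide for sets and functions, and that the two measures share the same null sets. Claims (i)--(iv) are all phrased via the covering-independent definitions of \S\ref{Sect:Prelim:Mfld}, so to pass from charts to $\cM$ I would just note: a set $A\subset\cM$ is measurable iff $\psi_k(U_k\cap A)$ is $\Leb$-measurable for each $k$, which by the chart-wise equivalence holds iff each $U_k\cap A$ is $\mu_\mg$-measurable, which by countable additivity of the measurable $\sigma$-algebra holds iff $A$ is $\mu_\mg$-measurable; the same covering argument handles the null-set and function statements, and (iv) is just (iii) applied to the exceptional set of $P$.

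There is no real obstacle here beyond the matrix-determinant estimate above: the architecture of the proof is dictated entirely by Lemma \ref{Lem:WeightedMeas}, which was deliberately stated in sufficient generality to cover exactly this situation (a weight that is positive and finite only almost everywhere). The only subtlety worth flagging is that on the singular set $\Sing(\mg)$ the density $\sqrt{\det\mg}$ may vanish or blow up, but since $\Sing(\mg)$ is null in every chart this is harmless and is precisely what the ``$0 < f < \infty$ a.e.'' hypothesis of the lemma is designed to absorb.
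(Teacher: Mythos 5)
Your proposal is correct and follows essentially the same route as the paper: cover $\cM$ by a countable family of charts satisfying the local comparability condition, apply Lemma \ref{Lem:WeightedMeas} chart-by-chart with weight $\sqrt{\det\mg}$ against Lebesgue measure, and patch by countable additivity. The paper's own justification for invoking Lemma \ref{Lem:WeightedMeas} is slightly lighter --- it merely observes that $0 < \modulus{u}_{\mg(x)} < \infty$ a.e.\ for $u \neq 0$, which already forces $0 < \det\mg < \infty$ a.e.\ without the two-sided $C_k^{\pm 2n}$ determinant bounds you derive --- but your stronger estimate is of course also valid and reappears in the paper only later, as Lemma \ref{Lem:Det}.
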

\begin{proof}
Let $(U, \psi)$ be a chart
satisfying the local comparability condition. 
Then, inside each such chart, we can apply Lemma \ref{Lem:WeightedMeas}
since $0 < \modulus{u}_{\mg(x)} < \infty$ 
for almost-every $x \in \cM$ and $0 \neq u \in \tanb_x\cM$.
Thus, we are able to conclude the properties hold inside $U$
and  again by the observation that the manifold can be
covered by countably many such charts, the conclusion follows.
\end{proof}

We remark that if we had simply asked
for $\Sing(\mg)$ to be a null
measure set in place of the local comparability 
condition,
the conclusions of this proposition would still be valid.
In what is to follow, we will see that 
the local comparability condition becomes
important to establish regularity properties of the 
measure, as well as some desirable properties 
of Lebesgue and Sobolev spaces for such metrics.
First,  let us note the following lemma.

\begin{lemma}
\label{Lem:Det}
Let $B$ be a symmetric, positive matrix
on $\R^n$ such that  there exists $C \geq 1$
satisfying 
$C^{-1} \modulus{u} \leq \tp{u}Bu \leq C\modulus{u}.$
Then, $C^{-n} \leq \det B \leq C^n.$
\end{lemma}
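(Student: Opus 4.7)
The plan is to reduce to the eigenvalues of $B$ by invoking the spectral theorem. I am reading the hypothesis as $C^{-1}|u|^2 \leq \tp{u}Bu \leq C|u|^2$, since otherwise the two sides are not homogeneous of the same degree in $u$.

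First, since $B$ is real symmetric and positive, there is an orthonormal basis $\{e_1,\dots,e_n\}$ of $\R^n$ consisting of eigenvectors, with corresponding eigenvalues $\lambda_1,\dots,\lambda_n > 0$. Evaluating the hypothesis at $u = e_i$ gives
\[
C^{-1} \;=\; C^{-1}|e_i|^2 \;\leq\; \tp{e_i}Be_i \;=\; \lambda_i \;\leq\; C|e_i|^2 \;=\; C,
\]
so every eigenvalue lies in the interval $[C^{-1},C]$.

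Second, the determinant is the product of the eigenvalues, so
\[
\det B \;=\; \prod_{i=1}^{n} \lambda_i,
\]
and multiplying the bounds $C^{-1}\leq \lambda_i \leq C$ across $i=1,\dots,n$ yields $C^{-n}\leq \det B \leq C^n$.

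There is essentially no obstacle: the argument is a one-line consequence of the spectral theorem together with the fact that the extremes of the Rayleigh quotient of a symmetric matrix are its smallest and largest eigenvalues. The only subtlety to flag is the apparent typo in the statement, which must be interpreted with $|u|^2$ on both ends so that the inequality constrains the Rayleigh quotient $\tp{u}Bu/|u|^2$, i.e.\ the eigenvalues of $B$.
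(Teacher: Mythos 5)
Your proof is correct and takes essentially the same approach as the paper: both reduce the claim to the statement that every eigenvalue of $B$ lies in $[C^{-1},C]$ and then use $\det B = \prod_i \lambda_i$. The only cosmetic difference is that the paper reaches the eigenvalue bound via the numerical range (restricting to $|u|=1$ so that $\nr(B)\subset[C^{-1},C]$ and hence $\spec(B)\subset\close{\nr(B)}$), whereas you evaluate the quadratic form directly at an orthonormal eigenbasis; these are interchangeable. You are also right to flag that the hypothesis must be read with $|u|^2$ on both sides (equivalently, restricted to unit vectors), which is exactly the normalization the paper's proof uses implicitly.
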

\begin{proof}
Let $\inprod{\mdot, \mdot}$ be the standard
Euclidean inner product. Then, 
$\tp{u}Bu = \inprod{u, Bu} = \inprod{Bu, u}$
by the symmetry of $B$. Choosing $\modulus{u} = 1$,
we have that the numerical range
$\nr(B) \subset [C^{-1}, C]$. But 
$\spec(B) \subset \close{\nr(B)} \subset [C^{-1}, C]$.
Letting $\spec(B) = \set{b_1, \dots, b_n}$,
we have that 
$C^{-n} \leq \det B = \prod_{i=1}^n b_i \leq C^n$. 
\end{proof} 

Using this lemma, we are first able to prove the
following regularity result for the measure $\mu_\mg$.

\begin{proposition}
\label{Prop:BorelCpct}
The measure $\mu_\mg$ is Borel and finite on compact
sets.
\end{proposition}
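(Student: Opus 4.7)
The plan is to unwind the definition of $\mu_\mg$ and use the local comparability condition together with Lemma \ref{Lem:Det} to obtain pointwise bounds on $\sqrt{\det \mg}$ in each chart, then exploit finiteness of Lebesgue measure on bounded sets.

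First, I would fix a countable atlas $\set{(U_i, \psi_i)}$ of charts satisfying the local comparability condition with constants $C_i \geq 1$, and a smooth partition of unity $\set{\chi_i}$ subordinate to this cover, with respect to which $\mu_\mg$ is defined by pasting $d\mu_\mg = \sqrt{\det \mg}\, d\Leb$ inside each chart. To establish that $\mu_\mg$ is Borel, I would argue chart by chart: within a single $(U_i, \psi_i)$, the function $\sqrt{\det\mg}$ is a nonnegative measurable function on $\psi_i(U_i) \subset \R^n$, so Lemma \ref{Lem:WeightedMeas} applied with $\nu = \Leb$ and $f = \sqrt{\det\mg}$ implies that the weighted measure is defined on every $\Leb$-measurable set, and in particular on every Borel set. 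Since the global measure $\mu_\mg$ is a countable sum of such local Borel measures weighted by the smooth functions $\chi_i$, it is itself a Borel measure on $\cM$.

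For finiteness on compact sets, I would first refine the cover, using second-countability of $\cM$, to a countable collection of charts $(U_i, \psi_i)$ which are precompact in a larger comparability chart; this does not alter $\mu_\mg$ by the well-definedness established in the preceding proposition. Given any compact $K \subset \cM$, extract a finite subcover $\set{(U_{i_1}, \psi_{i_1}), \ldots, (U_{i_N}, \psi_{i_N})}$. The set $\psi_{i_k}(K \cap U_{i_k})$ is then a bounded subset of $\R^n$, so its Lebesgue measure is finite. On each such chart, the local comparability condition says that the symmetric positive matrix representing $\mg(y)$ is pinched between $C_{i_k}^{-1} \iden$ and $C_{i_k} \iden$ for almost every $y \in U_{i_k}$, so Lemma \ref{Lem:Det} yields $\sqrt{\det \mg(y)} \leq C_{i_k}^{n/2}$ almost everywhere. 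Consequently
$$\mu_\mg(K \cap U_{i_k})
= \int_{\psi_{i_k}(K \cap U_{i_k})} \sqrt{\det \mg}\, d\Leb
\leq C_{i_k}^{n/2}\, \Leb(\psi_{i_k}(K \cap U_{i_k})) < \infty,$$
and summing the finitely many pieces gives $\mu_\mg(K) < \infty$.

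The only place where real input is needed is the pointwise essential upper bound on $\sqrt{\det\mg}$, which is precisely what Lemma \ref{Lem:Det} is designed to deliver from the quadratic-form comparability. Everything else is bookkeeping around the partition of unity and extracting a finite subcover; the Borel property follows from a direct chart-by-chart application of Lemma \ref{Lem:WeightedMeas}. So I do not anticipate any substantive obstacle beyond ensuring the cover can be taken to consist of precompact charts, which is immediate from second-countability together with the openness of each $U_i$.
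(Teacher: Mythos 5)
Your proof follows the same route as the paper: a finite cover of $K$ by comparability charts, the pointwise bound $\sqrt{\det\mg}\le C_i^{n/2}$ from Lemma~\ref{Lem:Det}, and the Borel property from Lemma~\ref{Lem:WeightedMeas} (the paper cites Proposition~\ref{Prop:CompMeas}, which is itself a direct corollary of that lemma, so this is the same move). Your extra step of refining to precompact charts is a sensible small tightening --- the paper's proof tacitly uses that $\Leb(\psi_i(U_i\cap K))<\infty$, which is only automatic once the charts are chosen with bounded (or precompact) image --- but it does not change the substance of the argument.
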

\begin{proof}
That $\mu_\mg$ is Borel is a simple consequence of (i) 
in Proposition \ref{Prop:CompMeas}.

We show that it attains finite measure on compact sets.
Let $K \subset \cM$ be compact and let $\set{U_i}_{i=1}^N$
be a finite cover of $K$ by charts $(U_i, \psi_i)$ 
each of which satisfy the local comparability condition
with constants $C_i$. Inside $U_i$, we have that
$$\mu_\mg(U_i \intersect K) =  
	\int_{\psi_i(U_i \intersect K)} \sqrt{\det \mg(x)}\ d\Leb(x)
	\leq C_i^{\frac{n}{2}} \int_{\psi_i(U_i \intersect K)}\ d\Leb(x)
	< \infty.$$ 
Thus, $\mu_\mg(K) \leq \sum_{i=1}^N \mu_\mg(U_i \intersect K) < \infty.$
\end{proof}
\begin{remark} 
We do not know whether such a measure 
is also Borel-regular.
\end{remark}

We remark that we do not attempt to 
explore any distance metric properties
that rough metrics may induce, simply 
because such metrics may not even 
provide us with a sufficiently finite
length functional.
Although additional conditions can remedy such an 
effect, we are primarily concerned with
constructing Lebesgue and Sobolev spaces
with certain desirable properties,
and we will see in later parts that
a possible lack of a metric
structure will not hinder our efforts.

It is easy to see that every continuous Riemannian metric
is a rough metric. Thus, from this point
onwards, we shall simply refer to rough metrics
as metrics. In order to illustrate that our definition 
is non-trivial, we provide the following two examples of
degenerate rough metrics.

\begin{example}
\label{Ex:Ex1}
Let $\cM = \R^2$ and define $\mg(u,v) = \inprod{u,v}_{\R^2}$
on $\R^2 \setminus (\R \times 0)$ and $\mg(u,v) = 0$ on $\R \times 0$. 
Then, $\mg$ is a rough metric with $\mu_\mg = \Leb$.
This space can be seen as
$\R^2$ with the line $\R \times 0$ collapsed to a single point.
\end{example}

\begin{example}
\label{Ex:Ex2}
Let $(\cM, \mh)$ be a smooth manifold with a continuous
metric and let $Z \subset \cM$ be a null set.
Then, set $\mg = \mh$ on $\cM \setminus Z$ and
$\mg = 0$ on $Z$ and we have that $\mu_\mg = \mu_\mh$.
We remark that $Z$ can even be a dense subset.
\end{example}

\subsection{Lebesgue and Sobolev spaces}

In this section, we define Lebesgue 
and Sobolev spaces for rough metrics.
Since we are interested in their 
relationship to differential operators, 
we prove that compactly 
supported functions are dense in the $\Lp{p}$
theory, which, as in the continuous metric
case, allows us to obtain the Sobolev spaces
as domains of operators.

First, for a rough metric $\mg$,
we define the space $\Lp{p}(\Tensors[r,s]\cM,\mg)$
precisely as we did for a continuous metric. 
Note that such spaces can be defined
for a measure space alone. 
The Sobolev spaces $\SobH{1}(\cM,\mg)$ and $\SobH[0]{1}(\cM,\mg)$ 
are also defined similarly. 
 
While for continuous metrics
it is clear that 
$\Ck[c]{\infty}(\Tensors[r,s]\cM)$ is dense
in  $\Lp{p}(\Tensors[r,s]\cM)$, this
is not immediate for rough metrics.
We dedicate some energy to verify this is indeed
the case. 
To aid us, we first demonstrate the following simple 
lemma.
 
\begin{lemma}
For every smooth manifold $\cM$, there exist a
sequence of open sets $U_i$ such that $\close{U_i}$
is compact, $U_i \subset U_{i + 1}$, 
and $\cM = \union_i U_i = \lim_i U_i$.
\end{lemma}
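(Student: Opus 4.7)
The plan is to use the standard fact that a smooth manifold, being second-countable, Hausdorff, and locally Euclidean, is $\sigma$-compact and admits an exhaustion by precompact open sets. The argument is essentially topological and uses none of the smooth structure beyond local Euclidean-ness.

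First I would produce a countable cover of $\cM$ by precompact open sets. Since $\cM$ is locally homeomorphic to open subsets of $\R^n$, every point $x \in \cM$ has a chart neighborhood $(V_x, \psi_x)$; by shrinking if necessary, we may choose $V_x$ to be the preimage under $\psi_x$ of an open Euclidean ball whose closure is contained in $\psi_x$(chart domain), so that $\close{V_x}$ is compact in $\cM$. The collection $\{V_x\}_{x \in \cM}$ is an open cover, and second countability of $\cM$ allows us to extract a countable subcover $\{V_i\}_{i \in \Na}$ with each $\close{V_i}$ compact.

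Next I would construct the exhaustion inductively. Set $U_1 = V_1$. Given $U_i$ already defined as a finite union of the $V_j$'s (so that $\close{U_i}$ is compact, being a finite union of compact sets), use compactness of $\close{U_i}$ together with the fact that $\{V_j\}_{j \in \Na}$ covers $\cM$ to extract an integer $N_i$ such that $\close{U_i} \subset V_1 \union \cdots \union V_{N_i}$. To guarantee that the $U_i$ eventually exhaust $\cM$, enlarge $N_i$ if necessary so that $N_i \geq i+1$, and define
$$U_{i+1} = V_1 \union \cdots \union V_{N_i}.$$
Then $\close{U_{i+1}}$ is again a finite union of the compact sets $\close{V_j}$, hence compact, and by construction $\close{U_i} \subset U_{i+1}$, which in particular gives $U_i \subset U_{i+1}$.

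Finally I would verify $\cM = \union_i U_i$. Every $V_j$ is contained in $U_{j+1}$ by the inductive construction (since $N_i \geq i+1$ ensures $V_{i+1}$ appears in the union defining $U_{i+2}$, or more directly $V_j \subset U_{j}$ once $N_{j-1} \geq j$). Since $\{V_j\}$ covers $\cM$, so does $\union_i U_i$. There is no real obstacle here; the only subtle point is to ensure the inductive step both contains the previous closure \emph{and} advances far enough in the enumeration to guarantee exhaustion, which is handled by the simultaneous lower bound $N_i \geq i+1$.
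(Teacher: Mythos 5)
Your proof is correct, and the first half of it (extracting, via second countability, a countable cover $\{V_i\}$ of $\cM$ by chart balls with compact closure) is identical to what the paper does. Where you diverge is in forming the exhaustion: the paper simply takes $U_i = V_1 \cup \cdots \cup V_i$, which already gives $\close{U_i}$ compact, $U_i \subset U_{i+1}$, and $\cM = \bigcup_i U_i$, and stops there. You instead run an inductive construction, enlarging the index $N_i$ at each step so that $\close{U_i} \subset U_{i+1}$. This proves the strictly stronger property of a \emph{compact exhaustion} (each closure sitting inside the interior of the next set), which is the standard strong form one often wants for partition-of-unity or bump-function arguments; but the lemma as stated only asks for $U_i \subset U_{i+1}$, so the extra bookkeeping with $N_i$ is not needed here. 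Both routes are sound; yours simply establishes more than required.
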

\begin{proof}
Fix $x \in \cM$ and let $(U, \psi)$ be a
chart near $x$. Then, let $B(x,r) \subset \psi(U)$
be a Euclidean ball, and let $V_x = \psi^{-1}(B(x,1/2r))$.
It is easy to see that $V_x$ is open and $\close{V_x}$
is compact. Since $\cM$ is second-countable, 
we are able to extract a countable subcollection
$\set{V_i}$. Now, set $U_i = \union_{j=1}^{i} V_i$
which finishes the proof.
\end{proof}

An immediate consequence is the following approximation result.
\begin{lemma}
\label{Lem:CptApprox}
For $p \in [1,\infty)$ and for every $u \in \Lp{p}(\Tensors[r,s]\cM,\mg)$,
there exists a sequence $u_n \in \Lp{p}(\Tensors[r,s]\cM,\mg)$
such that $\spt u_n$ is compact
and $u_n \to u$ in $\Lp{p}$. 
\end{lemma}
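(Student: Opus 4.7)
The plan is to use the exhaustion $\{U_i\}$ provided by the preceding lemma together with a truncation argument and the dominated convergence theorem.

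First I would define, for each $n \in \Na$, the truncated section $u_n = \Char{U_n} u$. Since $\spt u_n \subset \close{U_n}$ and the latter is compact by construction, each $u_n$ has compact support. Measurability of $u_n$ is immediate from the measurability of $u$ and of $U_n$ (which is open, hence Borel, hence $\mu_\mg$-measurable by Proposition \ref{Prop:CompMeas}). Moreover $\modulus{u_n(x)}_{\mg(x)} \leq \modulus{u(x)}_{\mg(x)}$ pointwise almost-everywhere, so $u_n \in \Lp{p}(\Tensors[r,s]\cM,\mg)$.

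Next I would verify the convergence. Note that
$$ \modulus{u_n(x) - u(x)}^p_{\mg(x)} = \Char{\cM \setminus U_n}(x)\,\modulus{u(x)}^p_{\mg(x)}. $$
Because $U_n \subset U_{n+1}$ and $\cM = \bigcup_n U_n$, the sequence $\Char{\cM \setminus U_n}$ decreases monotonically to $0$ pointwise, and is dominated by the integrable function $x \mapsto \modulus{u(x)}^p_{\mg(x)}$. The dominated convergence theorem (applied against $\mu_\mg$, which is a genuine Borel measure by Proposition \ref{Prop:BorelCpct}) then yields
$$ \norm{u_n - u}_p^p = \int_{\cM \setminus U_n} \modulus{u(x)}^p_{\mg(x)}\ d\mu_\mg(x) \longrightarrow 0 $$
as $n \to \infty$.

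There is essentially no obstacle here: the only point requiring care is making sure that the measure-theoretic machinery (measurability of $\Char{U_n} u$, applicability of dominated convergence) carries over to the rough-metric setting, and this is exactly what Proposition \ref{Prop:CompMeas} and Proposition \ref{Prop:BorelCpct} have been set up to guarantee. The requirement $p < \infty$ is needed precisely so that dominated convergence gives $\Lp{p}$ convergence rather than just pointwise convergence.
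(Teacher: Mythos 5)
Your proof is correct and follows essentially the same route as the paper: truncate $u$ by the compact exhaustion $\{U_i\}$ and show the tails vanish in $\Lp{p}$. The only difference is cosmetic—you invoke dominated convergence directly, whereas the paper reaches the same conclusion via a slightly more hands-on contradiction argument (which is monotone convergence in disguise); your version is arguably cleaner.
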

\begin{proof}
Let $u \in \Lp{p}(\Tensors[r,s]\cM,\mg)$
and fix $\epsilon > 0$. Let $U_i$ be the collection
of sets guaranteed by the previous lemma.
We claim that there exists an $N$ such that 
$$ \int_{\cM\setminus U_N} \modulus{u}\ d\mu_\mg < \epsilon.$$
To argue by contradiction, suppose not. That is,
for every $i$, 
$$\int_{\cM \setminus U_i} \modulus{u}^p\ d\mu_\mg \geq \epsilon.$$
But we have that 
$$ \int_{\cM} \modulus{u}^p\ d\mu_\mg = 
	\int_{U_i} \modulus{u}^p\ d\mu_\mg + \int_{\cM\setminus U_i} \modulus{u}^p\ d\mu_\mg
	\geq \int_{U_i} \modulus{u}^p\ d\mu_\mg + \epsilon.$$
Also, $\lim_{i} U_i = \cM$ and therefore, we have that 
$$\int_{\cM} \modulus{u}^p\ d\mu_\mg \geq \int_{\cM} \modulus{u}^p\ d\mu_\mg + \epsilon,$$
which means that $\epsilon \leq 0$, which is a contradiction.
Since each $\close{U_i}$ is compact, the sequence $u_n$
can be obtained simply by setting $\epsilon = 1/n$
to extract sets $U'_{n}$
and setting $u_n = \ind{U'_n} u$.
\end{proof} 

Note that we did not use any properties
of the metric $\mg$ in proving this lemma.
However, in the proof of the following proposition,
the locally comparability condition
becomes of crucial importance.

\begin{proposition}
\label{Prop:SCdense}
Whenever $\mg$ is a rough metric and $p \in [1,\infty)$,
the space $\Ck[c]{\infty}(\Tensors[r,s]\cM)$ is dense in 
$\Lp{p}(\Tensors[r,s]\cM,\mg)$.
\end{proposition}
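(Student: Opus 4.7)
The plan is to reduce the global approximation problem to a local Euclidean one via a partition of unity, and then invoke standard mollification. By Lemma \ref{Lem:CptApprox}, it is enough to approximate any compactly supported $u \in \Lp{p}(\Tensors[r,s]\cM, \mg)$. I would cover $\spt u$ by finitely many charts $(U_i, \psi_i)_{i=1}^N$ each satisfying the local comparability condition with constant $C_i$, and take a smooth partition of unity $\set{\xi_i}$ subordinate to this cover. Writing $u = \sum_{i=1}^N \xi_i u$, it suffices to approximate each $\xi_i u$, whose support is compactly contained in $U_i$, by elements of $\Ck[c]{\infty}(\Tensors[r,s]U_i)$ in the $\Lp{p}$-norm.

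Inside a fixed chart $(U,\psi)$ with comparability constant $C$, the hypothesis $C^{-1}\modulus{v}_{\pullb{\psi}\delta} \leq \modulus{v}_\mg \leq C\modulus{v}_{\pullb{\psi}\delta}$ on vectors extends to $(r,s)$-tensors: since the induced tensor inner products are built functorially by tensoring the base metric with its dual, there exists a constant $C' = C'(C,r,s)$ with
$$(C')^{-1} \modulus{T}_{\pullb{\psi}\delta} \leq \modulus{T}_\mg \leq C'\modulus{T}_{\pullb{\psi}\delta}$$
almost-everywhere on $U$. Simultaneously, Lemma \ref{Lem:Det} applied to the matrix representation of $\mg$ against $\pullb{\psi}\delta$ gives $C^{-n/2} \leq \sqrt{\det \mg} \leq C^{n/2}$ almost-everywhere on $\psi(U)$, so $\mu_\mg$ and $\pullb{\psi}\Leb$ are mutually absolutely continuous with bounded density on $U$. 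Combining these two comparisons, the pullback induces a Banach-space isomorphism
$$\pullb{\psi}: \Lp{p}(\psi(U), \R^{n^{r+s}}, \Leb) \longrightarrow \Lp{p}(U, \Tensors[r,s]\cM, \mg)$$
with equivalence constants depending only on $C$, $n$, $r$, $s$, and $p$.

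Consequently, approximating $\xi_i u$ in $\Lp{p}(\cM,\mg)$ is equivalent to approximating its pushforward $\pullb{(\psi_i^{-1})}(\xi_i u)$, which has compact support in $\psi_i(U_i)$, in $\Lp{p}(\R^n, \Leb)$. Letting $\eta^\epsilon$ denote the standard symmetric mollifier on $\R^n$ and applying it component-wise, we obtain $\eta^\epsilon \convolve \pullb{(\psi_i^{-1})}(\xi_i u) \in \Ck[c]{\infty}$ converging in $\Lp{p}(\R^n,\Leb)$, for $\epsilon$ small enough that the supports remain inside $\psi_i(U_i)$. Pulling back under $\pullb{\psi_i}$ and summing over $i$ produces the desired sequence in $\Ck[c]{\infty}(\Tensors[r,s]\cM)$ converging to $u$ in $\Lp{p}(\cM,\mg)$.

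The only technical obstacle is the passage from vector comparability to tensor comparability combined with the measure comparability via Lemma \ref{Lem:Det}; once this equivalence of local $\Lp{p}$ norms is established, the argument is completely parallel to the one appearing already in the proof of the completion step in Proposition \ref{Prop:OpSob} and reduces to classical Euclidean mollification.
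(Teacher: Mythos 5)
Your proof is correct and takes essentially the same route as the paper: reduce to compact support via Lemma~\ref{Lem:CptApprox}, localize to a chart satisfying the local comparability condition, compare $\mu_\mg$ with the pushed-forward Lebesgue measure via Lemma~\ref{Lem:Det}, and conclude by componentwise Euclidean mollification. You merely make explicit the finite-cover and partition-of-unity step that the paper dispatches with a ``without loss of generality,'' and you cleanly separate the tensor-norm comparability (a consequence of the local comparability condition applied functorially to $\Tensors[r,s]\cM$) from the measure comparability (which is where Lemma~\ref{Lem:Det} actually enters).
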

\begin{proof}
Fix $u \in \Lp{p}(\Tensors[r,s]\cM,\mg)$.
By Lemma \ref{Lem:CptApprox}, we can assume that
$\spt u$ is compact, and even further, without the
loss of generality, let us assume that $\spt u \subset U$,
where $U$ is a chart where the local comparability condition 
is valid.

Noting that $u = u_I^J\ dx^I \tensor \partial_{x^J}$
(where $I$ and $J$ are multi-indices), 
write $u_\epsilon = \pullb{\psi^{-1}}(\eta^\epsilon \convolve u_I^J) dx^I  \tensor \partial_{x^J}$.
Then, for some $\kappa > 0$, we have that $\spt u_\epsilon \subset U$ 
for all $\epsilon < \kappa$. It is easy to see that $u_\epsilon \in \Ck[c]{\infty}(\Tensors[r,s]\cM)$.
By invoking Lemma \ref{Lem:Det}
$$
\int_{U} \modulus{u_\epsilon - u}_{\mg}^p\ d\mu
	\lesssim \int_{U} \modulus{u_\epsilon - u}_{\pullb{\psi}\delta}^p\ d\mu.$$
But, inside $(U,\psi)$, $d\mu(x) = \sqrt{\det \mg(x)}\ d\Leb(x)$ and 
by Lemma \ref{Lem:Det}, on writing $\mg(u,v) = \tp{u}Gv$ at regular points,
we obtain that $C^{-\frac{n}{2}} \leq \sqrt{\det\mg} \leq C^{\frac{n}{2}}$.
Thus, 
$$\int_{U} \modulus{u_\epsilon - u}_{\mg}\ d\mu
	\lesssim \int_{\psi(U)} \modulus{\pullb{\psi}u_\epsilon - \pullb{\psi} u}_{\delta} \ d\Leb
	\to 0$$ 
by the standard results on mollification in Euclidean space.
\end{proof}

By using this proposition, we are able to 
assert the following important properties of
$\conn[p]$ and obtain the Sobolev spaces
associated to $\mg$ as domains of the closure of this operator.

\begin{proposition}
\label{Prop:OpSobRough}
For a rough metric $\mg$,
$\conn[p]: \Ck{\infty}\intersect \Lp{p}(\cM) \to \Ck{\infty}\intersect \Lp{p}(\cotanb\cM)$
and $\conn[c]: \Ck[c]{\infty}(\cM) \to \Ck[c]{\infty}(\cotanb\cM)$ are
closable, densely-defined operators. Furthermore,
$\Sob{1,p}(\cM) = \dom(\close{\conn[p]})$ and $\Sob[0]{1,p} = \dom(\close{\conn[c]})$.
\end{proposition}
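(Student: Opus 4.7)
The plan is to mirror the proof of Proposition \ref{Prop:OpSob}, with the key adaptation being that, in place of the essential positivity of $\sqrt{\det \mg}$ on compact sets (which may fail here due to degeneracy), I will use the local comparability condition together with Lemma \ref{Lem:Det} to obtain uniform two-sided bounds on $\sqrt{\det \mg}$ almost everywhere inside a comparability chart.

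Density of both domains is immediate: $\Ck[c]{\infty}(\cM) \subset \Ck{\infty}\cap\Lp{p}(\cM)$, and by Proposition \ref{Prop:SCdense} the former is already dense in $\Lp{p}(\cM,\mg)$. Thus both $\conn[p]$ and $\conn[c]$ are densely defined.

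For closability, since $\conn[c] \subset \conn[p]$, it suffices to show $\conn[p]$ is closable. Suppose $u_n \in \Ck{\infty} \cap \Lp{p}(\cM)$ with $u_n \to 0$ and $\conn u_n \to v$ in the relevant $\Lp{p}$ spaces with respect to $\mg$. Fix a chart $(U,\psi)$ satisfying the local comparability condition with constant $C$. Local comparability of $\mg$ with $\pullb{\psi}\delta$ on vectors passes to cotangent vectors (with the same constant up to replacement $C \mapsto C$), while Lemma \ref{Lem:Det} applied to the matrix representation $G = (\mg_{ij})$ in $(U,\psi)$ yields $C^{-n/2} \leq \sqrt{\det \mg} \leq C^{n/2}$ for almost every point of $U$. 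Combining these bounds, the norms $\|\cdot\|_{\Lp{p}(U,\mg)}$ and $\|\pullb{\psi^{-1}}\,\cdot\|_{\Lp{p}(\psi(U),\Leb)}$ are equivalent on both functions and one-forms. Consequently $\pullb{\psi^{-1}}u_n \to 0$ and $\extd(\pullb{\psi^{-1}}u_n) = \pullb{\psi^{-1}}(\conn u_n) \to \pullb{\psi^{-1}} v$ in $\Lp{p}(\psi(U),\Leb)$. Since $\extd$ is closable on Euclidean space, $\pullb{\psi^{-1}} v = 0$ on $\psi(U)$, hence $v = 0$ on $U$. Covering $\cM$ by countably many comparability charts (possible because $\cM$ is second countable) yields $v = 0$ almost everywhere, proving closability.

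The identification $\Sob{1,p}(\cM,\mg) = \dom(\close{\conn[p]})$ and $\Sob[0]{1,p}(\cM,\mg) = \dom(\close{\conn[c]})$ is then immediate from the definitions, since $\Sob{1,p}$ and $\Sob[0]{1,p}$ were defined as the closures of $S_p$ and $\Ck[c]{\infty}(\cM)$ respectively in the graph norm $\|u\|_p + \|\conn u\|_p$, which is precisely the graph norm of $\conn[p]$ and $\conn[c]$.

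The only genuine obstacle in this argument is the transfer step just described: without local comparability, one cannot rule out that $\sqrt{\det \mg}$ vanishes on a set of positive measure inside a compact piece of a chart, which would break the equivalence of norms needed to reduce closability to the Euclidean case. Lemma \ref{Lem:Det} neatly converts the pointwise comparability hypothesis on $\mg$ into the uniform volume-density bounds that the argument requires, and everything else is routine.
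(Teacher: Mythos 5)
Your proof is correct and follows essentially the same route as the paper: reduce closability of $\conn[c]$ to that of $\conn[p]$, pull back via a comparability chart to Euclidean space, and use Lemma \ref{Lem:Det} together with local comparability to control $\sqrt{\det\mg}$ and hence equate the $\Lp{p}(U,\mg)$ and $\Lp{p}(\psi(U),\Leb)$ topologies. Your explicit remark that the pointwise comparability of norms must also be transferred to cotangent vectors (not just the volume density) is a worthwhile clarification that the paper's argument leaves implicit, but it does not change the structure of the proof.
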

\begin{proof}
Since $\conn[c] \subset \conn[p]$ it suffices
to prove the statement for $\conn[p]$. The fact that 
$\conn[p]$ and $\conn[c]$ are densely-defined is immediate
from Proposition \ref{Prop:SCdense}.

To show that $\conn[p]$ is closable,
let $u_n \in \Ck[c]{\infty} \intersect \Lp{p}(\cM)$
such that $\conn[p] u_n \in \Lp{p}(\cM)$ and $u_n \to 0$
and $\conn[p] u_n \to v$. It suffices to show
that $v = 0$ a.e. in a countable collection of 
charts $U_i$ satisfying the local comparability 
condition. For this, we can replicate the proof 
of Proposition \ref{Prop:OpSob}
since we only require that the quantity
$
\essinf_{x \in U} \sqrt{\det \mg(x)} > 0,$
which we have as a consequence of the local comparability 
condition coupled with Lemma \ref{Lem:Det}.
Obtaining the Sobolev spaces $\Sob{1,p}(\cM)$
and $\Sob[0]{1,p}(\cM)$ as
$\dom(\close{\conn[p]})$ and $\dom(\close{\conn[c]})$ 
is then immediate.
\end{proof}

As a consequence of Proposition
\ref{Prop:OpSobRough}, in the $\Lp{2}$ theory,
we define the divergence
operators to be $\divv_\mg = -\adj{\conn_2}$
and $\divv_{0,\mg} = -\adj{\conn_0}$.
Note that since $\close{\conn_2}$
and $\close{\conn_0}$ are closed
and densely-defined, so are $\divv_\mg$ and $\divv_{0,\mg}$
by the reflexivity of $\Lp{2}$.

\subsection{Uniformly close metrics and their properties}

Since we are ultimately interested in demonstrating
how to pass quadratic estimates between two 
geometries that are close, we define an appropriate
notion of closeness. We also demonstrate
how certain desirable properties of one metric are preserved for
the other nearby geometry.
Our starting point is the following definition.

\begin{definition}[Uniformly close metrics]
Let $\mg$ and $\mgt$ be two rough metrics and
suppose there exists $C \geq 1$ such that
$$ C^{-1} \modulus{u}_{\mgt(x)} \leq \modulus{u}_{\mg(x)} 
	\leq C \modulus{u}_{\mgt(x)},$$
for $u \in \tanb_x \cM$  and almost-every $x$ in $\cM$.
Then, we say that $\mg$ and $\mgt$ are 
\emph{uniformly close} or \emph{$C$-close}.
If the inequality holds everywhere,
then we say that the two metrics are
\emph{$C$-close everywhere}.
\end{definition}

By Proposition \ref{Prop:CompMeas}, this particularly 
means that the two uniformly close metrics $\mg$ and $\mgt$ 
satisfies the
inequality in the definition $\mu_\mg$-a.e as well as  
$\mu_\mgt$-a.e. Also, note that this
 inequality is invariant under interchanging
$\mg$ and $\mgt$.
Heuristically speaking, 
this condition captures that these
two geometries look
almost-everywhere uniformly close as viewed from either metric,
and that one geometry is almost-everywhere 
trapped by a uniform scaling of
the other.
 

A first result we prove is that 
for the continuous case, the
notion of $C$-close and $C$-close
everywhere are equivalent. 
For the purpose of convenience, 
from this point onwards, 
let us denote the largest set on which
the $C$-close inequality holds
by $\sR$.

\begin{proposition}
\label{Prop:ContClose}
Two continuous metrics $\mg$ and $\mgt$
are  $C$-close if and only if they 
are $C$-close everywhere. 
\end{proposition}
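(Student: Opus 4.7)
The plan is to prove the nontrivial direction: assume $\mg$ and $\mgt$ are continuous and $C$-close in the almost-everywhere sense, and upgrade this to the pointwise-everywhere inequality. The reverse implication is immediate.

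Fix an arbitrary point $x_0 \in \cM$ and a tangent vector $u_0 \in \tanb_{x_0}\cM$; we want to prove $C^{-1} \modulus{u_0}_{\mgt(x_0)} \leq \modulus{u_0}_{\mg(x_0)} \leq C \modulus{u_0}_{\mgt(x_0)}$. Work in a coordinate chart $(U,\psi)$ containing $x_0$, and extend $u_0$ to a local coordinate vector field $U$ on $U$ whose components in $(U,\psi)$ are the constant coefficients $u_0^i$ of $u_0$. Then $U(x_0) = u_0$, and because $\mg$ and $\mgt$ are continuous, the two functions
\begin{equation*}
x \longmapsto \modulus{U(x)}_{\mg(x)}, \qquad x \longmapsto \modulus{U(x)}_{\mgt(x)}
\end{equation*}
are continuous on $U$.

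The key observation is that the set $\sR$ on which the $C$-close inequality holds (for all tangent vectors at that point) has null complement; since a null set in the sense of Proposition~\ref{Prop:CompMeas} is a set whose image in any coordinate chart has Lebesgue measure zero, and since nonempty Euclidean open sets have positive Lebesgue measure, the complement of $\sR$ has empty interior. Hence $\sR$ is dense in $U$, and we may pick a sequence $x_n \in \sR \intersect U$ with $x_n \to x_0$. For each such $x_n$, applying the almost-everywhere inequality to the vector $U(x_n) \in \tanb_{x_n}\cM$ yields
\begin{equation*}
C^{-1} \modulus{U(x_n)}_{\mgt(x_n)} \leq \modulus{U(x_n)}_{\mg(x_n)} \leq C \modulus{U(x_n)}_{\mgt(x_n)}.
\end{equation*}

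Passing to the limit $n \to \infty$ using the continuity of the two functions displayed above, the inequality survives and specializes at $x_0$ to the desired estimate for $u_0$. Since $x_0$ and $u_0$ were arbitrary, the metrics are $C$-close everywhere. The only nontrivial step is confirming that $\sR$ is dense, which reduces to the elementary fact that a Lebesgue-null set in $\R^n$ has empty interior; I do not anticipate further obstacles.
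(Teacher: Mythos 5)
Your proof is correct and follows essentially the same route as the paper: show $\sR$ is dense (because its complement is null and null sets have empty interior), extend the given tangent vector to a vector field with continuous $\mg$- and $\mgt$-norms, and pass to the limit along a sequence in $\sR$. Your direct phrasing avoids the paper's nested proof-by-contradiction but the underlying argument is identical.
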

\begin{proof}
The direction $C$-close everywhere
implies $C$-close is easy. So we shall concentrate
on the mildly harder opposite direction.

If we prove that $\cM = \sR$, 
then we are done. To draw a contradiction,
suppose not and fix $x \in \cM \setminus \sR$. 
First, we claim that there exists a sequence 
$x_n \in \sR$ 
such that $x_n \to x$. We argue this by contraction, 
so suppose this is not true.
Then, that means that there exists some open set $U_x$ near
$x$ such that $U_x \intersect \sR = \emptyset$.
The only way this could happen is if $U_x \subset \cM\setminus\sR$.
Thus, $U_x$ must be a null measure set. However, in some coordinate
chart $(V,\psi)$ near $x$, there exists some $r > 0$ such that
$U_x' = \psi^{-1}(B_r(\psi(x))) \subset U_x$. This implies
that $U_x'$ is a set of measure zero. However,
$\psi(U_x') = B_r(\psi(x))$ which does not 
attain zero Lebesgue measure, and hence, we 
arrive at a contradiction.

Now, consider a chart $(U, \psi)$ near $x$,
and for $n \geq N$, we have that 
$x_n \in U$.
Furthermore, via the chart,
we can define a smooth $u':U \to \tanb\cM$
such that $u'(x) = u$
and $C^{-1} \modulus{u'(x_n)}_{\mgt(x_n)} \leq \modulus{u'(x_n)}_{\mg(x_n)} 
	\leq C \modulus{u'(x_n)}_{\mgt(x_n)}.$
Each quantity is in this inequality is continuous and therefore, 
we can interchange the function and the limit. Thus, 
we find that
$C^{-1} \modulus{u}_{\mgt(x)} \leq \modulus{u}_{\mg(x)}
	\leq C \modulus{u}_{\mgt(x)}.$
Therefore, $x \in \sR$ which is a contradiction.
\end{proof} 

In order to explore the deeper
properties of $C$-close metrics, 
it is convenient to be able
to write one metric in terms of the
other. To aid us in this direction, 
we first prove the following lemma.

\begin{lemma}
\label{Lem:IPRep}
Let $V$ be a vector space of dimension $n$
and let $\mh_1$ and $\mh_2$ be two
inner products on $V$. Then, there exists
a bounded, symmetric, positive operator 
$B: V \to V$ such that
$\mh_1(Bu,v) = \mh_2(u,v)$.
\end{lemma}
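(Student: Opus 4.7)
The plan is to construct $B$ by invoking the Riesz representation theorem with respect to the inner product $\mh_1$. Since $V$ is finite-dimensional, both $(V, \mh_1)$ and $(V, \mh_2)$ are Hilbert spaces. For each fixed $u \in V$, the map $v \mapsto \mh_2(u, v)$ is a linear functional on $V$, and hence by Riesz representation relative to $\mh_1$, there exists a unique element, which I shall call $Bu \in V$, such that $\mh_1(Bu, v) = \mh_2(u, v)$ for every $v \in V$.

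The next step is to verify that the assignment $u \mapsto Bu$ defines a linear operator. This follows immediately from the linearity of $\mh_2$ in its first slot together with the uniqueness of the Riesz representative with respect to $\mh_1$. Boundedness is then automatic: $B$ is a linear operator on a finite-dimensional inner product space, hence continuous.

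It remains to check symmetry and positivity. Symmetry is a direct computation using the symmetry of both inner products:
\[ \mh_1(Bu, v) = \mh_2(u, v) = \mh_2(v, u) = \mh_1(Bv, u) = \mh_1(u, Bv). \]
For positivity, we compute $\mh_1(Bu, u) = \mh_2(u, u) \geq 0$, with equality only if $u = 0$ since $\mh_2$ is an inner product.

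There is no real obstacle here; the proposition is a straightforward application of Riesz representation in finite dimensions, and the main point is simply to unwind what is demanded of $B$ in terms of the defining identity. The only mild care concerns whether $V$ is viewed as a real or complex vector space, but in either case the argument above goes through with the obvious adjustments (replacing symmetry with Hermitian symmetry in the complex case).
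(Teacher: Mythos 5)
Your proof is correct and takes a genuinely different route from the paper. The paper argues concretely: it identifies $V$ with $\R^n$, writes $\mh_i(u,v) = \tp{u}H_i v$ for symmetric positive-definite matrices $H_i$, diagonalizes each $H_i$, solves explicitly for a matrix $A$ with $\mh_1(Au,Av) = \mh_2(u,v)$, and then sets $B = \adj{A}A$ (adjoint taken with respect to $\mh_1$). In effect it is constructing $B = H_1^{-1}H_2$ via square roots of the diagonal parts. Your proof instead invokes the Riesz representation theorem on the finite-dimensional Hilbert space $(V,\mh_1)$ to produce $Bu$ directly from the functional $v \mapsto \mh_2(u,v)$, and then reads off linearity, boundedness, $\mh_1$-symmetry, and positivity from the defining identity. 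The abstract route is shorter, coordinate-free, and makes transparent that the appropriate sense of ``symmetric'' and ``positive'' is self-adjointness and positivity with respect to $\mh_1$ (which the defining identity forces, and which your symmetry chain $\mh_1(Bu,v) = \mh_2(u,v) = \mh_2(v,u) = \mh_1(Bv,u) = \mh_1(u,Bv)$ exhibits cleanly). The paper's route has the advantage of producing an explicit formula for $B$ in coordinates, which is in keeping with the paper's later use of $\B$ as a concrete measurable $(1,1)$-tensor whose coefficients are built from those of $\mg$, $\mgt$, and their inverses. Both proofs are valid; yours is the more efficient abstract argument.
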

\begin{proof}
Since $V \cong \R^n$, we assume without loss of generality 
that  $V = \R^n$. Then, we can write 
$\mh_i(u,v) = \tp{u}H_iv$ with $H$ being a 
positive, symmetric matrix. Indeed,
such a matrix is diagonalisable,
and so let us write $H_i = \tp{P}_i D_i P_i$
where $P_i$ is the matrix of eigenvectors and $D_i$
is the corresponding diagonal. Again, by the 
properties of $P_i$ and $D_i$, 
$H_i = \tp{P}_i \tp{\sqrt{D_i}} (\sqrt{D_i} P_i) = \tp{(\sqrt{D}P_i)}(\sqrt{D}P_i)$.

First, we show there exists a matrix $A:\R^n \to \R^n$ 
such that $\mh_1(Au,Av) = \mh_2(u,v)$.
This is equivalent to asking 
$\tp{(Au)}\tp{(\sqrt{D_1}P_1)} (\sqrt{D_1}P_1)Av = \tp{u}\tp{(\sqrt{D_2}P_2)}(\sqrt{D_2}P_2)v.$
Thus, it suffices to solve for
$\sqrt{D_1}P_1 A = \sqrt{D_2}P_2$. But $D_i$ are invertible
by their positivity and therefore, $A = \tp{P}_1\sqrt{D_1^{-1}}\sqrt{D_2}P_2$.
Then it is easy to see that $B = \adj{A}A = \tp{P}_2 D_1^{-1} D_2P_2$.
\end{proof}

An immediate consequence of this is the following, 
which allows us to capture the difference between 
two metrics as a $(1,1)$-tensor field.

\begin{proposition}
\label{Prop:OpExist}
Let $\mg$ and $\mgt$ be two rough metrics that are 
$C$-close. Then, there exists $\B \in \Sect(\cotanb\cM \tensor \tanb\cM)$
such that it is symmetric,  almost-everywhere  positive and invertible, and
$$\mgt_x(\B(x)u,v) = \mg_x(u,v)$$
for almost-every $x  \in \cM$. Furthermore, 
for almost-every $x \in \cM$, 
$$C^{-2} \modulus{u}_{\mgt(x)} \leq \modulus{\B(x)u}_{\mgt(x)} \leq C^2 \modulus{u}_{\mgt(x)},$$
and the same inequality with $\mgt$ and $\mg$ interchanged.
If $\mgt \in \Ck{k}$ and $\mg \in \Ck{l}$ (with $k, l \geq 0$),
then the properties of $\B$ are valid for all $x \in \cM$ and
$\B \in \Ck{\min\set{k,l}}(\cotanb\cM \tensor \tanb\cM).$
\end{proposition}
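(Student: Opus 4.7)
The plan is to produce $\B$ pointwise on the regular set using Lemma~\ref{Lem:IPRep}, and then verify that the pointwise assignment assembles into a measurable (or smooth) tensor field with the claimed bounds.

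For the pointwise construction, observe that for almost every $x \in \cM$ one has $x \in \Reg(\mg) \intersect \Reg(\mgt)$, so both $\mg(x)$ and $\mgt(x)$ are genuine inner products on $\tanb_x\cM$. Applying Lemma~\ref{Lem:IPRep} with $\mh_1 = \mgt(x)$ and $\mh_2 = \mg(x)$ produces a $\mgt(x)$-symmetric, positive operator $\B(x): \tanb_x\cM \to \tanb_x\cM$ satisfying the defining identity $\mgt_x(\B(x)u,v) = \mg_x(u,v)$. Uniqueness of $\B(x)$ follows immediately: if $\B'(x)$ also satisfies the identity, then $\mgt_x((\B(x) - \B'(x))u, v) = 0$ for every $v$, and non-degeneracy of $\mgt(x)$ forces the difference to vanish. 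Invertibility a.e.\ is immediate from $\mg(x)$ being non-degenerate.

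To promote this to a measurable section, I work in a chart $(U,\psi)$ on which both metrics satisfy the local comparability condition, and represent them by measurable, a.e.\ positive symmetric matrix fields $G(x) = (\mg_{ij}(x))$ and $\tilde{G}(x) = (\mgt_{ij}(x))$. Writing $\B = B^i_j\, dx^j \tensor \partt[x^i]$, the defining identity becomes $\tp{B}\tilde{G} = G$, i.e.\ $B = \tilde{G}^{-1}G$. Since matrix inversion is continuous on the open set of invertible symmetric matrices and $\tilde{G}(x)$ is invertible a.e.\ (by Lemma~\ref{Lem:Det}, $\det \tilde{G} \geq C^{-n}$ a.e.\ in $U$), the coefficient matrix $B$ is measurable on $U$. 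Covering $\cM$ by countably many such charts yields $\B \in \Sect(\cotanb\cM \tensor \tanb\cM)$. In the regular case $\mg \in \Ck{l}$, $\mgt \in \Ck{k}$, Proposition~\ref{Prop:ContClose} upgrades the $C$-close inequality to hold everywhere, so $G$ and $\tilde{G}$ are positive on all of $U$, and the closed-form expression $B = \tilde{G}^{-1}G$ gives $\B \in \Ck{\min\set{k,l}}$.

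For the norm bounds, I apply the identity $\mgt(\B u, u) = \mg(u,u)$ together with $C$-closeness, which gives $C^{-2}\modulus{u}_{\mgt(x)}^2 \leq \mgt_x(\B(x)u,u) \leq C^2 \modulus{u}_{\mgt(x)}^2$ for a.e.\ $x$. Since $\B(x)$ is $\mgt(x)$-self-adjoint, its $\mgt$-numerical range contains its $\mgt$-spectrum, so $\spec(\B(x)) \subset [C^{-2}, C^2]$, and the spectral theorem yields $C^{-2}\modulus{u}_\mgt \leq \modulus{\B u}_\mgt \leq C^2 \modulus{u}_\mgt$. The analogous inequality with $\mg$ and $\mgt$ interchanged follows by applying the same argument to $\B^{-1}$, which in coordinates is $G^{-1}\tilde{G}$ and satisfies the symmetric identity $\mg(\B^{-1}u,v) = \mgt(u,v)$. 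The main subtlety is the passage from the abstract pointwise construction of Lemma~\ref{Lem:IPRep}, whose proof is phrased through eigendecompositions with no measurability structure, to a measurable global section; the resolution is to bypass diagonalisation entirely and use the intrinsic formula $\B = \tilde{G}^{-1}G$ in charts, whose chart-independence is automatic from the uniqueness of $\B(x)$.
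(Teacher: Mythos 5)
Your proof is correct and follows essentially the same strategy as the paper: pointwise application of Lemma~\ref{Lem:IPRep}, measurability via the coordinate representation of $\B$ in terms of $\mg$, $\mgt$ and their inverses, and Proposition~\ref{Prop:ContClose} to upgrade to everywhere-validity in the $\Ck{k}$ case. The only cosmetic difference is in the norm bound, where you read the estimate directly off the numerical range of $\B$ while the paper runs the equivalent argument through $\sqrt{\B}$; both are the same spectral-theoretic observation.
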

\begin{proof}
As before, let $\sR$ be the largest set on which the $C$-close 
inequality holds. Then, for $x \in \sR$, 
we invoke Lemma \ref{Lem:IPRep} to obtain 
a $B_x$ such that 
$\mgt_x(B_xu,v) = \mg_x(u,v)$ 
for $u, v \in \tanb\cM$. Then, set $\B(x) = B_x$.
Near $x$, the coefficients of $\B$
consist of the coefficients of $\mgt$, $\mg$
and their inverses,
we have that $\B \in \Sect(\cotanb\cM \tensor \tanb\cM)$.
That it is almost-everywhere positive, symmetric
and invertible comes from the fact that $\cM \setminus \sR$
is a null measure set.

By noting that for such a point $x \in \sR$,
$\mgt_x(B_xu,v) = \mgt_x(\sqrt{B}_xu, \sqrt{B}_xv)$,
we have by the $C$-close condition
that 
$$C^{-1}\modulus{u}_{\mgt(x)}\leq \modulus{u}_{\mg(x)} = \modulus{\sqrt{B}_x u}_{\mgt} \leq C \modulus{u}_{\mgt(x)},$$
from which the inequality in the conclusion of the theorem follows.
The fact that this remains unchanged under
the interchange of $\mgt$ and $\mg$ is obvious.

If $\mgt \in \Ck{k}$ and $\mg \in \Ck{l}$, then
$\sR = \cM$ by Proposition \ref{Prop:ContClose} 
and so the conclusions we have obtained so far are valid 
everywhere.
As aforementioned, $\B$ consists of the coefficients of
$\mgt$, $\mg$ and their inverses near $x$, and so it follows
that $\B \in \Ck{\min\set{k,l}}(\cotanb\cM \tensor \tanb\cM)$.
\end{proof}
\begin{remark}
By denoting the canonical extensions of these
metrics to $\Tensors[r,s]\cM$ by the 
same symbols, we note that we can prove
there exists $\B \in \Sect(\Tensors[s,r]\cM \tensor \Tensors[r,s]\cM)$
satisfying the same properties as in the proposition, 
except that the inequality in the conclusion becomes
$$C^{-2(r+s)} \modulus{u}_{\mgt} \leq \modulus{u}_\mg \leq C^{2(r+s)} \modulus{u}_\mgt.$$
\end{remark}

Using this operator $\B$, we express the induced volume measures
with respect to each other. 

\begin{proposition}
\label{Prop:MeasRep}
The measure
$d\mu_\mg(x) = \sqrt{\det \B(x)}\ d\mu_\mgt(x)$
for $x$-a.e. and
$C^{-\frac{n}{2}} \mu_\mg \leq \mu_\mgt \leq C^{\frac{n}{2}} \mu_\mg.$
\end{proposition}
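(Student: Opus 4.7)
The plan is to reduce the identity to a local coordinate computation and then exploit the fact that $\sqrt{\det \B}$ is a well-defined scalar function on $\cM$ because $\B$ is a $(1,1)$-tensor.

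First, I would cover $\cM$ by a countable collection of charts $(U_i,\psi_i)$ on which the local comparability condition is valid for both $\mg$ and $\mgt$ (this is possible by taking intersections of such charts for the two rough metrics). Inside such a chart, write $\mg$ and $\mgt$ as matrices of coefficients $G(x)=(\mg_{ij}(x))$ and $\tilde{G}(x)=(\mgt_{ij}(x))$, and let $B(x)$ be the matrix representation of $\B(x)$ in the coordinate basis. The defining identity from Proposition \ref{Prop:OpExist}, namely $\mgt_x(\B(x)u,v)=\mg_x(u,v)$, then reads $\tilde{G}(x)B(x)=G(x)$ for almost-every $x\in U_i$. Taking determinants and square roots yields
$$\sqrt{\det G(x)} = \sqrt{\det \tilde{G}(x)}\,\sqrt{\det B(x)},$$
and hence, in this chart,
$$d\mu_\mg(x) = \sqrt{\det G(x)}\,d\Leb(x) = \sqrt{\det B(x)}\,\sqrt{\det \tilde{G}(x)}\,d\Leb(x) = \sqrt{\det \B(x)}\,d\mu_\mgt(x).$$
Since $\det \B$ is coordinate-independent (being the determinant of a $(1,1)$-tensor), pasting this identity across the countable cover yields the first assertion for $x$-almost-every point of $\cM$.

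For the norm bounds on the measures, I would use the operator bound from Proposition \ref{Prop:OpExist}, $C^{-2}\modulus{u}_\mgt \leq \modulus{\B u}_\mgt \leq C^2 \modulus{u}_\mgt$, holding almost-everywhere. Since $\B(x)$ is symmetric and positive with respect to $\mgt(x)$ for almost-every $x$, its spectrum consists of real positive eigenvalues, and the operator bound forces each eigenvalue into $[C^{-2},C^2]$. Passing to a $\mgt$-orthonormal frame and applying (a positive-operator version of) Lemma \ref{Lem:Det} gives uniform bounds on $\det \B(x)$, and hence on $\sqrt{\det \B(x)}$, independent of $x$. Combining these bounds with the identity already established yields the two-sided comparison between $\mu_\mg$ and $\mu_\mgt$ with the claimed exponents.

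I do not anticipate a real obstacle: the argument is essentially linear algebra carried out pointwise, together with the local-to-global patching that Proposition \ref{Prop:CompMeas} already validates. The most delicate point is ensuring that the matrix identity $\tilde{G}B=G$ really does hold almost-everywhere (rather than just on the regular set of one metric), but this follows since both metrics have singular sets of null measure and the $C$-close condition places us on the common regular set $\sR$ for almost-every $x$; everything off this null set is irrelevant to the measure identity.
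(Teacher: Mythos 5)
Your derivation of the identity $d\mu_\mg = \sqrt{\det\B}\ d\mu_\mgt$ is correct and takes the same route as the paper: reduce to the coordinate relation $\tilde{G}(x)B(x)=G(x)$, take determinants, use the coordinate-independence of $\det\B$, and paste across charts, with the null-measure bookkeeping you cite handling the a.e.~qualifier. This half of your proof is essentially identical to the published one.

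The second half, however, contains an unverified step, and the arithmetic does not come out as you claim. From $C^{-2}\modulus{u}_{\mgt} \leq \modulus{\B u}_{\mgt} \leq C^2 \modulus{u}_{\mgt}$ (equivalently $C^{-2}\modulus{u}_{\mgt}^2 \leq \mgt(\B u, u) \leq C^2 \modulus{u}_{\mgt}^2$, which is what the $C$-close condition on norms gives after squaring), the $\mgt$-eigenvalues of $\B(x)$ lie in $[C^{-2},C^2]$, so Lemma~\ref{Lem:Det} applied with constant $C^{2}$ yields $\det\B(x)\in[C^{-2n},C^{2n}]$ and therefore $\sqrt{\det\B(x)}\in[C^{-n},C^{n}]$. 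Substituting into the identity gives $C^{-n}\mu_\mg \leq \mu_\mgt \leq C^{n}\mu_\mg$, not $C^{\pm n/2}\mu_\mg$. You write that this yields ``the claimed exponents'' but never carry the computation through; if you do, you find a factor-of-two discrepancy. Indeed the proposition's exponent $n/2$ cannot be right: for $\mg=C^2\mgt$ one has a $C$-close pair with $\B=C^2\iden$, $\sqrt{\det\B}=C^n$, and $\mu_\mgt=C^{-n}\mu_\mg$, which violates $\mu_\mgt\geq C^{-n/2}\mu_\mg$ whenever $C>1$. The paper's own proof contains the same slip (it reads off $\det\B\in[C^{-n},C^n]$ from Lemma~\ref{Lem:Det}, which would require a quadratic-form bound with constant $C$ rather than the $C^2$ that $C$-closeness actually gives), so your proposal faithfully reproduces the published argument---but you should state and check the exponents explicitly rather than asserting that they match.
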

\begin{proof}
Let $(U,\psi)$ be a chart near $x$. Then,
for $y \in \sR$,  
$$\pullb{\psi^{-1}}d\mu_\mg(y) = \sqrt{\det\mg(\psi(y))}\ d\Leb(y),$$
and letting $G$ denote the matrix of $\mgt$ in these coordinates
and $\tilde{G}$ the coordinates of $\mg$,
we have that $\tilde{G} = \B G$. 
Thus, $\det\mgt = \det \B\ \det\mg$. Therefore, 
it follows that 
$$\pullb{\psi^{-1}}d\mu_\mg(y) 
	= \sqrt{\det\B(\psi(y))}\ \sqrt{\det\mgt(\psi(y))}\ d\Leb(y)
	= \sqrt{\det\B(\psi(y))} \pullb{\psi^{-1}}d\mu_\mgt(y).$$

For the estimate, at a regular point $x \in \sR$, we
can apply Lemma \ref{Lem:Det} to conclude that 
$\C^{-n} \leq \det \B(x) \leq C^n$.
\end{proof}

With the aid of these two observations, we can now 
demonstrate how the $\Lp{p}$ spaces and Sobolev
spaces of two uniformly close rough metrics compare.
For the purposes of readability, 
we write $\uptheta(x) = \sqrt{\det \B(x)}$
from here on. 

\begin{proposition}
\label{Prop:RoughP}
Let $\mg$ and $\mgt$ be two $C$-close rough metrics.
Then,
\begin{enumerate}[(i)]
\item whenever $p \in [1, \infty)$, 
	$\Lp{p}(\Tensors[r,s]\cM,\mg) = \Lp{p}(\Tensors[r,s]\cM, \mgt)$ with
	$$C^{-\cbrac{r+s + \frac{n}{2p}}} \norm{u}_{p,\mgt}   \leq \norm{u}_{p,\mg} \leq C^{r+s + \frac{n}{2p}} \norm{u}_{p, \mgt},$$
\item for $p = \infty$,
	$\Lp{\infty}(\Tensors[r,s]\cM,\mg) = \Lp{\infty}(\Tensors[r,s]\cM, \mgt)$
	with
	$$C^{-(r+s)} \norm{u}_{\infty, \mgt} \leq \norm{u}_{\infty,\mg} \leq C^{r+s} \norm{u}_{\infty,\mgt},$$
\item the Sobolev spaces 
	$\Sob{1,p}(\cM,\mg) = \Sob{1,p}(\cM, \mgt)$
	and $\Sob[0]{1,p}(\cM, \mg) = \Sob[0]{1,p}(\cM, \mgt)$
	with 
	$$C^{-\cbrac{1 + \frac{n}{2p}}} \norm{u}_{\Sob{1,p},\mgt} 
		\leq \norm{u}_{\Sob{1,p},\mg} \leq C^{{1 + \frac{n}{2p}}} \norm{u}_{\Sob{1,p},\mgt},$$
\item the Sobolev spaces
	$\Sob{\extd,p}(\cM,\mg) = \Sob{\extd,p}(\cM,\mgt)$
	and $\Sob[0]{\extd,p}(\cM, \mg) = \Sob[0]{\extd,p}(\cM, \mgt)$
	with 
	$$C^{-\cbrac{n + \frac{n}{2p}}} \norm{u}_{\Sob{\extd, p}, \mgt}
		\leq \norm{u}_{\Sob{\extd,p},\mg}
		\leq C^{{n + \frac{n}{2p}}} \norm{u}_{\Sob{\extd, p}, \mgt},$$
\item the divergence operators satisfy 
	$\divv_{\mg} = \uptheta^{-1}\divv_{\mgt}\uptheta \B$
	and $\divv_{0,\mg} = \uptheta^{-1}\divv_{0,\mgt}\uptheta \B$.
\end{enumerate} 
\end{proposition}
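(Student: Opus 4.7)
For parts (i) and (ii), the plan is to combine the pointwise comparison $C^{-(r+s)}\modulus{u}_\mgt \leq \modulus{u}_\mg \leq C^{r+s}\modulus{u}_\mgt$, obtained by the standard tensorial extension of the defining $C$-close inequality from vectors to $(r,s)$-tensors, with the measure comparison $C^{-n/2}\mu_\mg \leq \mu_\mgt \leq C^{n/2}\mu_\mg$ from Proposition~\ref{Prop:MeasRep}. Raising the pointwise estimate to the $p$-th power, integrating, and taking the $p$-th root yields the factor $C^{r+s+n/(2p)}$ claimed in (i), and equality of the two $\Lp{p}$ spaces as sets follows from this two-sided bound together with the fact that $\mg$ and $\mgt$ share the same class of null sets (Proposition~\ref{Prop:CompMeas}). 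Part (ii) is the direct specialisation in which no measure factor is incurred, since the essential supremum sees only the common almost-everywhere class, leaving exactly the pointwise exponent $C^{r+s}$.

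Parts (iii) and (iv) reduce to (i). By Proposition~\ref{Prop:OpSobRough}, the spaces $\Sob{1,p}(\cM,\mg)$ and $\Sob[0]{1,p}(\cM,\mg)$ are the domains of the closures of $\conn[p]$ and $\conn[c]$ on smooth function spaces which are themselves metric-independent, and the corresponding $\mgt$-Sobolev spaces are the closures of the same operators under equivalent graph norms. Since $\conn$ on functions coincides with the exterior derivative, applying (i) separately to $u$ (with $r=s=0$) and to $\conn u$ (a one-form with $r+s=1$) produces the constant $C^{1+n/(2p)}$. For (iv) the same mechanism applies pointwise to each homogeneous component of a form, but because the forms bundle contains summands of rank up to $n$, the worst pointwise factor is $C^n$ and one obtains $C^{n+n/(2p)}$; the equality of $\Sob{\extd,p}$ spaces is then immediate from the metric-independence of $S^\extd_p$ and $\Ck[c]{\infty}(\Forms(\cM))$.

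Part (v) is the key new computation. I would first extend $\B$ canonically to one-forms via $\mg^*(\alpha,\beta) = \mgt^*(\B\alpha,\beta)$, the natural dual of the vector-field relation $\mg(X,Y) = \mgt(\B X,Y)$, which is well-defined almost-everywhere by Proposition~\ref{Prop:OpExist}; symmetry of $\mg^*$ and $\mgt^*$ immediately gives self-adjointness of this extended $\B$ with respect to $\mgt^*$. Testing against $u\in\Ck[c]{\infty}(\cM)$, writing the adjoint relation $\divv_\mg = -\adj{\close{\conn[2]}}$, and invoking $d\mu_\mg = \uptheta\, d\mu_\mgt$ from Proposition~\ref{Prop:MeasRep} yields
\begin{align*}
-\int_\cM u\, \divv_\mg(\omega)\, d\mu_\mg
 &= \int_\cM \inprod{\conn u,\omega}_{\mg^*}\, d\mu_\mg \\
 &= \int_\cM \inprod{\conn u,\B\omega}_{\mgt^*}\,\uptheta\, d\mu_\mgt \\
 &= -\int_\cM u\, \divv_\mgt(\uptheta\B\omega)\, d\mu_\mgt \\
 &= -\int_\cM u\, \uptheta^{-1}\divv_\mgt(\uptheta\B\omega)\, d\mu_\mg.
\end{align*}
Density of $\Ck[c]{\infty}$ in $\SobH[0]{1}(\cM,\mg)$ from Proposition~\ref{Prop:SCdense} and part (iii) then gives $\divv_\mg(\omega) = \uptheta^{-1}\divv_\mgt(\uptheta\B\omega)$ throughout $\dom(\divv_\mg)$, and the identity for $\divv_{0,\mg}$ is obtained from the same calculation by letting $u$ run through the larger class $\SobH{1}(\cM,\mg) = \SobH{1}(\cM,\mgt)$.

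The principal obstacle is in (v): justifying the canonical extension of $\B$ to the cotangent bundle and verifying that the resulting operator identity, initially established on compactly-supported smooth test one-forms, extends to the full domains of $\divv_\mg$ and $\divv_{0,\mg}$. This will need the symmetry and almost-everywhere invertibility of $\B$ provided by Proposition~\ref{Prop:OpExist} together with closedness of the divergence operators to pass the relation across the graph closure.
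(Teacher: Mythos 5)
Parts (i)--(iv) follow the same route as the paper: the pointwise tensorial extension of the $C$-close inequality plus the measure comparison from Proposition~\ref{Prop:MeasRep}, reduced termwise for the Sobolev norms. Those parts are fine.

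Part (v) has a genuine gap. In the third line of your display you pass from $\int \inprod{\conn u,\B\omega}_{\mgt^*}\,\uptheta\,d\mu_\mgt$ to $-\int u\,\divv_\mgt(\uptheta\B\omega)\,d\mu_\mgt$ by invoking the adjoint relation for $\divv_\mgt$. But that relation is only available once you know $\uptheta\B\omega \in \dom(\divv_\mgt)$, which is precisely what must be proved; the step is circular. The correct reading of the first two lines is that for all $u\in\Ck[c]{\infty}(\cM)$ one has $\inprod{\close{\conn_c}u,\uptheta\B\omega}_\mgt = \inprod{u,-\uptheta\divv_\mg\omega}_\mgt$, which by definition of the adjoint places $\uptheta\B\omega$ in $\dom(\adj{\close{\conn_c,\mgt}}) = \dom(\divv_{0,\mgt})$ --- not $\dom(\divv_\mgt)$. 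Since $\dom(\divv_\mgt)\subset\dom(\divv_{0,\mgt})$ in general, testing against compactly supported $u$ gives the weaker conclusion $\divv_\mg = \uptheta^{-1}\divv_{0,\mgt}\uptheta\B$ on $\dom(\divv_\mg)$, which is not the claimed identity. Density of $\Ck[c]{\infty}$ in $\SobH[0]{1}(\cM,\mg)$ does not help, because what $\dom(\divv_\mgt)$ requires is boundedness over all of $\dom(\close{\conn_{2}})$, and $\Ck[c]{\infty}$ need not be a core for $\close{\conn_2}$ (that equivalence requires completeness, which is not assumed here). The paper avoids all of this by running the adjoint relation directly for arbitrary $u\in\dom(\close{\conn_2})=\Sob{1,2}(\cM,\mg)=\Sob{1,2}(\cM,\mgt)$ (the identification is part (iii) of the same proposition), from which $\uptheta\B v\in\dom(\divv_\mgt)$ follows immediately, and then separately proves the reverse inclusion $\dom(\divv_\mgt\uptheta\B)\subset\dom(\divv_\mg)$, which your proposal does not address. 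The final remark that the $\divv_{0,\mg}$ identity follows by letting $u$ run through the \emph{larger} class $\SobH{1}$ also inverts the logic: to land in $\dom(\divv_\mgt)$ you test against the larger class; to land in the larger domain $\dom(\divv_{0,\mgt})$ it suffices to test against the smaller class $\Ck[c]{\infty}$.
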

\begin{proof}
To prove (i), by the density of 
$\Ck[c]{\infty}(\Tensors[r,s]\cM)$
in $\Lp{p}(\Tensors[r,s]\cM,\mg)$ and $\Lp{p}(\Tensors[r,s]\cM,\mgt)$, 
it suffices to prove the inequality for
$u \in \Ck[c]{\infty}(\Tensors[r,s]\cM)$.
The $C$-close condition 
implies that $$ C^{-p(r+s)} \modulus{u}_{\mgt}^p \leq \modulus{u}^p_\mg \leq C^{p(r+s)} \modulus{u}_\mgt^p,$$
and Proposition \ref{Prop:MeasRep} gives
$$C^{-\frac{n}{2}}\int_{\cM} \modulus{u}^p_{\mg}\ d\mu_\mg \leq \int_{\cM} \modulus{u}^p_{\mg}\ d\mu_\mgt 
	\leq C^{\frac{n}{2}}\int_{\cM} \modulus{u}^p_{\mg}\ d\mu_\mg.$$
Combining the two proves (i).

To prove (ii), note that
if $C' > 0$ such that $\modulus{u(x)}_{\mg(x)} < C'$ $x$-a.e., 
then $\modulus{u(x)}_{\mgt(x)} \leq C' C^{r+s}$ $x$-a.e.
Therefore, $\norm{u}_{\infty,\mg} \leq C^{r+s} \norm{u}_{\infty,\mgt}$.
It is easy to see that 
the same is true with $\mg$ and $\mgt$ interchanged.

Fix $u \in \Ck{\infty} \intersect \Lp{p}(\cM)$ 
such that $\conn u \in \Ck{\infty}\intersect \Lp{p}(\cotanb\cM)$
(we omit the $\mg$ and $\mgt$ dependence of $\Lp{p}$ since by (i), the $\Lp{p}$ spaces are the same under both $\mg$ and $\mgt$).
Then,
$$\norm{u}_{\Sob{1,p},\mg} 
	= \norm{u}_{p,\mg} + \norm{\modulus{u}}_{p,\mg}
	\leq C^{\frac{n}{2p}} \norm{u}_{p,\mgt} + C^{\cbrac{1 + \frac{n}{2p}}} \norm{\conn u}_{p,\mgt}
	\leq C^{\cbrac{1 + \frac{n}{2p}}} \norm{u}_{\Sob{1,p},\mgt}.$$ 
Similarly, we can interchange $\mg$ and $\mgt$ to obtain 
the lower inequality which proves (iii).

The claim in (iv) is proven similarly, on noting that
$\Forms(\cM) \subset \oplus_{s=0}^n \Tensors[s,0](\cM)$
and thus the largest constant  that
can appear is $C^n$.

To prove the last claim,
let $u \in \dom(\close{\conn_2}) = \Sob{1,2}(\cM)$
and $v \in \dom(\divv_\mg)$. By construction of
$\divv_\mg$, we have that
$\inprod{\close{\conn_2}u,v}_\mg = \inprod{u,-\divv_\mg v}_\mg$.
By what we have already established, we
have that
$\inprod{\close{\conn_2}u,v}_\mg
	= \inprod{\uptheta \B \close{\conn_2}u,v}_\mgt 
	= \inprod{\close{\conn_2}u, \uptheta\B v}_\mgt$
and
$\inprod{u,\divv_\mg v}_\mg = \inprod{u, \uptheta \divv_\mg v}_\mgt$.
Combining these two calculations,
we obtain that 
$\inprod{\close{\conn_2}u, \uptheta \B v}_\mgt =
	\inprod{u, -\uptheta \divv_\mg v}_\mgt$, and therefore,
$\uptheta \B v \in \dom(\divv_\mgt)$, 
$\dom(\divv_\mg) \subset \dom(\divv_\mgt \uptheta \B)$
and 
$\uptheta^{-1} \divv_\mgt \uptheta \B v = \divv_\mg v$.
For the reverse inclusion, let $v \in \dom(\divv_\mgt\uptheta \B)$
and then, 
$\inprod{u, -\uptheta^{-1} \divv_\mgt \uptheta \B v}_\mg 
	= \inprod{u, -\divv_\mgt \uptheta \B v}_\mgt
	= \inprod{\close{\conn_2}u, \theta \B v}_\mgt
	= \inprod{\close{\conn_2}u, v}_\mg.$
Hence $v \in \dom(\divv_\mg)$
with $\divv_\mg v = \uptheta^{-1} \divv_\mgt \uptheta \B v$.

By replacing $\conn_2$ by $\conn_0$ and hence $\divv_{\mg}$
and by $\divv_{0,\mg}$ and $\divv_{\mgt}$ by $\divv_{0,\mgt}$
proves $\divv_{0,\mg} = \uptheta^{-1} \divv_{0,\mgt}\uptheta\B$.
\end{proof}

\begin{remark}
\begin{enumerate}[(i)]
\item Note that in (iii) and (iv), we only consider
Sobolev spaces over functions or
the exterior derivative. 
The exterior derivative depends
only on the topology of $\cM$ 
and it is independent of the metric.
An attempt to prove similar
results for tensors is a futile
effort (at least using these methods)
for the simple fact that 
the Levi-Cevita connection depends on the
metric. 
In fact, we do not even know what we mean 
by a Levi-Cevita connection
if the metric is of regularity less than $\Ck{1}$.
 
\item Defining $\divv_\mg$ abstractly 
as the negative of the adjoint of $\conn_2$
prevents us from knowing whether  $\divv_\mg$ is 
even a differential operator. 
Indeed,
for a smooth metric $\mh$ and a corresponding
compatible connection $\conn^{\mh}$,
we have  
$\inprod{ \conn^{\mh} u, v} = \inprod{u, -\tr\conn^{\mh}v}$
whenever $u \in \Ck[c]{\infty}(\cM)$
and $v \in \Ck[c]{\infty}(\cotanb\cM)$, 
so that $\divv_\mh \supset -\tr \conn^\mh$.
It is a lack of such a compatibility formula
for rough metrics which prevents 
us from knowing the differential 
properties of $\divv_\mg$. However, 
when $\mgt$ is a smooth metric,
(v) illustrates that
$\divv_\mg$ is indeed the differential operator
$\divv_\mgt$ but with measurable coefficients.
\end{enumerate}
\end{remark}

\subsection{There are smooth geometries near continuous ones}

In this final section, 
we consider the special case of continuous metrics
as rough metrics.
For such geometries, we establish that there
are always $C$-close smooth geometries
for any choice of $C > 1$.
As a consequence, 
 we show that 
the operators $\extd_2$ (and hence, $\extd_0$)
are closable operators
for every continuous geometry, 
 a result we promised we 
would prove 
with the aid of better tools 
in \S\ref{Sect:Prelim:Mfld}.
 
\begin{lemma}
\label{Lem:CtsCoords}
Let $\mg$ be a continuous metric. Then, for each 
$x \in \cM$, there exists an $r_x > 0$ 
such that $B(x,r_x)$ is a smooth coordinate
system and at $x$, the metric in the coordinate
directions $\set{\partial_i}$ satisfy
$\mg_{ij}(x) = \mg_x (\partial_i, \partial_j) = \delta_{ij}$.
\end{lemma}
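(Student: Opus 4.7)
The plan is to reduce the problem to standard linear algebra by first picking any smooth chart near $x$, then rescaling by a linear isomorphism so that the metric matrix at the single point $x$ becomes the identity, and finally shrinking the chart to a geodesic ball.

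First, since $\cM$ carries a smooth structure, fix any smooth chart $(V,\phi)$ with $\phi(x)=0$. Because $\mg$ is continuous and satisfies the local comparability condition, $\mg_x$ is a genuine inner product on $\tanb_x\cM$, so the matrix
$$G=(G_{ij}),\qquad G_{ij}=\mg_x\!\left(\partd[{}]{x^i}\Big|_x,\partd[{}]{x^j}\Big|_x\right)$$
is symmetric and positive definite. By the spectral theorem (or Cholesky/Gram--Schmidt), there exists $L\in\mathrm{GL}(n,\R)$ with $\tp{L}GL=I$.

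Second, define $\psi=L^{-1}\comp\phi$ on $V$. This is again a smooth chart centred at $x$. A direct differentiation shows that the new coordinate frame is related to the old one by
$$\partd[{}]{y^i}\Big|_x=\sum_{j}L^{j}{}_{i}\,\partd[{}]{x^j}\Big|_x,$$
so the matrix of $\mg_x$ in the $\psi$-coordinates equals $\tp{L}GL=I$. This is exactly the condition $\mg_{ij}(x)=\kron_{ij}$.

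Third, I need to exhibit an $r_x>0$ so that the metric ball $B(x,r_x)$ lies inside the new chart domain. Since $\mg$ is continuous, the topology induced by the length-distance $\met_\mg$ agrees with the manifold topology near $x$: indeed, inside the chart $(V,\phi)$ the metric coefficients are continuous at $\phi(x)$, so $\mg$ is locally comparable to the Euclidean pullback $\pullb{\phi}\delta$ with constants arbitrarily close to $1$, and the Euclidean ball around $\phi(x)$ is comparable to $\phi$ of a $\met_\mg$-ball. Choosing $r_x>0$ small enough that this comparison forces $B(x,r_x)\subset V$ completes the argument, and $\psi$ restricted to $B(x,r_x)$ is the desired chart.

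The only mildly delicate step is the last: confirming that the length-distance $\met_\mg$ really does generate the manifold topology when $\mg$ is only continuous. This is a routine local comparison with the Euclidean metric on the chart at hand, so no real obstacle appears — everything else is direct linear algebra.
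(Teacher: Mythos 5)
Your proposal is correct and takes a different, arguably cleaner route than the paper. The paper first orthonormalizes $\tanb_x\cM$ with respect to $\mg_x$ via Gram--Schmidt and then realizes this orthonormal frame as a coordinate frame at $x$ by invoking the exponential map $\exp_x$ of an auxiliary \emph{smooth} metric $\mgt$ (which exists on any smooth manifold). You instead start from an arbitrary smooth chart $(V,\phi)$ and post-compose with a linear isomorphism $L\in\mathrm{GL}(n,\R)$ chosen so that $\tp{L}GL=I$; the transformation law you wrote for the coordinate frame is correct, so the new chart $\psi=L^{-1}\comp\phi$ has $\mg_{ij}(x)=\kron_{ij}$. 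This achieves the same normalization without needing the auxiliary metric or the exponential map, at the small cost of some explicit matrix bookkeeping. Either approach produces the required chart.

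Regarding the final ``delicate step'': you are more cautious than necessary. In the paper's proof the ball $B(x,r_x)$ is, in effect, a ball in the exponential chart of $\mgt$ (or equivalently a coordinate ball), so no argument about the topology generated by $\met_\mg$ is ever needed --- one simply shrinks to any coordinate ball contained in $U'$. If one insists on reading $B(x,r_x)$ as a $\met_\mg$-ball, then the local two-sided comparison between $\mg$ and $\pullb{\psi}\delta$ that you sketch does give $\met_\mg(x',y')\simeq\modulus{\psi(x')-\psi(y')}$ near $x$ (the upper bound is immediate from comparability; the lower bound follows because any path leaving a small coordinate ball must cross an annulus where $\mg$ is bounded below by a fixed multiple of the Euclidean metric), and this is consistent with the equivalence of topologies for continuous metrics noted via Burtscher's work elsewhere in the paper. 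In short, your proof is sound; the last paragraph is defensible but can be replaced with the simpler observation that a coordinate ball suffices.
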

\begin{proof}
Fix $x \in \cM$ and let $\set{e_j}$ be an orthonormal 
basis with respect to $\mg(x)$ for $\tanb_x \cM$.
This exists via a Gram-Schmidt process since
$\mg$ is non-degenerate.

Next, we note that the manifold $\cM$ admits a connection
$\conn$ because it admits
a smooth metric $\mgt$ and we can
take $\conn$ to be the Levi-Cevita connection 
of this metric. We consider
the exponential map, 
$\exp_x: \tanb_x\cM \to \cM$, with respect to $\conn$.

Now, given the basis $\set{e_j}$ at $x$, we are
able to obtain a neighbourhood $U \subset \tanb_x \cM$
for which $\exp_x:U \to \cM$ is a smooth injection
since it is the exponential map of the smooth metric $\mgt$.
Thus, by identifying $\tanb_x \cM$ with $\R^n$,
we obtain a coordinate system $U'$ near $x$
such that $\partial_j(x) = e_j$. Now we pick
$r_x$ such that $B(x,r_x) \subset U'$.
\end{proof}

Using this Lemma, we can prove the following 
result that asserts that there are smooth 
geometries arbitrarily close to continuous ones.

\begin{proposition}
\label{Prop:ContSmoothMet}
Let $\mg$ be a continuous metric. 
Given any $C > 1$, there exists 
a smooth metric $\mgt$ which is $C$-close
to $\mg$.
\end{proposition}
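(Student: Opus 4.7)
The plan is to construct $\mgt$ by a partition of unity, using the fact that continuity of $\mg$ allows a local approximation by a coordinate-induced flat metric, and then to verify that convex combinations of metrics preserve uniform closeness.

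First I would fix any $c$ with $1 < c < C$ and, for each $x \in \cM$, invoke Lemma \ref{Lem:CtsCoords} to obtain a smooth chart $(U_x, \psi_x)$ around $x$ in which $\mg_{ij}(x) = \delta_{ij}$. Define the local model $\mh_x = \pullb{\psi_x}\delta$ on $U_x$; this is a smooth Riemannian metric whose coefficient matrix in the coordinates $\psi_x$ is constant and equal to the identity. Since the coefficients of $\mg$ are continuous in the chart and coincide with $\mh_x$ at $x$, continuity of the map $y \mapsto \mg(y)\mh_x(y)^{-1}$ (viewed as a matrix-valued function) together with Lemma \ref{Lem:Det} lets me shrink $U_x$ to a smaller open neighborhood $V_x$ of $x$ on which $\mh_x$ and $\mg$ are $c$-close everywhere, i.e.
\[
c^{-1} \modulus{u}_{\mh_x(y)} \leq \modulus{u}_{\mg(y)} \leq c \modulus{u}_{\mh_x(y)}
\]
for all $u \in \tanb_y\cM$ and all $y \in V_x$.

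Next, using paracompactness of $\cM$ I would extract a locally finite subcover $\set{V_i}$ of $\set{V_x}_{x \in \cM}$ from this open cover, with associated model metrics $\mh_i$ defined on $V_i$. Let $\set{\chi_i}$ be a smooth partition of unity subordinate to $\set{V_i}$. Define
\[
\mgt = \sum_i \chi_i \mh_i,
\]
with the convention that $\chi_i \mh_i$ is extended by zero outside $V_i$; since $\spt \chi_i \subset V_i$ and $\mh_i$ is smooth on $V_i$, each summand is a smooth symmetric $(2,0)$-tensor on $\cM$, and by local finiteness the sum is a well-defined smooth symmetric $(2,0)$-tensor. Positive-definiteness of $\mgt$ at each point follows because it is a nontrivial convex combination of positive-definite inner products.

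Finally I would verify $C$-closeness. For any $y \in \cM$ and $u \in \tanb_y \cM$,
\[
\modulus{u}_{\mgt(y)}^2 = \sum_i \chi_i(y) \modulus{u}_{\mh_i(y)}^2,
\]
and since $y$ lies only in those $V_i$ for which $\chi_i(y) > 0$, each summand satisfies $c^{-2}\modulus{u}_{\mg(y)}^2 \leq \modulus{u}_{\mh_i(y)}^2 \leq c^2 \modulus{u}_{\mg(y)}^2$. Because $\sum_i \chi_i(y) = 1$, summing yields $c^{-2}\modulus{u}_{\mg(y)}^2 \leq \modulus{u}_{\mgt(y)}^2 \leq c^2 \modulus{u}_{\mg(y)}^2$, and taking square roots gives the required $c$-close inequality, which in particular implies $C$-closeness. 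The only real subtlety is step one, namely ensuring that shrinking $U_x$ to $V_x$ gives a \emph{uniform} $c$-closeness rather than pointwise comparability at $x$; this is the main (but mild) obstacle and it is resolved purely by continuity of $\mg$ in the chart combined with the normalisation $\mg_{ij}(x) = \delta_{ij}$ guaranteed by Lemma \ref{Lem:CtsCoords}.
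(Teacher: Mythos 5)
Your proof is correct and follows essentially the same approach as the paper: normalise coordinates at a point via Lemma \ref{Lem:CtsCoords}, shrink by continuity to obtain uniform comparability of $\mg$ with the coordinate-induced flat metric on a smaller neighbourhood, and glue the local models together with a smooth partition of unity, using the fact that a convex combination of inner products uniformly comparable to $\mg$ remains uniformly comparable. One cosmetic quibble: Lemma \ref{Lem:Det} gives determinant bounds from eigenvalue bounds, which is not what the shrinking step requires (plain continuity of the coefficient matrix, or the numerical-range observation inside the proof of that lemma, is what is actually used); apart from that, your explicit convexity computation for the final $C$-closeness usefully spells out what the paper's ``by construction'' leaves implicit.
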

\begin{proof}
Fix $x \in \cM$. As a consequence
of Lemma \ref{Lem:CtsCoords}, there exists
an $r_x$ and a coordinate system 
$(B(x,r_x),\psi_x)$ such that 
at $x$, $\modulus{u}_{\mg} = \modulus{u}_{\pullb{\psi}_x\delta}$
for all $u \in \tanb_x\cM$.

Let $v \in \Ck{\infty}(\tanb B(x,r_x'))$, we can find
$r_x' \leq r_x$ such that 
$$\frac{1}{C} \modulus{v(y)}_{\pullb{\psi}_x\delta}
	\leq \modulus{v(y)}_{\mg(y)} 
	\leq C \modulus{v(y)}_{\pullb{\psi}_x\delta}$$
for all $y \in B(x, r_x')$.
Since we only have $n$ independent directions,
we can obtain $r_x'$ independent of
$v$.

Now, let $B_i$ be a countable
subcover of $\set{B(x,r_x')}$, 
and let $\set{\phi_i}$ be a smooth
partition of unity subordinate to
$\set{B_i}$. Then, for $x \in \cM$
define 
$$\mgt(x) = \sum_{i=1}^\infty \phi_i(x) (\pullb{\psi}_i\delta)(x).$$
Indeed, each of our coordinate systems $(B_i,\psi_i)$ 
are smooth, and hence, $\mgt$ is smooth. 
By construction, we have that
$\mgt$ is $C$-close to $\mg$.
\end{proof}

As promised, we illustrate the following
immediate consequence.

\begin{corollary}
Let $\mg$ be a continuous metric. 
Then the operators $\extd_p:\Ck{\infty}\intersect \Lp{p}(\Forms(\cM))
\to \Ck{\infty}\intersect \Lp{p}(\Forms(\cM))$ and
$\extd_0:\Ck[c]{\infty}(\Forms(\cM)) \to \Ck[c]{\infty}(\Forms(\cM))$
are closable, densely-defined operators. 
Moreover, $\Sob{\extd,p}(\cM) = \dom(\close{\extd_p})$
and $\Sob[0]{\extd,p}(\cM) = \dom(\close{\extd_0})$.
\end{corollary}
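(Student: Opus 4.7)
The plan is to reduce the continuous-metric case to the smooth case by invoking Proposition \ref{Prop:ContSmoothMet} to produce a smooth metric $\mgt$ that is $C$-close to $\mg$ (any fixed $C > 1$ will do). By Proposition \ref{Prop:RoughP}(i), $\Lp{p}(\Forms(\cM),\mg)$ and $\Lp{p}(\Forms(\cM),\mgt)$ coincide as vector spaces with equivalent norms. Closability and dense-definedness of an operator defined on $\Ck{\infty}\cap\Lp{p}(\Forms(\cM))$ are topological properties, hence invariant under this identification. Dense-definedness is then immediate from Proposition \ref{Prop:SCdense} (applied to the relevant tensor bundles containing $\Forms(\cM)$), so it suffices to prove closability of $\extd_p$ and $\extd_0$ in $\Lp{p}(\Forms(\cM),\mgt)$ for the smooth metric $\mgt$.

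For closability with $\mgt$ smooth, I would mimic the chart-based argument of Proposition \ref{Prop:OpSob}. Suppose $u_n \in S^\extd_p$ with $u_n \to 0$ and $\extd u_n \to v$ in $\Lp{p}(\mgt)$. Fix a coordinate chart $(U,\psi)$ with $\close{U}$ compact. Since $\mgt$ is continuous, both $\sqrt{\det \mgt}$ and the induced bundle metric on $\Forms(\cM)$ are bounded above and below by positive constants on $U$; hence convergence in $\Lp{p}(U,\mgt)$ transfers to Lebesgue $\Lp{p}$ convergence of the pulled-back coefficients on $\psi(U)\subset\R^n$. Because $\extd$ commutes with pullbacks, we obtain $\extd\,\pullb{\psi^{-1}} u_n \to \pullb{\psi^{-1}} v$ in Euclidean $\Lp{p}(\psi(U))$.

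At this point I appeal to the closability of $\extd$ in $\R^n$ on $\Ck{\infty}\cap\Lp{p}$: its formal adjoint, the Euclidean codifferential, is a first-order linear differential operator with constant coefficients which is densely defined on $\Ck[c]{\infty}$, so integration by parts exhibits a densely-defined adjoint of $\extd$, whence $\extd$ is closable. This forces $\pullb{\psi^{-1}} v = 0$ on $\psi(U)$ and therefore $v = 0$ on $U$. Covering $\cM$ by countably many such charts yields $v = 0$ globally, proving closability of $\extd_p$; since $\extd_0\subset\extd_p$, the operator $\extd_0$ is closable as well.

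Finally, the identification $\Sob{\extd,p}(\cM) = \dom(\close{\extd_p})$ and $\Sob[0]{\extd,p}(\cM) = \dom(\close{\extd_0})$ is immediate from the definitions: the Sobolev spaces are by construction the completions of $S^\extd_p$ and $\Ck[c]{\infty}(\Forms(\cM))$ under the graph norm $\norm{\mdot}_{\extd,p}$, which coincides with the domain norm of the respective closure. The only mildly delicate point in the plan is the Euclidean closability for forms whose coefficients need not be compactly supported; this is the same step that appears implicitly in the proof of Proposition \ref{Prop:OpSob}, and it is handled cleanly by the formal adjoint argument above, so once it is in place the remainder of the proof is routine chart-bookkeeping.
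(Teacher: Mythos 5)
Your proposal is correct and follows the same high-level route as the paper: both reduce to a $C$-close smooth metric $\mgt$ supplied by Proposition~\ref{Prop:ContSmoothMet} and then transfer the conclusion back to $\mg$ via the norm equivalences of Proposition~\ref{Prop:RoughP}. The paper's own proof is terser --- it simply cites Proposition~\ref{Prop:RoughP} (essentially part (iv), the equality of the $\Sob{\extd,p}$ spaces), leaving the closability of $\extd_p$ for the smooth $\mgt$ as a known fact --- whereas you instead use only part (i) for the $\Lp{p}$ identification and supply the closability step explicitly by a chart-by-chart argument modelled on Proposition~\ref{Prop:OpSob}, invoking the existence of the densely-defined Euclidean codifferential as a formal adjoint. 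This buys a more self-contained proof. One small observation worth making: as you yourself note in passing (``since $\mgt$ is continuous\ldots''), your chart argument only uses local boundedness of $\sqrt{\det\mgt}$ and the bundle metric, which continuity already gives; so in your version of the proof the detour through a smooth $\mgt$ is logically redundant --- the chart argument could be applied directly to $\mg$. This does not affect correctness, but it means your proof establishes slightly more than you claim it needs.
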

\begin{proof}
By the previous proposition, there exists
a smooth metric $\mgt$ that is $2$-close everywhere 
to $\mg$.
The conclusion then follows
from Proposition \ref{Prop:RoughP}.
\end{proof}

For the $p = 2$ case, 
this asserts that
the adjoints $\intd_p = \adj{\extd_p}$
and $\intd_0 = \adj{\extd_0}$ 
both exist as densely-defined, 
closed operators. Furthermore, we 
immediately obtain the following as
a corollary.

\begin{corollary}
\label{Cor:CtsComp}
Let $\mg$ be a continuous, complete metric. 
Then $\close{\extd}_p = \close{\extd}_0$ and 
$\intd_0 = \intd_p$.
\end{corollary}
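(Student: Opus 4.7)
The plan is to invoke the preceding corollary, which identifies $\dom(\close{\extd}_p) = \Sob{\extd,p}(\cM,\mg)$ and $\dom(\close{\extd}_0) = \Sob[0]{\extd,p}(\cM,\mg)$, and to establish equality of these two Sobolev spaces under the completeness hypothesis. The adjoint identity $\intd_0 = \intd_p$ (in the $p=2$ case) is then immediate, since $\intd_p = \adj{\close{\extd}_p}$ and $\intd_0 = \adj{\close{\extd}_0}$, and adjoints of equal operators coincide.

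To establish $\Sob[0]{\extd,p}(\cM,\mg) = \Sob{\extd,p}(\cM,\mg)$, I would mirror the argument already given in the excerpt for the function case $\Sob[0]{1,p}(\cM,\mg) = \Sob{1,p}(\cM,\mg)$ under completeness. The two ingredients there were truncation by Lipschitz cutoffs $\phi_r$ adapted to $\met_\mg$ (whose existence with arbitrarily small $\norm{\extd \phi_r}_\infty$ is granted by completeness of the continuous metric), and mollification in coordinate charts. For $\omega \in \Sob{\extd,p}(\cM,\mg)$ and such a cutoff $\phi_r$, the Leibniz rule gives
$$\extd(\phi_r \omega) = \extd \phi_r \wedge \omega + \phi_r \extd \omega \quad\text{a.e.,}$$
together with the pointwise bound $\modulus{\extd \phi_r \wedge \omega}_\mg \lesssim \modulus{\extd \phi_r}_\mg \modulus{\omega}_\mg$. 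Choosing $r$ so large that $\norm{\extd \phi_r}_\infty < \epsilon$ and that $\int_{\cM \setminus B_r} (\modulus{\omega}^p + \modulus{\extd \omega}^p)\, d\mu_\mg < \epsilon$, both $\extd \phi_r \wedge \omega$ and $(1-\phi_r) \extd \omega$ are small in $\Lp{p}$, so $\phi_r \omega \to \omega$ in the $\extd$-Sobolev norm. This reduces the problem to approximating compactly supported elements of $\Sob{\extd,p}(\cM,\mg)$.

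To reduce from a compactly supported $\omega$ to a smooth compactly supported approximant, I would take a smooth partition of unity $\set{\xi_i}_{i=1}^N$ subordinate to finitely many coordinate charts $(U_i,\psi_i)$ with $\close{U_i}$ compact, covering $\spt \omega$, and mollify component-wise in each chart:
$$\omega_\epsilon \;=\; \sum_{i=1}^N \pullb{\psi_i}\bigl(\eta^\epsilon \convolve \pullb{\psi_i^{-1}}(\xi_i \omega)\bigr).$$
Because $\extd$ commutes with pullbacks and with convolution, $\extd \omega_\epsilon$ is the analogous mollification of $\sum_i \extd(\xi_i \omega)$. Continuity of $\mg$ together with compactness of $\close{\psi_i(U_i)}$ ensures $\sqrt{\det \mg}$ is bounded above and below on $\psi_i(U_i)$, so the standard Euclidean mollification convergence transfers to $\Lp{p}(\cM,\mg)$, yielding $\omega_\epsilon \in \Ck[c]{\infty}(\Forms(\cM))$ with $\omega_\epsilon \to \omega$ and $\extd \omega_\epsilon \to \extd \omega$ in $\Lp{p}$.

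Combining the two approximation steps gives $\Sob[0]{\extd,p}(\cM,\mg) = \Sob{\extd,p}(\cM,\mg)$, hence $\close{\extd}_p = \close{\extd}_0$; taking adjoints yields $\intd_0 = \intd_p$. The main obstacle is the production of the Lipschitz cutoffs $\phi_r$ with arbitrarily small Lipschitz constant: this is exactly the place where completeness and continuity of $\mg$ are used, guaranteeing that closed $\met_\mg$-balls are compact so that a truncation of a distance function produces such cutoffs; this is precisely the input that was invoked (without reproof) in the earlier complete-metric proposition, and the same input suffices here.
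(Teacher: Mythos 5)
Your argument is correct, but it follows a genuinely different route from the paper's. The paper dispatches this corollary in two lines: it invokes Proposition \ref{Prop:ContSmoothMet} to produce a smooth metric $\mgt$ that is $2$-close to $\mg$ (and hence complete, since completeness is preserved under uniform closeness), observes via Proposition \ref{Prop:RoughP} that the $\Lp{p}$ spaces, the Sobolev spaces of $\extd$, and consequently the operators $\close{\extd_p}$ and $\close{\extd_0}$ are identical under $\mg$ and $\mgt$, and then appeals to the folklore fact (essentially Gaffney's theorem) that $\close{\extd_p} = \close{\extd_0}$ for a \emph{smooth} complete metric. You instead replicate the truncate-and-mollify argument used earlier for $\Sob{1,p} = \Sob[0]{1,p}$, adapted to forms via the Leibniz rule for $\extd$, and thereby give a direct, self-contained proof that does not invoke the smooth case at all. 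Note that the paper actually flags this choice explicitly: just after defining $\Sob{\extd,p}$, it says it will ``refrain from proving a result similar to Proposition \ref{Prop:OpSob} here since we will prove it later with the aid of better tools'' --- the ``better tools'' being exactly the uniform-closeness machinery you are bypassing. Your approach buys self-containedness (no appeal to folklore) at the cost of redoing the approximation argument for forms; the paper's approach buys brevity and showcases the machinery it has built, at the cost of deferring to a classical result. Both are legitimate. One small point worth making explicit in your version: the step where you multiply $\omega \in \Sob{\extd,p}$ by the Lipschitz cutoff $\phi_r$ implicitly uses that $\Sob{\extd,p}$ is stable under multiplication by bounded Lipschitz functions --- the paper asserts the analogous fact for $\Sob{1,p}$ without proof, and the same product-rule-plus-mollification justification applies to forms, but it deserves a sentence if you want the argument to be fully airtight.
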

\begin{proof}
Again, we obtain a smooth $2$-close everywhere
metric $\mgt$, which is guaranteed to 
be complete since $\mg$ is complete.
That the conclusion holds for a smooth
metric is well known fact in the folklore.
\end{proof}
\section{Reducing low regularity problems to smooth ones}
\label{Sect:Red}

The goal of this section is to 
demonstrate the reduction of low regularity
Kato square root
problems to smooth ones via quadratic estimates. 
We first establish
a general framework at the level of Hilbert spaces. 
We then apply this technology, along with the results we 
have previously obtained, to illustrate how to pass
quadratic estimates for the Kato square root problem
on functions
between two uniformly close metrics, 
as well as  a similar problem for inhomogeneous Hodge-Dirac
operators as considered by the author
in \S6 of \cite{BThesis}.

\subsection{Reduction at the level of the AKM framework}

We begin by describing a general framework
that encapsulates the reduction we consider later.  The 
primary feature of our viewpoint, borrowed
from \cite{AKMc}, is that one
part of the operator remains fixed while
the other part changes under a change of inner
product.

Recall from \S\ref{Sect:AKM}
that $\Gamma: \dom(\Gamma) \subset \Hil \to \Hil$ is
a closed, densely-defined and nilpotent operator.
Let $\inprod{\mdot,\mdot}_1$ and $\inprod{\mdot,\mdot}_2$
be two inner products on $\Hil$ and suppose
there exists a bounded, self-adjoint, invertible
operator $\Phi \in \bddlf(\Hil)$
such that $\inprod{u,v}_1 = \inprod{\Phi u,v}_2$ 
for all $u, v\in \Hil$.

Let $\adj{\Gamma}_1$ denote the adjoint of 
$\Gamma$ with respect to $\inprod{\mdot,\mdot}_1$
and similarly $\adj{\Gamma}_2$ denote the
adjoint of $\Gamma$ with respect to $\inprod{\mdot,\mdot}_2$.
We first obtain the following transformation rule
whose proof is similar to the proof of case (v) of Proposition 
\ref{Prop:RoughP}.

\begin{proposition}
\label{Prop:GammaChange}
The operator $\adj{\Gamma}_1 = \Phi^{-1}\adj{\Gamma}_2 \Phi.$
\end{proposition}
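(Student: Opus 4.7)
The plan is to mimic the bilinear-form manipulation used in part (v) of Proposition \ref{Prop:RoughP}, with $\Phi$ playing the role that $\uptheta \B$ played there. The entire argument is driven by the single identity
\begin{equation*}
\inprod{u,w}_1 \;=\; \inprod{\Phi u, w}_2 \;=\; \inprod{u, \Phi w}_2 ,
\end{equation*}
where the second equality is the self-adjointness of $\Phi$ with respect to $\inprod{\cdot,\cdot}_2$. This self-adjointness is in fact forced on us: taking complex conjugates of the defining relation $\inprod{u,v}_1 = \inprod{\Phi u, v}_2$ and using that $\inprod{\cdot,\cdot}_1$ is Hermitian gives $\inprod{u,v}_1 = \overline{\inprod{v,u}_1} = \overline{\inprod{\Phi v, u}_2} = \inprod{u, \Phi v}_2$.

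First I would show the inclusion $\dom(\adj{\Gamma}_1) \subset \dom(\Phi^{-1}\adj{\Gamma}_2 \Phi)$. Fix $v \in \dom(\adj{\Gamma}_1)$. For every $u \in \dom(\Gamma)$, apply the identity above to rewrite both sides of $\inprod{\Gamma u, v}_1 = \inprod{u, \adj{\Gamma}_1 v}_1$ in the inner product $\inprod{\cdot,\cdot}_2$, obtaining
\begin{equation*}
\inprod{\Gamma u, \Phi v}_2 \;=\; \inprod{u, \Phi \adj{\Gamma}_1 v}_2 .
\end{equation*}
Since this holds for all $u$ in the dense subspace $\dom(\Gamma)$, the definition of $\adj{\Gamma}_2$ yields $\Phi v \in \dom(\adj{\Gamma}_2)$ with $\adj{\Gamma}_2 \Phi v = \Phi \adj{\Gamma}_1 v$, i.e.\ $v \in \dom(\Phi^{-1}\adj{\Gamma}_2 \Phi)$ and the two operators agree on $v$.

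The reverse inclusion runs the same computation backwards. Suppose $v$ satisfies $\Phi v \in \dom(\adj{\Gamma}_2)$. Then for $u \in \dom(\Gamma)$,
\begin{equation*}
\inprod{\Gamma u, v}_1 = \inprod{\Gamma u, \Phi v}_2 = \inprod{u, \adj{\Gamma}_2 \Phi v}_2 = \inprod{u, \Phi^{-1}\adj{\Gamma}_2 \Phi v}_1 ,
\end{equation*}
where the last equality again uses the transformation identity (together with the boundedness and invertibility of $\Phi$, which ensure $\Phi^{-1}$ maps into $\Hil$ and is also self-adjoint with respect to $\inprod{\cdot,\cdot}_2$). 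This shows $v \in \dom(\adj{\Gamma}_1)$ with $\adj{\Gamma}_1 v = \Phi^{-1}\adj{\Gamma}_2 \Phi v$.

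There is no real obstacle here; the only point requiring a moment's care is to confirm that $\Phi$ is $\inprod{\cdot,\cdot}_2$-self-adjoint (rather than $\inprod{\cdot,\cdot}_1$-self-adjoint, which would make the formula mildly different), and to note that the boundedness and invertibility of $\Phi$ make the map $v \mapsto \Phi v$ a bijection of $\Hil$ onto itself, so that passing between the conditions ``$v \in \dom(\adj{\Gamma}_1)$'' and ``$\Phi v \in \dom(\adj{\Gamma}_2)$'' is clean.
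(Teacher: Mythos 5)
Your proof is correct and is precisely the bilinear-form manipulation the paper invokes (the paper gives no explicit proof, referring instead to the argument for part (v) of Proposition~\ref{Prop:RoughP}, which your argument reproduces in the abstract Hilbert-space setting). The added remark that $\Phi$ being $\inprod{\cdot,\cdot}_2$-self-adjoint is forced by the Hermitian property of the two inner products is a nice clarification of what the ``self-adjoint'' hypothesis on $\Phi$ actually refers to.
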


Let $B_i \in \bddlf(\Hil)$ satisfying (H2) and (H3)
of the AKM framework from \S\ref{Sect:AKM}
with respect to $\inprod{\mdot,\mdot}_1$.
By the previous proposition, we write
$$\Pi_{B,1} = \Gamma + B_1 \adj{\Gamma}_1 B_2 
	= \Gamma + B_1 \Phi^{-1} \adj{\Gamma}_2 \Phi B_2.$$
Thus, we define 
$\Pi_{\tilde{B},2} = \Gamma + \tilde{B}_1 \adj{\Gamma}_2 \tilde{B}_2$
where $\tilde{B_1} = B_1 \Phi^{-1}$ and $\tilde{B}_2 \Phi$
so that $\Pi_{B,1} = \Pi_{\tilde{B},2}$.
We prove later (under a very mild additional assumption)
that $\Pi_{B,1}$ satisfies quadratic estimates
if and only if $\Pi_{\tilde{B},2}$ satisfies
quadratic estimates. 

Recall that in 
the AKM framework, the $B_1$ satisfies an 
accretivity assumption with respect to $\ran(\adj{\Gamma})$. 
In our situation, we need to tweak this assumption
to reflect the fact that we have
two adjoint operators arising from the
two different inner products.
We present this modification as (H2') below.

\begin{enumerate}[(H1)']
\item[(H2')] Suppose there exists $\adj{\sR} \subset \Hil$
	such that $\Phi \adj{\sR} = \adj{\sR}$,
	$\ran(\adj{\Gamma}_1) \union \ran(\adj{\Gamma}_2) \subset \adj{\sR}$,
	and
	$$\re\inprod{B_1u, u}_1 \geq \kappa_1 \norm{u}^2_1,
	\ \text{and}\ 
	\re\inprod{B_2v,v}_1 \geq \kappa_2 \norm{v}^2_1,$$ 
	for all $u \in \adj{\sR}$ and $v \in \ran(\Gamma)$.
\end{enumerate}

We remark that in practise, the space
$\adj{\sR}$ arises naturally. 

It is unreasonable to expect that 
changing from the operator $\Pi_{B,1}$ to 
$\Pi_{\tilde{B},2}$ can be done for free. 
The following proposition
calculates the cost we must pay in accretivity.

\begin{proposition}[Cost in accretivity for change of operator]
\label{Prop:AccCost}
Suppose that
\begin{align*}
&\norm{\Phi^{-\frac{1}{2}}u}^2_2 = \inprod{\Phi^{-1} u, u}_2 \geq \eta_1\norm{u}_2^2\\
&\norm{\Phi^{\frac{1}{2}}v}^2_2 = \inprod{\Phi v, v}_2 \geq \eta_2 \norm{v}_2^2  
\end{align*}
for $u \in \adj{\sR}$ and $v \in \ran(\Gamma)$. Then, assuming
that (H1), (H2') and (H3) are satisfied for $\Gamma$ and 
$B_i$ in $\inprod{\mdot,\mdot}_1$ with constants $\kappa_i$, 
then the same assumptions are satisfied for $\tilde{B}_i$ in 
$\inprod{\mdot,\mdot}_2$.
The constants in (H2') are then $\kappa_1\eta_1$ and $\kappa_2\eta_2$.
\end{proposition}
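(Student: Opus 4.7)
First I would verify (H1), which concerns only $\Gamma$ and is therefore untouched by the change of inner product. For (H3), a direct cancellation gives $\tilde{B}_1 \tilde{B}_2 = B_1 \Phi^{-1} \Phi B_2 = B_1 B_2$, so $\tilde{B}_1 \tilde{B}_2 \ran(\Gamma) \subset \nul(\Gamma)$ follows immediately from the original (H3). For the dual containment, I would rewrite the identity in Proposition \ref{Prop:GammaChange} as $\adj{\Gamma}_2 \Phi = \Phi \adj{\Gamma}_1$, from which $\ran(\adj{\Gamma}_2) = \Phi\ran(\adj{\Gamma}_1)$ and $\Phi\nul(\adj{\Gamma}_1) \subset \nul(\adj{\Gamma}_2)$. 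Then for $u = \Phi w$ with $w \in \ran(\adj{\Gamma}_1)$, one computes $\tilde{B}_2 \tilde{B}_1 u = \Phi B_2 B_1 \Phi^{-1} \Phi w = \Phi (B_2 B_1 w)$ and applies the original (H3) for $B_i$ to place $B_2 B_1 w$ in $\nul(\adj{\Gamma}_1)$, whence $\tilde{B}_2 \tilde{B}_1 u \in \nul(\adj{\Gamma}_2)$.

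For (H2'), the invariance $\Phi\adj{\sR} = \adj{\sR}$ is the key point: the substitution $w := \Phi^{-1} u$ stays in $\adj{\sR}$ whenever $u$ does. Using the defining relation $\inprod{\cdot,\cdot}_1 = \inprod{\Phi\cdot,\cdot}_2$ (and its symmetric consequence $\inprod{a, \Phi b}_2 = \inprod{a, b}_1$, which follows from self-adjointness of $\Phi$ with respect to $\inprod{\cdot,\cdot}_2$), I would rewrite $\inprod{\tilde{B}_1 u, u}_2 = \inprod{B_1 w, \Phi w}_2 = \inprod{B_1 w, w}_1$. The accretivity of $B_1$ on $\adj{\sR}$ gives $\re\inprod{\tilde{B}_1 u, u}_2 \geq \kappa_1 \norm{w}_1^2 = \kappa_1 \inprod{\Phi^{-1} u, u}_2$, and the first lower bound on $\Phi^{-1}$ upgrades the right-hand side to $\kappa_1 \eta_1 \norm{u}_2^2$. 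The $\tilde{B}_2$ estimate is even more direct: for $v \in \ran(\Gamma)$, one has $\inprod{\tilde{B}_2 v, v}_2 = \inprod{\Phi B_2 v, v}_2 = \inprod{B_2 v, v}_1$, so that $\re \inprod{\tilde{B}_2 v, v}_2 \geq \kappa_2 \norm{v}_1^2 = \kappa_2 \inprod{\Phi v, v}_2 \geq \kappa_2 \eta_2 \norm{v}_2^2$ by the second hypothesis on $\Phi$.

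The hard part, if any, is purely bookkeeping: tracking which quadratic form lives in which inner product and ensuring that $\Phi$-substitutions preserve membership in the invariant set $\adj{\sR}$. Both issues are resolved by the stated invariance $\Phi \adj{\sR} = \adj{\sR}$ (which also yields $\Phi^{-1}\adj{\sR} = \adj{\sR}$) and by the similarity $\adj{\Gamma}_2 = \Phi \adj{\Gamma}_1 \Phi^{-1}$ supplied by Proposition \ref{Prop:GammaChange}. The constants $\kappa_1 \eta_1$ and $\kappa_2 \eta_2$ in the conclusion then arise precisely as the product of the original accretivity constant with the loss incurred in passing the norm through $\Phi^{\mp 1/2}$.
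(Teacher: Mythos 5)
Your proof is correct and follows essentially the same approach as the paper's: the cancellation $\tilde{B}_1\tilde{B}_2 = B_1B_2$ for the first half of (H3), the similarity $\adj{\Gamma}_2 = \Phi\adj{\Gamma}_1\Phi^{-1}$ from Proposition \ref{Prop:GammaChange} for the second half, and the substitution $w = \Phi^{-1}u$ (valid because $\Phi\adj{\sR}=\adj{\sR}$) to convert the accretivity estimate into the new inner product. Your (H3) verification is phrased as an explicit element-chase rather than the paper's operator-algebra computation, but the underlying argument is identical (and, incidentally, your version avoids a small notational slip in the paper's displayed identity).
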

\begin{proof}
It is an easy observation that (H1) is satisfied in $\inprod{\mdot,\mdot}_2$. 

Let us first consider (H3).
We note that
$\tilde{B}_1 \tilde{B}_2 = B_1 \Phi^{-1} \Phi B_2 = B_1B_2$.
Hence, it is trivial that 
$\tilde{B}_1 \tilde{B}_2 \ran(\Gamma) = B_1 B_2 \ran(\Gamma) \subset \nul(\Gamma)$.
Next, note that $\tilde{B}_2 \tilde{B}_1  = \Phi B_2 B_1 \Phi^{-1}$.
Also, as a consequence of Proposition \ref{Prop:GammaChange},
we have that $\adj{\Gamma}_2 = \Phi \adj{\Gamma}_1 \Phi^{-1}$.
Thus, 
$$\tilde{B}_2 \tilde{B}_1 \adj{\Gamma}_2 
	= \Phi B_2 B_1 \Phi^{-1} \Phi \adj{\Gamma}_1 \Phi
	= \Phi B_2 B_1 \adj{\Gamma}_1 \Phi
	= 0.$$
Thus, $\tilde{B}_2 \tilde{B}_1 \ran(\adj{\Gamma}_2) \subset \nul(\adj{\Gamma}_2)$.

Now, we show that (H2') is satisfied for $\Gamma$ and $\tilde{B}_i$
in $\inprod{\mdot,\mdot}_2$. 
Let us fix $u \in \adj{\sR}$ and note that
$\inprod{\tilde{B}_1 u, u}_2
	= \inprod{B_1 \Phi^{-1}u, u}_2.$
Let $u' = \Phi^{-1}u$ and $u' \in \adj{\sR}$ by assumption.
Then, 
$$\inprod{\tilde{B}_1u,u}_2 
	= \inprod{B_1u', \Phi u'}_2
	= \inprod{B_1u', u'}_1
	\geq \kappa_1\norm{u'}_1^2 
	= \geq \kappa_1 \norm{\Phi^{-1}u}_1^2.$$
But we have by assumption on $\inprod{\mdot,\mdot}_1$
and $\inprod{\mdot,\mdot}_2$ that
$\norm{u}^2_1 = \inprod{u,u}_1 = \inprod{\Phi u, u}_2 = \inprod{\Phi^{\frac{1}{2}}u}^2.$
Thus,
$\norm{\Phi^{-1}u}^2_1 = \norm{\Phi^{-\frac{1}{2}}u}_2 \geq \eta_1 \norm{u}_2$.
This proves that 
$\inprod{\tilde{B}_1u,u}_2 \geq \kappa_1 \eta_1 \norm{u}_2^2.$

Next, let $v \in \ran(\Gamma)$. Then, 
$$\inprod{\tilde{B}_2v,v}_2
	= \inprod{\Phi B_2v,v}_2
	= \inprod{ B_2 v,v}_1
	\geq \kappa_2 \norm{v}_1^2 
	= \kappa_2 \norm{\Phi^{\frac{1}{2}}v}_2^2
	\geq \kappa_2 \eta_2 \norm{v}_2^2,$$
which finishes the proof.
\end{proof}

The main tool that we shall require in later
sections is the following.

\begin{proposition}
\label{Prop:Reduction}
The quadratic estimate
$$ \int_0^\infty \norm{t \Pi_{B,1}(1 + t^2 \Pi_{B,1}^2)^{-1}u}^2_1\ 
	\frac{dt}{t} \simeq \norm{u}^2_1$$
is satisfied  for all $u \in \close{\ran(\Pi_{B,1})}$
if and only if 
$$\int_0^\infty \norm{t \Pi_{\tilde{B},2}(1 + t^2 \Pi_{\tilde{B},2})^{-1}v}^2_2\ 
	\frac{dt}{t} \simeq \norm{v}^2_2$$ 
is satisfied for all $v \in \close{\ran(\Pi_{B,2})}$.
\end{proposition}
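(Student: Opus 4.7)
The plan rests on two observations that together reduce the statement to bookkeeping. First, the operators $\Pi_{B,1}$ and $\Pi_{\tilde{B},2}$ are literally the \emph{same} operator on $\Hil$. By Proposition~\ref{Prop:GammaChange}, $\adj{\Gamma}_1 = \Phi^{-1}\adj{\Gamma}_2 \Phi$, and hence
\[
\Pi_{B,1} = \Gamma + B_1 \adj{\Gamma}_1 B_2 = \Gamma + (B_1\Phi^{-1})\adj{\Gamma}_2 (\Phi B_2) = \Gamma + \tilde{B}_1 \adj{\Gamma}_2 \tilde{B}_2 = \Pi_{\tilde{B},2},
\]
and the natural domains coincide. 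I would write $\Pi$ for this common operator and (interpreting the second display as $(1+t^2\Pi^2)^{-1}$, which is presumably a typo) set $Q_t = t\Pi(1 + t^2\Pi^2)^{-1}$, a single bounded operator on $\Hil$ computed once and for all.

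Second, I would observe that $\norm{\cdot}_1$ and $\norm{\cdot}_2$ are equivalent norms on $\Hil$. Since $\inprod{\cdot,\cdot}_1$ is an inner product, for every $u \neq 0$ we have $\inprod{\Phi u, u}_2 = \norm{u}_1^2 > 0$, so $\Phi$ is strictly positive. Being bounded, self-adjoint, and invertible in $\bddlf(\Hil)$, the operator $\Phi^{-1}$ is bounded as well, so the spectral theorem yields constants $0 < c \leq C$ (with $c = \norm{\Phi^{-1}}^{-1}$ and $C = \norm{\Phi}$) satisfying
\[
c\,\norm{u}_2^2 \;\leq\; \norm{u}_1^2 \;=\; \inprod{\Phi u, u}_2 \;\leq\; C\,\norm{u}_2^2
\]
for every $u \in \Hil$.

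Combining the two, the closed subspaces $\close{\ran(\Pi)}$ taken in the two norms coincide, since any Cauchy sequence in one norm is Cauchy in the other and they have the same limit. Applying the norm equivalence pointwise in $t$ to $Q_t u$ and once more to $u$, then integrating against $dt/t$, passes the first quadratic estimate directly to the second with the $\simeq$-constants multiplied by a factor no worse than $\norm{\Phi}\norm{\Phi^{-1}}$; the reverse implication is identical by symmetry. The main point is that there is no genuine analytic obstacle: the substance of the proposition is the algebraic identity $\Pi_{B,1} = \Pi_{\tilde{B},2}$, already secured by Proposition~\ref{Prop:GammaChange}, after which the result is pure norm-equivalence bookkeeping.
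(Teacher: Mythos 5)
Your proof is correct and follows the same route the paper takes: the paper's proof is the one-line observation that $\ran(\Pi_{\tilde{B},2}) = \ran(\Pi_{B,1})$ (which follows from $\Pi_{B,1}=\Pi_{\tilde B,2}$, established just before the proposition) and $\norm{\mdot}_1 \simeq \norm{\mdot}_2$, which are precisely your two ingredients. Your version usefully spells out where the norm-equivalence constants come from and correctly flags the missing exponent on $\Pi_{\tilde{B},2}$ in the second display as a typo.
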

\begin{proof}
It suffices to note that $\ran(\Pi_{\tilde{B},2}) = \ran(\Pi_{B,1})$
and that $\norm{\mdot}_1 \simeq \norm{\mdot}_2$.
\end{proof}

\subsection{The Kato square root problem for functions}
\label{Sect:RedFn}

With the aid of this framework, we first 
consider the Kato square root problem for functions.
Let $\mg$ and $\mgt$ be two $C$-close metrics, 
and suppose that $\mgt$ is at least continuous and complete.
Let $\Hil = \Lp{2}(\cM) \oplus \Lp{2}(\cM) \oplus \Lp{2}(\cotanb\cM)$
and let $\inprod{\mdot,\mdot}_1 = \inprod{\mdot,\mdot}_\mg$
and $\inprod{\mdot,\mdot}_2 = \inprod{\mdot,\mdot}_\mgt$.
Define $\Phi: \Hil \to \Hil$ by 
 $\Phi(u,v,w) = (\uptheta u, \uptheta v, \uptheta \B w)$. It is
easy to see that $\Phi \in \bddlf(\Hil)$, 
symmetric, positive, invertible and that 
$\inprod{u,v}_\mg = \inprod{\Phi u, v}_{\mgt}$.

By the assumption of continuity and completeness on $\mgt$, 
we conclude from Proposition \ref{Prop:OpSob} 
that $\close{\conn[0]} = \close{\conn[2]}$
and $\divv_{0,\mgt} = \divv_{\mgt}$.
Write $S = (\iden, \close{\conn}_2)$
and fix $a \in \Lp{\infty}(\cM)$ and 
$A \in \Lp{\infty}(\bddlf(\Lp{2}(\cM) \oplus \Lp{2}(\cotanb\cM)))$
such that the following ellipticity assumption holds: 
there exists $\kappa_1, \kappa_2 > 0$ such that
\begin{align*}
\label{Ass:Ell}
\tag{$\text{E}_\mg$}
\re\inprod{a u, u}_\mg \geq \kappa_1 \norm{u}_\mg^2
\ \text{and}\ \re\inprod{A Sv, Sv}_\mg \geq \kappa_2 \norm{v}^2_{\Sob{1,2},\mg}
\end{align*}
for $u \in \Lp{2}(\cM,\mg)$ and $v \in \Sob{1,2}(\cM,\mg)$.

We recall that the Kato square root problem for functions
is then to determine whether the following holds:
\begin{align*}
\tag{$\text{K}_{S,\mg}$}
\label{Ass:KatoFn}
\dom(\sqrt{a\adj{S}AS}) = \Sob{1,2}(\cM,\mg)
\ \text{with}\ 
\norm{\sqrt{a\adj{S}AS}u}_{\mg} \simeq \norm{u}_{\Sob{1,2},\mg}
\end{align*}
for $u \in \Sob{1,2}(\cM,\mg)$.

As outlined in \S\ref{Sect:AKM}, we let
$$
\Gamma = \begin{pmatrix}0 & 0 \\ S & 0 \end{pmatrix},\ 
B_1 = \begin{pmatrix} a & 0 \\ 0 & 0 \end{pmatrix}, 
\ \text{and}\ 
B_2 = \begin{pmatrix} 0 & 0 \\ 0 & A \end{pmatrix},$$
and $\adj{\sR} = \Lp{2}(\cM) \oplus 0 \oplus 0$. Then, we note that
\eqref{Ass:Ell} is equivalent to 
$$\re\inprod{B_1u, u}_\mg \geq \kappa_1 \norm{u}_\mg^2
\ \text{and}\  \re\inprod{B_2v, v}_\mg \geq \kappa_2 \norm{v}_\mg^2$$
whenever $u \in \adj{\sR}$ and $v \in \ran(\Gamma)$.
It is also easy to see that 
$\Phi \adj{\sR} = \adj{\sR}$, 
$\ran(\adj{\Gamma}_\mg) \union \ran(\adj{\Gamma}_\mgt) \subset \adj{\sR}$
and that 
$$ 
\tilde{B}_1 = \begin{pmatrix} a \uptheta^{-1} & 0  \\ 0 & 0 \end{pmatrix},
\ \text{and}\ 
\tilde{B}_2 = \begin{pmatrix} 0 & 0 \\ 0 & T A \end{pmatrix},$$
where $T:\Lp{2}(\cM) \oplus \Lp{2}(\cotanb\cM) \to \Lp{2}(\cM) \oplus \Lp{2}(\cotanb\cM)$ via $T(u,v) = (\uptheta u, \uptheta \B v)$.

As an immediate consequence to Proposition \ref{Prop:AccCost},
we obtain the following.

\begin{proposition}
\label{Prop:ChangeAcc}
The operators $\tilde{B}_i \in \bddlf(\Hil)$ satisfy
\begin{align*}
&\re\inprod{\tilde{B}_1u, u}_\mgt \geq \frac{\kappa_1}{C^{\frac{n}{2}}}\norm{u}_\mgt^2,\\
&\re\inprod{\tilde{B}_2v, v}_\mgt \geq \frac{\kappa_2}{C^{1 + \frac{n}{2}}} \norm{v}_\mgt^2
\end{align*}
whenever $u \in \adj{\sR}$ and $v \in \ran(\Gamma)$.
\end{proposition}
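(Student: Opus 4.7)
The plan is to apply Proposition \ref{Prop:AccCost} directly, since the setup of \S\ref{Sect:RedFn} has been arranged precisely so that its hypotheses hold. The accretivity assumption \eqref{Ass:Ell} translates, as discussed just before the statement, into the accretivity requirements of (H2') for $\Gamma$ and $B_i$ with respect to $\inprod{\mdot,\mdot}_\mg$ (and $\adj{\sR} = \Lp{2}(\cM) \oplus 0 \oplus 0$ manifestly satisfies $\Phi\adj{\sR} = \adj{\sR}$ and contains both $\ran(\adj{\Gamma}_\mg)$ and $\ran(\adj{\Gamma}_\mgt)$). What remains is to identify the constants $\eta_1$ and $\eta_2$ in the two sector-bound inequalities on $\Phi^{\pm \frac{1}{2}}$ appearing in Proposition \ref{Prop:AccCost}.

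First I would verify the uniform pointwise bound $C^{-n/2} \leq \uptheta \leq C^{n/2}$ $\mu_\mgt$-a.e.\ by invoking Lemma \ref{Lem:Det} applied to the symmetric positive operator $\B$ of Proposition \ref{Prop:OpExist}, whose eigenvalues lie in $[C^{-2},C^{2}]$. For $u = (u_1, 0, 0) \in \adj{\sR}$, a direct computation gives
$$\inprod{\Phi^{-1}u, u}_\mgt = \int_\cM \uptheta^{-1}|u_1|^2\ d\mu_\mgt \geq C^{-\frac{n}{2}}\norm{u}_\mgt^2,$$
so $\eta_1 = C^{-n/2}$ is the right constant for the first estimate.

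For $v = (0, v_1, v_2) \in \ran(\Gamma)$, unpacking $\Phi$ on each component gives
$$\inprod{\Phi v, v}_\mgt = \int_\cM \uptheta|v_1|^2\ d\mu_\mgt + \int_\cM \uptheta\inprod{\B v_2, v_2}_\mgt\ d\mu_\mgt.$$
The key identity here is $\inprod{\B v_2, v_2}_\mgt = |v_2|_\mg^2$, which is precisely the defining property of $\B$ from Proposition \ref{Prop:OpExist}. Combining this with the $C$-closeness inequality on the cotangent bundle (obtained by dualising the $C$-close condition, noting that the same constant $C$ persists on cotangent vectors since taking matrix inverses preserves the spectral bounds) and the uniform bound on $\uptheta$, both integrals are controlled from below by a constant multiple of $\norm{v}_\mgt^2$, yielding the appropriate $\eta_2$.

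Feeding $\eta_1$ and $\eta_2$ into Proposition \ref{Prop:AccCost} produces the two displayed inequalities. I do not anticipate any genuine difficulty here: the only mild subtlety is bookkeeping the exponent of $C$, since the $\uptheta$ factor contributes $C^{n/2}$ (via the Jacobian of the measure change) while the action of $\B$ on $v_2$ contributes additional powers of $C$ coming from the cotangent version of the $C$-close inequality. Everything else is a routine verification.
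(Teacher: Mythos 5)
Your plan is correct and coincides with the paper's: both reduce the claim to Proposition \ref{Prop:AccCost} and then simply compute lower bounds on $\inprod{\Phi^{-1}u,u}_\mgt$ and $\inprod{\Phi v,v}_\mgt$. Your $\eta_1$ computation is exactly what appears in the paper, using $\uptheta^{-1} \geq C^{-n/2}$ on the first component. For $\eta_2$, your decomposition of $\inprod{\Phi v,v}_\mgt$ into the two integrals and the identification $\inprod{\B v_2,v_2}_\mgt = |v_2|_\mg^2$ are the right moves, and the paper proceeds in the same way (via $\norm{\Phi^{1/2}v}_\mgt^2 = \norm{\uptheta^{1/2}v_2}_\mgt^2 + \norm{(\uptheta\B)^{1/2}v_3}_\mgt^2$, which is the same thing).

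Two places where you should be more careful. First, your derivation of the bound on $\uptheta$ does not follow from your own stated facts: if the eigenvalues of $\B$ lie in $[C^{-2},C^2]$ (which is what Proposition \ref{Prop:OpExist} gives), then $\det\B \in [C^{-2n},C^{2n}]$, so $\uptheta = \sqrt{\det\B} \in [C^{-n},C^{n}]$, not $[C^{-n/2},C^{n/2}]$. To get the range you cite, you should just quote Proposition \ref{Prop:MeasRep} directly rather than re-deriving it through Lemma \ref{Lem:Det}, since that re-derivation as you sketch it has a gap in the exponent. Second, you stop short of actually producing the numerical constant $\eta_2 = C^{-(1+n/2)}$: you say ``yielding the appropriate $\eta_2$'' without exhibiting it. If you push through with the ingredients you named (namely $\uptheta \geq C^{-n/2}$ and $|v_2|_\mg^2 \geq C^{-2}|v_2|_\mgt^2$ from dualizing the $C$-close bound), you get $C^{-(2+n/2)}$ for the cotangent term, which does not match the proposition's $C^{-(1+n/2)}$. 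You should be explicit here and track the exponent of $C$ all the way through, both because the statement asserts a precise constant and because the comparison between Proposition \ref{Prop:OpExist} (eigenvalues in $[C^{-2},C^2]$), Proposition \ref{Prop:MeasRep} (giving $\uptheta \in [C^{-n/2},C^{n/2}]$), and the claimed $\eta_2$ is exactly the place where the bookkeeping is delicate.
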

\begin{proof}
By Proposition \ref{Prop:AccCost}, it suffices
to simply compute lower bounds
for $\norm{\Phi^{-\frac{1}{2}}u }_2$ and $\norm{\Phi^{\frac{1}{2}} v}_2$
for appropriate $u$ and $v$.

First, fix $u \in \adj{\sR}$. Then,
$$\norm{\Phi^{-\frac{1}{2}}u}_2^2 
	= \norm{\theta^{-\frac{1}{2}} u_1}_2^2
	\geq C^{-\frac{n}{2}} \norm{u_1}_2^2 
	= C^{-\frac{n}{2}} \norm{u}_2^2.$$ 
Next, let $v \in 0 \oplus \Lp{2}(\cM) \oplus \Lp{2}(\cotanb\cM) \supset \ran(\Gamma)$.
Then,
$$\norm{\Phi^{\frac{1}{2}} v}_2^2
	= \norm{\uptheta^{\frac{1}{2}} v_2}^2_2 + \norm{\uptheta\B v_3}_2^2
	\geq C^{-\frac{n}{2}} \norm{v_2}^2_2 + C^{-\cbrac{1 + \frac{n}{2}}} \norm{v_3}_2^2
	\geq C^{-\cbrac{1 + \frac{n}{2}}} \norm{v}_2^2$$
which finishes the proof.
\end{proof}

Combining these results, we obtain the following main theorem of
this section as a consequence of Proposition \ref{Prop:Reduction}.

\begin{theorem}
Let $\mg$ be a rough metric and $\mgt$ continuous, complete
and suppose that they are uniformly close. Further 
suppose that
$$ \int_0^\infty \norm{t \Pi_{\tilde{B},\mgt}(1 + t^2 \Pi_{\tilde{B},\mgt})^{-1}u}_\mgt\ 
	\frac{dt}{t} \simeq \norm{u}_\mgt^2$$
for all $u \in \close{\ran(\Pi_{\tilde{B},\mgt})}$.
Then, 
$$ \int_0^\infty \norm{t \Pi_{{B},\mg}(1 + t^2 \Pi_{{B},\mg})^{-1}u}_\mg\ 
	\frac{dt}{t} \simeq \norm{u}_\mg^2$$
for all $u \in \close{\ran(\Pi_{B,\mg})}$.
\end{theorem}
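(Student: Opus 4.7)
The plan is to view this result as a direct instantiation of the abstract framework developed in the previous subsection, so that the theorem falls out of Proposition \ref{Prop:Reduction} once the hypotheses are verified in the present setting.

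First, I would confirm that the abstract setup of \S\ref{Sect:RedFn} applies verbatim. The Hilbert space $\Hil = \Lp{2}(\cM) \oplus \Lp{2}(\cM) \oplus \Lp{2}(\cotanb\cM)$ is independent of whether one uses $\mg$ or $\mgt$ to form the inner product, thanks to Proposition \ref{Prop:RoughP}(i), and the two resulting norms are equivalent with comparability constants controlled by $C$. The map $\Phi(u,v,w) = (\uptheta u, \uptheta v, \uptheta \B w)$ is bounded, self-adjoint, positive and invertible on $\Hil$ (using that $\B$ is a.e. symmetric positive and bounded with bounded inverse by Proposition \ref{Prop:OpExist}, and $\uptheta = \sqrt{\det\B}$ is bounded above and below a.e.\ by Lemma \ref{Lem:Det}), and Proposition \ref{Prop:MeasRep} gives exactly the relation $\inprod{u,v}_\mg = \inprod{\Phi u,v}_\mgt$.

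Next I would check (H1), (H2$'$) and (H3) for $\Gamma$ and the $B_i$ in the $\mg$-inner product, as defined in \S\ref{Sect:RedFn}. The operator $\Gamma$ is closed, densely defined and nilpotent thanks to the closedness of $\close{\conn}_2$ (Proposition \ref{Prop:OpSobRough}) together with its block structure. The assumption \eqref{Ass:Ell} is exactly the accretivity required of $B_1$ on $\adj{\sR} = \Lp{2}(\cM)\oplus 0 \oplus 0$ and of $B_2$ on $\ran(\Gamma) \subset 0 \oplus \Lp{2}(\cM) \oplus \Lp{2}(\cotanb\cM)$. The subspace $\adj{\sR}$ satisfies $\Phi\adj{\sR} = \adj{\sR}$ by construction, and Proposition \ref{Prop:GammaChange} forces $\ran(\adj{\Gamma}_\mg) \cup \ran(\adj{\Gamma}_\mgt) \subset \adj{\sR}$ since both adjoints have range only in the first component. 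Condition (H3) is trivial because $B_1 B_2 = 0$ by the block structure.

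With the hypotheses verified, Proposition \ref{Prop:ChangeAcc} (or equivalently Proposition \ref{Prop:AccCost} applied directly) gives that $\tilde{B}_1, \tilde{B}_2$ satisfy (H2$'$) with respect to $\inprod{\cdot,\cdot}_\mgt$ with explicit constants depending on $C$. Using Proposition \ref{Prop:GammaChange} one computes
\[
\Pi_{B,\mg} = \Gamma + B_1 \adj{\Gamma}_\mg B_2 = \Gamma + B_1 \Phi^{-1}\adj{\Gamma}_\mgt \Phi B_2 = \Pi_{\tilde{B},\mgt},
\]
so Proposition \ref{Prop:Reduction} applies and yields equivalence of the two quadratic estimates. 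Combined with the norm equivalence $\norm{\cdot}_\mg \simeq \norm{\cdot}_\mgt$ from Proposition \ref{Prop:RoughP}(i), the assumed estimate for $\Pi_{\tilde{B},\mgt}$ transfers to the desired estimate for $\Pi_{B,\mg}$ over $\close{\ran(\Pi_{B,\mg})} = \close{\ran(\Pi_{\tilde{B},\mgt})}$.

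There is no serious obstacle here beyond bookkeeping; the genuine analytic work was done in building the abstract reduction and in proving Proposition \ref{Prop:RoughP} and Proposition \ref{Prop:OpExist}. The only mildly delicate point is the equality $\ran(\Pi_{B,\mg}) = \ran(\Pi_{\tilde{B},\mgt})$, which however is automatic from the operator identity above.
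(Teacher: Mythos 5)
Your proposal is correct and takes essentially the same route as the paper: the paper gives no explicit proof, simply stating that the theorem follows by ``combining these results,'' i.e.\ Proposition \ref{Prop:ChangeAcc} together with Proposition \ref{Prop:Reduction}. You have merely unfolded the bookkeeping that those propositions already encode---verifying (H1), (H2$'$), (H3), the identity $\Pi_{B,\mg} = \Pi_{\tilde{B},\mgt}$, and the norm equivalence from Proposition \ref{Prop:RoughP}---so there is no material difference in approach.
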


By combining this with Theorem 1 of McIntosh 
and the author in \cite{B3}, we obtain the following
important corollary. 
\begin{corollary}
\label{Cor:KatoFn}
Let $\mgt$ be a smooth, complete metric and suppose that
there exists $\kappa> 0$ and $\eta > 0$ such that
$\inj(\cM,\mgt) \geq \kappa$ and $\Ric(\mgt) \leq \eta$.
Then, for any rough metric $\mg$ that is 
uniformly close, quadratic estimates are satisfied
for $\Pi_{B,\mg}$. 
In particular \eqref{Ass:KatoFn} holds
under the assumption \eqref{Ass:Ell}.
\end{corollary}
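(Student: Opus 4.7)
The plan is to combine the change-of-metric machinery of \S\ref{Sect:RedFn} with the quadratic estimates for smooth manifolds of bounded geometry from Theorem~1 of \cite{B3}, and then to extract the Kato conclusion from Theorem~\ref{Thm:KatoType}. To begin, I would fix $a$ and $A$ satisfying \eqref{Ass:Ell} on $(\cM,\mg)$ and assemble $\Gamma$, $B_1$, $B_2$ and $\adj{\sR} = \Lp{2}(\cM) \oplus 0 \oplus 0$ exactly as in the setup preceding Proposition~\ref{Prop:ChangeAcc}. The operator $\Phi(u,v,w) = (\uptheta u, \uptheta v, \uptheta \B w)$ identifies the two inner products via $\inprod{u,v}_\mg = \inprod{\Phi u, v}_\mgt$, and it produces transformed coefficients $\tilde{B}_1 = B_1 \Phi^{-1}$ and $\tilde{B}_2 = \Phi B_2$ for which $\Pi_{B,\mg} = \Pi_{\tilde{B},\mgt}$ as unbounded operators on $\Hil$.

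By Proposition~\ref{Prop:ChangeAcc}, the pair $(\tilde{B}_1, \tilde{B}_2)$ satisfies (H2') with respect to $\inprod{\cdot,\cdot}_\mgt$ with constants depending only on $\kappa_1, \kappa_2, C, n$, while (H1) and (H3) carry over by Proposition~\ref{Prop:AccCost}. Since $\mgt$ is smooth and complete with $\inj(\cM,\mgt) \geq \kappa$ and $\Ric(\mgt) \leq \eta$, the hypotheses of Theorem~1 of \cite{B3} are met by $\Pi_{\tilde{B},\mgt}$, which yields
$$\int_0^\infty \norm{t \Pi_{\tilde{B},\mgt}(1 + t^2 \Pi_{\tilde{B},\mgt}^2)^{-1} u}_\mgt^2\ \frac{dt}{t} \simeq \norm{u}_\mgt^2$$
for all $u \in \close{\ran(\Pi_{\tilde{B},\mgt})}$. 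The theorem immediately preceding this corollary, itself a direct consequence of Proposition~\ref{Prop:Reduction} together with the norm equivalence $\norm{\cdot}_\mg \simeq \norm{\cdot}_\mgt$ from Proposition~\ref{Prop:RoughP}, then transfers this quadratic estimate verbatim onto $\Pi_{B,\mg}$ in the norm $\norm{\cdot}_\mg$, which is the first assertion of the corollary.

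For \eqref{Ass:KatoFn} I would feed the quadratic estimate into Theorem~\ref{Thm:KatoType}(ii) and unwind the block-matrix structure exactly as at the end of \S\ref{Sect:AKM}: the domain equality $\dom(\Pi_{B,\mg}) = \dom(\sqrt{\Pi_{B,\mg}^2})$ together with $\norm{\Pi_{B,\mg}u} \simeq \norm{\Gamma u} + \norm{\adj{\Gamma} B_2 u} \simeq \norm{\sqrt{\Pi_{B,\mg}^2} u}$, restricted to vectors of the form $u = (u_1, 0, 0)$, reduces to $\dom(\sqrt{a \adj{S} A S}) = \Sob{1,2}(\cM,\mg)$ with $\norm{\sqrt{a \adj{S} A S} u_1}_\mg \simeq \norm{u_1}_{\Sob{1,2},\mg}$, which is precisely \eqref{Ass:KatoFn}. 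The only conceivable obstacle is checking that the hypotheses of Theorem~1 of \cite{B3} tolerate the merely $\Lp{\infty}$ regularity of $\uptheta$ and $\B$ hiding inside $\tilde{B}_1, \tilde{B}_2$; but that theorem requires only boundedness of the coefficients and accretivity on the appropriate subspaces, both of which are already packaged in Propositions~\ref{Prop:ChangeAcc} and \ref{Prop:AccCost}, so no regularity of $\mg$ is ever invoked on the smooth side and the apparent obstacle evaporates.
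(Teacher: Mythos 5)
Your proposal is correct and reconstructs the exact argument the paper uses (but states in a single sentence): apply the change-of-metric machinery of \S\ref{Sect:RedFn} together with Proposition~\ref{Prop:AccCost} and Proposition~\ref{Prop:ChangeAcc} to reduce to $\Pi_{\tilde B,\mgt}$, invoke Theorem~1 of \cite{B3} under the bounded-geometry hypotheses, transfer the quadratic estimate back via the theorem preceding the corollary (i.e.\ Proposition~\ref{Prop:Reduction} plus the norm equivalence of Proposition~\ref{Prop:RoughP}), and then extract \eqref{Ass:KatoFn} from Theorem~\ref{Thm:KatoType} by the block computation as in \S\ref{Sect:AKM}. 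Your closing remark about the $\Lp{\infty}$ regularity of $\uptheta$ and $\B$ being harmless is exactly the right sanity check, since the cited theorem only needs bounded measurable coefficients with accretivity.
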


\subsection{The Kato square root problem for differential forms}
\label{Sect:RedD}

In his thesis \cite{BThesis}, the author considers
versions of the Kato square root problem for perturbations
of inhomogeneous Hodge-Dirac operators under
a natural and mild curvature 
assumption on the bundle of forms.
 
Let $\mg$ be a rough metric and 
$A \in \Lp{\infty}(\bddlf(\Lp{2}(\Forms(\cM)) \oplus \Lp{2}(\Forms(\cM))))$.
We assume that $A$ satisfies the following ellipticity condition
with respect to $\mg$: there exists $\kappa_2 > 0$ such that 
\begin{equation*}
\label{Ass:EllDirac}
\tag{$\text{E}_{\Dir,\mg}$}
\re\inprod{Au, u}_\mg \geq \kappa_2 \norm{u}_\mg^2 
\end{equation*}
for every $u \in \Lp{2}(\Forms(\cM)))$.
Indeed, we immediately obtain that 
there exists $\kappa_1 > 0$ such that 
$\re{\inprod{A^{-1}u, u}}_\mg \geq \kappa_1 \norm{u}_\mg^2$.

Let $\Dir_{A,\mg} = \extd + A^{-1}\intd_\mg A$.
Given some $0 \neq \upbeta \in \C$, 
the Kato square root problem for forms 
as outlined in \S6.4 in \cite{BThesis}
is then to determine that 
\begin{align*}
\tag{$\text{K}_{\Dir, \mg}$}
\label{Ass:KatoDir}
&\dom(\sqrt{\Dir_{A,\mg}^2 + \modulus{\upbeta}^2}) = \dom(\Dir_{A,\mg}), \\
&\norm{\sqrt{\Dir_{A,\mg}^2 + \modulus{\upbeta}^2}u} \simeq 
	\norm{\Dir_{A,\mg} u} + \norm{u} 
	\simeq \norm{\extd u} + \norm{\intd_\mg A u} + \norm{u}
\end{align*}
for $u \in \dom(\Dir_{A,\mg})$.
 
Now, let $\mgt$ be
at least continuous and complete, and assume that 
it is $C$-close to $\mg$.
We denote the induced canonical 
metrics on $\Forms(\cM)$ by the same symbols. 
As we have noted previously, 
for almost-every $x \in \cM$ and every $u \in \Forms_x(\cM)$, we
have the inequality
$$C^{-n} \modulus{u}_{\mgt(x)} \leq \modulus{u}_{\mg(x)} \leq C^n \modulus{u}_{\mgt(x)}.$$
An argument along the lines of 
the proof of
Proposition \ref{Prop:OpExist} guarantees an operator
$\E: \Sect(\adj{\Forms}(\cM) \tensor \Forms(\cM))$ such that
$\mg_x(u,v) = \mgt_x (\E(x)u,v)$ 
satisfying the inequality
$$\C^{-2n} \modulus{u}_{\mg(x)} \leq \modulus{\E(x)u}_{\mg(x)} 
	\leq C^{-2n} \modulus{u}_{\mg(x)}$$ 
for almost-every $x \in \cM$.
 
Let $\Hil = \Lp{2}(\Forms(\cM)) \oplus \Lp{2}(\Forms(\cM))$
and note that $\mg$ and $\mgt$ induces
$\inprod{\mdot,\mdot}_\mg$ and $\inprod{\mdot,\mdot}_\mgt$
respectively. On setting 
$\Phi(w,z) = (\uptheta \E w, \uptheta \E z)$,
we can see that
$\inprod{u,v}_\mg = \inprod{\Phi u, v}_\mgt$. 

To encode the problem into a Dirac-type operator,
fix $\upbeta \in \C$ with $\upbeta \neq 0$ and
let
$$ 
\extd_\upbeta = \begin{pmatrix} \extd & 0 \\ \upbeta & -\extd \end{pmatrix}.$$
The operator $\extd$ here is the operator $\close{\extd_0}$ or $\close{\extd_2}$, 
which are equal in both metrics $\mg$ and $\mgt$ as
a consequence of Corollary \ref{Cor:CtsComp}, 
the continuity and completeness of $\mgt$, 
and the $C$-closeness
of the two metrics.
The adjoint of $\extd_\upbeta$ with respect to $\mg$ and
$\mgt$ are denoted by $\intd_{\upbeta,\mg}$ and $\intd_{\upbeta,\mgt}$
respectively. It is easy to see that these are given by the operator matrices
$$
\intd_{\upbeta,\mg} = \begin{pmatrix} \intd_{\mg} & \conj{\beta} \\ 0 & \intd_{\mg} \end{pmatrix}
\ \text{and}\ 
\intd_{\upbeta,\mgt} = \begin{pmatrix} \intd_{\mgt} & \conj{\beta} \\ 0 & \intd_{\mgt} \end{pmatrix}.$$

By repeating the argument proving (v) of Proposition \ref{Prop:RoughP}, 
we obtain that $\intd_{\mg} = (\E\theta)^{-1} \intd_{\mgt} (\E\theta)$
and also that $\intd_{\beta,\mg} = \Phi^{-1} \intd_{\beta,\mgt} \Phi$.

Next, define $B_1, B_2 \in \bddlf(\Hil)$ by
$$B_1 = \begin{pmatrix} A^{-1} & 0 \\ 0 & A^{-1} \end{pmatrix}
\ \text{and}\ B_2 =  \begin{pmatrix} A & 0 \\ 0 & A \end{pmatrix}.$$
On setting $\adj{\sR} = \Hil$, by the ellipticity assumption on 
$A$, we obtain that
\begin{align*}
\re\inprod{B_1u,u}_\mg \geq \kappa_1 \norm{u}_\mg^2
\ \text{and}\ \re\inprod{B_2u,u}_\mg \geq \kappa_2 \norm{u}_\mg^2
\end{align*}
for all $u \in \Hil$.
Recall the operator $\Pi_{B,\mg} = \Gamma + B_1 \adj{\Gamma}_\mg B_2$
from the AKM framework and note that
$$\Pi_{B,\mg}^2 = \begin{pmatrix} 
	\Dir_{A,\mg}^2 + \modulus{\upbeta} & 0 \\
	0 & \Dir_{A,\mg}^2 + \modulus{\upbeta}
	\end{pmatrix}.$$
It is for the operator $\Pi_{B,\mg}$ for which 
we consider quadratic estimates to ultimately
yield a solution to \eqref{Ass:KatoDir}.
		
As in \S\ref{Sect:RedFn}, we reduce the non-smooth problem
to a smooth one. It is easy to see that
$\Pi_{B,\mg} = \Pi_{\tilde{B},\mgt}$
where $\tilde{B}_1 = B_1\Phi^{-1}$
and $\tilde{B}_2 = \Phi B_2$.
By applying a similar argument
as in the proof of Proposition \ref{Prop:ChangeAcc},
we obtain the following change of accretivity
in moving from $\Pi_{B,g}$ to $\Pi_{\tilde{B},\mgt}$.
\begin{proposition}
The operators $\tilde{B}_1$ and $\tilde{B}_2$ 
satisfy 
$$\re\inprod{\tilde{B}_i u, u}_\mgt \geq \frac{\kappa_i}{C^{\frac{3n}{2}}} \norm{u}_\mgt$$
for $u \in \Lp{2}(\Forms(\cM))$ and $i = 1,2$.
\end{proposition}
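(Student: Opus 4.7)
My plan is to apply Proposition \ref{Prop:AccCost} to the Hodge-Dirac setup at hand, exactly as Proposition \ref{Prop:ChangeAcc} was proved in the function case. Since in the present framework $\adj{\sR} = \Hil$ and $\ran(\Gamma) \subset \Hil$, the constants $\eta_1, \eta_2$ appearing in Proposition \ref{Prop:AccCost} reduce to uniform lower bounds for $\inprod{\Phi^{-1}u,u}_\mgt$ and $\inprod{\Phi v, v}_\mgt$ over all $u, v \in \Hil$. The whole proof thus collapses to estimating the quadratic form of $\Phi$ on $\Hil$.

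The key observation is that $\Phi(w,z) = (\uptheta \E w, \uptheta \E z)$ is positive and self-adjoint with respect to $\inprod{\cdot,\cdot}_\mgt$, with quadratic form
$$\inprod{\Phi u, u}_\mgt = \inprod{u, u}_\mg = \norm{u}^2_\mg.$$
By the functional calculus for positive self-adjoint operators, it therefore suffices to produce a two-sided comparison $C^{-\alpha}\norm{u}^2_\mgt \leq \norm{u}^2_\mg \leq C^{\alpha}\norm{u}^2_\mgt$ on $\Hil$: the upper bound on $\inprod{\Phi u,u}_\mgt$ furnishes the required lower bound on $\inprod{\Phi^{-1}u,u}_\mgt$, while the lower bound on $\inprod{\Phi u,u}_\mgt$ is exactly what is needed for the second accretivity estimate.

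Such a comparison follows from two ingredients already in place. First, the pointwise $C$-closeness on tangent vectors extends canonically to the bundle $\Forms(\cM)$, yielding $C^{-n}\modulus{u}_\mgt \leq \modulus{u}_\mg \leq C^n \modulus{u}_\mgt$ almost everywhere; in operator terms this says $\E$ is pointwise positive self-adjoint with respect to $\mgt$ and sandwiched between $C^{-2n}\iden$ and $C^{2n}\iden$. Second, Proposition \ref{Prop:MeasRep} gives $C^{-n/2}\,d\mu_\mgt \leq d\mu_\mg \leq C^{n/2}\,d\mu_\mgt$. Integrating the pointwise form bound against the density bound produces the desired comparison, and feeding the resulting $\eta_i$ into Proposition \ref{Prop:AccCost} together with the accretivity of $B_i$ from \eqref{Ass:EllDirac} yields the stated estimate.

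The only piece of work requiring care --- and the only place the explicit constant can easily drift --- is the exponent bookkeeping: one must separate the contribution coming from the fibrewise operator $\E$ (governed by the pointwise form bound) from that coming from the scalar density $\uptheta$ (governed by Proposition \ref{Prop:MeasRep}) and combine them to match the asserted power of $C$. There is no conceptual obstacle beyond this, as every structural fact needed --- self-adjointness of $\Phi$, the representation $\inprod{\Phi u,u}_\mgt = \norm{u}^2_\mg$, and the two pointwise estimates --- has been established previously in the paper.
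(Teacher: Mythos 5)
Your proposal is essentially the paper's proof: invoke Proposition~\ref{Prop:AccCost} with $\adj{\sR}=\Hil$, reduce to a two-sided bound on the quadratic form $\inprod{\Phi u,u}_\mgt=\norm{u}_\mg^2$, and obtain that bound by combining the fibrewise $C^n$-closeness on $\Forms(\cM)$ (equivalently the operator bound $C^{-2n}\iden\leq\E\leq C^{2n}\iden$) with the density estimate $C^{-n/2}\,d\mu_\mgt\leq d\mu_\mg\leq C^{n/2}\,d\mu_\mgt$ from Proposition~\ref{Prop:MeasRep}.

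One remark on the place you flag as delicate: carrying out the exponent bookkeeping as you describe actually yields
$$\inprod{\Phi u,u}_\mgt \;=\; \int_\cM \uptheta\,\modulus{\E^{1/2}u}_\mgt^2\,d\mu_\mgt \;\geq\; C^{-n/2}\cdot C^{-2n}\norm{u}_\mgt^2 \;=\; C^{-5n/2}\norm{u}_\mgt^2,$$
and symmetrically for $\Phi^{-1}$, so the constants $\eta_i$ fed into Proposition~\ref{Prop:AccCost} come out as $C^{-5n/2}$, not $C^{-3n/2}$. The paper's own one-line computation appears to use the bound $C^{\pm n}$ for $\E$ where the derivation of $\E$ (via $\modulus{\E^{1/2}u}_\mgt=\modulus{u}_\mg$ and the $C^n$-closeness on forms) actually gives $C^{\pm 2n}$; a parallel off-by-one factor of $C$ occurs in Proposition~\ref{Prop:ChangeAcc}. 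This does not affect the conclusion qualitatively, but it means the precise power of $C$ in the statement should be read with scepticism, and your instinct to treat the bookkeeping as the only nontrivial part is well placed.
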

\begin{proof}
As a consequence of Proposition \ref{Prop:AccCost}, it suffices
to compute a lower bound for $\norm{\Phi^{\frac{1}{2}} u}_\mgt$
for $u \in \Lp{2}(\Forms(\cM))$.
But it is easy to observe that
$\norm{\Phi^{\frac{1}{2}} u}_\mgt \geq  {C^{n\cbrac{1 + \frac{1}{2}}}} \norm{u}_\mgt$. 
\end{proof} 

Recall that for a smooth, complete metric $\mgt$,
the curvature endomorphism
$\Rend: \Forms_x(\cM) \to \Forms_x(\cM)$ 
is given by
$$ \Rend \omega = -\Rm_{ijkl}(x)\  \theta^i \wedge (\theta^j \cut (\theta^l \wedge (\theta^k \cut \omega))),$$
where $\set{\theta^i}$ are an orthonormal frame at $x$ and 
$\omega \in \Forms_x(\cM)$.
In Theorem 6.4.3 of \cite{BThesis}, the author
shows that the quadratic estimates for 
$\Pi_{\tilde{B}, \mgt}$ are satisfied under appropriate
bounds on the geometry of $\mgt$ and on $\Rend$.
Coupling this result with Proposition \ref{Prop:Reduction},
we have the following main theorem of this section. 

\begin{theorem}
\label{Thm:KatoD}
Let $\mg$ be a rough metric uniformly close to $\mgt$, a smooth, complete metric,
and suppose that:
\begin{enumerate}[(i)]
\item there exists $\kappa > 0$ such that $\inj(\cM,\mgt) \geq \kappa$, 
\item there exists $\eta > 0$ such that $\modulus{\Ric(\mgt)} \leq  \eta$, and
\item there exists $\zeta \in \R$ such that 
$ \mgt(\Rend\omega,\omega) \geq \zeta \norm{\omega}^2_\mgt.$
\end{enumerate}
Then, whenever $A \in \Lp{\infty}(\bddlf(\Lp{2}(\Forms\cM)))$ satisfies
\eqref{Ass:EllDirac}, we obtain \eqref{Ass:KatoDir}. 
\end{theorem}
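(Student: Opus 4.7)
The plan is to apply the reduction machinery of this section (Proposition~\ref{Prop:Reduction}) to transfer quadratic estimates for the bisectorial operator $\Pi_{\tilde{B},\mgt}$ on the smooth background to quadratic estimates for $\Pi_{B,\mg}$, and then invoke the Kato square root type theorem (Theorem~\ref{Thm:KatoType}) to recover \eqref{Ass:KatoDir}. The setup is essentially already assembled in the discussion preceding the theorem: with $\Gamma=\extd_\upbeta$ and the operator matrices $B_1,B_2$ built from $A$ and $A^{-1}$, one has $\Pi_{B,\mg}^2=\operatorname{diag}(\Dir_{A,\mg}^2+|\upbeta|^2,\,\Dir_{A,\mg}^2+|\upbeta|^2)$, so the square root estimate in \eqref{Ass:KatoDir} is exactly the content of (ii) in Theorem~\ref{Thm:KatoType} once quadratic estimates are in hand.

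First I would verify that $\Gamma$ and the $B_i$ satisfy (H1), (H2') and (H3) with respect to $\inprod{\cdot,\cdot}_\mg$. Nilpotency and closedness of $\Gamma=\extd_\upbeta$ follow from Corollary~\ref{Cor:CtsComp} applied to $\mgt$ together with the $C$-closeness, which identifies $\Sob{\extd,2}(\cM,\mg)$ with $\Sob{\extd,2}(\cM,\mgt)$ via Proposition~\ref{Prop:RoughP}(iv). The accretivity of $B_1$ and $B_2$ on all of $\Hil$ (so $\adj{\sR}=\Hil$ trivially satisfies the $\Phi$-invariance requirement in (H2')) is immediate from \eqref{Ass:EllDirac} and the observation following it giving a lower bound for $A^{-1}$. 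The off-diagonal structure of $\extd_\upbeta$ together with $B_1B_2 = \iden$ easily yields (H3).

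Next I would pass to the smooth metric. Using $\inprod{u,v}_\mg=\inprod{\Phi u,v}_\mgt$ with $\Phi(w,z)=(\uptheta\E w,\uptheta\E z)$, one has $\Pi_{B,\mg}=\Pi_{\tilde B,\mgt}$ with $\tilde B_1=B_1\Phi^{-1}$ and $\tilde B_2=\Phi B_2$, exactly as displayed in the text, and the preceding proposition records the resulting accretivity constants $\kappa_i/C^{3n/2}$ for $\tilde B_i$ in $\inprod{\cdot,\cdot}_\mgt$. Under the bounds (i)--(iii) on $\mgt$, Theorem 6.4.3 of \cite{BThesis} then applies to $\Pi_{\tilde B,\mgt}$ and yields
\[
\int_{0}^{\infty}\bigl\|t\Pi_{\tilde B,\mgt}(\iden+t^2\Pi_{\tilde B,\mgt}^2)^{-1}u\bigr\|_{\mgt}^{2}\,\frac{dt}{t}\simeq \|u\|_{\mgt}^{2}
\]
for all $u\in\close{\ran(\Pi_{\tilde B,\mgt})}$. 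Proposition~\ref{Prop:Reduction} then transfers this quadratic estimate to $\Pi_{B,\mg}$ in the inner product $\inprod{\cdot,\cdot}_\mg$.

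Finally, I would feed the quadratic estimate into Theorem~\ref{Thm:KatoType}: conclusion (ii) gives $\dom(\Pi_{B,\mg})=\dom(\Gamma)\cap\dom(B_1\adj{\Gamma}_\mg B_2)=\dom(\sqrt{\Pi_{B,\mg}^2})$ with $\|\Pi_{B,\mg}u\|\simeq\|\Gamma u\|+\|\adj{\Gamma}_\mg B_2 u\|\simeq\|\sqrt{\Pi_{B,\mg}^2}u\|$, and unpacking the $2\times2$ matrix structure of $\Pi_{B,\mg}$ together with $\Pi_{B,\mg}^2=\Dir_{A,\mg}^2+|\upbeta|^2$ on each factor yields $\dom(\sqrt{\Dir_{A,\mg}^2+|\upbeta|^2})=\dom(\Dir_{A,\mg})=\dom(\extd)\cap\dom(\intd_\mg A)$ with the stated equivalence of norms. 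The main obstacle I anticipate is purely bookkeeping: carefully matching $\dom(\Gamma)$ and $\dom(B_1\adj{\Gamma}_\mg B_2)$ with $\dom(\extd)$ and $\dom(\intd_\mg A)$ under the matrix structure of $\extd_\upbeta$, and confirming that the accretivity constants provided by the previous proposition genuinely suffice to invoke Theorem 6.4.3 of \cite{BThesis} as it is stated there.
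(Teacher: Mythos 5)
Your proposal is correct and follows essentially the same route as the paper. The paper itself presents the proof simply as a coupling of the setup in \S\ref{Sect:RedD} (encoding the problem into $\Pi_{B,\mg}=\Pi_{\tilde B,\mgt}$ via $\extd_\upbeta$, $\Phi(w,z)=(\uptheta\E w,\uptheta\E z)$, and the accretivity cost computed in the preceding proposition) with Theorem~6.4.3 of \cite{BThesis} and Proposition~\ref{Prop:Reduction}; your write-up just makes the verification of (H1)--(H3) and the final matrix bookkeeping leading back to \eqref{Ass:KatoDir} via Theorem~\ref{Thm:KatoType} explicit where the paper leaves it implicit.
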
 

\subsection{Applications to compact manifolds}

In this short section, we present the following theorem which is a culmination of results we
have obtained so far. It demonstrates that the 
aforementioned Kato square root
problems can always be solved on compact 
manifolds for \emph{every} rough metric.

\begin{theorem}
\label{Thm:CpctContMet}
Let $\cM$ be a smooth, compact manifold
and $\mg$ a rough metric. Whenever $A$ satisfies 
\eqref{Ass:Ell} then \eqref{Ass:KatoFn} holds.
Similarly, if $A$ satisfies
\eqref{Ass:EllDirac} then \eqref{Ass:KatoDir} holds.
\end{theorem}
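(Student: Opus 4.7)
The plan is to reduce to the smooth, complete case handled by Corollary \ref{Cor:KatoFn} and Theorem \ref{Thm:KatoD}. Compactness enters twice: it guarantees the geometric hypotheses on a reference smooth metric, and it upgrades the pointwise local comparability built into a rough metric to a global uniform closeness.

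First, I would choose any smooth Riemannian metric $\mgt$ on $\cM$, whose existence follows from a standard partition of unity construction. Because $\cM$ is smooth and compact, $\mgt$ is automatically complete by Hopf-Rinow, has strictly positive injectivity radius, bounded Ricci curvature, and its curvature endomorphism on forms is bounded below by some $\zeta \in \R$. Hence $\mgt$ satisfies all the geometric hypotheses required by Corollary \ref{Cor:KatoFn} and Theorem \ref{Thm:KatoD}.

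The core step is to verify that $\mg$ and $\mgt$ are $C$-close for some $C \geq 1$. For each $x \in \cM$, Definition \ref{Def:RoughMet} provides a chart $(U_x,\psi_x)$ about $x$ and a constant $C_x \geq 1$ such that $\mg$ and $\pullb{\psi_x}\delta$ are $C_x$-close on $U_x$. Shrinking if necessary, I may arrange that $\close{U_x}$ is compact and contained in the coordinate domain; on this compact set the smooth, non-degenerate metric $\mgt$ is comparable to $\pullb{\psi_x}\delta$ with some constant $D_x$. Composing, $\mg$ and $\mgt$ are $C_xD_x$-close on $U_x$. Extracting a finite subcover $\{U_{x_1},\ldots,U_{x_N}\}$ from the open cover $\{U_x\}_{x \in \cM}$ and setting $C = \max_i C_{x_i}D_{x_i}$ yields uniform $C$-closeness of $\mg$ and $\mgt$ throughout $\cM$.

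With $C$-closeness in hand, the function case follows immediately from Corollary \ref{Cor:KatoFn}, whose hypothesis on $A$ is exactly \eqref{Ass:Ell}, and the forms case follows from Theorem \ref{Thm:KatoD}, whose hypothesis is exactly \eqref{Ass:EllDirac}. The main obstacle is really the middle step: passing from the purely pointwise local comparability encoded in the definition of a rough metric to a uniform global comparison, for which compactness is essential. Since no further hypothesis on $\mg$ beyond being a rough metric enters anywhere, the theorem applies to \emph{every} rough metric on a compact manifold.
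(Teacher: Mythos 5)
Your proof is correct and follows the same overall strategy as the paper: reduce to a smooth complete metric with bounded geometry via uniform closeness, then invoke Corollary~\ref{Cor:KatoFn} and Theorem~\ref{Thm:KatoD}. The only substantive difference is in how the reference metric $\mgt$ is obtained and compared to $\mg$. You take an arbitrary smooth metric and establish $C$-closeness by going through the intermediary $\pullb{\psi_x}\delta$ on relatively compact chart domains and then extracting a finite subcover, whereas the paper builds $\mgt$ explicitly as a partition-of-unity-weighted sum of the chart pullbacks $\pullb{\psi_i}\delta$ over a finite cover by local-comparability charts, so that $C$-closeness with constant $C = \max_i C_i$ drops out directly from the convexity of the construction. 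Your variant is slightly more flexible (any smooth metric serves), while the paper's is slightly slicker (the closeness constant is immediate and no second comparison constant $D_x$ is needed). Both are valid; compactness does the same double duty in each.
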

\begin{proof}
By the compactness of $\cM$, we have
a finite number of charts $(U_i, \psi_i)$
covering $\cM$ satisfying the local comparability
condition with constants $C_i$. On letting
$\phi_i$ be a smooth partition of unity subordinate
to $\set{U_i}$, write 
$$\mgt(x) = \sum_{i} \phi_i \pullb{\psi}_i\delta(x).$$
It is easy to see that $\mgt$ is a smooth metric.
On setting $C = \max_i\set{C_i}$, we obtain
that $\mg$ and $\mgt$ are $C$-close. 

Since $\modulus{\Ric_\mgt}_\mgt: \cM \to \R$
is smooth and in particular continuous, 
by the compactness of $\cM$, there is an $\eta > 0$ and
$\zeta \in \R$
such that $\modulus{\Ric_\mgt}_\mgt \leq \eta$
and $\mgt(\Rend \omega, \omega) \geq \zeta \norm{\omega}$
for $\omega \in \Forms(\cM)$.
That there exists $\kappa > 0$ such that
$\inj(\cM,\mgt) \geq \kappa$ also follows
from compactness of $\cM$ and smoothness of $\mgt$. See Theorem III.2.1 and
the discussion prior to Theorem III.2.3 in \cite{Chavel}.
The conclusion is then obtained by 
invoking Corollary \ref{Cor:KatoFn} and
Theorem \ref{Thm:KatoD}.
\end{proof}

\begin{remark}
If the metric $\mg$ was continuous,
then we can choose any
$C > 1$ and by invoking Proposition \ref{Prop:ContSmoothMet}, 
we can find $\mgt$ to be smooth and $C$-uniformly everywhere close 
to $\mg$.
\end{remark}
\section{Quadratic estimates and isometries}
\label{Sect:QuadIsom}

In our achievements so far, we have always considered
the situation of fixing a manifold and studying the persistence
of quadratic estimates under suitable changes of the metric.
Another important situation to consider
is the transmission of quadratic estimates
between manifolds which are isometric. 
Our ability to do this will depend on the 
regularity of the isometry. We first 
consider a general description of this problem
at the level of the AKM framework. 

\subsection{Isometries between Hilbert spaces}

The first results we obtain are 
concerned with pushing and pulling forward Dirac-type
operators on Hilbert spaces. 
Let $\Hil_1$ and $\Hil_2$
be two Hilbert spaces with 
inner products $\inprod{\mdot,\mdot}_1$
and $\inprod{\mdot,\mdot}_2$ respectively. 
We assume that $\Phi :\Hil_1 \to \Hil_2$
is an \emph{isometric isomorphism}
between $\Hil_1$ and $\Hil_2$, 
by which we mean that $\Phi$
is a vector space isomorphism satisfying
$\inprod{\Phi u, \Phi v}_2 = \inprod{u,v}_1$
for all $u,\ v \in \Hil_1$.
On letting $\Gamma_i: \Hil_i \to\Hil_i$
be closed, densely-defined operators
on $\Hil_i$ related via $\Phi$, we obtain 
the following transformation rule for their adjoints.
 
\begin{lemma}
\label{Lem:Oppush}
Suppose that $\Gamma_2 = \Phi  \Gamma_1 \Phi ^{-1}$,
by which we mean that $\dom(\Gamma_2) = \Phi \dom(\Gamma_1)$
and $\Gamma_2 u = \Phi  \Gamma_1 \Phi ^{-1} u$ 
for all $u \in \dom(\Gamma_2)$. Then, 
$\adj{\Gamma}_2 = \Phi \adj{\Gamma}_1 \Phi ^{-1}$
and $\close{\ran(\Gamma_2)} = \Phi  \close{\ran(\Gamma_1})$.
\end{lemma}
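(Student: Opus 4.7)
The plan is to exploit the fact that an isometric isomorphism between Hilbert spaces is unitary in the strong sense: its Hilbert space adjoint coincides with its inverse. Concretely, since $\Phi:\Hil_1 \to \Hil_2$ is a vector space isomorphism satisfying $\inprod{\Phi u,\Phi v}_2 = \inprod{u,v}_1$, the operator $\Phi$ is bounded with $\norm{\Phi u}_2 = \norm{u}_1$, and the identity $\inprod{\Phi u, w}_2 = \inprod{u, \Phi^{-1}w}_1$ (obtained by substituting $v = \Phi^{-1}w$) shows that the Hilbert space adjoint $\Phi^{\ast}:\Hil_2 \to \Hil_1$ equals $\Phi^{-1}$. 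In the same way $(\Phi^{-1})^{\ast} = \Phi$. Both $\Phi$ and $\Phi^{-1}$ are therefore bounded everywhere defined operators whose adjoints are each other.

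For the adjoint statement, I would apply the standard composition rule for adjoints of products with bounded factors: if $T$ is closed and densely defined and $A, B$ are bounded everywhere defined with $B$ invertible, then $(ATB)^{\ast} = B^{\ast}T^{\ast}A^{\ast}$ (as an equality of operators, including domains). Taking $T = \Gamma_1$, $A = \Phi$, $B = \Phi^{-1}$, this yields
\[
\adj{\Gamma}_2 = (\Phi \Gamma_1 \Phi^{-1})^{\ast} = (\Phi^{-1})^{\ast} \adj{\Gamma}_1 \Phi^{\ast} = \Phi \adj{\Gamma}_1 \Phi^{-1},
\]
with domain $\dom(\adj{\Gamma}_2) = \Phi \dom(\adj{\Gamma}_1)$. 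To keep the argument self-contained one can verify this directly: $v \in \dom(\adj{\Gamma}_2)$ iff $u \mapsto \inprod{\Gamma_2 u, v}_2 = \inprod{\Gamma_1 \Phi^{-1}u, \Phi^{-1}v}_1$ is bounded on $\dom(\Gamma_2) = \Phi\dom(\Gamma_1)$, which, reparametrising $u = \Phi w$ with $w \in \dom(\Gamma_1)$, is equivalent to $\Phi^{-1}v \in \dom(\adj{\Gamma}_1)$, i.e. $v \in \Phi \dom(\adj{\Gamma}_1)$; and in that case the Riesz representative computation gives $\adj{\Gamma}_2 v = \Phi \adj{\Gamma}_1 \Phi^{-1}v$.

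For the range statement, the inclusion $\ran(\Gamma_2) = \Gamma_2\bigl(\Phi \dom(\Gamma_1)\bigr) = \Phi \Gamma_1 \dom(\Gamma_1) = \Phi \ran(\Gamma_1)$ is immediate from the definition of $\Gamma_2$. Since $\Phi$ is a bi-continuous bijection (indeed an isometry) between $\Hil_1$ and $\Hil_2$, it maps closed sets to closed sets and therefore commutes with topological closure, yielding $\close{\ran(\Gamma_2)} = \close{\Phi \ran(\Gamma_1)} = \Phi \close{\ran(\Gamma_1)}$.

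There is no real obstacle here: the whole content is that intertwining by a unitary preserves the relevant operator-theoretic data. The only point requiring a little care is the precise bookkeeping of domains in the adjoint identity, which is why I would write the domain check out explicitly rather than merely quote a composition formula.
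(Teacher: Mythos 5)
Your proof is correct. The argument is, in substance, the same as the paper's: you unpack $\dom(\adj{\Gamma}_2) = \Phi\dom(\adj{\Gamma}_1)$ and the intertwining formula via the inner-product identity $\inprod{\Gamma_2 u,v}_2 = \inprod{\Gamma_1\Phi^{-1}u, \Phi^{-1}v}_1$, and you observe that $\ran(\Gamma_2) = \Phi\ran(\Gamma_1)$ holds setwise with closure transferred because $\Phi$ is an isometric (hence bi-continuous) bijection. The only difference in packaging is that you first invoke the general composition formula $(ATB)^{\ast} = B^{\ast}T^{\ast}A^{\ast}$ for a densely defined $T$ with bounded, boundedly invertible outer factors, together with $\Phi^{\ast}=\Phi^{-1}$; the paper instead derives exactly this fact from scratch in the two-sided domain check. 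Since you also supply that direct verification as a fallback, the proofs are in effect identical, with yours making the underlying unitary-intertwining principle explicit — a small gain in clarity at no real cost. One incidental remark: the closedness of $T$ in your quoted composition rule is not actually needed for $(ATB)^{\ast}=B^{\ast}T^{\ast}A^{\ast}$ when $A$ is bounded and $B$ is bounded with bounded inverse; only dense definition of $T$ is required, though the extra hypothesis does no harm here since $\Gamma_1$ is assumed closed.
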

\begin{proof}
First, we prove that $\dom(\adj{\Gamma}_2) = \Phi \dom(\adj{\Gamma_1})$.
Fix, $u \in \dom(\adj{\Gamma}_2)$. Then, whenever $v \in \dom(\Gamma_2)$,
$$ 
\inprod{\adj{\Gamma}_2 u, v}_2 
	= \inprod{u, \Gamma_2 v}_2 
	= \inprod{\Phi ^{-1}u, \Phi ^{-1}\Gamma_2 v}_1
	= \inprod{\Phi ^{-1}u, \Gamma_1 \Phi ^{-1}v}_1.$$
But since $\dom(\Gamma_2) = \Phi \dom(\Gamma_1)$,
we can write $v' = \Phi ^{-1}v \in \dom(\Gamma_1)$
and 
furthermore, 
$\inprod{\adj{\Gamma}_2u,v}_2 = \inprod{\Phi ^{-1}\adj{\Gamma}_2u,\Phi ^{-1}v}_1$
and thus, 
$\inprod{\Phi^{-1}\adj{\Gamma}_2u, \tilde{v}}_1 = \inprod{ \Phi ^{-1}u, \adj{\Gamma}_1\tilde{v}}_1$
for all $\tilde{v} \in \dom(\Gamma_1)$. Therefore, 
$\Phi ^{-1}u \in\dom(\adj{\Gamma}_1)$.

For the opposite direction, let $u \in \dom(\adj{\Gamma}_1)$
and let $v \in \dom(\adj{\Gamma}_1)$. Then,
a similar calculation holds: 
$$
\inprod{\adj{\Gamma}_1u,v}_1 
	= \inprod{u, \Gamma_1 v}_1
	= \inprod{\Phi u, \Phi \Gamma_1 v}_2
	= \inprod{\Phi u, \Phi \Gamma_1 \Phi ^{-1} (\Phi v)}_2
	= \inprod{\Phi u, \Gamma_2(\Phi v)}_2.$$
So, 
$\inprod{\Phi \adj{\Gamma}_1u, \tilde{v}}_2 = \inprod{\Phi u, \Gamma_2\tilde{v}}$
for all $\tilde{v} \in \dom(\Gamma_2)$ and thus, we
conclude that $\Phi u \in \dom(\adj{\Gamma}_2)$.
Note that this implies that $\dom(\adj{\Gamma}_2) \subset \Phi \dom(\adj{\Gamma}_1)$,
for if not, then there would be a $u \in \dom(\adj{\Gamma}_2)$
such that $u \neq \Phi v$ for all $v \in \dom(\adj{\Gamma}_1)$,
but setting $v = \Phi^{-1}u \in \dom(\adj{\Gamma}_1)$ would produce a contradiction 
by what we have just proved. 
On combining these two calculations, 
we obtain that $\dom(\adj{\Gamma}_2) = \Phi \dom(\adj{\Gamma}_1)$.

Next, we fix $u \in \dom(\adj{\Gamma}_2)$ and compute: 
$$
\inprod{\adj{\Gamma}_2u, v}_2 
	= \inprod{\Phi ^{-1}u, \adj{\Gamma}_1 \Phi ^{-1} v}_1 
	= \inprod{ \adj{\Gamma}_1\Phi ^{-1}u, \Phi ^{-1} v}_1
	= \inprod{\Phi  \adj{\Gamma}_1 \Phi ^{-1}u, v}_2$$
for all $v \in \dom(\Gamma_2)$, and
so $\adj{\Gamma}_2u = \Phi \adj{\Gamma}_1 \Phi ^{-1} u$
by the density of  $\dom(\Gamma_2)$ in $\Hil_2$.

Now, let us show that $\close{\ran(\Gamma_2)} = \Phi  \close{\ran(\Gamma_1)}.$
Fix $u \in \dom(\Gamma_2)$. Then, write 
$v = \Gamma_2 u = \Phi  \Gamma_1 \Phi ^{-1}u$. That is
$ \Phi ^{-1}v = \Gamma_2 \Phi ^{-1}u$ which shows
that $\Phi ^{-1} \ran(\Gamma_2) \subset \ran(\Gamma_1)$.
For the other direction,
let $u \in \dom(\Gamma_1)$ and so 
$v = \Gamma_1 u = \Phi ^{-1}\Gamma_2 \Phi  u$.
Then, we can conclude by similar
reasoning that $\Phi \ran(\Gamma_1) \subset \ran(\Gamma_2)$
and hence $\ran(\Gamma_2) = \Phi \ran(\Gamma_1)$.
The proof is completed on observing that 
$\norm{v}_{1} = \norm{\Phi v}_{2}$.
\end{proof}

Let us now assume that the operator $\Gamma_1$
and $B_i \in \bddlf(\Hil_1)$ satisfy the 
hypotheses (H1)-(H3) of the AKM framework 
as outlined in 
\S\ref{Sect:AKM}.
By virtue of the previous lemma, the
definition of $\tilde{B}_i$, and as a consequence
of the fact that $\Phi$ is an 
isometry, the operators $\tilde{B}_i$
satisfy the same coercivity estimate in (H2)
with the exact same constants as $B_i$.
In fact, it is easy to see that 
the operators $\Gamma_2$ and $\tilde{B}_i$
satisfy the entire set of hypotheses (H1)-(H3).
Define $\Pi_B = \Gamma_1 + B_1 \adj{\Gamma}_1 B_2$
and $\tilde{\Pi}_B = \Gamma_2 + \tilde{B}_1 \Gamma_2 \tilde{B}_2$.
We note the following.

\begin{lemma}
$(\iden + t^2 \tilde{\Pi}_B^2)^{-1} u = \Phi (\iden + t^2 \Pi_B^2)^{-1}\Phi ^{-1}u$
for all $u \in \Hil_2$.
\end{lemma}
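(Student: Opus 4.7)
The plan is to reduce the identity about resolvents to the assertion $\tilde{\Pi}_B = \Phi\,\Pi_B\,\Phi^{-1}$ (with matching domains), which is purely algebraic once the transformation rules for $\Gamma_2$, $\adj{\Gamma}_2$ and $\tilde{B}_i$ are in hand. The resolvent identity then drops out by squaring and inverting, using that $\Phi$ is a bounded isomorphism.

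First I would record the transformation rules. By hypothesis in this subsection, $\Gamma_2 = \Phi\,\Gamma_1\,\Phi^{-1}$, and Lemma \ref{Lem:Oppush} gives $\adj{\Gamma}_2 = \Phi\,\adj{\Gamma}_1\,\Phi^{-1}$. From the context preceding the lemma (where it is noted that $\tilde{B}_i$ inherit the same accretivity constants as $B_i$, which requires the conjugation by an isometry), the operators $\tilde{B}_i \in \bddlf(\Hil_2)$ are defined by $\tilde{B}_i = \Phi\,B_i\,\Phi^{-1}$. Substituting these into the definition of $\tilde{\Pi}_B$ yields
\begin{equation*}
\tilde{\Pi}_B
= \Phi\,\Gamma_1\,\Phi^{-1} + \Phi\,B_1\,\Phi^{-1}\,\Phi\,\adj{\Gamma}_1\,\Phi^{-1}\,\Phi\,B_2\,\Phi^{-1}
= \Phi\bigl(\Gamma_1 + B_1\,\adj{\Gamma}_1\,B_2\bigr)\Phi^{-1}
= \Phi\,\Pi_B\,\Phi^{-1},
\end{equation*}
with the domain identity $\dom(\tilde{\Pi}_B) = \Phi\,\dom(\Pi_B)$ following by the same bookkeeping used in Lemma \ref{Lem:Oppush}.

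Next, since $\Phi$ and $\Phi^{-1}$ are bounded everywhere defined, iterating gives $\tilde{\Pi}_B^2 = \Phi\,\Pi_B^2\,\Phi^{-1}$ on $\dom(\tilde{\Pi}_B^2) = \Phi\,\dom(\Pi_B^2)$. Consequently
\begin{equation*}
\iden + t^2\tilde{\Pi}_B^2 = \Phi\bigl(\iden + t^2\Pi_B^2\bigr)\Phi^{-1}
\end{equation*}
as operators with domain $\Phi\,\dom(\Pi_B^2)$. Because $\Pi_B$ is bi-sectorial (as guaranteed by (H1)--(H3)), $\iden + t^2\Pi_B^2$ is a bijection from $\dom(\Pi_B^2)$ onto $\Hil_1$ with bounded inverse, and the displayed factorisation then shows that $\iden + t^2\tilde{\Pi}_B^2$ is a bijection from $\Phi\,\dom(\Pi_B^2)$ onto $\Hil_2$ whose inverse is $\Phi(\iden + t^2\Pi_B^2)^{-1}\Phi^{-1}$. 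This is exactly the claimed identity.

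The only delicate point is the domain bookkeeping when squaring $\tilde{\Pi}_B$ and when asserting that the inverse on the right-hand side really is the two-sided inverse of $\iden + t^2\tilde{\Pi}_B^2$. Both are routine once one invokes boundedness and invertibility of $\Phi$ together with the already-established sectoriality of $\Pi_B$, so I do not expect any genuine obstacle beyond careful notation.
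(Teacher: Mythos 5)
Your proposal is correct and follows essentially the same route as the paper: both begin from the conjugation identity $\tilde{\Pi}_B = \Phi\,\Pi_B\,\Phi^{-1}$ and pass to the resolvent. The paper verifies the resolvent identity by a direct element-wise computation (setting $v = \Phi(\iden + t^2\Pi_B^2)^{-1}\Phi^{-1}u$ and showing $u = (\iden + t^2\tilde{\Pi}_B^2)v$), whereas you square the conjugation and invoke bi-sectoriality explicitly for invertibility — a slightly more explicit but logically equivalent presentation.
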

\begin{proof}
First, it is an easy fact that $\tilde{\Pi}_{B} = \Phi \Pi_B \Phi ^{-1}$.
So, now, fix $v = \Phi (\iden + t^2\Pi_B^2)^{-1}\Phi ^{-1}u$
so that $\Phi ^{-1}v = (\iden + t^2 \Pi_B^2)^{-1}\Phi ^{-1}u$.
Then,
$
\Phi ^{-1}u = (\iden + t^2\Pi_B^2)\Phi ^{-1}v = \Phi ^{-1}v + t^{2}\Pi_B^2\Phi ^{-1}v$
and multiplying both sides by $\Phi$ then yields
$u = (\iden + t^2 \tilde{\Pi}_B^2)v$.
Thus, $v = (\iden + t^2 \tilde{\Pi}_B^2)^{-1}u$.
\end{proof}

On combining these two lemmas, we obtain the following
result pertaining to the transmission of quadratic estimates
across isometries.

\begin{proposition}
\label{Prop:QuadEstT}
The quadratic estimate
$$
\int_{0}^\infty \norm{t\tilde{\Pi}_B(\iden + t^2\tilde{\Pi}_B^2)^{-1}u}^2_2\ \frac{dt}{t}
\simeq \norm{u}_2^2$$
for $u \in \close{\ran(\tilde{\Pi}_B)}$
is satisfied if and only if
$$ 
\int_{0}^\infty \norm{t\Pi_B(\iden + t^2 {\Pi}_B^2)^{-1}v}^2_1\ \frac{dt}{t}
\simeq \norm{v}_1^2$$
for all $v \in \close{\ran(\Pi_B)}$.
\end{proposition}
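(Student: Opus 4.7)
The plan is to leverage the two lemmas immediately preceding the proposition, which together show that the relevant operator machinery is transported isometrically by $\Phi$. Nothing deep remains; the entire statement is a bookkeeping exercise once the isometry is exploited correctly.

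First I would parametrise the closure of the range of $\tilde{\Pi}_B$. From Lemma~\ref{Lem:Oppush} applied to $\Gamma = \Pi_B$ (which satisfies $\tilde{\Pi}_B = \Phi \Pi_B \Phi^{-1}$ as noted in the preceding lemma's proof), one obtains $\close{\ran(\tilde{\Pi}_B)} = \Phi \close{\ran(\Pi_B)}$. Thus every $u \in \close{\ran(\tilde{\Pi}_B)}$ can be written uniquely as $u = \Phi v$ with $v \in \close{\ran(\Pi_B)}$, and the isometry property gives $\norm{u}_2 = \norm{v}_1$.

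Second, I would compute the integrand on the $\Hil_2$-side in terms of the $\Hil_1$-side. Combining $\tilde{\Pi}_B = \Phi \Pi_B \Phi^{-1}$ with the preceding lemma's identity $(\iden + t^2 \tilde{\Pi}_B^2)^{-1} = \Phi(\iden + t^2 \Pi_B^2)^{-1}\Phi^{-1}$ yields
$$t\tilde{\Pi}_B(\iden + t^2 \tilde{\Pi}_B^2)^{-1} u \;=\; \Phi \bigl(t \Pi_B (\iden + t^2 \Pi_B^2)^{-1}\bigr)\Phi^{-1} u \;=\; \Phi \bigl(t \Pi_B (\iden + t^2 \Pi_B^2)^{-1} v\bigr).$$
Taking $\norm{\cdot}_2$ on both sides and using the isometry $\norm{\Phi w}_2 = \norm{w}_1$, the integrands coincide pointwise in $t$:
$$\norm{t\tilde{\Pi}_B(\iden + t^2 \tilde{\Pi}_B^2)^{-1} u}_2 \;=\; \norm{t\Pi_B(\iden + t^2 \Pi_B^2)^{-1} v}_1.$$

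Finally, integrating against $dt/t$ and combining with $\norm{u}_2^2 = \norm{v}_1^2$ shows that the quadratic estimate on the $\Hil_2$-side for $u$ is literally the same assertion as the quadratic estimate on the $\Hil_1$-side for $v$, with identical constants. Since $v \mapsto \Phi v$ is a bijection between $\close{\ran(\Pi_B)}$ and $\close{\ran(\tilde{\Pi}_B)}$, the two quadratic estimates hold simultaneously. No step here is a genuine obstacle; the only mild care needed is verifying the range identification so that one is indeed quantifying over the correct dense subset on each side.
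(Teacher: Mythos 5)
Your proof is correct and follows essentially the same approach as the paper: identify $\close{\ran(\tilde{\Pi}_B)} = \Phi\close{\ran(\Pi_B)}$ via Lemma~\ref{Lem:Oppush}, conjugate the resolvent using the preceding lemma, and transfer the integrand through the isometry. (You have also quietly corrected a typographical slip in the paper's displayed computation, which writes $\Phi^{-1}\Pi_B\Phi^{-1}$ where $\Phi\Pi_B\Phi^{-1}$ is meant.)
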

\begin{proof}
First, by identifying $\Pi_B$ with $\Gamma_1$
in Lemma \ref{Lem:Oppush}, we find that
$\close{\ran(\tilde{\Pi}_B)} = \Phi \close{\ran(\Pi_B)}$.
Thus, for $u \in \close{\ran(\tilde{\Pi}_B)}$
we have that $\norm{u}_2 = \norm{\Phi ^{-1}u}_1$.
For the same $u$, 
$$
\norm{t\tilde{\Pi}_B(\iden + t^2\tilde{\Pi}_B^2)^{-1}u}_2
	= \norm{t\Phi ^{-1} \Pi_B \Phi ^{-1}\Phi (\iden + t^2 \Pi_B^2)^{-1}\Phi ^{-1}u}_2
	= \norm{t\Pi_B(\iden + t^2 \Pi_B^2)^{-1}\Phi ^{-1}u}_1.$$
Setting $v = \Phi ^{-1}u$ finishes the proof.
\end{proof}

\subsection{Local Lipeomorphisms between manifolds}

Let $\cM$ and $\cN$ be smooth manifolds and $\mh$
a $\Ck{k}$ ($k \geq 1$) metric on $\cN$.
We would like to consider maps $F:\cM \to \cN$
with which we can pull the metric $\mh$ across 
to $\cM$ in a way that this new geometry
reflects the regularity of $F$. 
If $F$ is a $\Ck{k}$ ($k\geq 1$) diffeomorphism, then 
we are able to simply consider the pullback 
metric of $\mh$ of regularity
$\Ck{k-1}$.

The key point to notice here is that we require
the first derivatives of $F$ to exist on a suitably
large set of points in $\cM$.  
At a glance, it seems that it would suffice
to ask $F$ to be a  \emph{Lipeomorphism},
which we define as an invertible Lipschitz map
with a Lipschitz inverse between two metric spaces. 
However, without specifying a metric a priori
on $\cM$, we are unable to make sense of this terminology. 
The existence of derivatives
is a local problem and therefore the notion of a
\emph{local Lipeomorphism} can be formulated
between two manifolds by resorting to their locally
Euclidean structure. 
We begin our discussion by formalising this notion. 
We note that the definition given below
is independent of the metric $\mh$ on $\cN$.

\begin{definition}[Local Lipeomorphism]
Let $\cM$ and $\cN$ be two smooth manifolds. 
Then, we say that 
$F: \cM \to \cN$ is \emph{local Lipeomorphism} if
\begin{enumerate}[(i)]
\item $f$ is a homeomorphism, and
\item for all $x \in \cM$, there exists a chart
	$(U, \psi)$ near $x$ and $(V, \phi)$ near 
	$F(x)$ and a constant $C \geq 1$ such that
	$$C^{-1} \modulus{x' - y'} \leq 
		\modulus{(\phi \comp F \comp \psi^{-1})(x') - (\phi \comp F \comp \psi^{-1})(y')} \leq
		C \modulus{x' - y'}.$$
\end{enumerate}
For the sake of nomenclature, we call the charts $(U, \psi)$ and $(V, \phi)$
the \emph{Lipeo-admissible} charts.
\end{definition} 

As a consequence of this definition, 
the differential of $\tilde{F} = (\phi \comp F \comp \psi^{-1})$ exists
almost-everywhere on Lipeo-admissible charts
and hence,  we are able to define the pushforward $\pushf{F}$ 
almost-everywhere in $\cM$. 
 
From this point onwards, let us assume
$\mh$ to be a \emph{complete} $\Ck{k}$ ($k \geq 0$) metric on $\cN$.
Define $\mg = \pullb{F}\mh$ and note that it is a 
\emph{rough metric}. 
We will prove in this section
that $\mg$ has considerably better properties
than an arbitrary rough metric.

\subsubsection{The distance metric $\met_\mg$ and geodesy}

Recall that the length of an 
absolutely continuous curve $\gamma: I \to \cM$ 
is given by:
$$
\len_\mg(\gamma) 
	= \int_{I} \modulus{\gamma(t)}_{\mg(\gamma(t))}\ dt
	= \int_{I} \modulus{F \comp \gamma(t)}_{\mh(F \comp \gamma(t))}\ dt < \infty.$$
As for continuous metrics, 
by taking an infimum over the lengths $\len_\mg(\gamma)$
for such curves $\gamma$ between points $x, y\in \cM$,
we obtain a distance metric $\met_\mg$. 
The following proposition then gives a regularity criteria
for geodesics of $(\cM, \mg)$.

\begin{proposition}
For every $x,y \in \cM$, $\met_\mg(x,y) = \met_\mh(F(x),F(y))$. 
Furthermore, if $x', y' \in \cN$ with a $\Ck{k}$-minimising geodesic (for $k \geq 1$)
between them, then there exists a Lipschitz curve
on $\cM$ that is a minimising geodesic between $F^{-1}(x')$ and $F^{-1}(y')$.
\end{proposition}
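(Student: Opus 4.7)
My plan is to exploit the bi-Lipschitz nature of $F$ to transport absolutely continuous curves between $\cM$ and $\cN$ while preserving length; the equality of distances and the existence of a minimising geodesic will then follow by taking infima and by composing the given $\Ck{k}$ curve with $F^{-1}$.

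The central identity I will establish is $\len_\mg(\gamma) = \len_\mh(F \comp \gamma)$ for every absolutely continuous $\gamma: I \to \cM$. Since both $F$ and $F^{-1}$ are locally Lipschitz by Lipeo-admissibility, composition with either preserves absolute continuity. Within a pair of Lipeo-admissible charts, Rademacher's theorem provides the differential $DF$ almost everywhere, and the standard chain rule for absolutely continuous curves composed with bi-Lipschitz maps gives $(F\comp\gamma)'(t) = DF(\gamma(t))\dot{\gamma}(t)$ for a.e.\ $t \in I$. Combined with the defining relation $\mg = \pullb{F}\mh$, this yields
\[
\modulus{\dot{\gamma}(t)}_{\mg(\gamma(t))}
= \modulus{DF(\gamma(t))\dot{\gamma}(t)}_{\mh(F(\gamma(t)))}
= \modulus{(F\comp\gamma)'(t)}_{\mh(F\comp\gamma(t))}
\]
almost everywhere on $I$, and integrating produces the length identity. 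The bijection $\gamma \leftrightarrow F \comp \gamma$ between absolutely continuous curves from $x$ to $y$ in $\cM$ and those from $F(x)$ to $F(y)$ in $\cN$ (with inverse given by composition with $F^{-1}$) is then length-preserving, and taking infima delivers $\met_\mg(x,y) = \met_\mh(F(x),F(y))$.

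The main technical obstacle is the justification of the chain rule pointwise a.e., since \emph{a priori} $\gamma$ could concentrate on the null set where $DF$ fails to exist. This is handled by the classical local argument for compositions of absolutely continuous curves with bi-Lipschitz maps in $\R^n$: working in charts, the bi-Lipschitz push-forward $\phi \comp F \comp \psi^{-1}$ satisfies Lusin's condition (N), which together with the a.e.\ chain rule of Marcus--Mizel (or the equivalent Serrin--Varga result) shows that the classical derivative of $F\comp\gamma$ exists and equals $DF(\gamma(t))\dot{\gamma}(t)$ off a set of zero one-dimensional measure in $I$.

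For the second claim, let $\tilde{\gamma}: I \to \cN$ be a $\Ck{k}$ ($k \geq 1$) minimising geodesic between $x'$ and $y'$; on a compact interval it is in particular Lipschitz with compact image. Then $\gamma = F^{-1} \comp \tilde{\gamma}$ is Lipschitz, since $F^{-1}$ is locally Lipschitz and hence uniformly Lipschitz on the compact set $\tilde{\gamma}(I)$ by a standard compactness argument on the finite cover by Lipeo-admissible charts. Applying the length identity and the distance identity to pairs of parameter values gives $\met_\mg(\gamma(t_1),\gamma(t_2)) = \met_\mh(\tilde{\gamma}(t_1),\tilde{\gamma}(t_2)) = \modulus{t_1-t_2}$ (for the unit-speed parametrisation of $\tilde{\gamma}$), so $\gamma$ is a minimising geodesic in $(\cM,\met_\mg)$ between $F^{-1}(x')$ and $F^{-1}(y')$.
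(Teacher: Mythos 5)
Your proof is correct and follows essentially the same route as the paper: the paper also states the length identity $\len_\mg(\gamma) = \len_\mh(F\comp\gamma)$ (displayed just before the proposition), derives the distance equality by the same two-sided infimum argument over absolutely continuous curves, and obtains the Lipschitz minimising geodesic by composing the $\Ck{k}$ geodesic in $\cN$ with $F^{-1}$ and using the distance equality on pairs of parameter values. You go further than the paper in carefully justifying the a.e.\ chain rule for $F\comp\gamma$ (via Lusin (N) and Marcus--Mizel), a point the paper treats as immediate; you also correctly write $F^{-1}\comp\tilde\gamma$, where the paper's text has an evident typo $\sigma = F\comp\gamma$.
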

\begin{proof}
Fix $x, y \in \cM$ and let
$\gamma: [0,1] \to \cN$ be an absolutely continuous 
curve between $F(x)$ and $F(y)$. Then, 
$\met_{\mg}(x,y) \leq \len_\mg(F^{-1} \comp \gamma) = \len_\mh(\gamma).$
Taking an infimum over such curves then gives that
$\met_\mg(x,y) \leq \met_\mh(F(x),F(y))$.
Conversely, if $\sigma:[0,1] \to \cM$ is an
absolutely continuous curve between $x$ and $y$, then
$\met_\mh(F(x),F(y)) \leq \len_\mh(F \comp \sigma) = \len_\mg(\sigma)$
and so $\met_\mh(F(x),F(y)) \leq \met_\mg(x,y)$.

Suppose now that $\gamma:[0,1] \to \cN$ is a minimising geodesic
between $x', y' \in \cN$ that is of class $\Ck{k}$ for
$k \geq 1$. Then,  for $t_1, t_2 \in I$,
$\met_\mh(\gamma(t_1), \gamma(t_2)) = \modulus{t_1 - t_2}$.
On letting $\sigma = F \comp \gamma$, 
we note that 
$\met_\mg(\sigma(t_1), \sigma(t_2)) = \met_\mh(\gamma(t_1), \gamma(t_2))$, 
which shows that $\sigma$
is a minimising geodesic in $\mg$. It is easy to see that 
$\sigma$ is Lipschitz. 
\end{proof}

\begin{remark}
\begin{enumerate}
\item Burtscher points out (in a private communication)
	that for $\Ck{1}$ metrics there exist 
	``geodesics'' in the sense of curves
	satisfying the Euler-Lagrange equations
	that are nowhere minimising due to 
	a result of Hartman-Wintner in \cite{HW}.
	More seriously such curves may not be 
	unique (see \cite{H}).
	This forces us to only consider
	minimising geodesics.
	
\item 	When a  metric is $\Ck{0,1}$,
	it turns out that the regularity of its minimising 
	geodesics (when they exist) are 
	$\Ck{1,1}$. When it is $\Ck{0,\alpha}$ for $\alpha \in (0,1)$, 
	then its minimising geodesics are $\Ck{1, \alpha/2}$. 
	Indeed, minimising geodesics of
	a $\Ck{k}$ metric ($k \geq 1$)
	are $\Ck{k+1}$.
	In each case, we see that the minimising geodesics have 
	one integer 
	exponent of regularity higher than the metric.
	While this may tempt us to embrace this observation as a maxim, 
	it is not true for purely continuous metrics!
	Pettersson in his remarkable paper \cite{Pettersson}
	gives a continuous metric on $\R^2$ for which a
	minimising geodesic passing through the origin is given by 
	$t(\sin a(t), \cos a(t))$ where $a(r) = \log( - \log\modulus{r})$.
	It is easy to see that this curve is continuous
	but it is not differentiable at
	the origin.
\end{enumerate}
\end{remark}

\subsubsection{The induced measure $\mu_\mg$}

Recall the induced measure for rough metrics 
from \S\ref{Sect:RoughM}. Here,
we establish a relationship between
the measures $\mu_\mh$ and $\mu_\mg$.
This is necessary in order for us to relate the
Lebesgue and Sobolev space theory of the two geometries.  
First, we present the following lemma which 
provides us with a formula expressing $\mu_\mg$
with respect to $\mu_\mh$ inside charts.

\begin{lemma}
\label{Lem:MeasureRep}
Let $(U, \psi)$ be a chart on $\cM$ 
near $x \in \cM$ and $(V, \phi)$ a chart 
near $F(x) \in \cN$. Then,
$$ d\mu_\mg(y)  = \modulus{\det D\tilde{F}(y)}\ \sqrt{\det \mh(\tilde{F}(y))}\ d\Leb(y),$$
where $\tilde{F} = \phi \comp F \comp \psi^{-1}$
and for $\Leb$-almost every $y \in \psi(U)$.
\end{lemma}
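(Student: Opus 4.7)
The plan is to unwind the coordinate expression $d\mu_\mg(y) = \sqrt{\det\mg(y)}\ d\Leb(y)$ from the definition of the induced measure in $\S\ref{Sect:RoughM}$, and then relate $\det\mg$ at a point $y \in \psi(U)$ to $\det\mh$ at $\tilde F(y)$ via the differential $D\tilde F(y)$. The main observation to justify this is that $\tilde F = \phi \comp F \comp \psi^{-1}$ is by definition bi-Lipschitz between $\psi(U)$ and $\phi(V)$, so Rademacher's theorem guarantees that $D\tilde F(y)$ exists for $\Leb$-a.e.\ $y \in \psi(U)$ and is an invertible linear map at those points.

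First I would fix such a point $y$ of differentiability and compute the coefficients of $\mg = \pullb{F}\mh$ in the coordinate frame $\set{\partial_i}$ induced by $\psi$. By the definition of the pullback, $\mg_{ij}(y) = \mh_{\tilde F(y)}(\pushf{F}\partial_i, \pushf{F}\partial_j)$, and in the chart $(V,\phi)$ with coordinate frame $\set{\partial_k'}$ on $\cN$ we have $\pushf{F}\partial_i = (D\tilde F(y))^k_i\ \partial_k'$. Expanding gives
\[
\mg_{ij}(y) = \mh_{kl}(\tilde F(y))\ (D\tilde F(y))^k_i\ (D\tilde F(y))^l_j,
\]
which in matrix form reads $G(y) = \tp{D\tilde F(y)}\, H(\tilde F(y))\, D\tilde F(y)$, with $G = (\mg_{ij})$ and $H = (\mh_{kl})$. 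Taking determinants and square roots yields
\[
\sqrt{\det\mg(y)} = \modulus{\det D\tilde F(y)}\ \sqrt{\det\mh(\tilde F(y))},
\]
for $\Leb$-a.e.\ $y \in \psi(U)$.

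Finally, I would substitute this pointwise identity into the definition $d\mu_\mg(y) = \sqrt{\det\mg(y)}\ d\Leb(y)$ from the local comparability construction of $\mu_\mg$; the previous proposition on invariance under change of coordinates ensures this expression is independent of the chart used. The result is the claimed formula.

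The only subtle point is ensuring that the a.e.\ pointwise equality of integrands suffices to identify the measures: this is immediate since both sides define Borel measures on $\psi(U)$ that agree on the complement of a $\Leb$-null set (the set where $\tilde F$ fails to be differentiable together with the singular set of $\mg$), and null sets do not contribute under either measure by Lemma \ref{Lem:WeightedMeas} applied in the chart. I do not anticipate a serious obstacle beyond carefully invoking Rademacher and bookkeeping the chart indices; the content of the lemma is essentially the classical Jacobian change-of-variables identity translated into the rough-metric setting.
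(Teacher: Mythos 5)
Your argument is correct and follows essentially the same route as the paper: compute the pullback coefficients $\mg_{ij} = \mh_{kl}\,\partial_i\tilde F^k\,\partial_j\tilde F^l$ to obtain $G = \tp{(D\tilde F)}\,H\,D\tilde F$ in matrix form, take determinants, and substitute into the local expression for $d\mu_\mg$. The paper phrases the coefficient computation via $\pullb{F}dy^k$ rather than $\pushf{F}\partial_i$, but that is a dual restatement of the same thing; your additional remarks about Rademacher's theorem and the a.e.\ identification of Borel measures make explicit what the paper leaves implicit and do not change the substance.
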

\begin{proof}
Let $\set{x^i}$ denote coordinates in $U$
and $\set{y^j}$ coordinates in $V$. 
It suffices to show that 
$\det \mg(y) = \modulus{\det \Dir\tilde{F}(y)}^2\ \det \mh(\tilde{F}(y))$
for $\Leb$-a.e. $y \in U$.

Inside the chart $V$, we can write
$\mh(w) = \mh_{kl}(w) \ dy^k \tensor dy^l$ and hence, 
by the linearity of the pullback,
$\pullb{F} \mh(y) = \mh_{kl}(F(y))\ \pullb{F}dy^k \tensor \pullb{F}dy^l.$
A straightforward calculation gives
that $\pullb{F}dy^k = \partial_{x^i} \tilde{F} dx^i$
and hence,
$\pullb{F}\mh(y) = \mh_{kl} \partial_{x^i}\tilde{F}^k \partial_j \tilde{F}^l\ dx^i \tensor dx^j.$
Therefore, $\mg_{ij}(y) = \mh_{kl}(F(y) \partial_{x^i}\tilde{F}^k(y) \partial_j\tilde{F}^l(y)$
and so $(\mg_{ij}) = (\mh_{kl})\Dir\tilde{F}\mdot\Dir\tilde{F}$.
Thus, $\det \mg(y) = (\det\mh(F(y))) (\det \Dir\tilde{F})^2$. 
\end{proof}

On combining this result with Lemma \ref{Lem:WeightedMeas},
we are able to compare the measure algebras 
of $\mu_\mg$ and $\mu_\mh$ via $F$. 
 
\begin{proposition}
A function $\xi: \cM \to \C$ is $\mu_\mg$-measurable
if and only if $\pullb{F}{\xi} = \xi \comp F^{-1}: \cN \to \C$
is $\mu_\mh$-measurable.  
\end{proposition}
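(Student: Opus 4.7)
The plan is to reduce the global statement to a local one on Lipeo-admissible charts, transport measurability questions for $\mu_\mg$ and $\mu_\mh$ down to Lebesgue measurability in Euclidean coordinates, and then exploit the fact that bi-Lipschitz homeomorphisms between open subsets of $\R^n$ preserve $\Leb$-measurability. Cover $\cM$ by a countable family of Lipeo-admissible charts $(U_i, \psi_i)$ with associated target charts $(V_i, \phi_i)$ near $F(U_i)$, and note that $F(U_i) = V_i$ can be arranged by shrinking. Since $F$ is a homeomorphism, $\set{V_i}$ covers $\cN$, and since both $\mu_\mg$-measurability and $\mu_\mh$-measurability are local (checkable on a countable cover), it suffices to prove the equivalence for $\xi$ restricted to a single $U_i$.

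Inside such a chart, write $\tilde{F}_i = \phi_i \comp F \comp \psi_i^{-1}$. By definition of a local Lipeomorphism, $\tilde{F}_i$ is a bi-Lipschitz homeomorphism between the open sets $\psi_i(U_i)$ and $\phi_i(V_i)$ in $\R^n$. Rademacher's theorem gives $D\tilde{F}_i$ almost-everywhere, and the bi-Lipschitz bounds together with Lemma \ref{Lem:Det} applied to $\tp{D\tilde{F}_i} D\tilde{F}_i$ yield $C^{-n} \leq \modulus{\det D\tilde{F}_i(y)} \leq C^n$ for $\Leb$-a.e.\ $y$. Combined with Lemma \ref{Lem:MeasureRep}, this shows that the Radon-Nikodym density of $\pullb{(\psi_i^{-1})}\mu_\mg$ with respect to $\Leb$ on $\psi_i(U_i)$, namely $\modulus{\det D\tilde{F}_i(y)}\sqrt{\det \mh(\tilde{F}_i(y))}$, is bounded between positive finite constants on every compact subset; here the factor $\sqrt{\det \mh \comp \tilde{F}_i}$ is locally bounded above and below since $\mh$ is a continuous metric and $\tilde{F}_i$ is continuous.

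Applying Lemma \ref{Lem:WeightedMeas} with $\nu = \Leb$ on $\psi_i(U_i)$ and this density as $f$, a set in $\psi_i(U_i)$ is $\Leb$-measurable iff it is $\pullb{(\psi_i^{-1})}\mu_\mg$-measurable; equivalently $\xi \comp \psi_i^{-1}$ is $\Leb$-measurable on $\psi_i(U_i)$ iff $\xi$ is $\mu_\mg$-measurable on $U_i$. The same argument applied on the $\cN$-side with the continuous metric $\mh$ (using the earlier proposition for continuous metrics) shows that $\xi \comp F^{-1} \comp \phi_i^{-1}$ is $\Leb$-measurable on $\phi_i(V_i)$ iff $\xi \comp F^{-1}$ is $\mu_\mh$-measurable on $V_i$. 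It then suffices to observe that $\xi \comp F^{-1} \comp \phi_i^{-1} = (\xi \comp \psi_i^{-1}) \comp \tilde{F}_i^{-1}$, so the two Lebesgue measurability statements coincide provided the bi-Lipschitz map $\tilde{F}_i$ preserves $\Leb$-measurability.

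The main technical point, and the only real obstacle, is precisely this last fact: a bi-Lipschitz homeomorphism between open subsets of $\R^n$ maps $\Leb$-null sets to $\Leb$-null sets in both directions (a standard consequence of the Lipschitz bound on Hausdorff measure, $\Haus{n} = \Leb$ on $\R^n$), and therefore transports the completed $\Leb$-measurable $\sigma$-algebra to itself. Granting this, composition with $\tilde{F}_i^{\pm 1}$ preserves $\Leb$-measurability of functions, which closes the circle and yields the desired equivalence.
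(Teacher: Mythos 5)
Your proof is correct and follows essentially the same strategy as the paper: localize to Lipeo-admissible charts, use Lemma~\ref{Lem:MeasureRep} to write the chart-pullback of $\mu_\mg$ as a positive a.e.\ finite density against $\Leb$, apply Lemma~\ref{Lem:WeightedMeas} to swap $\mu_\mg$-measurability for $\Leb$-measurability in coordinates (and likewise for $\mu_\mh$ on the $\cN$-side), and finally transport across $\tilde F_i$.

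The one place your account diverges from, and improves on, the paper's own proof is the last step. The paper reduces $\xi$ to the level sets $X_j = U_j \intersect \xi^{-1}(\alpha,\infty]$ and $Y_j = F(X_j)$ and then asserts, in a single sweep, that Lemma~\ref{Lem:WeightedMeas} with $f = \modulus{\det D\tilde F}\,\sqrt{\det\mh\comp\tilde F}$ gives \emph{directly} that $X_j$ is $\mu_\mg$-measurable iff $Y_j$ is $\mu_\mh$-measurable. But $X_j \subset \cM$ and $Y_j \subset \cN$ live in different spaces, whereas Lemma~\ref{Lem:WeightedMeas} only compares two measures on a \emph{fixed} measure space. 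The paper is therefore silently invoking the fact that the bi-Lipschitz homeomorphism $\tilde F_j$ carries $\Leb$-measurable sets to $\Leb$-measurable sets. You identify this gap and close it explicitly: bi-Lipschitz maps preserve Lebesgue null sets (via Hausdorff measure comparison), they are homeomorphisms and hence preserve Borel sets, and therefore they preserve the Lebesgue-completed $\sigma$-algebra. This is the one nontrivial ingredient in the proof, and your treatment is more complete than the paper's. The level-set reduction in the paper versus your direct argument for functions is an inessential cosmetic difference. One small remark: for Lemma~\ref{Lem:WeightedMeas} you only need $0 < f < \infty$ a.e., so the quantitative bound $C^{-n} \le \modulus{\det D\tilde F_i} \le C^n$ from Lemma~\ref{Lem:Det} and the local boundedness of $\sqrt{\det\mh\comp\tilde F_i}$ are both a bit more than required, though of course harmless.
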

\begin{proof}
Let $(U_i,\psi_i)$ and correspondingly $(V_j = F(U_j), \phi_j)$
be Lipeo-admissible charts and fix $\alpha \in \R$. 
Define
\begin{align*}
X_j &= U_j \intersect \xi^{-1}(\alpha, \infty] = \set{x \in U_j: \xi(x) > \alpha} \\
Y_j &= V_j \intersect (\xi \comp F^{-1})^{-1}(\alpha, \infty] 
	= \set{y \in V_j: \xi \comp F^{-1}(x) > \alpha}. 
\end{align*}
Then, it is easy to see that $x \in X_j$ 
if and only if $F(x) \in V_j = F(U_j)$
and $\xi(x) > \alpha$ if and only if $(\xi \comp F^{-1})(F(x)) > \alpha$.
Thus, $Y_j = F(X_j)$.

By Lemma \ref{Lem:MeasureRep}, we have inside $U_j$
that $d\mu_\mg(x) = \modulus{\det \Dir \tilde{F}(x)} \sqrt{\det\mh(F(x))}\ d\Leb(x)$
and therefore, on setting $f = \modulus{\det \Dir \tilde{F}(x)} \sqrt{\det\mh(F(x))}$,
we can conclude from Lemma \ref{Lem:WeightedMeas}
that $X_j$ is $\mu_\mg$-measurable if and only if 
$Y_j$ is $\mu_\mh$-measurable. Thus,
$\xi^{-1}(\alpha, \infty] = \union_j X_j$ is $\mu_\mg$
measurable if and only if $(\xi \comp F^{-1})^{-1}(\alpha, \infty] = \union_j Y_j$
is $\mu_\mh$-measurable.
\end{proof} 

As we would expect, we obtain that $\mu_\mg$ is 
indeed the pullback measure of $\mu_\mh$ under $F$. 

\begin{proposition}
\label{Prop:MeasPullb}
The measure $\mu_\mg = \pullb{F}\mu_\mh$. That is
if $\xi: \cM \to \C$ $\mu_\mg$-integrable, then,
$$\int_\cM \xi(x)\ \mu_\mg(x) = \int_{\cN} \xi \comp F^{-1}(y)\ d\mu_\mh(y).$$ 
\end{proposition}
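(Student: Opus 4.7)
The plan is to establish the formula first locally inside a single pair of Lipeo-admissible charts and then globalise via a partition of unity. Reduction to the local case: by linearity and the standard splitting $\xi = \xi_+ - \xi_-$ (and, if need be, real/imaginary parts), it suffices to treat nonnegative $\xi$. Choose a countable locally finite cover of $\cM$ by Lipeo-admissible charts $(U_i,\psi_i)$ with corresponding $(V_i,\phi_i)$ on $\cN$, and a subordinate smooth partition of unity $\set{\chi_i}$. Writing $\xi = \sum_i \chi_i \xi$ and using the monotone convergence theorem on both sides, it is enough to establish the identity for each $\chi_i \xi$, i.e.\ when $\spt \xi$ is contained in a single chart $U = U_i$.

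For such a $\xi$, apply Lemma \ref{Lem:MeasureRep}: with $\tilde F = \phi \comp F \comp \psi^{-1}$,
\begin{equation*}
\int_\cM \xi\ d\mu_\mg = \int_{\psi(U)} (\xi \comp \psi^{-1})(y)\ \modulus{\det \Dir\tilde F(y)}\ \sqrt{\det \mh(\tilde F(y))}\ d\Leb(y).
\end{equation*}
Since $F$ is a local Lipeomorphism, $\tilde F: \psi(U) \to \phi(V)$ is a bi-Lipschitz homeomorphism between open sets of $\R^n$, so Rademacher's theorem gives that $\Dir\tilde F$ exists $\Leb$-a.e.\ and is bounded, and the Euclidean change-of-variables formula (area formula) for Lipschitz bijections applies: setting $z = \tilde F(y)$,
\begin{equation*}
\int_{\psi(U)} (\xi \comp \psi^{-1})(y)\ \modulus{\det \Dir\tilde F(y)}\ \sqrt{\det \mh(\tilde F(y))}\ d\Leb(y)
= \int_{\phi(V)} (\xi \comp \psi^{-1} \comp \tilde F^{-1})(z)\ \sqrt{\det \mh(z)}\ d\Leb(z).
\end{equation*}
Recognising $\psi^{-1} \comp \tilde F^{-1} = F^{-1} \comp \phi^{-1}$ and applying the local representation of $\mu_\mh$ in the chart $(V, \phi)$ (the continuous-metric case already discussed in \S\ref{Sect:Prelim:Mfld}) yields
\begin{equation*}
\int_{\phi(V)} (\xi \comp F^{-1} \comp \phi^{-1})(z)\ \sqrt{\det \mh(z)}\ d\Leb(z)
= \int_{\cN} \xi \comp F^{-1}\ d\mu_\mh,
\end{equation*}
noting that $\xi \comp F^{-1}$ is supported in $V$. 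Chaining these identities gives the claim for $\xi$ supported in a single chart.

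The main obstacle is the rigorous use of the change-of-variables formula for the bi-Lipschitz map $\tilde F$. This is not a smooth change of coordinates; we rely instead on Rademacher's theorem (so that $\Dir\tilde F$ exists $\Leb$-a.e.) together with the Euclidean area/coarea formula for Lipschitz maps, which is precisely what legitimises the substitution $z = \tilde F(y)$ and the factor $\modulus{\det \Dir\tilde F(y)}$ appearing in Lemma \ref{Lem:MeasureRep}. A minor caveat to address is measurability of $\xi \comp F^{-1}$ on $\cN$, but this has been handled by the previous proposition. Once the local identity is established, summing over the partition of unity (justified by Fubini--Tonelli on the nonnegative parts) produces the global statement.
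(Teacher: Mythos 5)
Your argument is correct and follows essentially the same route as the paper's proof: localize to a pair of Lipeo-admissible charts via a partition of unity, invoke Lemma \ref{Lem:MeasureRep} to express $d\mu_\mg$ in terms of $\sqrt{\det\mh}$ and the Jacobian of $\tilde F$, and then perform a bi-Lipschitz change of variables to absorb the Jacobian factor. The only cosmetic difference is the ordering (you apply Lemma \ref{Lem:MeasureRep} before the substitution rather than after), and you are somewhat more explicit about the analytic ingredients (Rademacher, the Lipschitz area formula, reduction to nonnegative $\xi$) that the paper takes for granted.
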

\begin{proof}
Fix $(U,\psi)$ and $(V,\phi)$ Lipeo-admissible
charts with $V = F(U)$. Then, first consider $\xi: \cM \to \C$
with $\spt \xi \subset U$ which is $\mu_\mg$
integrable.
Then,
$$ \int_{\cM} \xi(x)\ d\mu_\mg(x) 
	= \int_{\psi(U)} \xi \comp \psi^{-1}(x)\ \sqrt{\det \mg(x)}\ d\Leb(x).$$
Also, $\tilde{\xi} = \xi \comp F^{-1}$
is $\mu_\mh$ measurable 
with $\spt \tilde{\xi} \subset V$ and 
$$ 
\int_{\cN} \tilde{\xi}(y)\ d\mu_{\mh}(y)
	= \int_{\phi(V)} \xi \comp F^{-1} \comp \phi^{-1}(y) \sqrt{\det \mh(y)}\ d\Leb(y).$$

Now, by our choice of $(U, \psi)$ and $(V, \phi)$, 
we have that the map 
$\tilde{F} = \phi \comp F \comp \psi^{-1}: \psi(U) \to \phi(V)$
is bi-Lipschitz and therefore
we have an integration by substitution formula: 
\begin{multline*}
\int_{\psi(U)} \xi \comp \psi^{-1}(x)\ \sqrt{\det \mg(x)}\ d\Leb(x) \\
	= \int_{\tilde{F}(\psi(U))} \xi \comp \psi^{-1} \comp \tilde{F}^{-1}(x)
		\sqrt{\det\mg(\tilde{F}^{-1}(x))} \modulus{\det \Dir \tilde{F}^{-1}(x)}\ d\Leb(x).
\end{multline*}
From Lemma \ref{Lem:MeasureRep},
$\sqrt{\det\mh}(x) = \sqrt{\det\mg(\tilde{F}^{-1}(x))} \modulus{\det \Dir \tilde{F}^{-1}(x)}$
and 
$\xi^\comp \psi^{-1} \comp \tilde{F}^{-1}(x) = \tilde{\xi} \comp \phi^{-1}(x)$.
Therefore, $\int_{U} \xi\ d\mu_\mg = \int_{V} \xi \comp F^{-1}\ d\mu_\mh$.

For a general $\xi$, we can cover $\cM$ by $(U_i, \psi_i)$
and corresponding $(V_i, \phi_j)$ and the patch the
integral together through a partition of unity.
\end{proof}

As a direct consequence, we point out the following
regularity result.

\begin{proposition}
$\mu_\mg$ is a Radon measure.
\end{proposition}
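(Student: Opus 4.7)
The plan is to deduce the Radon property of $\mu_\mg$ from that of $\mu_\mh$ by transport across the homeomorphism $F$, using the identification $\mu_\mg = \pullb{F}\mu_\mh$ established in Proposition \ref{Prop:MeasPullb}. The one ingredient we need beyond what is already proved is the Radon property of $\mu_\mh$, which is standard: since $\mh$ is a continuous metric on $\cN$, in every chart $d\mu_\mh = \sqrt{\det \mh}\, d\Leb$ has a strictly positive continuous density, so locally $\mu_\mh$ is equivalent to Lebesgue measure with bounded density on each relatively compact set; combined with the fact that $\cN$ is second-countable, locally compact and Hausdorff, this yields that $\mu_\mh$ is Borel, finite on compact sets, outer regular on Borel sets, and inner regular on open sets.

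The main step is then to check that pullback under a homeomorphism preserves each of these properties. Applying Proposition \ref{Prop:MeasPullb} to $\xi = \ind{A}$ for $A \subset \cM$ Borel gives $\mu_\mg(A) = \mu_\mh(F(A))$. Since $F$ is a homeomorphism, $F$ and $F^{-1}$ each send Borel sets to Borel sets, open sets to open sets, and compact sets to compact sets. For outer regularity, given Borel $A \subset \cM$ and $\epsilon > 0$, select an open $V \supset F(A)$ in $\cN$ with $\mu_\mh(V) \leq \mu_\mh(F(A)) + \epsilon$; then $U = F^{-1}(V)$ is open in $\cM$, contains $A$, and satisfies $\mu_\mg(U) = \mu_\mh(V) \leq \mu_\mg(A) + \epsilon$. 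Inner regularity on open sets of $\cM$ is argued symmetrically by pulling back compact subsets of $F(U)$ through $F^{-1}$. Finiteness on compact sets is given by Proposition \ref{Prop:BorelCpct}, and the Borel property is given by Proposition \ref{Prop:CompMeas}(i).

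There is essentially no obstacle beyond bookkeeping: the three structural classes of sets relevant to the Radon property (Borel, open, compact) are each preserved by the homeomorphism $F$ in both directions, so every regularity statement for $\mu_\mh$ transports mechanically to $\mu_\mg$ via the pullback formula. The only subtle point worth stating explicitly is that one cannot invoke the analogous statement for a general rough metric — the earlier remark after Proposition \ref{Prop:BorelCpct} warns that Borel regularity is not known in that generality — but the extra structure $\mg = \pullb{F}\mh$ with a continuous $\mh$ is exactly what bridges this gap.
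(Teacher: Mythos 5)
Your proof is correct and uses the same key idea as the paper: transport regularity from $\mu_\mh$ to $\mu_\mg$ through the pullback identity $\mu_\mg = \pullb{F}\mu_\mh$ together with the fact that the homeomorphism $F$ and its inverse preserve Borel, open, and compact sets. The only discrepancy is in which formulation of ``Radon'' gets verified. The paper works with the geometric measure theory version (a Borel-regular outer measure that is finite on compact sets), so after citing Proposition~\ref{Prop:BorelCpct} its remaining step is a one-liner: for any $A\subset\cM$, pull back a Borel hull $\tilde{B}\supset F(A)$ of equal $\mu_\mh$-measure and set $B=F^{-1}(\tilde{B})$. You instead verify the topological formulation (outer regularity by open sets and inner regularity of open sets by compact sets), which requires a bit more bookkeeping but is equally valid and equivalent in this second-countable, locally compact setting. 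Your closing observation --- that the continuity of $\mh$ is precisely what supplies the Radon property of $\mu_\mh$, bridging the gap flagged in the remark after Proposition~\ref{Prop:BorelCpct} --- matches the paper's intent.
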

\begin{proof}
Since $\mg$ is a rough metric, by Proposition \ref{Prop:BorelCpct}, 
we obtain that $\mu_\mg$ is Borel and finite on compact sets.
Hence, we only need to prove Borel-regularity. For that, 
let $A \subset \cM$. 
Then, there exists $\tilde{B} \subset \cN$ such that
$F(A) \subset \tilde{B}$ and $\mu_\mh(F(A)) = \mu_\mh(\tilde{B})$. 
Set $B = F^{-1}(\tilde{B})$ and note that $A \subset B$ and
$\mu_\mg(A) = \mu_\mh(F(A)) = \mu_\mh(\tilde{B}) = \mu_\mg(B)$.
\end{proof} 

\subsubsection{Lebesgue and Sobolev spaces}

In our previous analysis where the
manifold $\cM$ was fixed and
we dealt with two uniformly close metrics $\mg$ and $\mgt$,
we were able to relate the
Lebesgue and Sobolev spaces of the two metrics to each
other in a more or less straight forward manner. 
The situation we now face is different -
we need to relate these spaces via $F$. The primary difficulty is that
the pullback of $F$ does not preserve smoothness.
Rather, it sends smooth functions to Lipschitz ones, 
and smooth tensors to tensors with only measurable
coefficients.
As a consequence we we dispense with
our attempts to setup this analysis on differential forms
and only concentrate on functions.
Furthermore, we demonstrate the somewhat unsurprising
fact that Lipschitz functions play
a sufficient role in the Lebesgue and Sobolev
theory so that we may use them in our analysis
instead of smooth objects.

We begin our efforts by presenting the following
easy but important observation which is
immediate from Proposition \ref{Prop:MeasPullb}.

\begin{proposition}
The map $F$ induces an isometry between 
$\Lp{p}(\Tensors[a,b] \cM,\mg)$ and $\Lp{p}(\Tensors[a,b]\cN,\mh)$.
\end{proposition}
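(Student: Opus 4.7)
The plan is to realise the claimed isometry as the pullback map $\pullb{F}$ sending sections of $\Tensors[a,b]\cN$ to sections of $\Tensors[a,b]\cM$. Since $F$ is a local Lipeomorphism, for any pair of Lipeo-admissible charts $(U,\psi)$ near $x$ and $(V,\phi)$ near $F(x)$ the coordinate representative $\tilde{F} = \phi \comp F \comp \psi^{-1}$ is bi-Lipschitz between open subsets of $\R^n$. By Rademacher's theorem, $\Dir\tilde{F}$ exists and is invertible $\Leb$-almost everywhere on $\psi(U)$, and the analogous statement holds for $\Dir\tilde{F}^{-1}$ on $\phi(V)$. Covering $\cM$ by countably many such chart pairs produces a differential for $F$ on a conull subset of $\cM$, which then lets us define $\pullb{F}T$ almost everywhere for any $T \in \Sect(\Tensors[a,b]\cN)$. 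Inside a chart, the coefficients of $\pullb{F}T$ are essentially bounded polynomial combinations of the partial derivatives of $\tilde{F}$ with the coefficients of $T$, and so $\pullb{F}T \in \Sect(\Tensors[a,b]\cM)$.

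The central pointwise identity is
$$\modulus{\pullb{F}T(x)}_{\mg(x)} = \modulus{T(F(x))}_{\mh(F(x))}$$
for almost every $x \in \cM$. Indeed, at any $x$ where $\Dir\tilde{F}(\psi(x))$ exists and is invertible and $x \in \Reg(\mg)$, the fact that $\mg = \pullb{F}\mh$ as a $(2,0)$-tensor is precisely the statement that $\Dir\tilde{F}(\psi(x))$ is a linear isometry from $(\tanb_x\cM,\mg(x))$ onto $(\tanb_{F(x)}\cN,\mh(F(x)))$. The canonical extension of a linear isometry to the $(a,b)$-tensor powers remains an isometry for the induced fibre metrics, which gives the displayed identity. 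Combining this with Proposition \ref{Prop:MeasPullb} applied to $\xi = \modulus{\pullb{F}T}_\mg^p$ yields, for $1 \leq p < \infty$,
$$\int_{\cM} \modulus{\pullb{F}T}_\mg^p\ d\mu_\mg = \int_{\cN} \modulus{T}_\mh^p\ d\mu_\mh.$$
The $p = \infty$ case follows from the pointwise identity together with the fact that $F$ preserves null sets (again by Proposition \ref{Prop:MeasPullb}). Bijectivity of $\pullb{F}$ is obtained by running the same argument with the local Lipeomorphism $F^{-1}$ to produce an inverse $\pullb{(F^{-1})}$.

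The main obstacle is the measure-theoretic set-up of $\pullb{F}$ on tensor bundles under merely Lipschitz regularity: unlike the smooth case, $\pullb{F}$ sends continuous tensor fields only to bounded measurable sections, and one needs to patch the almost-everywhere differentials across countably many chart pairs while verifying measurability of the resulting section. Once this bookkeeping is discharged, the isometry itself is a direct consequence of the already-established identity $\mu_\mg = \pullb{F}\mu_\mh$ together with the fibrewise isometry supplied by $\mg = \pullb{F}\mh$.
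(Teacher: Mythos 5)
Your proposal is correct and amounts to the same argument the paper has in mind: the paper dismisses this as "immediate from Proposition \ref{Prop:MeasPullb}" with no written proof, and you have supplied exactly the two ingredients that the word "immediate" conceals — the fibrewise linear isometry $\pushf{F}_x$ coming from $\mg = \pullb{F}\mh$ at points where $\Dir\tilde{F}$ exists and is invertible (hence the pointwise identity $\modulus{\pullb{F}T}_\mg = \modulus{T}_\mh \comp F$ a.e.), and the change-of-measure formula $\mu_\mg = \pullb{F}\mu_\mh$. The Rademacher and patching bookkeeping you flag as the "main obstacle" is indeed the part the paper glosses over, and your treatment of it is sound.
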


Let us now fix some notation.
Fix $\cP$ to be a smooth manifold and 
let us denote the exterior derivative on this manifold
by $\conn^\cP$.
Let $\Lips[loc](\cP)$ be the space of locally Lipschitz functions, 
defined by appealing to the local Euclidean structure
of $\cP$
so this space can be formulated independently of a metric on $\cP$.
Define the subspace of such functions with compact support 
by $\Lips[c](\cP)$. 
The idea is to substitute $\Lips[loc](\cP)$
for $\Ck{k}(\cP)$. The following lemma gives credence
to our efforts.

\begin{lemma}
For every $f \in \Lips[loc](\cN)$, $\conn^\cN f$ exists for $\mu_\mh$-a.e.
Furthermore, if $\xi \in \Ck{k}(\cM)$ for $k \geq 1$, then 
$\pullb{F^{-1}}\xi \in \Lips[loc](\cN)$ and $\conn^\cM\xi = \pullb{F}\conn^{\cN}\pullb{F^{-1}} \xi$
$\mu_\mg$-a.e.
Similarly, if $\eta \in \Ck{k}(\cN)$ for $k \geq 1$, then, 
$\pullb{F}\eta \in \Lips[loc](\cM)$ and 
$\conn^\cN \eta = \pullb{F^{-1}}\conn^\cM \pullb{F}\eta$ 
$\mu_\mh$-a.e.
\end{lemma}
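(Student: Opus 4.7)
The plan is to reduce every assertion to its Euclidean counterpart via Lipeo-admissible charts and Rademacher's theorem. Throughout, $\mu_\mh$- and $\Leb$-null sets inside any smooth chart on $\cN$ coincide by continuity of $\mh$ and Lemma \ref{Lem:WeightedMeas}, and by Proposition \ref{Prop:MeasPullb}, $F$ and $F^{-1}$ interchange $\mu_\mh$- and $\mu_\mg$-null sets. This lets us freely translate Euclidean ``$\Leb$-a.e.''\ statements in charts into the ``$\mu_\mh$-a.e.''\ or ``$\mu_\mg$-a.e.''\ statements demanded by the lemma.

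For the first assertion, cover $\cN$ by a countable family of smooth charts $(V_i, \phi_i)$; in each one, the representative $f \comp \phi_i^{-1}$ is locally Lipschitz on $\phi_i(V_i) \subset \R^n$ and hence, by Rademacher's theorem, is differentiable $\Leb$-a.e. Defining $\conn^\cN f$ in $V_i$ by the usual coordinate formula on the set of differentiability, compatibility on overlaps follows from the classical chain rule applied to the smooth transition maps, yielding a $\mu_\mh$-a.e.\ defined measurable section of $\cotanb\cN$.

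For the second assertion, fix a countable cover of $\cM$ by Lipeo-admissible charts $(U_j, \psi_j)$ with associated $(V_j, \phi_j)$ on $\cN$, and set $\tilde{F} = \phi_j \comp F \comp \psi_j^{-1}$. Since $\tilde{F}$ is bi-Lipschitz, the representative of $\pullb{F^{-1}} \xi$ in the chart is $(\xi \comp \psi_j^{-1}) \comp \tilde{F}^{-1}$, smooth composed with Lipschitz, hence locally Lipschitz, so $\pullb{F^{-1}} \xi \in \Lips[loc](\cN)$. For the derivative identity, work at points $x \in U_j$ such that $\tilde{F}$ is differentiable at $\psi_j(x)$ and $\tilde{F}^{-1}$ is differentiable at $\tilde{F}(\psi_j(x))$; this is a full-measure subset, since the bi-Lipschitz $\tilde{F}$ preserves null sets in both directions. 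At such $x$, differentiating $\tilde{F}^{-1} \comp \tilde{F} = \id$ via the chain rule forces $D\tilde{F}^{-1}(\tilde{F}(\psi_j(x))) \comp D\tilde{F}(\psi_j(x)) = I$, and then the chain rule applied to $\xi \comp \psi_j^{-1} = ((\xi \comp \psi_j^{-1}) \comp \tilde{F}^{-1}) \comp \tilde{F}$ yields exactly the coordinate form of $\conn^\cM \xi = \pullb{F}\conn^\cN\pullb{F^{-1}}\xi$ at $x$. The third assertion is proved by the simpler argument that $(\eta \comp \phi_j^{-1}) \comp \tilde{F}$ is smooth composed with Lipschitz, hence locally Lipschitz, and that $d(\tilde{\eta} \comp \tilde{F}) = \pullb{\tilde{F}} d\tilde{\eta}$ holds wherever $\tilde{F}$ is differentiable, which is $\Leb$-a.e.

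The main obstacle is the chain rule in the second assertion, where one composes two only-Lipschitz factors $\tilde{F}^{-1}$ and $\tilde{F}$: the usual Lipschitz chain rule fails on the set where the outer factor is non-differentiable at the image of the inner, and this set could a priori carry positive measure. The bi-Lipschitz hypothesis is precisely what guarantees it is null, since bi-Lipschitz maps carry null sets to null sets in both directions. Once this is in hand, transferring Lebesgue-a.e.\ identities in charts back to $\mu_\mg$-a.e.\ identities on $\cM$ is routine via Proposition \ref{Prop:MeasPullb}.
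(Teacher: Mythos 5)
Your proof is correct and takes the same approach as the paper, which disposes of the lemma in one sentence by invoking the facts that the exterior derivative commutes with chart pullbacks and that Lipschitz transition maps are a.e.\ differentiable by Rademacher. You have usefully filled in the detail the paper leaves implicit, namely that the chain rule for a composition of Lipschitz factors only holds off the preimage of the outer factor's non-differentiability set, and that this preimage is null precisely because the inner factor is bi-Lipschitz (so that Lipschitz maps carry null sets to null sets in both directions); this is exactly the content needed to make the paper's terse argument rigorous.
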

\begin{proof}
The conclusions follows immediately from the
fact that $\conn^\cP$ is the exterior derivative
on $\cP$, $\pullb{\psi}\conn^\cP = \conn^{\R^n} \pullb{\psi}$,
and since $\phi \comp F \comp \psi^{-1}$ is Lipschitz and a.e. 
differentiable.
\end{proof}  

The following proposition 
then yields that $\Lips[c](\cN)$
is a suitable substitute for $\Ck[c]{k}(\cN)$, $k \geq 1$.

\begin{proposition}
$\Lips[c](\cN) \subset \SobH{1}(\cN)$ and
moreover, $\Lips[c](\cN)$ is dense in $\SobH{1}(\cN)$.
\end{proposition}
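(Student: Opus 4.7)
The plan is to establish the inclusion first by showing that a Lipschitz compactly supported function is in $\Lp{2}$, has its (a.e.\ defined) differential in $\Lp{2}$, and can be approximated in the Sobolev norm by smooth compactly supported functions via local mollification. The density claim then follows immediately from the density of $\Ck[c]{\infty}(\cN)$ in $\SobH{1}(\cN)$ established earlier for continuous, complete metrics.

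For the inclusion, let $f \in \Lips[c](\cN)$. Since $\spt f$ is compact and $\mu_\mh$ is finite on compact sets (by Proposition~\ref{Prop:BorelCpct} applied to $\mh$ viewed as a rough metric, or directly from continuity), and since $f$ is bounded (being Lipschitz with compact support), we have $f \in \Lp{2}(\cN,\mh)$. The preceding lemma furnishes $\conn^\cN f$ at $\mu_\mh$-a.e.\ point; its pointwise norm is controlled by the local Lipschitz constant on $\spt f$ and vanishes off $\spt f$, so $\conn^\cN f \in \Lp{2}(\cotanb\cN,\mh)$.

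To place $f$ in the Sobolev completion, I would cover $\spt f$ by finitely many coordinate charts $(V_j, \phi_j)$ with compact closure, take a smooth partition of unity $\set{\eta_j}$ subordinate to this cover, and decompose $f = \sum_j \eta_j f$. Each $\eta_j f$ is Lipschitz with compact support inside a single chart $V_j$, and $\pullb{\phi_j^{-1}}(\eta_j f)$ is a compactly supported Lipschitz function on $\phi_j(V_j) \subset \R^n$. Standard Euclidean mollification $\rho_\epsilon \convolve \pullb{\phi_j^{-1}}(\eta_j f)$ yields, for sufficiently small $\epsilon$, smooth compactly supported approximations converging in the Euclidean $\Sob{1,2}$-norm. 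Pushing these forward by $\phi_j$ and using that $\mh$ is continuous with positive eigenvalues on the compact set $\close{V_j}$ (so the coefficients of $\mh$ and of $\sqrt{\det\mh}$ are bounded above and below by positive constants), the Euclidean $\Sob{1,2}$-convergence transfers to convergence in $\SobH{1}(\cN,\mh)$. Summing over $j$ produces a sequence in $\Ck[c]{\infty}(\cN)$ converging to $f$ in $\SobH{1}(\cN)$, which establishes $f \in \SobH{1}(\cN)$.

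For density, the chain $\Ck[c]{\infty}(\cN) \subset \Lips[c](\cN) \subset \SobH{1}(\cN)$ combined with the density of $\Ck[c]{\infty}(\cN)$ in $\SobH{1}(\cN)$ (which holds by the earlier proposition giving $\SobH[0]{1}(\cN,\mh) = \SobH{1}(\cN,\mh)$ for continuous, complete metrics) yields the claim at once. There is no serious obstacle here; the only point that demands mild care is the passage from Euclidean mollification to the intrinsic Sobolev norm, which is routine because compactness of each chart and continuity of $\mh$ guarantee uniform comparability of the local Euclidean and intrinsic norms.
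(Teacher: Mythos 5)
Your proposal is correct and takes essentially the same route as the paper: both approaches mollify locally within coordinate charts, use continuity of $\mh$ and compactness of $\spt f$ to compare the Euclidean and intrinsic $\Lp{2}$-quantities by uniform constants, pass to the closure of $\conn^\cN$, and patch via a smooth partition of unity. The only cosmetic difference is the order of operations—you split $f = \sum_j \eta_j f$ first and mollify each piece, whereas the paper first proves the single-chart case and then invokes a partition-of-unity argument to remove that restriction. Your explicit handling of density via the chain $\Ck[c]{\infty}(\cN) \subset \Lips[c](\cN) \subset \SobH{1}(\cN)$ together with $\SobH[0]{1}(\cN) = \SobH{1}(\cN)$ (from completeness and continuity of $\mh$) is the argument the paper leaves implicit, so this is a welcome clarification rather than a deviation.
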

\begin{proof}
First, consider $f \in \Lips[c](\cN)$
with $\spt f \subset V$, where $(V, \phi)$
is a compact chart on $\cN$. Then,
$f \comp \phi^{-1}: \phi(V) \to \C$ is
Lipschitz, and furthermore,
$\spt (f \comp \phi^{-1}) \subset \phi(V)$.
Then, there exists $\delta > 0$ such that for
all $\epsilon < \delta$,
$\spt ( \eta^\epsilon \convolve f \comp \phi^{-1}) \subset \phi(V)$, 
where $\eta$ is the standard symmetric mollifier.
Writing
$$ \tilde{f}_\epsilon 
	= \begin{cases}
		(\eta^\epsilon \convolve f \comp \phi^{-1}) \comp \phi(x)	&x \in V\\
		0 &x \not\in V,
	\end{cases} $$
defines $\tilde{f}_\epsilon \in \Ck[c]{\infty}(\cN)$.
Then, 
\begin{align*} 
\int_{\cN} \modulus{\tilde{f}_\epsilon - f}^2\ d\mu_\mh
	&= \int_{V} \modulus{\tilde{f}_\epsilon - f}^2\ d\mu_\mh \\
	&= \int_{\phi(V)} \modulus{\eta^\epsilon \convolve f \comp \phi^{-1} - f \comp \phi^{-1}}^2\ 
		\sqrt{\det \mh}\ d\Leb \\
	&\leq C \int_{\phi(V)} \modulus{\eta^\epsilon \convolve f \comp \phi^{-1} - f \comp \phi^{-1}}^2\ d\Leb
	\to 0
\end{align*}
as $\epsilon \to 0$ and where $C$ depends on $\mh$ and $V$ by
the continuity of $\mh$ and compactness of $V$.

Also,  
\begin{multline*}
\int_{\cN} \modulus{\extd \tilde{f}_\epsilon - \extd \tilde{f}_{\epsilon'}}_\mh^2
	= \int_{\phi(V)} \modulus{ \pullb{\phi^{-1}} \extd \tilde{f}_\epsilon - 
		\pullb{\phi^{-1}} \extd \tilde{f}_{\epsilon'}}_\mh^2\ \sqrt{\det \mh}\ d\Leb \\
	\leq C \int_{\phi(V)} \modulus{ \pullb{\phi^{-1}} \extd \tilde{f}_\epsilon - 
		\pullb{\phi^{-1}} \extd \tilde{f}_{\epsilon'}}^2\ d\Leb
\end{multline*}
where $C$ depends on $\mh$ and $V$ as before. 
Also, 
$$\partial_i \eta^\epsilon \convolve f \comp \phi^{-1} 
	= \eta^\epsilon \convolve \partial_i(f \comp \phi^{-1})$$
and therefore,
$$\int_{\phi(V)} \modulus{ \pullb{\phi^{-1}} \extd \tilde{f}_\epsilon - 
		\pullb{\phi^{-1}} \extd \tilde{f}_{\epsilon'}}_\mh^2\ d\Leb \to 0$$
as $\epsilon, \epsilon' \to 0$, 
which implies that there exists $v \in \Lp{2}(\cN,\mh)$
such that $\extd \tilde{f}_\epsilon = \conn^\cN \tilde{f}_\epsilon \to v$.
Combining this with the fact that $\tilde{f}_\epsilon \to f$
and because $\conn^\cN$ is a closable operator,
we have that $f \in \SobH{1}(\cN)$ and $v = \close{\conn^\cN} f$.
Through a partition of unity argument,
the requirement that $V$ is compact and $\spt f \subset V$
can be dropped and makes the result valid for
every $f \in \Lips[c](\cN)$. 
\end{proof}

Recall that, since $\mg$ is a rough metric,
the two operators $\conn[c]^\cM$ and $\conn[2]^\cM$
are both automatically densely-defined are closable
as a consequence of Proposition \ref{Prop:OpSobRough}.
Since $\close{\conn[c]^\cN} = \close{\conn[2]^\cN}$
due the completeness of $\mh$, we 
obtain the following 
similar result on $\cM$.

\begin{proposition}
$\SobH[0]{1}(\cM) = \SobH{1}(\cM)$ and 
$\pullb{F}\SobH{1}(\cN) = \SobH{1}(\cM)$
with $\close{\conn[2]^\cM} = \pullb{F}\close{\conn[2]^\cN}\pullb{F^{-1}}$.
\end{proposition}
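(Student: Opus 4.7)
The plan is to transfer the density statement for $\Lips[c](\cN)$ in $\SobH{1}(\cN)$ (proved in the preceding proposition) across to $\cM$ using the $\Lp{2}$-isometry induced by $\pullb{F}$ together with the almost-everywhere chain rule $\conn^\cM\pullb{F}\phi = \pullb{F}\conn^\cN\phi$ from the preceding lemma. Because $F$ is a homeomorphism with locally Lipschitz components, $\pullb{F}$ carries $\Lips[c](\cN)$ bijectively onto $\Lips[c](\cM)$; and because $\mh$ is continuous and complete on $\cN$, we already have $\SobH[0]{1}(\cN) = \SobH{1}(\cN)$, so $\Ck[c]{\infty}(\cN)$ is dense in $\SobH{1}(\cN)$ under the graph norm of $\close{\conn[2]^\cN}$.

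The key enabling step I will establish first is that $\Lips[c](\cM) \subset \SobH[0]{1}(\cM)$. Given $u \in \Lips[c](\cM)$, a smooth partition of unity subordinate to charts satisfying the local comparability condition for $\mg$ reduces to the case $\spt u \subset U$ for a single such chart $(U,\psi)$. Mollifying $\pullb{\psi^{-1}}u$ in Euclidean space with the standard symmetric mollifier (as in Proposition \ref{Prop:SCdense}) produces smooth compactly supported approximants. The local comparability condition coupled with Lemma \ref{Lem:Det} yields two-sided comparisons between $\mg$-norms and Euclidean norms both for the functions and, via the chart frame, for their gradients; standard Euclidean theory for mollifying Lipschitz functions (Rademacher's theorem places them in $W^{1,2}_{\rm loc}$) then yields graph-norm convergence for $\conn[c]^\cM$.

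With this in hand, the three claims of the proposition follow in tandem. For $\phi \in \Ck[c]{\infty}(\cN)$ the pullback $\pullb{F}\phi$ lies in $\Lips[c](\cM) \subset \SobH[0]{1}(\cM)$, and the $\Lp{2}$-isometry together with the chain rule gives $\close{\conn[c]^\cM}\pullb{F}\phi = \pullb{F}\close{\conn[c]^\cN}\phi$. Passing to limits along sequences $\phi_n \in \Ck[c]{\infty}(\cN)$ approximating an arbitrary $\phi \in \SobH{1}(\cN)$ and invoking closability of $\close{\conn[c]^\cM}$ yields $\pullb{F}\SobH{1}(\cN) \subset \SobH[0]{1}(\cM) \subset \SobH{1}(\cM)$ together with the intertwining formula. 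For the reverse inclusion $\SobH{1}(\cM) \subset \pullb{F}\SobH{1}(\cN)$, I repeat the argument with $F$ replaced by $F^{-1}$: smooth approximants $u_n \in \Ck{\infty} \cap \Lp{2}(\cM)$ for $u \in \SobH{1}(\cM)$ are pulled back to locally Lipschitz functions on $\cN$, and the chain rule together with closability of $\close{\conn[2]^\cN}$ delivers $\pullb{F^{-1}}u \in \SobH{1}(\cN)$. Chaining the inclusions forces the three spaces $\SobH[0]{1}(\cM)$, $\SobH{1}(\cM)$, and $\pullb{F}\SobH{1}(\cN)$ to coincide.

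The main obstacle is the reverse inclusion: the pullbacks $\pullb{F^{-1}}u_n$ are only locally Lipschitz on $\cN$ rather than smooth, so one must verify that a locally Lipschitz function on $\cN$ whose almost-everywhere differential lies in $\Lp{2}(\cotanb\cN,\mh)$ is genuinely an element of $\SobH{1}(\cN)$. This is precisely where the smoothness (or at least continuity and completeness) of $(\cN,\mh)$ is essential: Euclidean mollification in $\mh$-admissible charts produces smooth $\SobH{1}$-convergent approximants, after which closability of $\close{\conn[2]^\cN}$ closes the argument.
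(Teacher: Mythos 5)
Your proposal is correct and follows essentially the same route as the paper: density of $\Lips[c](\cM)$ in $\SobH[0]{1}(\cM)$ via chart-wise Euclidean mollification (with the Lipeo-admissible chart structure and a uniform bound on $\modulus{\det \Dir\tilde{F}}\sqrt{\det\mh}$ controlling the measure), combined with the $\Lp{2}$-isometry $\pullb{F}$, the a.e.\ chain rule, and closedness to transport the Sobolev spaces; your only structural deviation is to establish $\Lips[c](\cM)\subset\SobH[0]{1}(\cM)$ first, where the paper first proves $\pullb{F}\SobH{1}(\cN)=\SobH{1}(\cM)$ and then observes $\pullb{F}\Lips[c](\cN)\subset\Lips[c](\cM)$ is dense. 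One thing you do well that the paper leaves tacit: you explicitly flag that the pullbacks $\pullb{F^{-1}}u_n$ of the (generally non-compactly-supported) smooth approximants on $\cM$ are merely locally Lipschitz on $\cN$, so membership in $\dom(\close{\conn[2]^\cN})$ must be separately justified—via cutting off (using completeness of $\mh$, so $\SobH[0]{1}(\cN)=\SobH{1}(\cN)$) and chart-wise mollification—before closedness of $\close{\conn[2]^\cN}$ can be invoked.
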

\begin{proof}
First, we show that 
$\pullb{F}\SobH{1}(\cN) = \SobH{1}(\cM)$.
For this, fix $f \in \SobH{1}(\cM)$. Then,
there exists $f_j \in \Ck{\infty} \intersect \Lp{2}(\cM) \to \Lp{2}(\cotanb\cM)$
such that $f_j \to f$ and $\conn^\cM f_j \to \close{\conn[2]^\cM}f$.
But $\pullb{F^{-1}} f_j \to \pullb{F^{-1}}f$
and $\conn^\cN \pullb{F^{-1}} \to v$. By the closedness
of $\close{\conn[2]^\cN}$, we have that $\pullb{F^{-1}f} \in \SobH{1}(\cN)$
and $v = \close{\conn[2]^\cN}\pullb{F^{-1}}f$.
This shows that $\pullb{F^{-1}} \SobH{1}(\cM) \subset \SobH{1}(\cN)$. 
A similar argument then establishes that
$\pullb{F}\SobH{1}(\cN) \subset \SobH{1}(\cM)$.
The formula $\close{\conn[2]^\cM} = \pullb{F}\close{\conn[2]^\cN}\pullb{F^{-1}}$
follows immediately. 

Next we prove $\SobH[0]{1}(\cM) = \SobH{1}(\cM)$.
For this, note that since $\pullb{F}\SobH{1}(\cN) = \SobH{1}(\cM)$
and $\pullb{F}$ is an $\Lp{2}$ isometry 
implies that $\pullb{F}\Lips[c](\cN)$ is a dense 
subset of $\SobH{1}(\cM)$. It is enough to prove that
$\pullb{F}\Lips[c](\cN) \subset \SobH[0]{1}(\cM)$.
In fact, we prove that $\Lips[c](\cM) \subset \SobH[0]{1}(\cM)$
is a dense subset, and it is
an easy observation that $\pullb{F}\Lips[c](\cN) \subset \Lips[c](\cM)$.

Let $f \in \Lips[c](\cM)$ such that
$\spt f  \subset U$, where $(U, \psi)$ and $(F(U) \subset V,\phi)$ is
a Lipeo-admissible compact chart. Then, 
we can arrange $\epsilon > 0$ small such that
$f_\epsilon = \pullb{\psi} \comp (\eta^\epsilon \convolve f \comp \psi^{-1}) \in \Ck[c]{\infty}(U)$, 
and extend it to $0$ outside of $U$. Then, 
$f_\epsilon \to f$ in $\Lp{2}(\cM)$
and 
\begin{align*}
\int_{\cM} &\modulus{\conn[c]^\cM f_\epsilon - \conn[c]^\cM f_{\epsilon'}}\ d\mu_\mg \\
	 &= \int_{\psi(U)} \modulus{\extd (\eta^\epsilon \convolve f \comp \psi^{-1}) 
	- \extd (\eta^\epsilon \convolve f \comp \psi^{-1})}\ 
	\modulus{\det \Dir \tilde{F}} \sqrt{\det \mh}(\tilde{F}) \ d\Leb \\
	 &\leq C \int_{\psi(U)} \modulus{\extd (\eta^\epsilon \convolve f \comp \psi^{-1}) 
	- \extd (\eta^\epsilon \convolve f \comp \psi^{-1})}\ d\Leb \to 0
\end{align*}
where $\tilde{F} = \phi \comp F \comp \psi^{-1}$ and
 the $C$ depends on the lower bound of 
$\modulus{\det \Dir \phi \comp F \comp \psi^{-1}}$, 
which exists since $U$ and $V$ are Lipeo-admissible
and by the continuity of $\sqrt{\det \mh}$ in $(\phi \comp F)(U)$
which is guaranteed to have compact closure.
Therefore, $\conn[c]^\cM f_\epsilon \to v$ as
$\epsilon \to 0$ and by the closedness
of $\close{\conn[c]^\cM}$, we conclude $f \in \SobH[0]{1}(\cM)$.
\end{proof}

\subsubsection{Transmitting quadratic estimates via $F$}

We now demonstrate how to
transmit quadratic estimates via 
$F$ between $\Lp{2}(\cM)$ and $\Lp{2}(\cN)$.
As we have previously mentioned,
we only concentrate on the case of
functions.

As before, let $S_\cM = (I, \close{\conn_c^\cM})$
with domain $\dom(S_\cM) = \SobH{1}(\cM)$
and similarly define $S_\cN$.
Then, write $\Hil_1 =
\Lp{2}(\cM) \oplus \Lp{2}(\cotanb\cM)	
\oplus \Lp{2}(\cotanb\cM)$
and define $\Gamma_\cM: \Hil_1 \to \Hil_1$ 
by 
$$\Gamma_\cM = \begin{pmatrix}0 & 0 \\ S_\cM & 0\end{pmatrix}.$$
Similarly, define $\Gamma_\cN$ upon replacing $S_\cM$ by $S_\cN$.
Since $\Hil_1$ is a Hilbert space and $\Gamma_\cM$ is densely-defined
and closed, it follows that $\adj{\Gamma}_\cM$ exists and it
is also densely-defined and closed.

Write $\Hil_2 = \Lp{2}(\cN) \oplus \Lp{2}(\cotanb\cN) \oplus \Lp{2}(\cotanb\cN)$
and define $\Phi:\Hil_1 \to \Hil_2$
by 
$$\Phi(u,v,w) = (\pullb{F^{-1}}u, \pullb{F^{-1}}v,\pullb{F^{-1}}w),$$
which is readily checked to be an isometry between $\Hil_1$ and $\Hil_2$.
Letting $\divv_\mg$ denote the divergence with respect to $\conn^\cM$
and metric $\mg$, we note that 
$\adj{\Gamma}_\cM = \Phi^{-1}\adj{\Gamma}_\cN\Phi$ 
as a consequence of Lemma \ref{Lem:Oppush}. That is precisely 
$\divv_\mg = \pullb{F} \divv_\mh \pullb{F^{-1}}$.

Let $A \in \Lp{\infty}(\bddlf(\cM \oplus \cotanb\cM))$, 
$a \in \Lp{\infty}(\bddlf(\cM)$
and suppose they satisfy
$$
\re\inprod{AS_\cM v, v}_1 \geq \kappa_1 \norm{v}_{\SobH{1}(\cM)}
\quad\text{and}\quad
\re\inprod{au, u}_1 \geq\kappa_2 \norm{u}_1$$
for $v \in \SobH{1}(\cM)$ and $u \in \Lp{2}(\cM)$.
Set
$$B_1 = \begin{pmatrix}a & 0 \\ 0 & 0 \end{pmatrix}
\quad\text{and}\quad
B_2 = \begin{pmatrix} 0 & 0 \\ 0 & A\end{pmatrix}.$$
Write $\tilde{B}_i = \Phi^{-1}B_i\Phi$, 
so that 
$$
\tilde{B}_1 = \begin{pmatrix} 
	\pullb{F} a \pullb{F^{-1}} & 0 \\
	0 & 0 \end{pmatrix}
\quad\text{and}\quad
\tilde{B}_2 = \begin{pmatrix}
	0 & 0 \\ 0 & \pullb{F}A \pullb{F^{-1}}
	\end{pmatrix}.$$
Then, on letting 
$\Pi_{\cM,B} = \Gamma_{\cM} + B_1 \adj{\Gamma}_\cM B_2$
and $\Pi_{\cN,\tilde{B}} = \Gamma_\cN + \tilde{B}_1 \adj{\Gamma}_\cN \tilde{B}_2$, 
we obtain the following main theorem of this section.

\begin{theorem}
\label{Thm:QuadEstLipeo}
The quadratic estimate
$$\int_0^\infty \norm{t\Pi_{\cM,B}(\iden + t^2 \Pi_{\cM,B}^2)^{-1}u}^2_\mg\ \frac{dt}{t} \simeq \norm{u}_\mg^2$$
for all $u \in \close{\ran(\Pi_{\cM,B})}$ 
if and only if 
$$\int_0^\infty \norm{t\Pi_{\cN, \tilde{B}}(\iden + t^2 \Pi_{\cN, \tilde{B}}^2)^{-1}v}^2_\mh\ \frac{dt}{t} \simeq \norm{v}_\mh^2$$
for all $v \in \close{\ran(\Pi_{\cN,\tilde{B}})}$.
\end{theorem}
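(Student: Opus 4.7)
The plan is to deduce the theorem as a direct application of Proposition \ref{Prop:QuadEstT} once we have verified that the abstract setup of that proposition is realised by the concrete data $(\Phi, \Gamma_\cM, B_1, B_2)$ here. First, I would check that $\Phi: \Hil_1 \to \Hil_2$ is an isometric isomorphism. This is immediate from Proposition \ref{Prop:MeasPullb}, which establishes that $\pullb{F^{-1}}$ is an $\Lp{2}$ isometry between the relevant Lebesgue spaces on $\cM$ and $\cN$, and this extends componentwise to the orthogonal sums making up $\Hil_1$ and $\Hil_2$.

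The main step, which is the only genuine content, is to verify the intertwining identity $\Gamma_\cN = \Phi \Gamma_\cM \Phi^{-1}$ at the level of both action and domains. Since $\Gamma_\cM$ and $\Gamma_\cN$ are the off-diagonal $2\times 2$ matrix operators built out of $S_\cM = (\iden, \close{\conn_c^\cM})$ and $S_\cN = (\iden, \close{\conn_c^\cN})$, this reduces to the claim that $\close{\conn_c^\cN} = \pullb{F^{-1}}\, \close{\conn_c^\cM}\, \pullb{F}$ as closed, densely-defined operators. This is precisely the content of the Sobolev-space identification established just before the theorem, namely $\pullb{F}\SobH{1}(\cN) = \SobH{1}(\cM)$ with $\close{\conn_2^\cM} = \pullb{F}\close{\conn_2^\cN}\pullb{F^{-1}}$, combined with the equality $\SobH[0]{1} = \SobH{1}$ on both manifolds (for $\cN$ this is Corollary \ref{Cor:CtsComp} using completeness of $\mh$, and for $\cM$ it is the previous proposition). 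Once this is in hand, Lemma \ref{Lem:Oppush} immediately yields $\adj{\Gamma}_\cN = \Phi \adj{\Gamma}_\cM \Phi^{-1}$.

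Next I would check hypotheses (H1)--(H3) for $(\Gamma_\cM, B_1, B_2)$ on $\Hil_1$. Condition (H1) is trivial as $\Gamma_\cM$ is closed, densely-defined and manifestly nilpotent (its square is zero by the block structure). Condition (H2) is precisely the accretivity assumption imposed on $a$ and $A$, and (H3) follows from the block structure of $B_1, B_2$ and $\Gamma_\cM$ exactly as in \S\ref{Sect:AKM}. Because $\Phi$ is an isometry, all constants transfer without loss, and the pushed operators $\tilde{B}_i$ on $\Hil_2$ satisfy the same hypotheses with the same bounds. A direct computation then shows $\Pi_{\cN,\tilde{B}} = \Phi\, \Pi_{\cM, B}\, \Phi^{-1}$.

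With all of the above in place, the theorem follows from Proposition \ref{Prop:QuadEstT}: the ranges satisfy $\close{\ran(\Pi_{\cN,\tilde{B}})} = \Phi\, \close{\ran(\Pi_{\cM,B})}$ by Lemma \ref{Lem:Oppush}, and both the norms and the operators $t\Pi(\iden + t^2\Pi^2)^{-1}$ are intertwined by $\Phi$ via the isometry identity $\norm{\Phi v}_\mh = \norm{v}_\mg$, so each of the two quadratic estimates is transformed into the other by the substitution $u = \Phi v$ under the integral. The only part of this plan that requires real care is the operator-theoretic intertwining of the closures of $\conn_c^\cM$ and $\conn_c^\cN$; everything else is formal manipulation given the machinery already developed.
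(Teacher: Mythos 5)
Your proposal is correct and follows exactly the route the paper intends: verify that $\Phi$ is an isometry via Proposition~\ref{Prop:MeasPullb}, establish the intertwining $\Gamma_\cN = \Phi\,\Gamma_\cM\,\Phi^{-1}$ via the Sobolev identification $\close{\conn[2]^\cM} = \pullb{F}\close{\conn[2]^\cN}\pullb{F^{-1}}$ together with $\SobH[0]{1} = \SobH{1}$ on both manifolds, deduce the adjoint relation from Lemma~\ref{Lem:Oppush}, and then invoke Proposition~\ref{Prop:QuadEstT}. The only tiny inaccuracy is attributing $\SobH[0]{1}(\cN) = \SobH{1}(\cN)$ to Corollary~\ref{Cor:CtsComp}, which is stated for $\extd$ on forms; the relevant statement is the unnamed proposition about complete continuous metrics and $\conn$, though on functions $\extd = \conn$ so no harm is done.
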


\subsection{Lipschitz transformations}

Let $(\cM,\mg)$ and $(\cN,\mh)$ now be two smooth manifold with 
continuous metrics. 
Suppose that the map $F: (\cM, \mg) \to (\cN, \mh)$
is now a \emph{Lipeomorphism},
 by which we mean that
there exists $C \geq 1$ satisfying
$C^{-1} \met_\mg(x,y) \leq \met_\mh(F(x),F(y)) \leq C \met_\mg(x,y)$.
Our first observation is 
that such a map is a local Lipeomorphism as we
have previous defined.

\begin{proposition}
$F:\cM \to \cN$ is a local Lipeomorphism.
\end{proposition}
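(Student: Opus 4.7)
The plan is to verify the two defining conditions of a local Lipeomorphism. Condition (i), that $F$ is a homeomorphism in the manifold topology, follows immediately from the bi-Lipschitz hypothesis: for continuous Riemannian metrics on smooth manifolds the induced distance metric generates the manifold topology, and a bi-Lipschitz bijection between metric spaces is automatically a homeomorphism.

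For condition (ii), I fix $x \in \cM$ and apply Lemma \ref{Lem:CtsCoords} on both $(\cM, \mg)$ and $(\cN, \mh)$ to obtain charts $(U, \psi)$ near $x$ and $(V, \phi)$ near $F(x)$ in which the metrics coincide with the Euclidean pullbacks at the central points. By continuity of $\mg$ and $\mh$, shrinking these charts I may assume $\mg$ is $2$-close to $\pullb{\psi}\delta$ on all of $U$ and $\mh$ is $2$-close to $\pullb{\phi}\delta$ on all of $V$ (invoking Proposition \ref{Prop:ContClose}), and then shrinking $U$ further via the continuity of $F$ I arrange that $F(U) \subset V$ and $\psi(U)$ is a Euclidean ball.

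The main step is to promote this $2$-closeness of the infinitesimal metrics to a comparison of the induced distance metrics on a sufficiently small concentric sub-chart $U_0 \subset U$. For $x', y' \in U_0$, the chart straight line gives the upper bound $\met_\mg(x', y') \leq 2\modulus{\psi(x') - \psi(y')}$. For the lower bound, an absolutely continuous curve from $x'$ to $y'$ that stays inside $U$ has $\mg$-length at least $\tfrac{1}{2}$ times the Euclidean length of its chart image, which is at least $\modulus{\psi(x') - \psi(y')}$; any curve that leaves $U$ must cross $\partial U$, and by taking $\psi(U_0)$ much smaller than $\psi(U)$ such a curve incurs $\mg$-length proportional to the ambient radius of $U$, dominating $\modulus{\psi(x') - \psi(y')}$. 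This yields
\[
\tfrac{1}{2}\modulus{\psi(x') - \psi(y')} \leq \met_\mg(x', y') \leq 2\modulus{\psi(x') - \psi(y')}
\]
on $U_0$, and the analogous comparison holds for $\met_\mh$ on some small sub-chart $V_0 \subset V$.

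Finally, shrinking $U_0$ once more so that $F(U_0) \subset V_0$, I combine the two local equivalences above with the global bi-Lipschitz estimate $C^{-1} \met_\mg(x', y') \leq \met_\mh(F(x'), F(y')) \leq C \met_\mg(x', y')$ to produce a Euclidean bi-Lipschitz estimate for $\phi \comp F \comp \psi^{-1}$ with constant $4C$, verifying condition (ii). The only subtlety in this argument is the lower comparison of $\met_\mg$ with Euclidean distance in a chart: one must rule out that curves leaving the chart provide shortcuts between nearby points, which is the purpose of passing to the much smaller ball $U_0$.
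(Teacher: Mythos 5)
Your proof is correct and follows essentially the same strategy as the paper's: charts with compact closure plus continuity of the metrics give uniform comparability with the Euclidean structure, and the global bi-Lipschitz bound on the induced distance functions then transports to a Euclidean bi-Lipschitz estimate for $\phi \circ F \circ \psi^{-1}$. The one place where you genuinely add something is the lower bound step: you explicitly rule out that a minimizing sequence of curves for $\met_\mg$ (or $\met_\mh$) leaves the chart by passing to a much smaller concentric subchart $U_0$, so that any curve exiting $U$ already picks up $\mg$-length dominating $|\psi(x')-\psi(y')|$. The paper's own proof asserts $\modulus{\tilde{F}(x') - \tilde{F}(y')} \lesssim \met_\mh(\tilde{F}(x'),\tilde{F}(y'))$ by ``considering a curve in $\phi(V)$,'' but this only bounds the infimum over curves constrained to $\phi(V)$, which is $\geq \met_\mh$ rather than $\leq$; the gap is precisely the one you close. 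So your version is the same in spirit but strictly more careful, and correctly so.
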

\begin{proof}
Since $F$ is an invertible Lipschitz map with
a Lipschitz inverse and since the Lipschitz property 
implies continuity, we have that
$F$ is a homeomorphism in the topologies induced on $\cM$ and $\cN$
by $\mg$ and $\mh$ respectively. But, by the continuity of $\mg$ and $\mh$, 
the induced topologies agrees with the natural topologies of the manifolds.

Now, let us show that it is locally Lipschitz. 
Choose charts $(U, \psi)$ near $x$ and
$(V,\phi)$ near $F(x)$ such that
$\close{U},\close{V}$ are compact
and that $V = F(U)$. 
Let $\tilde{F} = \phi \comp F \comp \psi^{-1}$.
Then, for any two points $x',y' \in \phi(U)$
and any absolutely continuous curve $\gamma$
connecting these points. Then,
$$ \modulus{\dot{\gamma}(t)}^2 
	= \mh(\dot{\gamma}(t), \dot{\gamma}(t))
	= \mh_{ij} \dot{\gamma}^i(t) \dot{\gamma}^j(t)
	\leq \cbrac{\sup_{x \in U} \max_{i} \mh_{ij}(x)} \modulus{\dot{\gamma}}_{\delta}(t)^2,$$
where continuity of $\mh$ and compactness of $U$ guarantees
the finiteness of the supremum.
Therefore, $\met_\mg(x',y') \lesssim \modulus{x' - y'}$ and therefore, 
$\met_\mh(\tilde{F}(x'), \tilde{F}(y')) \lesssim \modulus{x' - y'}$. But by considering
an absolutely continuous curve $\sigma$ connecting 
the points $\tilde{F}(x')$ and $\tilde{F}(y')$ in $\phi(V)$, through a similar argument
we obtain that
$\modulus{\tilde{F}(x') - \tilde{F}(y')} \lesssim \met_\mh(\tilde{F}(x'), \tilde{F}(y')),$
with the constant depending on $\mh$ and $V$.
In fact, a repetition of argument yields
$\met_\mh(\tilde{F}(x'), \tilde{F}(y')) \lesssim \modulus{\tilde{F}(x'), \tilde{F}(y')}$ and
since $\met_\mg(x',y') \lesssim \met_\mh(\tilde{F}(x'),\tilde{F}(y'))$,
by again repeating this argument, we get that
$\modulus{x' - y'} \lesssim \met_\mg(x',y')$.
This shows that $\tilde{F}$ is a bi-Lipschitz
mapping between $\psi(U)$ and $\phi(V)$, 
with the constant depending on $U, V$, $\mh$
and the global Lipschitz constant $C$. 
\end{proof}

Let us now apply this result to 
the case $(\cN,\mh) = (\cM, \mg)$.
Consider the pullback geometry $\mgt = \pullb{F}\mg$. 
This geometry is a \emph{Lipschitz transformation}
of $(\cM, \mg)$. 
As a consequence of Theorem \ref{Thm:QuadEstLipeo}, 
we are able to conclude that if
quadratic estimates are satisfied for
$(\cM, \mg)$, then they are also satisfied for
$(\cM, \mgt)$. 

\begin{theorem}
\label{Thm:KatoLipInv}
Let $(\cM,\mg)$ be a smooth manifold with continuous metric. 
Then the quadratic estimate
$$\int_0^\infty \norm{t\Pi_{\cM,B}(\iden + t^2 \Pi_{\cM,B}^2)^{-1}u}^2\ \frac{dt}{t} \simeq \norm{u}^2$$
for all $u \in \close{\ran(\Pi_{\cM,B})}$
remains invariant under Lipschitz transformations 
of $(\cM, \mg)$.
\end{theorem}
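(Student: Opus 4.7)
The plan is to deduce this theorem as a direct corollary of Theorem \ref{Thm:QuadEstLipeo}, using the preceding proposition to bridge the gap between global Lipeomorphisms and local Lipeomorphisms. First, I would observe that any Lipeomorphism $F:(\cM,\mg)\to(\cM,\mg)$ is, by the proposition just proved, a local Lipeomorphism in the sense of the earlier definition. This legitimises forming the rough pullback metric $\mgt = \pullb{F}\mg$ on $\cM$ and appealing to the isometric correspondence of Lebesgue and Sobolev spaces established in the previous subsection; in particular $\pullb{F^{-1}}:\Lp{2}(\cM,\mgt)\to\Lp{2}(\cM,\mg)$ is an isometry and $\SobH{1}(\cM,\mgt)=\pullb{F}\SobH{1}(\cM,\mg)$ with the identification $\close{\conn_2^{\mgt}} = \pullb{F}\close{\conn_2^{\mg}}\pullb{F^{-1}}$.

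With that in hand, I would apply Theorem \ref{Thm:QuadEstLipeo} in the configuration source $(\cM,\mgt)$ and target $(\cN,\mh) = (\cM,\mg)$, so that $\mgt = \pullb{F}\mh$ exactly matches the hypothesis of that theorem. The Hilbert space isometry $\Phi$ between the triples $\Lp{2}(\cM,\mgt)\oplus\Lp{2}(\cotanb\cM,\mgt)\oplus\Lp{2}(\cotanb\cM,\mgt)$ and the analogous triple over $(\cM,\mg)$ is induced componentwise by $\pullb{F^{-1}}$. Starting from bounded measurable coefficients $a,A$ on $(\cM,\mg)$ satisfying the required accretivity, I encode them into $B_1,B_2\in\bddlf(\Hil)$ as in the AKM set-up; conjugating through $\Phi$ yields $\tilde B_i=\Phi^{-1}B_i\Phi$ on the $(\cM,\mgt)$-side. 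Theorem \ref{Thm:QuadEstLipeo} then gives the two-sided equivalence of the quadratic estimate for $\Pi_{\cM,B}$ on $(\cM,\mg)$ and that for $\Pi_{\cM,\tilde B}$ on $(\cM,\mgt)$, which is precisely the claim that the estimate is invariant under the Lipschitz transformation.

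I do not anticipate a genuine obstacle, since all the substantive work has already been done: the preceding proposition handles the Lipeomorphism/local Lipeomorphism passage, the isometric identification of the Lebesgue and Sobolev spaces was established in the previous subsection, and Theorem \ref{Thm:QuadEstLipeo} provides the transfer of the estimate via $\Phi$. The only care required is bookkeeping — verifying that $B_i$ and $\tilde B_i=\Phi^{-1}B_i\Phi$ satisfy the hypotheses (H1)--(H3) of the AKM framework with respect to the respective inner products. This is automatic from the isometric nature of $\Phi$, following the same argument used earlier to transfer (H1)--(H3) between $\Gamma_\cM,B_i$ and $\Gamma_\cN,\tilde B_i$; in particular no analogue of Proposition \ref{Prop:AccCost} is needed here because $\Phi$ is isometric rather than merely bounded, so the accretivity constants pass across unchanged.
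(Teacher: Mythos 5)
Your proposal is correct and follows essentially the same route as the paper: invoke the preceding proposition to see that a Lipeomorphism $F:(\cM,\mg)\to(\cM,\mg)$ is a local Lipeomorphism, form $\mgt=\pullb{F}\mg$, and apply Theorem~\ref{Thm:QuadEstLipeo} with $(\cN,\mh)=(\cM,\mg)$ to transfer the quadratic estimate. Your closing remark that no analogue of Proposition~\ref{Prop:AccCost} is needed because $\Phi$ is a genuine isometry is a nice explicit observation the paper leaves implicit.
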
 
\section{Metrics without lower bounds on injectivity radius}
\label{Sect:Inj}

In each of the papers \cite{AKMc}, \cite{AKM2}, \cite{Morris3}, \cite{BEMc}
and \cite{BMc} which establish solutions to the Kato square root
problem, the underlying geometry possesses lower bounds on injectivity 
radius. Certainly, this is not an exhaustive
list of references, however, we do not know of a solution to this
problem where the underlying geometry fails to satisfy such bounds.
In \cite{BMc} \emph{harmonic coordinates}
are used to  obtain uniform control at a small scale on the manifold,
making it explicit how the assumption of such 
bounds assist in the proof.

As a consequence, McIntosh has asked whether this assumption
is a necessary condition. In this section, we demonstrate that
it is not. We first construct very simple
$2$-dimensional geometries which fail these bounds
as suggested by Anton Petrunin and Sergei V. Ivanov.
We demonstrate by application of previous results that
the Kato square root problem can be solved on these geometries.
Furthermore, we  demonstrate that such examples are
abundant in all dimensions above $2$. 

\subsection{Cones as Lipschitz graphs}
\label{Sect:nCone}

Cones and their smooth counterparts will 
be instrumental to the results we obtain in this section.
We begin by examining
$n$-cones and establishing some of
their properties that will be of use to us.

Let $h, r > 0$ and consider the $n$-cone
in $\R^{n+1}$ given by 
$$\cC_{r,h}^n = \set{(x,t) \in \R^{n+1}: \modulus{x} = \frac{r}{h} (h - t),\ t \in [0, h]}.$$
This is a cone of height $h$ and radius $r$.
Letting $H:B_r(0) \to \R$ be the height function
given by 
$$H_{r,h}(x) = h\cbrac{1 - \frac{\modulus{x}}{r}},$$
we note that the cone can be realised as
the image of the graph function $F_{r,h}(x) = \Graph H(x) = (x, H_{r,h}(x))$.

Let $U$ be an open set in $\R^n$ such that $B_r(0) \subset U$.
Then, define $G_{r,h}: U \to \R^{n+1}$
as the map $F_{r,h}$ whenever $x \in B_r(0)$
and $(x,0)$ otherwise. First, we prove the following. 

\begin{proposition}
The map $G_{r,h}$ satisfies
$$ \modulus{x - y} 
	\leq \modulus{G_{r,h}(x) - G_{r,h}(y)} 
	\leq \sqrt{1 + \frac{h^2}{r^2}} \modulus{x - y}.$$
\end{proposition}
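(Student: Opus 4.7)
The plan is to write $G_{r,h}$ as a graph map $G_{r,h}(x) = (x, H(x))$ where $H: U \to \R$ is the continuous extension of $H_{r,h}$ by zero outside $B_r(0)$, i.e.\ $H(x) = h(1 - |x|/r)$ on $\close{B_r(0)}$ and $H(x) = 0$ elsewhere. Note $H$ is well-defined and continuous because $H_{r,h}$ vanishes on $\bnd B_r(0)$. With this formulation, for any $x, y \in U$ the Pythagorean identity in $\R^{n+1}$ gives
$$ \modulus{G_{r,h}(x) - G_{r,h}(y)}^2 = \modulus{x - y}^2 + \modulus{H(x) - H(y)}^2. $$

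The lower bound $\modulus{x - y} \leq \modulus{G_{r,h}(x) - G_{r,h}(y)}$ is then immediate by dropping the non-negative term $\modulus{H(x) - H(y)}^2$. The upper bound will follow as soon as I show that $H$ is Lipschitz with constant $h/r$, for then
$$ \modulus{G_{r,h}(x) - G_{r,h}(y)}^2 \leq \modulus{x - y}^2 + \frac{h^2}{r^2} \modulus{x-y}^2 = \cbrac{1 + \frac{h^2}{r^2}} \modulus{x-y}^2. $$

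To verify the Lipschitz estimate for $H$, I split into three cases. When $x, y \in \close{B_r(0)}$ the reverse triangle inequality gives $\modulus{H(x) - H(y)} = (h/r)\bigl| \modulus{x} - \modulus{y} \bigr| \leq (h/r) \modulus{x - y}$. When $x, y \notin B_r(0)$ both values vanish and there is nothing to prove. The only slightly delicate case is $x \in B_r(0)$ and $y \notin B_r(0)$ (or vice versa), in which $\modulus{H(x) - H(y)} = H(x) = (h/r)(r - \modulus{x}) \leq (h/r)(\modulus{y} - \modulus{x}) \leq (h/r)\modulus{x-y}$, using $\modulus{y} \geq r$.

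There is no serious obstacle here; the only point that needs care is the mixed case handling the transition across $\bnd B_r(0)$, and this is handled by the inequality $\modulus{y} \geq r \geq \modulus{x}$ together with the reverse triangle inequality.
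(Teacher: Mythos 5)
Your proof is correct and takes essentially the same route as the paper: the same three-case analysis (both inside, both outside, one in each), with the same two key inequalities (reverse triangle inequality for the inside--inside case, and $r\leq\modulus{y}$ combined with the triangle inequality for the mixed case). The only cosmetic difference is that you package the argument by first writing down the Pythagorean identity $\modulus{G_{r,h}(x)-G_{r,h}(y)}^2=\modulus{x-y}^2+\modulus{H(x)-H(y)}^2$ and reducing everything to the claim that $H$ is $(h/r)$-Lipschitz, which is a slightly cleaner way to organize the same computation.
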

\begin{proof}
Suppose $x, y \not\in B_r(0)$. Then, it is easy to see
that $\modulus{G_{r,h}(x) - G_{r,h}(y)} = \modulus{x - y}$.
When $x, y \in B_r(0)$, then
$G_{r,h}(x) - G_{r,h}(y) = (x - y, hr^{-1}(\modulus{y} - \modulus{x}))$
and by the reverse triangle inequality $\modulus{\modulus{x} - \modulus{y}} \leq \modulus{ x - y}$
we obtain the inequality in the conclusion.

For the remaining case, let $x \in B_r(0)$
and $y \not\in B_r(0)$. Then, 
$G_{r,h}(x) - G_{r,h}(y) = (x - y, h(1 - \modulus{x}r^{-1}))$. 
But by choice of $x$ and $y$, 
$r \leq \modulus{y} \leq\modulus{x - y} + \modulus{x}$
which implies that 
$$ 1 - \frac{\modulus{x}}{r} \leq \frac{\modulus{x - y}}{r}$$
and the desired estimate follows by direct calculation.
\end{proof}

This proposition tells us that, in particular, the
map $G_{r,h}$ is almost-everywhere differentiable. 
Therefore, we define
the pullback metric $\mg = \pullb{G}_{r,h}\inprod{\mdot,\mdot}_{\R^{n+1}}$
on $U$. The following proposition is immediate. 
\begin{proposition}
\label{Prop:ConeClose}
Let $\gamma: I \to U$ be a smooth curve 
such that $\gamma(0) \not \in \set{0} \union \bnd B_r(0)$.
Then, 
$$ \modulus{\gamma'(0)} \leq \modulus{(G_{r,h} \comp \gamma)'(0)} 
	\leq \sqrt{1 + \frac{h^2}{r^2}} \modulus{\gamma'(0)}.$$
Moreover, for $u \in T_x U$, $x \not\in \set{0} \union \bnd B_r(0)$
(and in particular for almost-every $x$), 
$$ \modulus{u}_{\delta} \leq \modulus{u}_{\mg} \leq 
\sqrt{1 + \frac{h^2}{r^2}} \modulus{u}_{\delta},$$
where $\delta$ is the usual inner product on $U$
induced by $\R^n$. 
\end{proposition}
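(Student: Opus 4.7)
The plan is to unpack the map $G_{r,h}$ into its two explicit pieces, use the chain rule at $t=0$, and control the resulting Jacobian via Cauchy--Schwarz.

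First I would observe that the hypothesis $\gamma(0)\notin \set{0}\union \bnd B_r(0)$ forces $\gamma(0)$ to lie in one of the two open regions $B_r(0)\setminus\set{0}$ or $U\setminus \close{B_r(0)}$, on each of which $G_{r,h}$ is smooth. Since $G_{r,h}$ is smooth at $\gamma(0)$, the chain rule yields $(G_{r,h}\comp\gamma)'(0) = \Dir G_{r,h}(\gamma(0))\cdot \gamma'(0)$, so the curve inequality reduces to a pointwise bound on the linear map $\Dir G_{r,h}(x)$.

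In the exterior region, $G_{r,h}(x) = (x,0)$, so $\Dir G_{r,h}(x)$ is just the inclusion $u\mapsto (u,0)$ and the bound is trivial with both constants equal to $1$. In the interior region $B_r(0)\setminus\set{0}$, the map is the graph of the smooth function $H_{r,h}(x) = h(1-\modulus{x}/r)$, and a direct computation gives $\Dir H_{r,h}(x)u = -\frac{h}{r}\frac{x}{\modulus{x}}\cdot u$, whence $\Dir G_{r,h}(x) u = (u,\Dir H_{r,h}(x) u)$ and
\begin{equation*}
\modulus{\Dir G_{r,h}(x) u}^2 = \modulus{u}^2 + \modulus{\Dir H_{r,h}(x) u}^2.
\end{equation*}
The lower bound $\modulus{\Dir G_{r,h}(x) u}\geq \modulus{u}$ is then immediate, while Cauchy--Schwarz applied to $\Dir H_{r,h}(x)u$ gives $\modulus{\Dir H_{r,h}(x) u}\leq \frac{h}{r}\modulus{u}$ and hence the upper bound $\modulus{\Dir G_{r,h}(x)u}\leq \sqrt{1+h^2/r^2}\,\modulus{u}$. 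Combining with $\modulus{\gamma'(0)}_\delta=\modulus{u}$ for $u=\gamma'(0)$ yields the first inequality.

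For the second statement, given a tangent vector $u \in T_xU$ with $x\notin\set{0}\union\bnd B_r(0)$, I would pick any smooth curve $\gamma$ with $\gamma(0)=x$ and $\gamma'(0)=u$ (for instance the line $t\mapsto x+tu$ for $t$ small), invoke the definition of the pullback metric $\mg = \pullb{G}_{r,h}\inprod{\mdot,\mdot}_{\R^{n+1}}$ to identify $\modulus{u}_\mg = \modulus{(G_{r,h}\comp\gamma)'(0)}$, and apply the first part. Finally, the complement of $\set{0}\union \bnd B_r(0)$ has full $\Leb$-measure in $U$, so the bound holds almost-everywhere. The only technical point worth highlighting is the need to avoid $\bnd B_r(0)$ where $G_{r,h}$ fails to be differentiable (the gradient of $H_{r,h}$ does not match the zero gradient of the constant $0$ extension), but since this is a null set, it creates no issue for the a.e. statement.
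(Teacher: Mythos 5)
Your proof is correct, but it takes a somewhat different route than the paper does. The paper prefaces this proposition with the remark "The following proposition is immediate," and what it has in mind is to read off the derivative bounds directly from the preceding global bi-Lipschitz estimate $\modulus{x-y}\leq\modulus{G_{r,h}(x)-G_{r,h}(y)}\leq\sqrt{1+h^2/r^2}\,\modulus{x-y}$: at any point of differentiability one applies that two-sided inequality along $t\mapsto\gamma(t)$, divides by $\modulus{t}$, and lets $t\to 0$, which squeezes $\modulus{(G_{r,h}\circ\gamma)'(0)}$ between the desired bounds for $\modulus{\gamma'(0)}$. You instead recompute the Jacobian of $G_{r,h}$ from scratch in the two open regions and estimate $\modulus{\Dir H_{r,h}(x)u} = \frac{h}{r}\,\frac{\modulus{x\cdot u}}{\modulus{x}}\leq\frac{h}{r}\modulus{u}$ by Cauchy--Schwarz; this is a self-contained argument that does not invoke the earlier proposition at all, and in effect re-proves the bi-Lipschitz bound in an infinitesimal form. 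Both are sound; the paper's route is shorter given what has already been established, while yours has the advantage of exhibiting the Jacobian explicitly and making it clear exactly where equality is attained (when $u$ is radial). Your handling of the second statement via the definition of the pullback metric and the full-measure of the complement of $\set{0}\union\bnd B_r(0)$ matches what the paper intends.
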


A particular consequence of this observation is
 that the metrics $\mg$ and $\delta$
are $\sqrt{1  + (hr^{-1})^2}$-close on $U$.

\subsection{Two dimensional examples}

In this section, we construct
$2$-dimensional examples to illustrate
that the Kato square root problem can be solved for
metrics with zero injectivity radius.
First, we establish some basics facts about $2$-cones.
We first realise the $2$-cone as a quotient space
since this makes it easier for us to study its geodesics.

Let
$\cT_{r,h}$ 
denote the region enclosed by the sector of
angle $\theta = \frac{2\pi r}{\sqrt{h^2 + r^2}}$
inside the ball 
$B_{\sqrt{h^2 + r^2}}(0,S)$
where $S = \sqrt{h^2 + r^2}\cos\cbrac{\frac{\pi r}{\sqrt{h^2 + r^2}}}$
containing the triangular region
$$\set{(x,t) \in \R^2: \modulus{x}\leq \frac{R}{S}(S - t),\ t \in [0,S]},$$
of base length $2R$ and height $S$, with 
$R = \sqrt{h^2 + r^2}\sin\cbrac{\frac{\pi r}{\sqrt{h^2 + r^2}}}$
as illustrated in Figure \ref{Fig:Sector}.
\begin{figure}[H]
\includegraphics[scale=0.7]{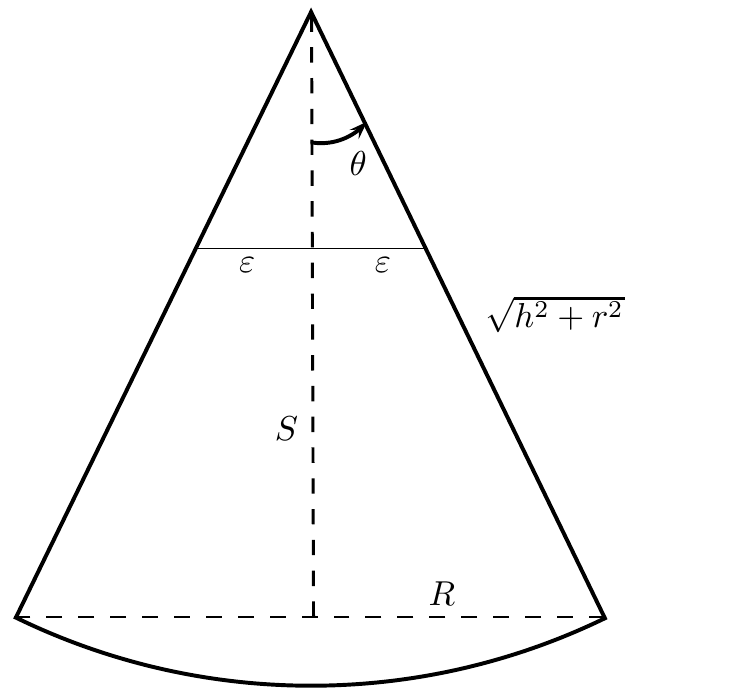}
\caption{The sector $\cT_{r,h}$.}
\label{Fig:Sector} 
\end{figure}
Define the map $q_{r,h}: \cT_{r,h} \to \R^2$
as the identity map in $\interior \cT_{r,h}$, the interior of $\cT_{r,h}$, and
by identifying the points  
$(R ( 1 - t S^{-1}), t)$ with 
$(-R (1 - t S^{-1}), t)$.

\begin{lemma}
The quotient space $\cT_{r,h}/{q_{r,h}}$ is 
isometric to $\cC_{r,h}^2$.
\end{lemma}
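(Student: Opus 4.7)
The plan is to exhibit an explicit isometry $\Psi : \cT_{r,h}/q_{r,h} \to \cC_{r,h}^2$ by ``unrolling'' the cone; this is the classical construction realising a cone as a flat Euclidean sector with the two bounding radii glued. Write $L = \sqrt{h^2 + r^2}$ for the slant height, so that the sector has aperture $2\pi r/L$ and radius $L$.

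First I would introduce polar coordinates on $\cT_{r,h}$ centred at the apex $(0, S)$ of the sector:
$$(\rho, \phi) \longmapsto (\rho \sin\phi,\ S - \rho\cos\phi), \qquad \rho \in [0, L],\ \phi \in [-\pi r/L,\, \pi r/L].$$
In these coordinates the Euclidean metric on $\cT_{r,h}$ takes the usual flat polar form $d\rho^2 + \rho^2\, d\phi^2$, and $q_{r,h}$ is precisely the identification $(\rho, -\pi r/L) \sim (\rho, \pi r/L)$.

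Next, I would parameterise $\cC_{r,h}^2$ in slant/angular coordinates by
$$\Theta(\rho, \alpha) = \left(\tfrac{r\rho}{L}\cos\alpha,\ \tfrac{r\rho}{L}\sin\alpha,\ h\bigl(1 - \tfrac{\rho}{L}\bigr)\right), \qquad \rho \in [0, L],\ \alpha \in \R/2\pi\Z,$$
so that $\rho$ is the slant distance from the cone's apex. A direct computation pulls back the metric induced from $\R^3$ to $d\rho^2 + (r\rho/L)^2\, d\alpha^2$. Defining $\Psi(\rho, \phi) = \Theta(\rho,\, L\phi/r)$, the edges $\phi = \pm \pi r/L$ map to $\alpha = \pm \pi$, which coincide in $\R/2\pi\Z$; hence $\Psi$ descends to a continuous bijection from $\cT_{r,h}/q_{r,h}$ onto $\cC_{r,h}^2$. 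Pulling the cone metric back along $\Psi$ yields
$$d\rho^2 + \left(\tfrac{r\rho}{L}\right)^2\left(\tfrac{L}{r}\right)^2 d\phi^2 = d\rho^2 + \rho^2\, d\phi^2,$$
which agrees with the flat sector metric.

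Thus $\Psi$ is a smooth Riemannian isometry on the complement of the apex $\rho = 0$. The main obstacle is promoting this to an isometry of the induced length metrics across the conical singularity: the apex is a single point at which the cone fails to be a Riemannian manifold. Since both $\cT_{r,h}/q_{r,h}$ and $\cC_{r,h}^2$ (with induced intrinsic metric) are length spaces, and $\Psi$ is a homeomorphism, any rectifiable curve passing through the apex can be approximated by curves avoiding an arbitrarily small neighbourhood of it, on which $\Psi$ is a genuine Riemannian isometry. Taking a limit shows that $\Psi$ preserves the length of every absolutely continuous curve, and hence the induced distance, completing the proof.
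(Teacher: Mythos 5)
Your proof is correct and realises the same classical ``unrolling'' of a cone that the paper's terse argument invokes; you simply make explicit in polar/slant coordinates what the paper dismisses as ``obvious.'' The careful treatment of the conical singularity at the apex via approximation of curves is a welcome rigor that the paper's one-line proof glosses over, but the underlying idea is identical.
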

\begin{proof}
Intersect the cone $\cC_{r,h}^2$ with a $2$-plane
containing the $z$-axis in $\R^3$. This produces a
triangle whose height is $h$ and radius $r$ 
with hypotenuse $\sqrt{h^2 + r^2}$. This along
with the fact that the base of the cone has
circumference $2\pi r$ allows us to compute
$S$, $R$ and $\theta$ as in our definition above.
The identification is then obvious.
\end{proof} 

Given this, we observe the following about
geodesics of $\cC_{r,h}^2$. 

\begin{lemma}
The straight lines inside $\cT_{r,h}$ between points
map to geodesics in $\cC_{r,h}^2$ under $q_{r,h}$.
\end{lemma}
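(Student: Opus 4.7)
The plan rests on the observation that $q_{r,h}$ is a local isometry everywhere except possibly at the vertex of the sector, which corresponds to the apex of the cone. This reduces the claim to the elementary fact that straight lines in $\R^2$ are geodesics.

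First, I would verify the local isometry property. On the interior of $\cT_{r,h}$, the map $q_{r,h}$ is just the identity embedding, which is trivially a local isometry. At an identified boundary point $p$ away from the vertex, a neighborhood in the quotient is obtained by gluing two half-discs, one from each edge of the sector, isometrically along their diameters; this produces a Euclidean disc, which by the preceding lemma maps isometrically onto a neighborhood of $q_{r,h}(p)$ on $\cC^2_{r,h}$.

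Given this, consider a straight line segment $\gamma:[0,L] \to \cT_{r,h}$ parametrised by arc length, and set $\sigma = q_{r,h} \comp \gamma$. Local isometries preserve curve lengths, so $\len(\sigma) = L$. To verify the geodesic condition from the preliminaries of \S\ref{Sect:Prelim:Mfld}, fix $t_0 \in [0,L]$ with $\sigma(t_0)$ not the apex; choose a Euclidean neighborhood $V$ of $\sigma(t_0)$ in $\cC^2_{r,h}$ and a subinterval $J_{t_0} \ni t_0$ small enough that $\sigma(J_{t_0}) \subset V$. Through the local isometry, $\sigma|_{J_{t_0}}$ corresponds to a straight line segment in $\R^2$, which realises the Euclidean distance between its endpoints. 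Hence $\met(\sigma(t_1), \sigma(t_2)) = |t_1 - t_2|$ for $t_1, t_2 \in J_{t_0}$, as required.

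The principal obstacle is when $\gamma$ passes through the vertex of the sector, so that $\sigma$ hits the apex of the cone. At this single parameter value the local-isometry argument fails, since no neighborhood of the apex is isometric to a Euclidean disc (the angle defect there is nonzero). I would handle this case by developing a small neighborhood of the apex into the plane along a radial cut and applying the Euclidean triangle inequality: any curve in the cone joining two points close to the apex on radial rays is at least as long as the broken radial path through the apex, and in fact equal to the length of the straight segment through the vertex in the developed picture. This, together with the previous argument applied on the two sub-intervals flanking the apex parameter, yields the geodesic property at every point of $\sigma$.
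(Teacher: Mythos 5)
Your local-isometry argument for points away from the vertex is correct and is essentially a careful expansion of the paper's one-line assertion that $q_{r,h}$ ``bends'' the sector isometrically onto the cone: the identity on the interior, and the gluing of two half-discs into a Euclidean disc at an identified edge point, together give a local isometry, and local isometries send straight segments (local Euclidean minimizers) to local minimizers, which is exactly the paper's notion of geodesic.

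The handling of the apex, however, is wrong, and in fact the case you are trying to rescue is a case where the conclusion \emph{fails}. On a Euclidean cone whose total cone angle $\theta = \tfrac{2\pi r}{\sqrt{h^2+r^2}}$ is strictly less than $2\pi$, a broken radial path through the apex joining two points at distances $d_A$, $d_B$ from the apex is \emph{never} length-minimizing when $A\neq B$: the two radial segments subtend some angle $\alpha$ on one side of the apex and $\theta-\alpha$ on the other, and since $\alpha + (\theta-\alpha) = \theta < 2\pi$, at least one of these is strictly less than $\pi$, so developing the cone along the other side gives a straight chord of length $\sqrt{d_A^2+d_B^2-2d_Ad_B\cos\min(\alpha,\theta-\alpha)} < d_A+d_B$. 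Your claim that ``any curve in the cone joining two points close to the apex on radial rays is at least as long as the broken radial path through the apex'' is therefore false, and the comparison with ``the straight segment through the vertex in the developed picture'' proves the opposite of what you want: the developed chord on the short side is \emph{strictly shorter}. In particular, when $\theta\ge\pi$ (equivalently $h\le\sqrt{3}\,r$, which includes the paper's use with $h$ small) there genuinely exist straight segments in $\cT_{r,h}$ having the vertex $(0,S)$ in their interior, and their images in $\cC^2_{r,h}$ are \emph{not} geodesics. The lemma is thus only true if ``straight lines inside $\cT_{r,h}$'' is read so as to exclude lines with the vertex as an interior point; the paper's tacit isometry argument and its subsequent application (Lemma \ref{Lem:2ConeGeo}, which only uses short horizontal chords near but not through the apex) are consistent with that reading. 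You should delete the apex paragraph and instead observe that the segments to which the lemma is applied avoid the vertex, so the local-isometry argument already suffices.
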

\begin{proof}
The map $q_{r,h}$ bends $\cT_{r,h}$ to the cone $\cC^2_{r,h}$. That is, it
is an isometry. 
\end{proof}

The next is a very important lemma which establishes
the existence of non-unique geodesics of decreasing length. 

\begin{lemma}
\label{Lem:2ConeGeo}
Given $\epsilon > 0$, there exists two points $x, x'$
and distinct minimising smooth geodesics $\gamma_{1,\epsilon}$ and $\gamma_{2,\epsilon}$
between $x$ and $x'$ of length $\epsilon$.
Furthermore, there are two constants $C_{1,r,h,\epsilon}, C_{2,r,h,\epsilon} > 0$ depending
on $h,\ r$ and $\epsilon$ such that 
the geodesics $\gamma_{1,\epsilon}$ and $\gamma_{2,\epsilon}$
are contained in $G_{r,h}(A_\epsilon)$ where
$A_\epsilon$ is the Euclidean annulus 
$$\set{x \in B_r(0): C_{1,r,h,\epsilon} 
	< \modulus{x} < C_{2,r,h,\epsilon}}.$$
\end{lemma}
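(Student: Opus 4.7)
My strategy is to unroll the cone through the sector representation $\cC_{r,h}^2 \isomorphic \cT_{r,h}/q_{r,h}$ and produce the two geodesics as a pair of straight segments in $\cT_{r,h}$ meeting the identified edges. Let $\theta = 2\pi r/\sqrt{h^2 + r^2} \in (0, 2\pi)$ denote the sector angle and work in polar coordinates $(s, \varphi)$ on $\cT_{r,h}$ with the apex at the origin.

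Fix $\rho > 0$, let $x$ be the cone point represented by the identified pair $(\rho, 0) \sim (\rho, \theta)$, and let $x'$ be the cone point represented by $(\rho, \theta/2)$. The straight segments $\sigma_1$ from $(\rho, 0)$ to $(\rho, \theta/2)$ and $\sigma_2$ from $(\rho, \theta)$ to $(\rho, \theta/2)$ descend under $q_{r,h}$ to curves $\gamma_{1,\epsilon}, \gamma_{2,\epsilon}$ on $\cC_{r,h}^2$, which are geodesics by the preceding lemma. They are distinct, since $\sigma_1$ and $\sigma_2$ lie in the two open halves of $\cT_{r,h}$ separated by the radial ray at angle $\theta/2$; they are smooth, because neither passes through the apex. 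A chord computation gives their common Euclidean length as $2\rho \sin(\theta/4)$, while their closest approach to the apex is at Euclidean distance $\rho\cos(\theta/4) > 0$. Choosing
$$\rho = \frac{\epsilon}{2\sin(\theta/4)}$$
normalises this common length to $\epsilon$.

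To confirm that $\gamma_{1,\epsilon}, \gamma_{2,\epsilon}$ are actually length-minimising, I would pass to the flat Riemannian universal cover of the punctured cone, namely $(0,\infty) \times \R$ equipped with $ds^2 + s^2 d\varphi^2$. Lifts of $x'$ compatible with the fixed lift $(\rho, 0)$ of $x$ take the form $(\rho, \theta/2 + n\theta)$, $n \in \In$; a straight-segment geodesic to $(\rho,0)$ exists whenever $\modulus{\theta/2 + n\theta} \leq \pi$, and then has length $2\rho\sin(\modulus{\theta/2 + n\theta}/2)$. This quantity attains its minimum exactly at $n = 0$ and $n = -1$, which correspond precisely to $\gamma_{1,\epsilon}$ and $\gamma_{2,\epsilon}$. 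Competitors routed through the apex have length $2\rho > \epsilon$ since $\sin(\theta/4) < 1$.

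Finally, for the annular localisation, the segments $\sigma_1, \sigma_2$ lie at Euclidean radii in $\bbrac{\rho\cos(\theta/4), \rho}$ inside $\cT_{r,h}$, which coincides with cone-distance from the apex on $\cC_{r,h}^2$. Since $G_{r,h}$ parametrises the cone so that $x \in B_r(0)$ with $\modulus{x} = s$ maps to a point at cone-distance $s\sqrt{1 + h^2/r^2}$ from the apex, the preimage of $\gamma_{1,\epsilon} \union \gamma_{2,\epsilon}$ under $G_{r,h}$ lies in the Euclidean annulus
$$A_\epsilon = \dbrac{x \in B_r(0) : C_{1,r,h,\epsilon} < \modulus{x} < C_{2,r,h,\epsilon}}$$
for any choice satisfying
$$C_{1,r,h,\epsilon} < \frac{\rho\cos(\theta/4)}{\sqrt{1 + h^2/r^2}} \quad \text{and} \quad C_{2,r,h,\epsilon} > \frac{\rho}{\sqrt{1 + h^2/r^2}}.$$
I expect the only delicate step to be the universal-cover case analysis ruling out shorter competitors; the geometric construction and the annulus bookkeeping are then straightforward.
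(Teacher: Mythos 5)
Your proof is correct and rests on the same core idea as the paper's: unroll the cone to the flat sector $\cT_{r,h}$, produce the two geodesics as a pair of straight chords related by the edge identification $q_{r,h}$, and read off the annular localisation from the slant-distance formula. The difference is in the details of the configuration and in the degree of rigour, and both differences are to your credit. The paper places $x$ on the central axis at height $S(1-\epsilon/R)$ and $x'$ at the identified pair $(\pm\epsilon, S(1-\epsilon/R))$, so that the two chords are \emph{horizontal} segments of Euclidean length exactly $\epsilon$ by inspection; you instead place both $x$ and $x'$ at the \emph{same} polar radius $\rho$ from the apex and normalise $\rho = \epsilon/(2\sin(\theta/4))$ after the fact. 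Your symmetric choice buys you a clean chord-length formula and, more importantly, makes the minimality verification transparent: by lifting to the universal cover $(0,\infty)\times\R$ with the flat metric $ds^2 + s^2 d\varphi^2$ you see immediately that the admissible chord lengths $2\rho\sin(\modulus{\theta/2+n\theta}/2)$ are minimised exactly at $n=0,-1$ and that the apex path of length $2\rho$ is strictly longer since $\sin(\theta/4)<1$. The paper's proof, by contrast, only observes that the chords \emph{are} geodesics (via the preceding lemma) and never checks that they are \emph{minimising}, which the statement actually requires — so your universal-cover step closes a genuine gap that the paper leaves open. One minor point common to both arguments: the construction requires $\epsilon$ small enough that $\rho$ (resp.\ the paper's $y$ and $y_\pm$) remains inside the sector and the resulting annulus fits in $B_r(0)$; this is harmless since the lemma is only invoked for $\epsilon \to 0$, but it is worth flagging.
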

\begin{proof}
As a consequence of the previous lemma,
it suffices to realise $\gamma_{1,\epsilon}$ and $\gamma_{2,\epsilon}$ as 
straight lines of length $\epsilon$ in $\cT_{r,h}$. This is easy. 
If we solve for $t$ in  
$R ( - tS^{-1}) = \epsilon$, we find that
$t = (1 - \frac{\epsilon}{R})$. Thus,
fix the point $y = (0,S(1 - \frac{\epsilon}{R}))$. 
This can be identified with the point $x$. 
Now consider $y_\pm = (\pm \epsilon, S(1 - \frac{\epsilon}{R})$. 
These two points are identified under the
quotient map $q_{r,h}$, so identify this with $x'$. Then,
we can take the straight line segment $l_{+}$ from $y$ to $y_{+}$
and identify this with $\gamma_{1,\epsilon}$. Then $\gamma_{2,\epsilon}$
can be obtained by the straight line $l_{-}$ from $y$ to $y_{-}$.
Since $q_{r,h}$ is an isometry, $\gamma_{1,\epsilon}$ and $\gamma_{2,\epsilon}$
each have length $\epsilon$.

Next, consider the ball of radius $\tau$
centred $(0,h)$, the apex of the region
region $\cT_{r,h}$. It is easy to verify that 
the intersection
$B_{\tau}(0,h) \intersect \cT_{r,h}$ 
correspond to balls $B_{\tau}(0) \subset \R^2$
under quotienting and via the inverse of 
$G_{r,h}$ which is the projection map.
It is easy to see that the lines $l_{\pm}$
are contained in $B_{C_{1,r,h,\epsilon}}(0, h) \intersect B_{C_{2,r,h,\epsilon}}(0,h)$
where $\C_i$ are constants dependent 
on the identification, $r,\ h$ and $\epsilon$. 
Thus, it follows that 
$\gamma_{1,\epsilon}, \gamma_{2,\epsilon}: I \to A_{\epsilon}$.
\end{proof}

We first demonstrate that the Kato square root problem
can be solved for a non-smooth metric with zero injectivity 
radius.

\begin{theorem}
For any $C > 1$, there exists a metric
$\mg$ which is $C$-close to the Euclidean
metric $\delta$ obtained via a Lipschitz pullback 
for which $\inj(\R^2,\mg) = 0$.
Furthermore, the Kato square root problem
\eqref{Ass:KatoFn} can be solved
for $(\R^2, \mg)$ under the ellipticity
assumptions \eqref{Ass:Ell}. 
\end{theorem}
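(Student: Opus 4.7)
The plan is to construct the rough metric $\mg$ as the Lipschitz pullback of the Euclidean metric on $\R^3$ along a single cone graph extended as the identity, exploiting the non-uniqueness of geodesics near the cone apex guaranteed by Lemma \ref{Lem:2ConeGeo} to drive the injectivity radius to zero, and then invoking Corollary \ref{Cor:KatoFn} with $\mgt = \delta$ to obtain the Kato solution.

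First I would fix parameters $r, h > 0$ with $\sqrt{1 + h^2/r^2} \leq C$, which is possible for any $C > 1$. The map $G_{r,h}:\R^2 \to \R^3$ defined in \S\ref{Sect:nCone} agrees with the graph of the cone height function on $B_r(0)$ and with the canonical inclusion $(x, 0)$ outside $B_r(0)$; it is globally Lipschitz and a homeomorphism onto its image (which is the cone $\cC^2_{r,h}$ glued along $\bnd B_r(0)$ to the flat complement). Thus $G_{r,h}$ is a local Lipeomorphism in the sense of \S\ref{Sect:QuadIsom}, and I define $\mg = \pullb{G_{r,h}}\inprod{\mdot,\mdot}_{\R^3}$, which is a rough metric on $\R^2$.

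The $C$-closeness to $\delta$ is immediate from Proposition \ref{Prop:ConeClose}, since at almost every point (everywhere except at $0$ and on $\bnd B_r(0)$, which is a null set) the pullback inner product satisfies $\modulus{u}_{\delta} \leq \modulus{u}_{\mg} \leq \sqrt{1 + h^2/r^2}\modulus{u}_{\delta} \leq C\modulus{u}_{\delta}$. For the injectivity radius, I invoke Lemma \ref{Lem:2ConeGeo}: for every $\epsilon > 0$ there exist points $x_\epsilon, x'_\epsilon$ near the apex and two distinct minimising smooth geodesics of length $\epsilon$ connecting them, both contained in the smooth region (where $\mg$ is at least $\Ck{2}$, in fact smooth). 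The existence of two distinct unit-speed geodesics from $x_\epsilon$ reaching $x'_\epsilon$ at parameter $\epsilon$ forces $\inj(\R^2, \mg, x_\epsilon) \leq \epsilon$, so taking $\epsilon \to 0$ yields $\inj(\R^2, \mg) = \inf_x \inj(\R^2, \mg, x) = 0$.

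For the Kato square root problem, the point is that the comparison metric $\mgt = \delta$ on $\R^2$ is smooth, complete, and satisfies both $\inj(\R^2, \delta) = \infty$ and $\Ric(\delta) = 0$. Since $\mg$ is a rough metric uniformly close to $\delta$, Corollary \ref{Cor:KatoFn} applies verbatim: the quadratic estimates hold for $\Pi_{B,\mg}$ whenever $a$ and $A$ satisfy \eqref{Ass:Ell}, and consequently \eqref{Ass:KatoFn} holds on $(\R^2, \mg)$. The only subtle point worth flagging is the interpretation of $\inj(\R^2, \mg) = 0$ for a merely Lipschitz metric; this is resolved by observing that the non-uniqueness is realised at pairs $x_\epsilon, x'_\epsilon$ lying in the open set where $\mg$ is smooth, so the classical pointwise injectivity radius is defined there and genuinely tends to zero, making the infimum zero without needing to assign a value at the apex.
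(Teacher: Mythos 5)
Your proposal follows essentially the same route as the paper: the cone graph $G_{r,h}$ extended as the identity, Proposition \ref{Prop:ConeClose} for $C$-closeness, Lemma \ref{Lem:2ConeGeo} for the shrinking pairs of distinct minimising geodesics forcing $\inj(\R^2,\mg) = 0$, and Corollary \ref{Cor:KatoFn} applied against $\mgt = \delta$. Your closing remark that the geodesics from Lemma \ref{Lem:2ConeGeo} live in the open annulus where $\mg$ is smooth, so that the pointwise injectivity radius is classically defined there, is a sensible clarification that the paper leaves implicit.
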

\begin{proof}
Set $r = 1$ and choose $h > 0$ such that $\sqrt{1 + h^2} = C$
and apply  Proposition \ref{Prop:ConeClose} with $U = \R^2$.
As a consequence of Lemma \ref{Lem:2ConeGeo},
there exist distinct minimising-geodesics $\gamma_{1, \epsilon}$ $\gamma_{2,\epsilon}$
between two points $x_{\epsilon}$, $x_{\epsilon}'$ for each $\epsilon > 0$. 
Therefore, $\inj(\R^2, \mg, x_\epsilon) \leq \epsilon$ and hence
$\inj(\R^2,\mg) = 0$.
The fact that the Kato square root problem \eqref{Ass:KatoFn}
has solutions is immediate from Corollary \ref{Cor:KatoFn}. 
\end{proof}

Using a similar construction as we did in the previous
example, we produce a smooth metric with
zero injectivity radius solving the aforementioned problem.

\begin{theorem}
\label{Thm:KatoInj2dim}
For any $C > 1$, there exists a smooth 
metric which is $C$-close to $\delta$ 
on $\R^2$ for which $\inj(\R^2,\mg) = 0$.
The Kato square root problem \eqref{Ass:KatoFn}
then has a solution on $(\R^2, \mg)$ under
\eqref{Ass:Ell}. 
\end{theorem}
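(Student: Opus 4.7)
The plan is to realise $\mg$ as the pullback to $\R^2$ of the Euclidean metric on the graph of a globally smooth function $f$ that contains infinitely many smoothed cones placed at discrete points $p_n$ tending to infinity, with shrinking smoothing scale. This way $\mg$ stays smooth and $C$-close to $\delta$ everywhere, while the injectivity radius at the $n$-th apex can be driven to zero independently of the global geometry. Concretely, for each $n \in \Na$ I would select pairwise disjoint discs $D_n \subset \R^2$ about $p_n$, and on $D_n$ install a smooth radial profile $f_n$, supported in $D_n$, which coincides with the cone profile of slope $h_n/r_n$ on an annulus inside $D_n$ and is replaced, over a concentric disc of radius $\rho_n$ around $p_n$, by a smooth rotationally symmetric "spherical cap" of scale $\rho_n$ whose radial derivatives all vanish at $p_n$ (so that $F(x) = (x, f(x))$ with $f = \sum_n f_n$ is globally $\Ck{\infty}$). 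Taking $h_n/r_n \leq \sqrt{C^2 - 1}$ and arranging the cap so that $\modulus{\nabla f} \leq \sqrt{C^2 - 1}$ everywhere, the calculation of Proposition \ref{Prop:ConeClose} applied locally on each $D_n$ shows that $\mg = \pullb{F}\inprod{\cdot,\cdot}_{\R^3}$ is $C$-close to $\delta$ on all of $\R^2$.

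The second step is to choose $\rho_n \downarrow 0$ so that $\inj(\R^2,\mg,p_n) \to 0$. Near each $p_n$ the surface is a small spherical-cap-type region of scale $\rho_n$ smoothly glued to a truncated cone of cone angle $\theta_n = 2\pi r_n/\sqrt{r_n^2 + h_n^2} < 2\pi$; the Gauss--Bonnet defect $2\pi - \theta_n$ is concentrated on the cap, forcing curvature of order $\rho_n^{-2}$ there. On such a cap one can exhibit a pair of distinct minimising unit-speed geodesics issuing from $p_n$ in "antipodal" directions that meet at their first cut point at distance $O(\rho_n)$; this is the smooth analogue of the pair of straight lines in the unrolled sector from Lemma \ref{Lem:2ConeGeo}, pushed through a regularised apex. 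Hence $\inj(\R^2,\mg,p_n) \lesssim \rho_n$ and, choosing $\rho_n \to 0$, we obtain $\inj(\R^2,\mg) = 0$.

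The Kato square root statement then follows immediately. The Euclidean metric $\delta$ on $\R^2$ is smooth and complete with $\inj(\R^2,\delta) = \infty$ and $\Ric(\delta) = 0$, and by construction $\mg$ is uniformly $C$-close to $\delta$. Corollary \ref{Cor:KatoFn} thus yields quadratic estimates for $\Pi_{B,\mg}$ and, under the ellipticity hypothesis \eqref{Ass:Ell}, the Kato square root estimate \eqref{Ass:KatoFn} on $(\R^2,\mg)$.

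The main obstacle is the geodesic claim in the second paragraph: one must produce an honest pair of smooth minimising geodesics of length $O(\rho_n)$ that meet, rather than merely invoke the existence of conjugate points on a positively curved region. A clean way to arrange this is to take the cap to be an \emph{exact} geodesic spherical cap of constant Gauss curvature $\sim \rho_n^{-2}(2\pi - \theta_n)$ over a strict sub-disc of radius $\rho_n/2$, and to confine the $\Ck{\infty}$ transition to the annular region $\rho_n/2 \leq \modulus{x - p_n} \leq \rho_n$; the two "antipodal" geodesic loops then live entirely in the exact spherical portion of the surface before encountering the smoothing annulus, so their minimality and meeting at a first cut point at distance $O(\rho_n)$ can be read off directly from the spherical geometry.
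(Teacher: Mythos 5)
Your global scheme --- infinitely many smoothed cones planted at discrete points marching to infinity, with the graph metric $C$-close to $\delta$ and Corollary~\ref{Cor:KatoFn} applied to the smooth complete background $(\R^2,\delta)$ --- matches the paper's, and your first and third paragraphs are fine. The gap is in the geodesic claim of the second paragraph, and it is genuine. You want a pair of distinct minimising geodesics of length $O(\rho_n)$ \emph{issuing from the apex $p_n$} that meet again at a cut point inside the spherical cap. But your regularised apex is by construction the pole of a surface of revolution, so geodesics emanating from $p_n$ are meridians, and distinct meridians never meet again (the profile function $f(s)$ stays strictly positive for $s>0$). Nor is there a conjugate point: the normal Jacobi field along a meridian is precisely $f(s)$, which on the cap equals $R\sin(s/R)$ and, because the cap angular radius $\arccos(\theta_n/2\pi)$ is strictly below $\pi/2$, exits the cap with $f>0$ and $f'=\theta_n/2\pi>0$, after which it grows linearly on the flat cone --- so $f$ never returns to zero. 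Hence on the model cone-plus-cap $\exp_{p_n}$ is a global diffeomorphism, $\inj$ at $p_n$ is large (bounded below by roughly the separation between bumps once you glue into the plane), and shrinking $\rho_n$ does not, by itself, send any injectivity radius to zero.

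The short geodesics actually live in the conical \emph{annulus} around the apex, not at the apex; this is precisely the content of Lemma~\ref{Lem:2ConeGeo}, whose pair of straight lines in the unrolled sector wrap \emph{around} the vertex, not through it. The paper's proof is built on exactly this: each bump is a cone of \emph{fixed} radius $1$ and height $h$, but the $k$-th copy is smoothed only \emph{above} the annulus $A_{1/k}$ (close to the apex) and at the base. Because $A_{1/k}$ is an open set left untouched by the smoothing, it is totally geodesic, and the pair of length-$1/k$ minimising geodesics from Lemma~\ref{Lem:2ConeGeo} survive verbatim on the smooth surface, giving $\inj(\R^2,\mg,x_k)\le 1/k$ at points $x_k$ inside those annuli. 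What shrinks with $k$ is not the bump size but how close to the apex the smoothing encroaches. To salvage your version, move the geodesic pair out of the cap and into an untouched conical annulus, which is exactly the paper's move.
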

\begin{proof}
As in the proof of the previous theorem, 
set $r = 1$ and choose $h$ 
such that $\sqrt{1 + h^2} = C$. 
Consider the $2$-cone of radius
$1$ contained in $B_{2}(0)$.
Then, for any $k > 0$, we can find an annulus $A_{\frac{1}{n}}$
containing two points $x_k$ and $x_k'$ and 
non-unique minimising geodesics $\gamma_{1,k}$ and $\gamma_{2,k}$
between them of length $\frac{1}{k}$ by  Lemma
\ref{Lem:2ConeGeo}. As a consequence, 
we can smooth the cone at the apex
above $A_{\frac{1}{k}}$ and below at
the base to obtain a smooth map
$G^\infty_{1,h,k}:B_{2}(0) \to \R^{n+1}$.
These geodesics are still geodesics
in this new space as $A_{\frac{1}{k}}$ is an open 
set and hence, it is totally geodesic.
Now, choose a set of points $y_k = (0,3k) \in \R^2$
for $k \geq 0$ and 
consider the map:
$$
G(x) = \begin{cases} 
		x & x\not\in B_{2}(y_k) \\
		G^\infty_{1,h,k}(x - y_k) &x \in B_2(y_k).
	\end{cases}$$
It is easy to see that $G$ is a smooth map 
and pullback metric $\mg = \pullb{G}\inprod{\mdot,\mdot}_{\R^{n+1}}$
is smooth and $C$-close to the Euclidean
metric. 
By construction,
$\inj(\R^2,\mg, x_k) \leq \frac{1}{k}$ and hence, $\inj(\R^2, \mg) = 0$.
As before, the fact that the Kato square root problem \eqref{Ass:KatoFn}
has solutions is immediate from Corollary \ref{Cor:KatoFn}. 
\end{proof}

\subsection{Higher dimensions}

In this section, we demonstrate that 
metrics with zero injectivity radius
are abundant in the space of rough metrics. 
We use this 
to show that the Kato square root problem can be solved
for a wide class of metrics and also demonstrate
that there exist low regularity metrics
on compact manifolds for which injectivity radius
bounds fail.
Throughout this section, we assume that 
the dimension is at least $2$.

First, as in the previous section, 
we define the annulus $A_\epsilon^n$ in our more general
situation as
$$A_\epsilon^n = \set{x \in B_r(0): C_{1, r, h, \epsilon} 
		< \modulus{x} < C_{2, r, h, \epsilon}},$$
where the constants $C_{1,r,h,\epsilon}$ and $C_{2,r,h,\epsilon}$
are the ones guaranteed by Lemma \ref{Lem:2ConeGeo}.
We then obtain the following natural generalisation of this lemma. 

\begin{lemma}
\label{Lem:nConeGeo}
For every $\epsilon > 0$, the set
$G_{r,h}(A_\epsilon^n)$ contains 
two points $x_\epsilon$ and $x_\epsilon'$ and
two distinct minimising geodesics $\gamma_{1,\epsilon}$
$\gamma_{2, \epsilon}$ between these two points
such that their length is $\epsilon$.
\end{lemma}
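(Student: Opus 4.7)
The plan is to reduce to the two-dimensional case already handled in Lemma \ref{Lem:2ConeGeo}. Identify $\R^2 \hookrightarrow \R^n$ as the first two base coordinates; this identifies $\cC^2_{r,h}$ as a submanifold of $\cC^n_{r,h}$ via the same graph map $G_{r,h}$, and, since the constants $C_{1,r,h,\epsilon}$ and $C_{2,r,h,\epsilon}$ are defined identically in both dimensions, identifies $A_\epsilon$ as a subset of $A_\epsilon^n$. Lemma \ref{Lem:2ConeGeo} then supplies two points $x_\epsilon, x_\epsilon'$ together with distinct minimising geodesics $\gamma_{1,\epsilon}, \gamma_{2,\epsilon}$ of length $\epsilon$ between them in the two-cone, all contained in $G_{r,h}(A_\epsilon) \subset G_{r,h}(A_\epsilon^n)$. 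It remains to verify that these curves are still minimising when regarded as paths in the ambient $n$-cone.

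The key point is that the intrinsic distance in $\cC^n_{r,h}$ between two points of the embedded $\cC^2_{r,h}$ coincides with their intrinsic distance in $\cC^2_{r,h}$. To see this, observe that the orthogonal group $O(n-2)$ acts on $\R^n$ by rotating the last $n-2$ coordinates, and this lifts to an isometric action on $\cC^n_{r,h} \subset \R^{n+1}$ (since $|x|$ is preserved); its fixed set is exactly $\cC^2_{r,h}$, so the two-cone is totally geodesic. More strongly, if $\gamma$ is any smooth geodesic of $\cC^n_{r,h}$ away from the apex, then the isotropy subgroup inside the full isometric $O(n)$-action fixing both $\gamma(0)$ and $\dot\gamma(0)$ has fixed set equal to a single two-cone, namely the one determined by the $2$-plane in $\R^n$ spanned by the base coordinate of $\gamma(0)$ and the angular component of $\dot\gamma(0)$; uniqueness of geodesics then forces $\gamma$ to lie in this two-cone. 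Alternatively, writing $\cC^n_{r,h}$ intrinsically as the metric cone $d\rho^2 + c^2\rho^2 g_{S^{n-1}}$ with $c = r/\sqrt{r^2+h^2}$ and applying the planar cosine-law for metric cones yields the same conclusion by showing that the distance between two points depends only on their slant distances from the apex and their spherical separation $d_{S^{n-1}}(\omega_1,\omega_2)$, which for points of a common great circle agrees with the intrinsic distance along that circle.

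The construction in Lemma \ref{Lem:2ConeGeo} places $x_\epsilon$ and $x_\epsilon'$ on distinct rays from the apex, and the cone-angle $2\pi r/\sqrt{r^2+h^2}$ is strictly less than $2\pi$, so broken paths through the apex are never shorter than direct $\epsilon$-paths. Hence any minimising path between $x_\epsilon$ and $x_\epsilon'$ in $\cC^n_{r,h}$ is a smooth geodesic, and by the above it is trapped in the unique two-cone spanned by the rays through $x_\epsilon$ and $x_\epsilon'$, which is precisely our embedded $\cC^2_{r,h}$. Consequently the $n$-cone distance equals the two-cone distance, so $\gamma_{1,\epsilon}$ and $\gamma_{2,\epsilon}$ remain distinct minimising geodesics of length $\epsilon$ in $\cC^n_{r,h}$ contained in $G_{r,h}(A_\epsilon^n)$. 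The main obstacle I anticipate is the careful justification of the geodesic-stays-in-a-two-cone claim, which via either the symmetry/isotropy argument or the intrinsic metric-cone formula constitutes the technical core of the proof.
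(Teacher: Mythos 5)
Your proof is correct and its core mechanism is the same one the paper uses: you exhibit the embedded two-cone as the fixed-point set of an isometric symmetry of $\cC_{r,h}^n$, and conclude it is totally geodesic. The paper does this with a single reflection, $s(x_1,\dots,x_n) = (x_1,x_2,-x_3,\dots,-x_n)$, applied to the base coordinate, whose fixed set is exactly $\Sph^1 \subset \Sph^{n-1}$ and hence induces an isometry of the cone with fixed set the two-cone (and of the annular region $G_{r,h}(A_\epsilon^n)$ with fixed set $G_{r,h}(A_\epsilon^2)$); Klingenberg's theorem on fixed sets of isometries then yields totally geodesic. Your $O(n-2)$-action is a strictly larger symmetry group with the same fixed set, so you land in the same place.

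Where you go beyond the paper is in addressing whether the length-$\epsilon$ geodesics supplied by Lemma~\ref{Lem:2ConeGeo} remain \emph{minimising} once regarded as curves in the ambient $n$-cone. The paper simply invokes Lemma~\ref{Lem:2ConeGeo} after establishing the totally geodesic property, which transfers the geodesic condition but not, on its own, minimality. Your isotropy argument (that any ambient geodesic is trapped in the two-cone spanned by its base point and initial angular direction, so that the $n$-cone distance between two points of the embedded two-cone agrees with the two-cone distance) and your check that, since the cone angle is less than $2\pi$, the angular separation $\theta/2$ is less than $\pi$ and hence the broken path through the apex is never shorter, together close this gap honestly. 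Both checks are correct in the regime where the sector construction makes geometric sense (which forces $\theta < \pi$). For the downstream use in Theorem~\ref{Thm:KatoInj} only \emph{distinctness} of the two geodesics of length $\epsilon$ is needed to bound $\inj(\cdot,\cdot,x_\epsilon)$, so the extra care is not strictly required, but it does match the statement of the lemma more faithfully than the paper's own proof.
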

\begin{proof}
Let $x = (x_1, \dots, x_{n}) \in \Sph^{n-1} \subset \R^n$ 
be points on the $(n-1)$-sphere.
Then, let $s(x) = (x_1, x_2, -x_3, \dots, -x_n)$.
Note that the fixed point set  of this map $s$ precisely
the circle $\Sph^1$.

Now,  for points $(x,t) \in \cC_{r,h}^n$, define 
$\Phi(x,t) = (s(x), t).$ The map $\Phi:\cC_{r,h}^n \to \cC_{r,h}^n$
is an isometry. Moreover, the restricted
map $\Phi_\epsilon(x,t) = \Phi\rest{A_\epsilon^n}: A_\epsilon^n \to A_\epsilon^n$
is also an isometry. It is easy to see that 
$A_\epsilon^2 = \set{(x,t): \Phi_\epsilon(x,t) = (x,t)}$.
Since $A_\epsilon^n$ stays away from the base and apex of $\cC_{r,h}^n$, 
it is a smooth manifold submanifold of $\R^{n+1}$ and 
by the fact that $\Phi_\epsilon$ is an isometry, 
Theorem 1.10.15 in \cite{Klingenberg} guarantees us
that  $A_\epsilon^2$ is a totally geodesic submanifold.
Invoking Lemma \ref{Lem:2ConeGeo} completes the proof.
\end{proof}

The following final lemma is
instrumental in proving the 
main theorem of this section.
Note that we write $B_{r}^\ast(x) = \psi^{-1}(B_r(\psi(x)))$
inside a coordinate chart $(U,\psi)$.

\begin{lemma}
\label{Lem:GCover}
Let $\cM$ be a smooth manifold with a continuous
metric $\mg$. Then, for any $x_0$ and $C > 1$, 
there exists an at most countable
collection $\sC = \set{(U_i,\psi_i)}$ of charts covering
$\cM$ and an $r_0 > 0$ such that 
\begin{enumerate}[(i)]
\item for all $y \in U_i$ and $u \in T_y U_i$, 
	$$C^{-1} \modulus{u}_{\pullb{\psi_i}\delta} 
		\leq \modulus{u}_\mg 
		\leq C \modulus{u}_{\pullb{\psi_i}\delta},$$
\item $\close{B_{r_0}^\ast(x_0)} \subset U_0$, and
\item $\close{B_{r_0}^\ast(x_0)} \intersect U_i = \emptyset$
	for $i > 0$.
\end{enumerate} 
\end{lemma}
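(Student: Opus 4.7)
The plan is to construct the special chart $U_0$ around $x_0$ first, fix the radius $r_0$, and then cover the complement of $\close{B_{r_0}^\ast(x_0)}$ by suitable charts, using second-countability to extract a countable subcover.

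\textbf{Step 1 (special chart at $x_0$).} Apply Lemma \ref{Lem:CtsCoords} at $x_0$ to obtain a smooth chart $(V_0, \tilde{\psi}_0)$ near $x_0$ in which $\mg_{ij}(x_0) = \delta_{ij}$. In these coordinates the function
$$R(y, u) = \frac{\mg_{ij}(y)\, u^i u^j}{\delta_{ij}\, u^i u^j}$$
is continuous on $V_0 \times (\R^n \setminus \{0\})$, homogeneous of degree zero in $u$, and $R(x_0, u) = 1$ for all nonzero $u$. By compactness of the unit sphere in $\R^n$, there is some Euclidean radius $\rho > 0$ such that $C^{-2} \leq R(y,u) \leq C^2$ for all $y \in \tilde\psi_0^{-1}(\close{B_\rho(\tilde\psi_0(x_0))}) \subset V_0$ and all nonzero $u$. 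Take $U_0 = \tilde\psi_0^{-1}(B_\rho(\tilde\psi_0(x_0)))$, $\psi_0 = \tilde\psi_0\rest{U_0}$, and choose $r_0 > 0$ such that $\close{B^\ast_{r_0}(x_0)} \subset U_0$. This secures (i) on $U_0$ and (ii).

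\textbf{Step 2 (charts away from $x_0$).} For each $y \in \cM \setminus \close{B^\ast_{r_0}(x_0)}$, first apply Lemma \ref{Lem:CtsCoords} at $y$ to obtain a chart $(\tilde V_y, \tilde{\psi}_y)$ in which $\mg$ equals the Euclidean form at $y$. Since $\cM \setminus \close{B^\ast_{r_0}(x_0)}$ is open, we may shrink the chart domain so that $\tilde V_y \subset \cM \setminus \close{B^\ast_{r_0}(x_0)}$. Then repeat the continuity-and-compactness argument from Step 1 inside this chart to extract an open neighborhood $U_y \subset \tilde V_y$ of $y$ on which the $C$-close inequality (i) holds; set $\psi_y = \tilde\psi_y\rest{U_y}$. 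By construction $U_y \intersect \close{B^\ast_{r_0}(x_0)} = \emptyset$, giving (iii).

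\textbf{Step 3 (countable subcover).} The collection $\{U_0\} \cup \{U_y : y \in \cM \setminus \close{B^\ast_{r_0}(x_0)}\}$ is an open cover of $\cM$. A second-countable space is Lindel\"of, so we can extract an at most countable subcover $\{U_i\}_{i \geq 0}$ retaining $U_0$ as the chart around $x_0$. The properties (i)--(iii) pass to this subcover without change, and this completes the construction.

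\textbf{Main obstacle.} There is no deep obstacle: Lemma \ref{Lem:CtsCoords} already produces charts that are Euclidean-normalised at a single point, so the only work is a routine continuity estimate upgrading the pointwise equality $\mg_{ij}(x_0) = \delta_{ij}$ to the uniform two-sided inequality with constant $C$ on a small neighborhood. The one subtlety worth care is ensuring that, around points $y \neq x_0$, the neighborhood $U_y$ can be taken disjoint from $\close{B^\ast_{r_0}(x_0)}$; this is handled by first shrinking into the open complement before applying the continuity argument.
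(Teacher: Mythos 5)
Your proof is correct and follows essentially the same approach as the paper: use continuity of $\mg$ (together with normalised coordinates as in Lemma~\ref{Lem:CtsCoords}) to obtain a chart around each point on which the $C$-close inequality holds, arrange that charts away from $x_0$ miss $\close{B^\ast_{r_0}(x_0)}$, and invoke second-countability for a countable subcover. The only (cosmetic) difference is ordering: you fix $U_0$ and $r_0$ first and then cover the open complement, whereas the paper first extracts a countable subcover $\set{V_i}$ of $C$-close charts and afterwards shrinks each $V_i$ with $i>0$ to $V_i\setminus\close{B^\ast_{r_0}(x_0)}$; both yield the same conclusion with the same ingredients.
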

\begin{proof}
For $x \in \cM$, we can find a chart 
$(V_x, \psi_x)$ satisfying the inequality
$C^{-1} \modulus{u}_{\pullb{\psi_i}\delta}
		\leq \modulus{u}_\mg 
		\leq C \modulus{u}_{\pullb{\psi_i}\delta}$
for all $u \in T_yV_x$ by the continuity of $\mg$.

Let $\set{V_i}$ be a countable subcover, choosing
the index so that $x_0 \in V_0$.
We can choose $r_0 > 0$ small such that
$\close{B_{r_0}(\psi_0(x_0))} \subset \psi_0(V_0)$. 
Define a new set of charts by restricting the
sets $V_i$ on setting $\tilde{U}_0 = V_0$ 
and $\tilde{U}_i = V_i\setminus \close{B_{r_0}^\ast(x_0)}$
for $i > 0$. Then, define 
$\sC = \set{\tilde{U}_i: \tilde{U}_i \neq \emptyset}$.

It is easy to see that $\sC$ covers $\cM$. 
Since the $\tilde{U}_i$ are obtained
as restrictions of the $V_i$, 
it is easy to see that the desire inequality in (i)
still holds. This shows (i).
That (ii) is true is immediate, and (iii) 
is true by the construction of the $\tilde{U}_i$.
\end{proof}

Whit the aid of this tool, we prove the following
main theorem of this section.

\begin{theorem}
\label{Thm:KatoInj}
Let $\cM$ be a smooth manifold of dimension at least $2$ 
and $\mg$ a continuous metric. 
Given $C >1$, and a point $x_0 \in \cM$, there exists a rough metric $\mh$
such that: 
\begin{enumerate}[(i)]
\item it preserves the topology of $\cM$,
\item it is smooth everywhere but at $x_0$,
\item the geodesics through $x_0$ are Lipschitz,
\item it is  $C$-close to $\mg$,
\item $\inj(\cM \setminus\set{x_0},\mh) = 0$. 
\end{enumerate} 
\end{theorem}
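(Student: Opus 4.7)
The plan is to adapt the construction from Theorem~\ref{Thm:KatoInj2dim} by placing a sequence of smoothed $n$-cones of shrinking size whose apexes accumulate at $x_0$, so that $\mh$ acquires a singularity precisely at $x_0$ while each individual cone supplies a pair of short, distinct minimising geodesics away from $x_0$.

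First I would fix a constant $C_1 > 1$ with $C_1^3 = C$, invoke Proposition~\ref{Prop:ContSmoothMet} to pick a smooth metric $\mgt$ that is $C_1$-close everywhere to $\mg$, and apply Lemma~\ref{Lem:GCover} to $(\cM, \mgt)$ with constant $C_1$ and base point $x_0$ to obtain a chart $(U_0, \psi_0)$ with $\psi_0(x_0) = 0$ on which $\mgt$ is $C_1$-close to $\pullb{\psi_0}\delta$, together with $r_0 > 0$ such that $\close{B_{r_0}^\ast(x_0)}$ lies in $U_0$ alone. Inside $\psi_0(U_0)$, I would then select a sequence $y_k \to 0$ with $y_k \neq 0$ and radii $r_k \to 0$ such that the closed balls $\close{B_{2r_k}(y_k)}$ are pairwise disjoint, avoid $0$, and are contained in $B_{r_0/2}(0)$. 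At each $y_k$, build a smoothed $n$-cone centred at $y_k$ of base radius $r_k$ and height $h_k$ with $\sqrt{1 + (h_k/r_k)^2} \leq C_1$, using a graph map $G_k(x) = (x - y_k, H_k(x-y_k))$ in which both the apex and the base of the cone are smoothed (as in Theorem~\ref{Thm:KatoInj2dim}) while the annulus $A_{\epsilon_k}^n$ from Lemma~\ref{Lem:nConeGeo}, with $\epsilon_k \to 0$, is left intact and hence totally geodesic in the cone.

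Assembling the individual height functions into a single $H$ equal to $H_k$ on each $B_{r_k}(y_k)$ and to $0$ elsewhere, and choosing a smooth cut-off $\chi$ equal to $1$ on $\bigcup_k B_{r_k}(y_k)$ and vanishing outside $\bigcup_k B_{2r_k}(y_k)$, I would define
$$
\mh^\flat = \chi\,(\delta + dH \otimes dH) + (1 - \chi)\,\mgt^\flat
$$
on $\psi_0(U_0)$, where $\mgt^\flat = \pullb{\psi_0^{-1}}\mgt$, and then set $\mh = \pullb{\psi_0}\mh^\flat$ on $U_0$ and $\mh = \mgt$ elsewhere; on $U_0 \setminus \bigcup_k B_{2r_k}(y_k)$ one has $\chi \equiv 0$ and $\mh^\flat = \mgt^\flat$, so the two definitions agree and yield a well-defined rough metric on $\cM$. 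Properties (ii) and (iii) will follow since $\mgt$, $\chi$ and each $H_k$ are smooth and only finitely many cones meet any compact subset of $\cM \setminus \set{x_0}$; (i) is a consequence of $C$-closeness to the complete smooth metric $\mgt$; and (v) is immediate since $\mh^\flat = \delta + dH_k \otimes dH_k$ exactly on $B_{r_k}(y_k)$, so Lemma~\ref{Lem:nConeGeo} supplies points $x_k \to x_0$ admitting two distinct $\epsilon_k$-length minimising geodesics and hence $\inj(\cM,\mh,x_k) \leq \epsilon_k$. The main obstacle is the three-stage bookkeeping required for (iv): the convex combination defining $\mh^\flat$, the cone perturbation $dH \otimes dH$, and the initial $C_1$-close smoothing $\mgt$ of $\mg$ each contribute a factor of $C_1$-closeness, and these must be chosen cooperatively, together with careful control of the smoothing of each $H_k$ near the apex and base, so that the composite metric $\mh$ remains $C$-close to $\mg$.
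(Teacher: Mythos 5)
Your construction is a genuinely different one from the paper's, and the difference is worth noting. The paper uses a \emph{single} cone with its apex placed exactly at $x_0$, and smooths only the \emph{base}: the apex remains the honest cone singularity. This is both simpler and more efficient than your scheme of shrinking smoothed cones accumulating at $x_0$. One cone already does the whole job for (v), because the totally geodesic annuli $A_\epsilon$ of Lemma~\ref{Lem:nConeGeo} live inside that single cone for \emph{every} small $\epsilon > 0$, marching toward the apex; there is no need to manufacture a fresh cone for each scale. Moreover, placing $x_0$ at a cone apex makes (ii) and (iii) immediate: the apex is a structured, well-understood singularity, and since $\tilde G_{r,h}$ agrees with the bi-Lipschitz graph map $G_{r,h}$ near the apex, geodesics of the cone surface pull back to Lipschitz curves through $x_0$. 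The paper's accounting of constants also saves a factor over yours, because it builds $\mgt$ so that $\phi_0 \equiv 1$ on the inner ball, making $\mgt = \pullb{\psi_0}\delta$ exactly there; only two multiplicative factors $C_1 C_2 = C$ are needed, not three.

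The genuine gap in your version is (iii). Your justification --- ``$\mgt$, $\chi$ and each $H_k$ are smooth and only finitely many cones meet any compact subset of $\cM\setminus\set{x_0}$'' --- proves (ii) but says nothing about geodesics through $x_0$ itself. In your construction $x_0$ is an \emph{accumulation} point of smooth bumps rather than a cone apex, so the metric near $x_0$ merely satisfies an a.e.\ bi-Lipschitz comparison with the Euclidean metric and need not be continuous there. You would need a separate argument that minimising geodesics through $x_0$ exist (an Arzel\`a--Ascoli/lower-semicontinuity argument using local compactness of the induced length metric) and are Lipschitz as curves in the chart (which does follow from the uniform comparability once existence is known). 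This is plausibly repairable, but as written it is a genuine gap, and it is exactly the gap that the paper's cone-apex placement of $x_0$ avoids by design. A related loose end is that your (ii) requires the metric to actually \emph{fail} to be smooth at $x_0$; this holds if the cutoff derivatives blow up as $r_k\to 0$, but if $h_k$ is chosen to decay too quickly the perturbation could extend smoothly across $x_0$, so the scales need to be chosen cooperatively rather than arbitrarily.
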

\begin{proof}
For a given $x_0 \in \cM$
take the cover given by Lemma \ref{Lem:GCover}
with constant $C_1 > 1$ to be chosen later.
Let $\set{\phi_i}$ be a smooth partition of unity
subordinate to $\sC = \set{U_0, U_1, \dots}$. 
Note that since $\close{B_{r_0}^\ast(x_0)} \intersect U_i = \emptyset$
for $i > 0$, we have that 
$\phi_0 \equiv 1$ on $\close{B_{r_0}^\ast(x_0)}$.

Define a new metric 
$\mgt = \sum_{i} \phi_i \pullb{\psi}_i \delta$. It is easy to 
see that this metric is smooth and a direct calculation 
at an arbitrary point $x$ will show that it is $C_1$-close
to $\mg$.

Set $r = \frac{r_0}{10}$, and choose $h > 0$ such that 
$\sqrt{1 + \frac{h^2}{r^2}} \leq C_2$, where $C_2 > 0$
to be chosen later. Inside $\phi_0(U_0)$, remove the 
Euclidean ball $B_r(y_0) \subset \R^n$ where $y_0 = \psi_0(x_0)$
and 
affix an $n$-cone of radius $r$ and height $h$
via the map $G_{r,h}:B_{r_0}(\phi(x_0)) \to \R^{n+1}$
on setting $U = B_{r_0}(\phi(x_0))$ in our construction
in \S\ref{Sect:nCone}.
Note that 
$\pullb{G}_{r,h}\inprod{\mdot, \mdot}_{\R^{n+1}} = \inprod{\mdot,\mdot}_{\R^n}$
in the annulus $B_{r_0}(y_0) \setminus \close{B_{r}(y_0)}$.
We can smooth the base of the cone to produce a map 
$\tilde{G}_{r,h}:B_{r}(y_0) \to \R^{n+1}$ such that 
there exists $\tau \in (0, r)$ with 
$\pullb{\tilde{G}}_{r,h}\inprod{\mdot,\mdot}_{\R^{n+1}} = \inprod{\mdot,\mdot}_{\R^n}$
for $B_{r}(y_0) \setminus \close{B_{r - \tau}(y_0)}$,
a $c \in (0,1)$
with $\tilde{G}_{r,h} = G_{r,h}$ in $B_{cr}(y_0)$, 
and so that the Lipschitz constant $C_2$ is unaltered.

Now, consider the pullback metric $\mgt_{0} = \pullb{\tilde{G}_{r,s}}$
on $B_{r_0}(y_0)$. It is easy to see that this metric 
preserves the topology in $B_{r_0}(y_0)$. 
Thus, define
$$\mh(x) = \begin{cases} 
	\pullb{\psi_0}\mgt_0(x) &x \in B_{r_0}^\ast(x_0). \\
	\mgt(x)			&\text{otherwise}.
	\end{cases}
$$
This is a smooth metric away from $x_0$, because by what we have said before,
$\phi_0 \equiv 1$ on the ball $B_{r_0}^\ast(x_0)$, 
$\mgt_0 = \delta$ in $\B_{r_0}(y_0) \setminus \close{B_{r - \tau}(y_0)}$, 
and because $\mgt_0$ fails to be smooth only at $y_0$.
As we have already mentioned, $\tilde{G}_{r,h} = G_{r,h}$ in a neighbourhood 
of the apex, and hence, it is a Lipschitz map there. As a consequence,
geodesics through $x_0$ are Lipschitz.

So far, we have shown (i) to (iii). To show (iv), 
note that $\mh$ is $C_2$-close to $\mgt$ and $\mgt$
is $C_1$ close to $\mg$, we have that 
$\mh$ is $C_1C_2$-close to $\mg$.
On setting $C_1 = C_2 = \sqrt{C}$, we obtain
that $\mg$ is $C$-close to $\mh$.

To show that $\inj(\cM,\mh) = 0$, note that 
there is some $\epsilon_0$ so that
whenever $\epsilon  \in (0, \epsilon_0)$, 
$\tilde{G}_{r,h}(A_\epsilon) = G_{r,h}(A_\epsilon)$.
The set $A_\epsilon$ is an open set in $\R^n$
and hence, $\tilde{G}_{r,h}(A_\epsilon)$
it is an open set in $\img \tilde{G}_{r,h}$. 
Open sets are totally geodesic submanifolds,
and by Lemma \ref{Lem:nConeGeo}, $G_{r,h}(A_\epsilon)$ contains a two distinct minimising geodesics of
length $\epsilon$ between points $y_\epsilon$ and $y_\epsilon'$. Thus, 
on setting $x_\epsilon = \psi_0^{-1}(G_{r,h}^{-1}(x_\epsilon))$, 
we have that
$\inj(\cM, \mh, x_\epsilon) \leq \epsilon$.
Therefore, $\inj(\cM,\mh) = 0$.
\end{proof} 

As aforementioned,
 we are currently able to prove the Kato square
root problem when $\mg$ is complete, smooth,
$\modulus{\Ric} \leq \eta$ and $\inj(\cM,\mg) \geq \kappa > 0$.
However, by this theorem, we are able to find arbitrarily 
close metrics $\mh$ to $\mg$ for which the 
injectivity radius
bounds fail, and by 
Corollary \ref{Cor:KatoFn},
we can solve the Kato square
root problem for such metrics.
This leads us to believe that the
lower bounds on injectivity radius
in the proofs
of the Kato square root problem is a
technical assumption.

\def\cprime{$'$}
\providecommand{\bysame}{\leavevmode\hbox to3em{\hrulefill}\thinspace}
\providecommand{\MR}{\relax\ifhmode\unskip\space\fi MR }
\providecommand{\MRhref}[2]{%
  \href{http://www.ams.org/mathscinet-getitem?mr=#1}{#2}
}
\providecommand{\href}[2]{#2}


\begin{thebibliography}{10}

\bibitem{ADMc}
David Albrecht, Xuan Duong, and Alan McIntosh, \emph{Operator theory and
  harmonic analysis}, Instructional {W}orkshop on {A}nalysis and {G}eometry,
  {P}art {III} ({C}anberra, 1995), Proc. Centre Math. Appl. Austral. Nat.
  Univ., vol.~34, Austral. Nat. Univ., Canberra, 1996, pp.~77--136. \MR{1394696
  (97e:47001)}

\bibitem{AAMc}
Pascal Auscher, Andreas Axelsson, and Alan McIntosh, \emph{On a quadratic
  estimate related to the {K}ato conjecture and boundary value problems},
  Harmonic analysis and partial differential equations, Contemp. Math., vol.
  505, Amer. Math. Soc., Providence, RI, 2010, pp.~105--129. \MR{2664564
  (2011e:35080)}

\bibitem{AHLMcT}
Pascal Auscher, Steve Hofmann, Michael Lacey, Alan McIntosh, and Philippe.
  Tchamitchian, \emph{The solution of the {K}ato square root problem for second
  order elliptic operators on {${\Bbb R}\sp n$}}, Ann. of Math. (2)
  \textbf{156} (2002), no.~2, 633--654.

\bibitem{AKM2}
Andreas Axelsson, Stephen Keith, and Alan McIntosh, \emph{The {K}ato square
  root problem for mixed boundary value problems}, J. London Math. Soc. (2)
  \textbf{74} (2006), no.~1, 113--130.

\bibitem{AKMc}
\bysame, \emph{Quadratic estimates and functional calculi of perturbed {D}irac
  operators}, Invent. Math. \textbf{163} (2006), no.~3, 455--497.

\bibitem{B3}
Lashi {Bandara}, \emph{{Density problems on vector bundles and manifolds}},
  ArXiv e-prints (2012).

\bibitem{BThesis}
\bysame, \emph{{G}eometry and the {K}ato {S}quare {R}oot {P}roblem}, Ph.D.
  thesis, Australian National University, 2013.

\bibitem{BMc}
Lashi {Bandara} and Alan {McIntosh}, \emph{{The Kato square root problem on
  vector bundles with generalised bounded geometry}}, ArXiv e-prints (2012).

\bibitem{BEMc}
Lashi Bandara, A.~F.~M. ter Elst, and Alan McIntosh, \emph{Square roots of
  perturbed subelliptic operators on {L}ie groups}, Studia Math. \textbf{216}
  (2013), no.~3, 193--217. \MR{3092423}

\bibitem{Burtscher}
A.~{Burtscher}, \emph{{Length structures on manifolds with continuous
  Riemannian metrics}}, ArXiv e-prints (2012).

\bibitem{Chavel}
Isaac Chavel, \emph{Riemannian geometry}, second ed., Cambridge Studies in
  Advanced Mathematics, vol.~98, Cambridge University Press, Cambridge, 2006, A
  modern introduction. \MR{2229062 (2006m:53002)}

\bibitem{GMM}
Vladimir Gol'dshtein, Irina Mitrea, and Marius Mitrea, \emph{Hodge
  decompositions with mixed boundary conditions and applications to partial
  differential equations on {L}ipschitz manifolds}, J. Math. Sci. (N. Y.)
  \textbf{172} (2011), no.~3, 347--400, Problems in mathematical analysis. No.
  52. \MR{2839867}

\bibitem{Haase}
Markus Haase, \emph{The functional calculus for sectorial operators}, Operator
  Theory: Advances and Applications, vol. 169, Birkh\"auser Verlag, Basel,
  2006. \MR{2244037 (2007j:47030)}

\bibitem{H}
Philip Hartman, \emph{On the local uniqueness of geodesics}, Amer. J. Math.
  \textbf{72} (1950), 723--730. \MR{0038111 (12,357b)}

\bibitem{HW}
Philip Hartman and Aurel Wintner, \emph{On the problems of geodesics in the
  small}, Amer. J. Math. \textbf{73} (1951), 132--148. \MR{0040740 (12,742e)}

\bibitem{HMc}
Steve Hofmann and Alan McIntosh, \emph{Boundedness and applications of singular
  integrals and square functions: a survey}, Bull. Math. Sci. \textbf{1}
  (2011), no.~2, 201--244. \MR{2901001}

\bibitem{Kervaire}
Michel~A. Kervaire, \emph{A manifold which does not admit any differentiable
  structure}, Comment. Math. Helv. \textbf{34} (1960), 257--270. \MR{0139172
  (25 \#2608)}

\bibitem{Klingenberg}
Wilhelm P.~A. Klingenberg, \emph{Riemannian geometry}, second ed., de Gruyter
  Studies in Mathematics, vol.~1, Walter de Gruyter \& Co., Berlin, 1995.
  \MR{1330918 (95m:53003)}

\bibitem{LS}
Paul {Leopardi} and Ari {Stern}, \emph{{The abstract Hodge-Dirac operator and
  its stable discretization}}, ArXiv e-prints (2014).

\bibitem{Morris2}
Andrew~J. Morris, \emph{Local quadratic estimates and holomorphic functional
  calculi}, The {AMSI}-{ANU} {W}orkshop on {S}pectral {T}heory and {H}armonic
  {A}nalysis, Proc. Centre Math. Appl. Austral. Nat. Univ., vol.~44, Austral.
  Nat. Univ., Canberra, 2010, pp.~211--231.

\bibitem{Morris3}
Andrew~J. Morris, \emph{The {K}ato square root problem on submanifolds}, J.
  Lond. Math. Soc. (2) \textbf{86} (2012), no.~3, 879--910. \MR{3000834}

\bibitem{Pettersson}
Pelle Pettersson, \emph{Differentiability of minimal geodesics in metrics of
  low regularity},  (2001).

\bibitem{Sullivan}
Dennis Sullivan, \emph{Hyperbolic geometry and homeomorphisms}, Geometric
  topology ({P}roc. {G}eorgia {T}opology {C}onf., {A}thens, {G}a., 1977),
  Academic Press, New York, 1979, pp.~543--555. \MR{537749 (81m:57012)}

\bibitem{Whitney}
Hassler Whitney, \emph{Differentiable manifolds}, Ann. of Math. (2) \textbf{37}
  (1936), no.~3, 645--680. \MR{1503303}

\end{thebibliography}
\end{document}